\theoremstyle{definition}
\newtheorem{thm}{Theorem}[section]
\newtheorem{prop}[thm]{Proposition}
\newtheorem{conj}[thm]{Conjecture}
\newtheorem{lemma}[thm]{Lemma}
\newtheorem{defn}[thm]{Definition}
\newtheorem{cor}[thm]{Corollary}
\newtheorem{question}[thm]{Question}
\newtheorem{remark}[thm]{Remark}
\DeclareMathOperator{\aff}{aff}
\DeclareMathOperator{\AltAv}{\widetilde{Av}}
\DeclareMathOperator{\Av}{Av}
\DeclareMathOperator{\cn}{cone}
\DeclareMathOperator{\conv}{conv}
\DeclareMathOperator{\CRY}{CRY}
\DeclareMathOperator{\Des}{Des}
\DeclareMathOperator{\des}{des}
\DeclareMathOperator{\face}{face}
\DeclareMathOperator{\gglex}{\succ_{grevlex}}
\DeclareMathOperator{\lglex}{\prec_{grevlex}}
\DeclareMathOperator{\Irr}{Irr}
\DeclareMathOperator{\inv}{inv}
\DeclareMathOperator{\Invv}{Invv}
\DeclareMathOperator{\ehr}{\mathcal{L}}
\DeclareMathOperator{\grob}{\mathcal{G}}
\DeclareMathOperator{\init}{\mathrm{in}_{\prec}}
\DeclareMathOperator{\Inv}{\mathrm{Inv}}
\DeclareMathOperator{\pos}{pos}
\DeclareMathOperator{\rad}{rad}
\DeclareMathOperator{\vol}{vol}
\DeclareMathOperator{\Vol}{Vol}
\newcommand{\A}{\mathcal{A}}
\newcommand{\AltB}{\widetilde{B}}
\newcommand{\AltQ}{\widetilde{Q}}
\newcommand{\AltT}{\widetilde{\mathcal{T}}}
\newcommand{\FF}{\mathcal{F}}
\newcommand{\HH}{\mathcal{H}}
\newcommand{\RR}{\mathbb{R}}
\newcommand{\cR}{\mathcal{R}}
\newcommand{\ZZ}{\mathbb{Z}}
\newcommand{\symm}{\mathfrak{S}}
\newcommand{\TT}{\mathcal{T}}
\newcommand{\LL}{\mathcal{L}}
\newcommand{\dprime}{\prime\prime}
\newcommand{\da}{\hspace{-2pt}\downarrow}
\newcommand{\cF}{\mathcal F}
\newcommand{\iso}{\cong}
\newcommand{\OO}{\mathcal{O}}
\newcommand{\om}{\omega}
\newcommand{\Om}{\Omega}
\newcommand{\ol}{\overline}
\newcommand{\sig}{\sigma}
\newcommand{\la}{\lambda}
\newcommand{\sigp}{\sigma^{\prime}}
\newcommand{\llex}{<_{\rm lex}}
\newcommand{\glex}{>_{\rm lex}}
\tikzset{->-/.style={decoration={
  markings,
  mark=at position .5 with {\arrow[scale=1.5]{>}}},postaction={decorate}}}
\begin{document}

\title{Pattern-Avoiding Polytopes}
\author{Robert Davis and Bruce Sagan}

\begin{abstract}
	Two well-known polytopes whose vertices are indexed by permutations in the symmetric group $\symm_n$ are 
	the permutohedron $P_n$ and the Birkhoff polytope $B_n$.  We consider polytopes $P_n(\Pi)$ and $B_n(\Pi)$, whose vertices correspond to the permutations in $\symm_n$ avoiding a set of patterns $\Pi$.
	For various choices of $\Pi$, we explore the Ehrhart polynomials and $h^*$-vectors of these polytopes as well as other aspects of their combinatorial structure.
		
	For $P_n(\Pi)$, we consider all subsets $\Pi \subseteq \symm_3$ and are able to provide results in most cases.
	 To illustrate, $P_n(123,132)$ is  a Pitman-Stanley polytope, the number of  interior lattice points in $P_n(132,312)$  is a derangement number,
	and the normalized volume of $P_n(123,231,312)$ is the number of trees on $n$ vertices.

	The polytopes $B_n(\Pi)$ seem much more difficult to analyze, so we focus on four particular choices of $\Pi$.
	First we show that the $B_n(231,321)$ is exactly the Chan-Robbins-Yuen polytope.  Next we prove that for any $\Pi$ containing $\{123,312\}$ we have $h^*(B_n(\Pi))=1$.
	Finally, we  study $B_n(132,312)$ and $\AltB_n(123)$, where the tilde indicates that  we choose vertices corresponding to  alternating permutations avoiding the pattern $123$.
	In both cases we use order complexes of posets and techniques from toric algebra to construct regular, unimodular triangulations of the polytopes.
	The posets involved turn out to be isomorphic to the lattices of Young diagrams contained in a certain shape, and this permits us to give an exact expression for the normalized volumes of the corresponding polytopes via the hook formula.
	Finally, Stanley's theory of  $(P,\omega)$-partitions allows us to show that their $h^*$-vectors are symmetric and unimodal.
	
	Various questions and conjectures are presented throughout.
\end{abstract}
	
\maketitle


\section{Introduction}

Let $\symm_n$ denote the symmetric group on $1,2,\dots,n$ and $\symm = \cup_{n\ge0}\symm_n$.
Let $\pi  \in \symm_k$ and $\sig \in \symm_n$.
We say that $\sig$ {\em contains the pattern} $\pi$ if there is some substring $\sig'$ of $\sig$ whose elements have the same relative order as those in $\pi$.
Alternatively, we view $\sig'$ as {\em standardizing} to $\pi$ by replacing the smallest element of $\sig'$ with $1$, the next smallest by $2$, and so on.
If there is no such substring then we say that $\sig$ {\em avoids the pattern} $\pi$.
If $\Pi \subseteq \symm$, then we say $\sig$ {\em avoids} $\Pi$ if $\sig$ avoids every element of $\Pi$.
We will use the notation 
\[\Av_n(\Pi) := \{\sig \in \symm_n\ |\ \sig \text{ avoids } \Pi \}.\]
Note this is {\em not} the avoidance class of $\Pi$ which is the union of these sets over all $n$.

A polytope $P \subseteq \RR^n$ is the convex hull of finitely many points, written $P = \conv\{v_1,\ldots, v_k\}$.
Equivalently, a polytope may be described as a bounded intersection of finitely many half-spaces.
The {\em dimension} of $P$ is the dimension of its affine span.
We think of vectors in $\RR^n$ as columns and use $a^Tb$ to denote the usual inner product of $a,b\in\RR^n$.
An affine hyperplane $H$ determined by the equation $a^Tx = b$ for some $a,b \in \RR^n$ is called {\em supporting} if $a^Tp \geq b$ for every $p \in P$.
Some texts, such as \cite{GrunbaumConvexPolytopes}, insist that $H \cap P$ be nonempty; our definition aligns with those found in \cite{BeckRobinsCCDed2, StanleyVol1Ed2}.
If $H$ is a supporting hyperplane, then the set $H \cap P$ is called a {\em face} of $P$ and is a subpolytope of $P$.
Faces of dimension $0$ are {\em vertices}, faces of dimension $1$ are called {\em edges}, and faces of dimension $\dim P - 1$ are called {\em facets}.
Additionally, we say a polytope is a {\em lattice polytope}  if each vertex is an element of $\ZZ^n$.
Lattice polytopes have long found connections with permutations, in particular via the permutohedron and Birkhoff polytope.

The {\em permutohedron} is defined as 
\[P_n := \conv\{(a_1,\ldots, a_n)\ |\  a_1\cdots a_n \in \symm_n\}.\]
We will often make no distinction between a permutation and its corresponding point in $\RR^n$.
This polytope was first described in~\cite{Schoute} and has connections to the geometry of flag varieties as well as representations of $GL_n$.
We refer to~\cite{ZieglerLectures} for general background regarding permutohedra.

The {\em Birkhoff polytope} is the polytope 
\[
	B_n := \conv\left\{X = (x_{i,j}) \in (\RR_{\geq 0})^{n\times n} \ |\ \sum_{i=1}^n x_{i,j} = \sum_{j=1}^n x_{i,j} = 1 \text{ for all } i,j\right\}.
\]
The Birkhoff-von Neumann Theorem states that the vertices of $B_n$ are the permutation matrices.

In this article, we describe a natural blending of pattern avoidance with  the permutohedron and the Birkhoff polytope. 
Specifically, for any set of patterns $\Pi$, we define $P_n(\Pi)$ to be the subpolytope of $P_n$ obtained by taking the convex hull of those vertices corresponding to permutations in $\Av_n(\Pi)$.  
The polytope $B_n(\Pi)$ is defined similarly.  
We study the Ehrhart polynomials and $h^*$-vectors of these polytopes as well as other aspects of their combinatorial structure.

The rest of this paper is organized as follows.
In Section~\ref{sec:pre} we review some basic notions about  pattern avoidance and polytopes which will be needed throughout.
Section~\ref{sec:permutohedra} focuses on the permutohedron case $P_n(\Pi)$.
We first show in Proposition~\ref{prop: Pn equivalence} that the action of a certain subgroup of the dihedral group of the square produces unimodularly equivalent polytopes.  
We then consider all possible $\Pi\subseteq\symm_3$ and are able to provide results for most of the orbits of this action.  
Specific propositions are listed in Table~\ref{tab: summary}.  
As a sampling, $P_n(123,132)$ is  a Pitman-Stanley polytope, the number of interior lattice points in $P_n(132,312)$  is a derangement number, and the normalized volume of $P_n(123,231,312)$ is the number of trees on $n$ vertices.

The $\Pi$-avoiding Birkhoff polytope appears to be much harder to analyze in general.  So we concentrate  on four specific examples.  
In Section~\ref{sec:birkhoff}, we show that $B_n(231,321)$ is a polytope studied by Chan, Robbins, and Yuen.
Next we prove that for any $\Pi$ containing  the permutations $123$ and $312$ we have $h^*(B_n(\Pi))=1$.
	In Section~\ref{sec:132,312 and 123} we begin our study of $B_n(132,312)$ and $\AltB_n(123)$, the tilde indicating that  we choose vertices corresponding to  alternating permutations avoiding the pattern $123$.
	In both cases we use order complexes of posets and techniques from toric algebra to construct regular, unimodular triangulations of the polytopes.
	The posets involved turn out to be isomorphic to the lattices of Young diagrams contained in a certain shape, and this permits us to give an exact expression for the normalized volumes of the corresponding polytopes via the hook formula.
	Finally, in Section~\ref{sec:gorensteinsection}, Stanley's theory of  $(P,\omega)$-partitions is applied to show that the $h^*$-vectors of these two polytopes are symmetric and unimodal.

Various conjectures and questions are scattered through the paper.


\section{Preliminaries}
\label{sec:pre}

There are a number of concepts to which we refer throughout the paper.
In this section, we collect the most frequent of these notions.


\subsection{Diagrams, Wilf Equivalence, and Grid Classes}

Let $\pi = a_1\cdots a_k\in\symm_k$.  Sometimes for clarity we will insert commas and write $\pi=a_1,\cdots,a_k$.
The {\em diagram} of a permutation $\pi$ is the set of points with Cartesian coordinates $(i,a_i)$ for $i = 1,\ldots,k$.
An example diagram is given in Figure~\ref{fig:diagram}.
When no confusion will result, we make no distinction between a permutation and its diagram.
Diagrams of permutations provide an easy way to see how certain permutations can be related geometrically.
For example, the diagrams of $\pi$ and $\pi^{-1}$ are related by reflection across the line $y=x$.
With both the $\Pi$-avoiding permutohedra and $\Pi$-avoiding Birkhoff polytopes,
many results will be true not only for the choice of $\Pi$ in their statement, but also for certain other subsets of permutations whose diagrams are related to those in $\Pi$.

\begin{figure}
	\begin{tikzpicture}
		\draw[step=1cm,very thin] (1,1) grid (7,7);
		    \foreach \i in {(1,2),(2,6),(3,4),(4,1),(5,7),(6,5),(7,3)}
		    {
		    	\fill \i circle [radius=3pt];
		    }
	\end{tikzpicture}
	\caption{The diagram of the permutation $2641753$.}\label{fig:diagram}
\end{figure}
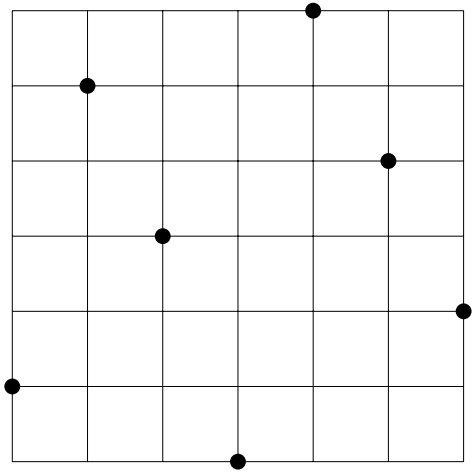

Two permutations $\pi_1$ and $\pi_2$ are called {\em Wilf equivalent}, written $\pi_1 \equiv \pi_2$, if $|\Av_n(\pi_1)| = |\Av_n(\pi_2)|$ for all $n$.
For example,  any two permutations in $\symm_3$ are Wilf equivalent.
This is indeed an equivalence relation. 
Although proving $\pi_1 \equiv \pi_2$ may be quite difficult, in some instances
the Wilf equivalence of two permutations follows quickly from observing that their diagrams are related by a transformation in the dihedral group of the square.

Let $D_4 = \{R_0,R_{90},R_{180},R_{270},r_{-1},r_{0},r_{1},r_{\infty}\}$, where $R_{\theta}$ is rotation counterclockwise by an angle of $\theta$ degrees and $r_m$ is reflection across a line of slope $m$. 
A couple of these rigid motions have easy descriptions in terms of the one-line notation for permutations.
If $\pi = a_1a_2 \ldots a_k$ then its {\em reversal} is $\pi^r =a_k\ldots a_2a_1 =r_{\infty}(\pi)$, and its {\em complement} is $\pi^c = k+1-a_1,\ k+1-a_2,\ \dots,\ k+1-a_k=r_0(\pi)$.

Note that for any $f \in D_4$, one has $\sig \in \Av_n(\pi)$ if and only if $f(\sig) \in \Av_n(f(\pi))$, and hence $\pi \equiv f(\pi)$.
For this reason, the equivalences induced by the dihedral action on a square are often referred to as the {\em trivial Wilf equivalences}. 

Call polytopes $P$ and $Q$ {\em unimodularly equivalent} if one can be taken into the other by an affine transformation whose linear part is representable by an $n \times n$ matrix with integer entries and determinant $\pm1$.  We will see in Propositions~\ref{prop: Pn equivalence} and~\ref{prop:dihedral} that certain trivial Wilf equivalences imply unimodular equivalence of the corresponding polytopes.

In subsequent sections, it will be helpful to describe classes of permutations in the following way:
Let $A = (a_{i,j})$ be a $k \times l$ matrix with entries in $\{0,\pm 1\}$.
We say that a permutation $\sig$ is {\em $A$-griddable} in $\RR^2$ 
if the diagram $\mathcal{C}$ of $\sig$ can be partitioned into rectangular regions $C_{i,j}$ using horizontal and vertical lines in such a way that
\[
	\mathcal{C} \cap C_{i,j} \text{ is } \begin{cases} 
		\text{ increasing } & \text{ if } a_{i,j} = 1, \\
		\text{ decreasing } & \text{ if } a_{i,j} = -1, \\
		\text{ empty } & \text{ if } a_{i,j} = 0.
		\end{cases}
\]
If $\mathcal{C} \cap C_{i,j}$ contains one element or no elements, it may be considered as either increasing or decreasing.
For example, if 
\[
	A = \begin{bmatrix}
			0 & 1 \\
			-1 & -1 \\ 
			0 & -1 
		\end{bmatrix},
\]
then $\sig=4261573$ is $A$-griddable, as demonstrated in Figure~\ref{fig:gridding}.
For a particular matrix $A$, the {\em grid class} of $A$ is the set of permutations that are $A$-griddable.
We will occasionally use grid classes to more conveniently describe the structure of permutations used as the vertices of our polytopes.

\begin{figure}
	\begin{tikzpicture}
		    \foreach \i in {(1,4),(2,2),(3,6),(4,1),(5,5),(6,7),(7,3)}
		    {
		    	\fill \i circle [radius=3pt];
		    }
		\draw (2.5,0.5) -- (2.5,7.5);
		\draw (0.5,1.5) -- (7.5,1.5);
		\draw (0.5,6.5) -- (7.5,6.5);
		\draw (0.5,0.5) -- (0.5,7.5) -- (7.5,7.5) -- (7.5,0.5) -- cycle;
	\end{tikzpicture}
	\caption{An $A$-gridding of $4261573$.}\label{fig:gridding}
\end{figure}
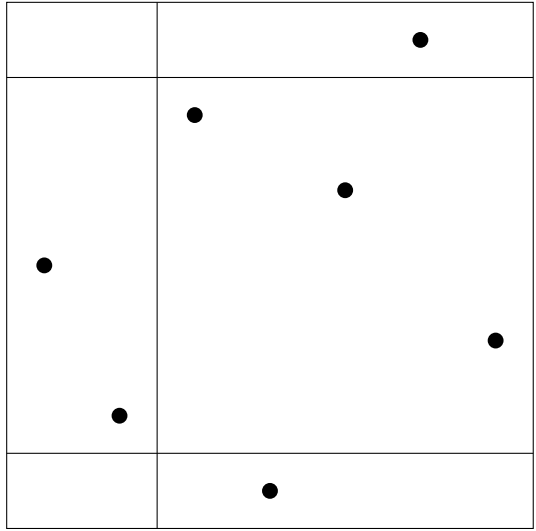


\subsection{Ehrhart Polynomials and Volume} 

For a lattice polytope $P \subseteq \RR^n$, consider the counting function $\ehr_P(m) := |mP \cap \ZZ^n|$, where $mP$ is the $m$-th dilate of $P$.
This function is a polynomial in $m$, although not obviously so; it is called the {\em Ehrhart polynomial} of $P$.
In particular, two well-known theorems due to Ehrhart~\cite{Ehrhart} and Stanley~\cite{StanleyDecompositions} imply that the {\em Ehrhart series} of $P$,
\[
	E_{P}(t) := 1 + \sum_{m\ge1} \ehr_P(m)t^m,
\]
may be written in the form
\[
	E_P(t)=\frac{\sum_{j=0}^d h_j^*t^j}{(1-t)^{\dim P+1}}
\]
for some nonnegative integers $h_0^*,\ldots,h_d^*$ with $h_0^*=1$, $h_d^* \neq 0$, and $d \leq \dim P$. 
	
We say the polynomial $h^*_P(t):=\sum_{j=0}^dh_j^*t^j$ is the {\em $h^*$-polynomial} of $P$ and the vector of coefficients, $h^*(P)$, is the \emph{$h^*$-vector} of $P$.
The $h^*$-vector of a lattice polytope $P$ is a fascinating invariant, and obtaining a general understanding of $h^*$-vectors of 
lattice polytopes and their geometric/combinatorial implications is currently of great interest.

A standard result of Ehrhart theory is that the leading coefficient of $\ehr_P(m)$ gives the volume of $P$.
We note, though that when a polytope $P \subseteq \RR^n$ is not full-dimensional, some extra care is needed when discussing volume.
Usual Euclidean volume would dictate that the volume of a polytope that is not full-dimensional is zero.
However, we are typically interested in the {\em relative volume}, that is, the volume of the polytope with respect to the lattice $(\aff P) \cap \ZZ^n$ where $\aff P$ is the affine subspace spanned by $P$.
When $P$ does have full dimension, the notions of volume and relative volume coincide.
Throughout this paper,  ``volume''   is understood to mean the relative volume.

The {\em normalized volume} of a lattice polytope $P \subseteq \RR^n$ is $\Vol P := (\dim P)!\vol(P)$, where $\vol(P)$ is the usual relative volume of $P$.
A lattice simplex $\Sigma \subseteq \RR^n $ with vertex set $V = \{v_0,\ldots, v_k\}$ is {\em unimodular with respect to the lattice $L$} if it has smallest possible relative volume with respect to $L$.
If $L$ is not specified, then it is assumed that $L = (\aff V) \cap \ZZ^n$.
Equivalently, $\Sigma$ is unimodular with respect to $L$ if the set of emanating vectors $\{v_1 - v_0, \ldots, v_k - v_0\}$ forms a $\ZZ$-basis of $L - v_0$. 
In particular, if $P$ is unimodular, then it has a normalized volume of $1$.
We refer to Section~5.4 of~\cite{BeckRobinsCCDed2} for a more thorough discussion of these details.


\section{Permutohedra}\label{sec:permutohedra}

The permutohedron has been generalized in multiple ways, including the permuto-associa\-hedron of Kapranov~\cite{Kapranov}, which was first realized as a polytope by Reiner and Ziegler~\cite{ReinerZiegler},
and the generalized permutohedra studied by Postnikov~\cite{Postnikov2009}.
Here, we study yet another generalization of the permutohedron by looking at $P_n$ from the perspective of pattern avoidance.

\begin{defn}
	Let $\Pi \subseteq \symm_n$ and define 
	\[ P_n(\Pi) := \conv\{(a_1,\dots,a_n)\ |\  a_1\dots a_n \in \Av_n(\Pi)\}\]
	to be the {\em $\Pi$-avoiding permutohedron}. 
	If $\Pi=\{\pi\}$ then we write $P_n(\pi)$ for $P_n(\Pi)$.
\end{defn}

Notice that if $\Pi = \emptyset$, then $P_n(\Pi) = P_n$
and each permutation is a vertex of $P_n$.  Since $P_n(\Pi)$ is obtained by taking a convex hull of a subset of these vertices, the elements of $\Av_n(\Pi)$ will also be vertices of $P_n(\Pi)$.
For example, if $\pi \in \symm_3$ then, as previously remarked,
 $|\Av_n(\pi)|=C_n$ where $C_n$ is the $n$th Catalan number, so  $P_n(\pi)$ has a Catalan number of vertices.

\begin{prop}\label{prop: Pn equivalence}
	If $\Pi \subseteq \symm$, then $P_n(f(\Pi))$ is unimodularly equivalent to $P_n(\Pi)$ for any $f \in \{R_0,R_{180},r_0,r_{\infty}\}$.
	So their face lattices, volumes, and Ehrhart series are all equal.
\end{prop}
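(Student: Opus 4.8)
The proposition claims that the four "trivial" symmetries $R_0, R_{180}, r_0, r_\infty$ — the identity, the $180^\circ$ rotation, and the two axis-reflections (complement and reversal) — each induce a unimodular equivalence between $P_n(\Pi)$ and $P_n(f(\Pi))$. The key observation is that each of these four maps of permutation diagrams is realized, at the level of the ambient space $\RR^n$ containing the vertices $(a_1,\dots,a_n)$, by an affine map with integer linear part of determinant $\pm 1$. The strategy is: (1) write down the ambient affine map $F$ for each $f$; (2) check that $F$ is unimodular (integer matrix, $\det = \pm 1$); (3) check that $F$ sends the vertex set of $P_n(\Pi)$ bijectively onto the vertex set of $P_n(f(\Pi))$; (4) conclude that $F$ carries one polytope onto the other, and invoke standard facts that unimodular equivalences preserve face lattices, relative volume, and Ehrhart series. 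Since $R_0$ is trivial, there are really only three maps to handle, and $R_{180} = r_0 \circ r_\infty$, so one could even reduce to the two generators $r_0$ and $r_\infty$.

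\textbf{Carrying it out.} For the complement $r_0$: if $\sig = a_1\cdots a_n$, then $\sig^c = (n+1-a_1,\dots,n+1-a_n)$, so the ambient map is $F_c(x_1,\dots,x_n) = (n+1-x_1,\dots,n+1-x_n)$, i.e.\ $x \mapsto (n+1)\mathbf{1} - x$. Its linear part is $-I_n$, which has determinant $(-1)^n = \pm 1$ and integer entries, so $F_c$ is a unimodular affine map. For the reversal $r_\infty$: $\sig^r = (a_n,\dots,a_1)$, so $F_r$ is the coordinate permutation $(x_1,\dots,x_n)\mapsto(x_n,\dots,x_1)$; its linear part is the antidiagonal permutation matrix, with determinant $\pm 1$ and integer entries. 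For $R_{180}$ one takes the composite $F_c \circ F_r$, again unimodular. Now, by the remark in the Preliminaries (``$\sig \in \Av_n(\pi)$ iff $f(\sig)\in\Av_n(f(\pi))$''), each $f\in\{R_0,R_{180},r_0,r_\infty\}$ restricts to a bijection $\Av_n(\Pi) \to \Av_n(f(\Pi))$; and under the identification of a permutation with its point in $\RR^n$, this bijection is exactly the restriction of $F_f$ to the vertex set of $P_n(\Pi)$. Hence $F_f$ maps the vertices of $P_n(\Pi)$ bijectively onto the vertices of $P_n(f(\Pi))$, and since an affine map sends a convex hull to the convex hull of the images, $F_f(P_n(\Pi)) = P_n(f(\Pi))$.

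\textbf{Concluding.} Finally I would note the standard consequences: an invertible affine map preserves the face lattice (affine maps send faces to faces and the inverse does likewise, so the lattices are isomorphic); a unimodular affine map preserves relative volume, since its linear part maps the lattice $(\aff P)\cap\ZZ^n$ isomorphically onto $(\aff F(P))\cap\ZZ^n$ (here one should check that $F$ does carry the relevant lattice onto the relevant lattice — this is immediate since $F$ has integer linear part with integer inverse); and preserving relative volume together with preserving the lattice means $|mP\cap\ZZ^n| = |mF(P)\cap\ZZ^n|$ for all $m$, so the Ehrhart series agree. These facts are collected in Section~5.4 of~\cite{BeckRobinsCCDed2}, so I would cite them rather than reprove them.

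\textbf{Main obstacle.} There is no deep obstacle here — the proof is essentially bookkeeping. The one point that requires a moment's care is worth flagging explicitly: the four maps in the statement are precisely those elements of $D_4$ that preserve the \emph{horizontal} direction up to sign, i.e.\ they permute and/or negate the \emph{values} of a permutation without mixing the value-axis with the position-axis; the other four elements of $D_4$ (the $90^\circ$ rotations and the diagonal reflections $r_{-1}, r_1$) correspond to inversion $\sig\mapsto\sig^{-1}$ and its composites, which swap positions with values and hence do \emph{not} act on $\RR^n$ by an affine map on the coordinates in any obvious way — this is exactly why the proposition is restricted to the four-element subgroup $\{R_0,R_{180},r_0,r_\infty\}$ rather than all of $D_4$. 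I would make sure the writeup makes clear why this restriction is the natural one.
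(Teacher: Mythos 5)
Your proof is correct and follows essentially the same route as the paper: realize reversal as a coordinate-permutation matrix, complement as $x \mapsto (n+1)\mathbf{1} - x$, and $R_{180}$ as their composite, then observe that each such unimodular affine map sends $\Av_n(\Pi)$ bijectively onto $\Av_n(f(\Pi))$ and hence carries one polytope onto the other. Your additional remark clarifying why the other four elements of $D_4$ (those involving inversion) do not act affinely on the vertex coordinates is a useful observation not spelled out in the paper's proof, though the paper does implicitly address it by giving a counterexample right after the proposition.
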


\begin{proof}
	For ease of notation, we prove this in the case that $\Pi = \{\pi\}$.
	The general demonstration is similar. Recall that $\pi^r=r_\infty(\pi)$ and $\pi^c=r_0(\pi)$.

	From the discussion above, $P_n(\pi^r)$ is the image of $P_n(\pi)$ under the map $f(v) = Av$, where $A = \begin{bmatrix} e_n & \cdots & e_1\end{bmatrix}$ 
	and the $e_i$ are the standard unit column vectors.
	Since $A$ is a permutation matrix, this is a unimodular transformation.
	
	Also, $P_n(\pi^c)$ is the image of $P_n(\pi)$ under the map
	\[
		g(x_1,\ldots,x_n) = (n+1 - x_1, \ldots, n+1-x_n) = (n+1,\ldots,n+1)-(x_1,\ldots,x_n),
	\]
	which is again clearly unimodular. 
	Finally, notice that $R_{180}(\pi)=f \circ g(\pi)$ and so $R_{180}$ gives rise to a unimodular equivalence as well.
\end{proof}

Notice that
\begin{itemize}
	\item two permutations $\pi$ and $\pi^{\prime}$ may be Wilf equivalent without $P_n(\pi)$ and $P_n(\pi^{\prime})$ being unimodularly equivalent.
		For example, $123$ and $132$ are Wilf equivalent, but $P_4(123)$ has $13$ facets whereas $P_4(132)$ has only $11$.
	\item two permutations $\pi$ and $\pi^{\prime}$ may even be trivially Wilf equivalent without $P_n(\pi)$ and $P_n(\pi^{\prime})$ being unimodularly equivalent.
		For example, $\pi = 1423$ and $\pi^{\prime} = 2431$ are related by a $90$-degree rotation, however $P_5(1423)$ has $48$ facets while $P_5(2431)$ only has $46$. 
\end{itemize}

Proposition~\ref{prop: Pn equivalence} allows us to choose $\Pi$ more efficiently; a summary of the choices of $\Pi \subseteq \symm_3$ leading to potentially distinct $P_n(\Pi)$, and the corresponding results, are given in Table~\ref{tab: summary}.
Certain entries in the table have no corresponding result or conjecture provided; this is because no clear structure of $P_n(\Pi)$ is apparent in these cases.
See Table~\ref{tab: Pn data} for experimental data, computed via LattE \cite{latte}, regarding these two polytopes for small $n$.

\begin{table}
\begin{tabular}{|c|c|} \hline
$\Pi$ & Relevant result(s) for $P_n(\Pi)$ \\ \hline
$\emptyset$ & $P_n(\Pi) = P_n$ \\
$\{123\}$ & -- \\
$\{132\}$ & -- \\
$\{123,132\}$ & Theorem~\ref{thm: 123 132} \\
$\{123, 231\}$ & -- \\
$\{123, 321\}$ & $P_n(\Pi) = \emptyset$ for $n \geq 5$ \\
$\{132, 213\}$ & Conjecture~\ref{conj: 132 213} \\
$\{132, 231\}$ & -- \\
$\{132,312\}$ & Proposition~\ref{prop: 132 312} \\
$\{123,132,213\}$ & -- \\
$\{123,132,231\}$ & Proposition~\ref{prop: 123 132 231} \\
$\{123,132,312\}$ & Proposition~\ref{prop: 123 132 312} \\
$\{123,231,312\}$ & Proposition~\ref{prop: 123 231 312} \\
$\{132, 213, 231\}$ & Proposition~\ref{prop: 132 213 231} \\
$\{123,132,213,231\}$ & Proposition~\ref{prop: line segments} \\
$\{123,132,231,312\}$ & Proposition~\ref{prop: line segments} \\
$\{132, 213, 231, 312\}$ & Proposition~\ref{prop: line segments} \\
$\{123, 132, 213, 231, 312\}$ & $P_n(\Pi) = \{(n,n-1,\dots,1)\}$ \\ \hline
\end{tabular}
\caption{The choices of $\Pi \subseteq \symm_3$ that result in unimodularly distinct $P_n(\Pi)$, and references to the results proven about them.}\label{tab: summary}
\end{table}

\begin{table}
\begin{tabu}{|c|c|c|c|c|} \hline
$\Pi$ & $n$ & $f_{n-2}$ & $\ehr_{P_n(\Pi)}(m)$ & $\Vol(P_n(\Pi))$ \\ \hline
$\{123\}$ & $3$ & $5$ & $1+\frac{5}{2}m + \frac{5}{2}m^2$ & $5$ \\
	& $4$ & $13$ & $1 + \frac{11}{3}m + 9m^2 + \frac{31}{3}m^3$ & $62$ \\
	& $5$ & $43$ & $1 + \frac{65}{12}m + {121}{8}m^2 + \frac{511}{12}m^3 + \frac{479}{8}m^4$ & $1437$ \\
	& $6$ & $215$ & $1 + \frac{71}{6}m + \frac{117}{4}m^2 + \frac{413}{6}m^3 + \frac{1019}{4}m^4 + \frac{1339}{3}m^5$ & $53560$ \\ \hline
$\{132\}$ & $3$ & $5$ & $1+\frac{5}{2}m + \frac{5}{2}m^2$ & $5$ \\
	& $4$ & $11$ & $1 + 4m + 9m^2 + 10m^3$ & $60$ \\
	& $5$ & $27$ & $1 + 6m + {37}{2}m^2 + 43m^3 + \frac{109}{2}m^4$ & $1308$ \\
	& $6$ & $84$ & $1 + \frac{521}{60}m + \frac{283}{8}m^2 + \frac{197}{2}m^3 + \frac{2089}{8}m^4 + \frac{22399}{60}m^5$ & $44798$ \\ \hline
\end{tabu}
\caption{Experimental data for $P_n(123)$ and $P_n(132)$ for $n=3,4,5,6$. We use the notation $f_{n-2}$ to denote the number of facets of the polytope.}\label{tab: Pn data}
\end{table}


\subsection{Avoiding Two Patterns in $\symm_3$}

We begin by noting that if $\Pi=\{123,321\}$ then $\Av_n(\Pi)=\emptyset$ for $n\ge5$.  This is because of the
 Erd\H{o}s-Szekeres theorem which states that any permutation in $\symm_{mn+1}$ contains either an increasing
 subsequence of length $m+1$ or a decreasing subsequence of length $n+1$.  The same is clearly true for any $\Pi$
 containing $\{123,321\}$.  So we do not need to consider polytopes for such avoidance classes.

The following result will be useful when considering $\Pi=\{132,312\}$ in both the permutohedron and Birkhoff polytope cases.  It follows easily from the proof of Proposition 5.2 in~\cite{DokosEtAl}.
\begin{lemma}
\label{lem:132,312}
The permutations in $\Av_n(132,312)$ are the permutations of $\symm_n$ in the grid class of the matrix
	\[
		A=\begin{bmatrix}  1\\ -1 \end{bmatrix}.
	\]
\hfill$\qed$
\end{lemma}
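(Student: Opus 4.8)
The plan is to characterize $\Av_n(132,312)$ directly and then recognize the answer as the grid class of $A = \begin{bmatrix} 1 \\ -1 \end{bmatrix}$. A permutation is $A$-griddable precisely when its diagram can be split by a single horizontal line into a lower block that is increasing and an upper block that is decreasing; equivalently, there is a value $k$ so that the entries $1, 2, \dots, k$ appear in increasing order of position and the entries $k+1, \dots, n$ appear in decreasing order of position. So the goal reduces to showing: $\sigma$ avoids both $132$ and $312$ if and only if $\sigma$ has this ``increasing-then-decreasing by value'' shape.

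First I would prove the easy direction. If $\sigma$ is $A$-griddable with threshold $k$, suppose $\sigma$ contained a $132$ or a $312$ pattern, say on positions $i < j < \ell$ with values $a < c < b$ playing the roles of the pattern entries where $b$ is the largest. In both patterns the largest value $b$ sits in the middle position $j$ and is flanked on the right by a smaller value $c$ at position $\ell$, with $c$ itself not the smallest. Since $b > c$, both $b$ and $c$ cannot lie in the increasing (small-value) block — if both were $\le k$ they would have to appear in increasing positional order, contradicting $j < \ell$ with $b$ before $c$. Hence $b > k$. But then the third value, whichever of $a$ or $c$ is the one at position $\ell$, together with $b$, forces a descent within... actually the cleanest argument is: the pair $(b \text{ at } j,\ c \text{ at }\ell)$ is a descent with $b > k$, so $c > k$ too (anything to the right of a large value in the decreasing block is large and smaller), meaning both are in the decreasing block and appear in decreasing order — consistent — but then the entry at the remaining position among $\{i,j,\ell\}$ with the smallest value must also be examined; running this bookkeeping shows one of the two forbidden relative orders is impossible. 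I'd write this out carefully by checking that in the decreasing block you get a decreasing subsequence and in the increasing block an increasing one, so neither $132$ nor $312$ (each of which has a value pattern that is neither monotone increasing nor decreasing) can be embedded using entries all from one block, and a mixed embedding is ruled out because every small-block entry precedes... no: small-block entries need not precede large-block entries positionally. This is exactly the subtlety, so the verification that no mixed $132$/$312$ occurs is where I'd be most careful.

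For the forward direction, assume $\sigma \in \Av_n(132,312)$ and let $k$ be defined by the position of $n$: I claim everything to the right of $n$ consists of the largest remaining values in decreasing order, and everything to the left, together with $n$ removed, recursively has the same structure. Concretely: if $n$ sits at position $p$, then for any two positions $j < \ell$ both greater than $p$, the triple $(p, j, \ell)$ has $n$ largest in the middle, so to avoid $132$ we need $\sigma_j > \sigma_\ell$, and to avoid $312$ we need $\sigma_j$ to not be the smallest — i.e. the suffix after $n$ is decreasing. Moreover any value to the right of $n$ must exceed every value to the left of $n$: if some $\sigma_j$ (with $j < p$) were larger than some $\sigma_\ell$ (with $\ell > p$), then... one checks $(j, p, \ell)$ or a nearby triple yields a forbidden pattern. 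Then I'd induct on the prefix before $n$. Assembling the inductive output gives the threshold decomposition: the values split at some level into a bottom set occurring left-to-right increasingly and a top set occurring decreasingly, which is precisely $A$-griddability.

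Finally I would note that since the paper says the lemma ``follows easily from the proof of Proposition 5.2 in~\cite{DokosEtAl},'' an alternative is simply to cite that proof and observe that the grid-class reformulation is a restatement of the structural description obtained there. The main obstacle in a self-contained argument is the case analysis ruling out \emph{mixed} occurrences of the patterns (entries drawn from both the increasing and decreasing blocks) in the easy direction, and the companion claim in the hard direction that no value right of $n$ can be smaller than a value left of $n$; both are short but require care to state the right triple of positions. Everything else is routine induction and pattern bookkeeping.
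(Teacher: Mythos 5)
Your characterization of the grid class of $A = \begin{bmatrix} 1 \\ -1 \end{bmatrix}$ is reversed. Under the paper's convention (compare the example in Figure~\ref{fig:gridding}), matrix row $1$ corresponds to the \emph{top} of the permutation diagram, so a permutation is $A$-griddable when a horizontal line splits its diagram into an \emph{upper} block that is increasing and a \emph{lower} block that is decreasing. You describe the opposite (lower increasing, upper decreasing), which is the grid class of $\begin{bmatrix} -1 \\ 1 \end{bmatrix}$ and characterizes the reversal class $\Av_n(213,231)$, not $\Av_n(132,312)$. As a sanity check: $132$ itself matches your description with threshold $k=2$, yet $132$ is one of the two patterns being avoided, while $231 \in \Av_3(132,312)$ fails your description.

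The orientation swap produces further errors in both directions of your argument. In the easy direction you assert that in both $132$ and $312$ the largest value occupies the middle position $j$; this is true for $132$ but false for $312$, where the largest entry is at the \emph{leftmost} position $i$. In the forward direction, your claim that any value to the right of $n$ must exceed every value to the left of $n$ is false for the actual class: for $\sigma = 23451 \in \Av_5(132,312)$, $n = 5$ sits at position $4$ and the lone entry to its right is $1$, which is smaller than everything preceding it. The correct statement is the reverse, and it follows from avoiding $132$: if $j < p < \ell$ with $\sigma_p = n$ and $\sigma_j < \sigma_\ell$, the triple at positions $(j,p,\ell)$ standardizes to $132$. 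With the orientation corrected (the suffix after $n$ is a decreasing run lying entirely below every earlier entry, and one inducts on the prefix before $n$), your outline would give a workable self-contained proof in the spirit of the structural description the paper obtains by citing Dokos et al.; as written, however, it argues about a different permutation class.
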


\begin{prop}\label{prop:firstclass}
	The polytope $P_n(132,312)$ is a rectangular parallelepiped ({\em parallelotope}). 
	Specifically, the polytope is contained in the hyperplane $\sum x_i = \binom{n+1}{2}$, and its facet-defining inequalities are
	\begin{eqnarray}
		\left| \sum_{i=1}^j (x_i - x_{j+1}) \right| \leq \binom{j+1}{2} \label{eq:pnfacets}
	\end{eqnarray}
	as $j$ ranges over $1,\ldots, n-1$.
\end{prop}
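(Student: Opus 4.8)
The plan is to use the grid-class description from Lemma~\ref{lem:132,312}: every $\sigma \in \Av_n(132,312)$ has its diagram split by a horizontal line into an increasing run below and a decreasing run above. Concretely, this means that for some subset $S \subseteq \{1,\dots,n\}$, the values in positions of $S$ are placed in increasing order and the values not in $S$ are placed in decreasing order; more precisely, $\sigma$ is determined by choosing which positions receive ``small'' values versus ``large'' values in a way compatible with a single threshold. First I would pin down this parametrization exactly — I expect that the permutations in $\Av_n(132,312)$ are in bijection with binary strings (or subsets), and that as $\sigma$ ranges over $\Av_n(132,312)$, the partial sums $\sum_{i=1}^j x_i$ take a controlled range of values for each fixed $j$. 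The key observation is that $\sum_{i=1}^j x_i$ depends only on \emph{which} $j$ values landed in the first $j$ positions, not on their internal order, and the grid structure forces these to be a contiguous-type selection so that $\sum_{i=1}^j x_i$ ranges over an interval of integers between its minimum (the $j$ smallest available, suitably interpreted) and maximum.

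Next I would translate this into the coordinates appearing in \eqref{eq:pnfacets}. Writing $y_j := \sum_{i=1}^j(x_i - x_{j+1}) = \sum_{i=1}^j x_i - j x_{j+1}$, I would compute, over all vertices $\sigma$, the exact set of values taken by each $y_j$, and show this set is precisely $\{-\binom{j+1}{2}, \dots, \binom{j+1}{2}\}$ or at least that its extremes are $\pm\binom{j+1}{2}$. Then I would argue that the map $\sigma \mapsto (y_1,\dots,y_{n-1})$ is affine, injective on vertices, and that its image is (the vertex set of) a box $\prod_{j=1}^{n-1}[-\binom{j+1}{2},\binom{j+1}{2}]$ — i.e.\ every sign pattern is achieved and these are the only vertices. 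Since the $y_j$ are $n-1$ independent affine functionals on the hyperplane $\sum x_i = \binom{n+1}{2}$ (which has dimension $n-1$), an affine bijection between $P_n(132,312)$ and a genuine $(n-1)$-dimensional box follows, giving the parallelotope claim and identifying \eqref{eq:pnfacets} as the facet inequalities.

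The main obstacle I anticipate is the combinatorial bookkeeping in the previous paragraph: showing that \emph{all} $2^{n-1}$ sign patterns of $(y_1,\dots,y_{n-1})$ actually arise from genuine elements of $\Av_n(132,312)$, and conversely that no $y_j$ ever exceeds $\binom{j+1}{2}$ in absolute value. For the bound, $|y_j| \le \binom{j+1}{2}$ should come from comparing $\sum_{i=1}^j x_i$ against $j$ copies of $x_{j+1}$ and using that the values are a permutation of $1,\dots,n$ — the worst cases being the first $j$ positions holding the $j$ largest or $j$ smallest values relative to $x_{j+1}$. For surjectivity onto all sign patterns, I would exhibit, for each choice of signs, an explicit griddable permutation realizing it: roughly, put large values early where we want $y_j > 0$ and small values early where we want $y_j < 0$, then check this selection is consistent with a single increasing/decreasing split — this is where the grid-class form of Lemma~\ref{lem:132,312} does the real work, since it guarantees exactly the permutations we need exist. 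Finally I would double-check the count: the number of vertices of an $(n-1)$-box is $2^{n-1}$, which should match $|\Av_n(132,312)|$, giving a clean consistency check that the vertex-to-box map is a bijection rather than merely a surjection.
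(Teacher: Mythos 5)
Your plan is correct and follows essentially the same route as the paper's proof: both rely on Lemma~\ref{lem:132,312}, both study the affine functionals $y_j := \sum_{i=1}^j(x_i - x_{j+1})$, and both exploit the count $|\Av_n(132,312)| = 2^{n-1}$ against the $2^{n-1}$ vertices of a combinatorial box.

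One imprecision in your third paragraph is worth fixing. The bound $|y_j| \le \binom{j+1}{2}$ does not follow merely from ``using that the values are a permutation of $1,\dots,n$'': for instance $\sigma = 3,1,2$ has $y_1 = 2 > 1 = \binom{2}{2}$. The avoidance-class structure is essential here, and in fact it gives something sharper than an inequality. By Lemma~\ref{lem:132,312}, for $\sigma = a_1\cdots a_n \in \Av_n(132,312)$ the first $j+1$ entries always form a contiguous interval of integers with $a_{j+1}$ at one of its two ends, so the multiset $\{a_i - a_{j+1} : 1\le i\le j\}$ is exactly $\{1,\dots,j\}$ or exactly $\{-1,\dots,-j\}$, and hence $y_j$ \emph{equals} $\binom{j+1}{2}$ or $-\binom{j+1}{2}$, never a value strictly in between. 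In other words, every avoidance-class permutation is already a corner of the box. Once you know this, the surjectivity-onto-all-sign-patterns step of your fourth paragraph is free from the count (an injection between finite sets of equal size is a bijection), so you need not construct a permutation for each sign pattern; this is exactly how the paper closes the argument, after first noting that the normals of the $y_j$ are pairwise orthogonal and orthogonal to $(1,\dots,1)$, which guarantees the inequalities carve out an honest $(n-1)$-dimensional parallelotope.
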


\begin{proof}
	Consider the polytope $P$ defined by the given inequalities and lying in the given hyperplane.
	Each inequality in \eqref{eq:pnfacets} gives a pair of parallel faces of $P$ because of the absolute value signs.
	It is also easy to check that the normal vectors are pairwise orthogonal and also orthogonal to the vector $(1,\ldots,1)$ which defines the hyperplane $\sum x_i = \binom{n+1}{2}$.
	Thus $P$ is an $(n-1)$-dimensional parallelotope.
	
	The polytope $P$ will have $2^{n-1}=|\Av_n(132,312)|$ vertices.  
	So to demonstrate that $P=P_n(132,312)$ it suffices to prove that every $\sig = a_1a_2\cdots a_n\in\Av_n(132,312)$ is a vertex of $P$. 
	It follows from Lemma~\ref{lem:132,312} that the elements of this avoidance class are characterized by the fact that for each $j = 1, \ldots, n-1$, 
	we have $a_{j+1}$ is either one greater than the largest previously-appearing entry or one less than the smallest previously-appearing entry.
	Note that if it is smaller, then $\sig$ satisfies $\sum_{i=1}^j (x_i - x_{j+1}) = \binom{j+1}{2}$, and if
	it is larger then $\sig$ satisfies $\sum_{i=1}^j (x_i - x_{j+1}) = -\binom{j+1}{2}$.
	These equalities hold because the summands are exactly the integers $1,\dots, j$ in the first case and $-1,\ldots,-j$ in the second.
	Since this is true for all $j$, $\sig$ is a vertex of $P$.
\end{proof}

\begin{cor}\label{cor:pnvolume}
	The volume of $P_n(132,312)$ is $(n-1)!$.
\end{cor}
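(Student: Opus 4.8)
The plan is to use Proposition~\ref{prop:firstclass}, which identifies $P_n(132,312)$ as the $(n-1)$-dimensional parallelotope cut out by the hyperplane $\sum x_i = \binom{n+1}{2}$ together with the inequalities \eqref{eq:pnfacets}. For a parallelotope, the (relative) volume is the volume of the fundamental parallelepiped spanned by the edge vectors emanating from any one vertex, measured with respect to the lattice $(\aff P)\cap\ZZ^n$. So first I would fix a convenient vertex, say $\sig = (1,2,\dots,n)$ (the identity, which avoids $132$ and $312$), and work out the $n-1$ edge vectors of $P_n(132,312)$ at that vertex.

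The key computation is then to identify those edge vectors. From the characterization in the proof of Proposition~\ref{prop:firstclass}, the neighbors of a vertex $\sig$ in the parallelotope correspond to toggling, for a single index $j$, whether $a_{j+1}$ is "one more than the current max" versus "one less than the current min." Starting from the identity and toggling at position $j$ produces the permutation obtained by moving the appropriate block, and the resulting edge vector $v_j$ is easy to write down explicitly (up to sign it records the difference between the two local choices at step $j$). I expect $\{v_1,\dots,v_{n-1}\}$ to be (a signed permutation of) a triangular-type family of vectors whose $\ZZ$-span is exactly the root lattice $\{x\in\ZZ^n : \sum x_i = 0\}$ shifted appropriately — equivalently, the lattice $(\aff P)\cap\ZZ^n$ translated to the origin. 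One then computes the relative volume as the index of the sublattice generated by the $v_j$ inside $(\aff P)\cap\ZZ^n$, and the normalized volume $\Vol P_n(132,312) = (n-1)!\,\vol P_n(132,312)$.

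Alternatively — and this may be cleaner — I would induct on $n$. Lemma~\ref{lem:132,312} shows that a permutation in $\Av_n(132,312)$ is built from one in $\Av_{n-1}(132,312)$ by appending either $n$ or a "shift-everything-up-and-prepend-$1$" move; geometrically this exhibits $P_n(132,312)$ as a prism (product of $P_{n-1}(132,312)$ with a segment) up to a unimodular change of coordinates, because the single facet inequality indexed by $j=n-1$ in \eqref{eq:pnfacets} is the one involving $x_n$ and the remaining $n-2$ inequalities do not involve $x_n$. A prism over a $(d-1)$-polytope of normalized volume $V$ with a primitive segment direction has normalized volume $d\cdot V$, giving $\Vol P_n = (n-1)\cdot \Vol P_{n-1}$, and the base case $P_2(132,312)$ is a unit segment with $\Vol = 1$. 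This yields $\Vol P_n(132,312) = (n-1)!$, hence $\vol P_n(132,312) = (n-1)!/(n-1)! = \ldots$ — wait, more carefully: $\Vol = (n-1)!\cdot\vol$ and we want to conclude $\vol = (n-1)!$, so actually I expect the statement to read that the \emph{relative volume} is $(n-1)!$ and the \emph{normalized volume} is therefore $(n-1)!\cdot(n-1)!$; I would double-check the normalization conventions against the Ehrhart data and state it consistently with whichever the corollary intends.

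The main obstacle is the lattice bookkeeping: verifying that the edge vectors $v_1,\dots,v_{n-1}$ generate \emph{all} of $(\aff P)\cap\ZZ^n$ (so that the index computation gives the honest relative volume and not a multiple of it), or equivalently that the segment directions appearing in the prism decomposition are primitive lattice vectors. This is where one must be careful, since $P_n(132,312)$ is not full-dimensional and the relevant lattice is the rank-$(n-1)$ lattice $\{x\in\ZZ^n:\sum x_i = \binom{n+1}{2}\}$ translated to the origin. Once primitivity is checked, the rest is the routine volume-of-a-parallelepiped or prism-recursion argument.
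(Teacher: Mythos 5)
Your first-approach framework is sound (relative volume equals the index of the sublattice spanned by the edge vectors inside $(\aff P - p_0)\cap\ZZ^n$), but your specific expectation about those edges is backwards: you predict that the $v_j$ will "$\ZZ$-span exactly the root lattice," which would force the index, hence the relative volume, to be $1$. In fact the edge vectors $v_j = \sig_j - \sig = (\underbrace{1,\dots,1}_{j-1},\,1-j,\,0,\dots,0)$ generate a proper sublattice of index $(n-1)!$. The paper sidesteps the index computation entirely: the $v_j$ are pairwise orthogonal with Euclidean length $\sqrt{j(j-1)}$, so the Euclidean volume of the box is $\prod_{j=2}^n\sqrt{j(j-1)} = \sqrt{n!\,(n-1)!}$, and dividing by the covolume $\sqrt{\det A^TA}=\sqrt{n}$ of the ambient lattice (with $A$ having columns $e_1 - e_j$) gives $(n-1)!$ directly. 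That is the same "pick a vertex, look at the edges" strategy you propose, just executed via the Gram determinant rather than a lattice index.

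Your prism induction has a genuine gap, and it is not a matter of conventions. The prism recursion $\Vol P_n = (n-1)\cdot\Vol P_{n-1}$ requires the extruded segment to have lattice height $1$ in the quotient $L/L_B$, where $L = \{x\in\ZZ^n:\sum x_i = 0\}$ and $L_B$ is the direction lattice of the base facet. Primitivity of the edge vector $v_n = (1,\dots,1,1-n)$ in $\ZZ^n$ is not the relevant condition. Here $L_B = L\cap\{x_n=0\}$, the quotient map $L\to L/L_B\cong\ZZ$ is $x\mapsto x_n$, and the image of $v_n$ is $1-n$, so the lattice height is $n-1$, not $1$. The correct recursion is therefore $\vol(P_n) = (n-1)\cdot\vol(P_{n-1})$ for the \emph{relative} volume, or equivalently $\Vol(P_n) = (n-1)^2\Vol(P_{n-1})$ for the normalized volume; your formula $\Vol P_n = (n-1)!$ and the resulting $\vol P_n = 1$ are wrong by exactly this missing factor. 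This is the cause of the sign-off confusion at the end of your write-up: once the height $n-1$ is inserted, the recursion does give $\vol P_n = (n-1)!$ with no ambiguity about conventions.

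In short: both of your approaches can be made to work, and the prism induction is a legitimate alternative to the paper's Gram-determinant argument, but in each case the essential content you have not supplied is precisely the lattice bookkeeping you flag at the end — and your tentative guesses about how that bookkeeping will come out (full-lattice span in approach 1, height-$1$ segment in approach 2) are both incorrect. The arithmetic hidden in the $\sqrt{n}$ (or, in your language, in the height $n-1$) is the proof.
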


\begin{proof}
	By the previous proposition, the volume of $P = P_n(132,312)$ may be computed directly by choosing a base vertex, taking the product of the lengths of the edges incident to it, and then dividing by an appropriate factor to account for the relative volume.
	For the scaling factor, it is well-known that for a (measurable) subset $S \subseteq \RR^m$ and a linear function $f: \RR^m \to \RR^n$, with $m \leq n$,
	\[
		\vol(f(S)) = \sqrt{\det A^TA}\vol(S),
	\]
	where $A$ is the matrix for $f$ and volume is taken with respect to the usual Euclidean measure.
	In our case, a $\ZZ$-basis for $\aff P \cap \ZZ^n$ is $e_1 - e_j$ for $j = 2,\ldots,n$, so these vectors form the columns of $A$.
	It is straightforward to check that $A^TA = J_{n-1} + I_{n-1}$ where $J_{n-1}$ is the $(n-1)\times (n-1)$ matrix with every entry $1$.
	Furthermore,   one easily sees that $ J_{n-1} + I_{n-1}$ has one eigenvalue equal to $n$ (with corresponding eigenspace spanned by the all-ones vector) and the rest equal to $1$ (with corresponding eigenspace the subspace of vectors with coordinate sum zero).  Thus $\det A^TA =n$.
So to find the relative volume of $P$, we must divide the usual $(n-1)$-dimensional volume of $P$ by $\sqrt{n}$.

	Now, a convenient choice of base vertex is the permutation $\sig = 12\cdots n$.
	Using the hyperplane description of the previous result, this vertex is adjacent to the permutations 
$\sig_j = 2, \cdots, j, 1, j+1,\cdots, n$ for each $j = 2,\ldots, n$.
	It is straightforward to compute that $|\sig_j - \sig| = \sqrt{j(j-1)}$, so taking the product of these lengths and then dividing by $\sqrt{n}$ yields $\vol(P) = (n-1)!$ as desired.
\end{proof}

\begin{remark}
	We would like to note a connection between permutations avoiding $\{132,312\}$ and the world of polytopes.
	The permutations of $\Av_n(132,312)$ can be considered as elements of a type-$A$ Coxeter group.
	Thought of in this way, the elements of $\Av_n(132,312)$ are an example of \emph{$c$-singletons} (where $c=s_1s_2s_3$), that is, their inverses form vertices of both the permutohedron and Loday's realization of the associahedron; see \cite{Loday, HohlwegSingletons}.
	It would be interesting to define pattern-avoiding polytopes for other Coxeter groups and see if there is any relationship with the corresponding $c$-singletons.
\end{remark}

Postnikov~\cite{Postnikov2009} defined generalized permutohedra and showed that they encompass associahedra, cyclohedra, Stanley-Pitman polytopes, and graphical zonotopes.  So one could ask if $P_n(\Pi)$ is always a generalized permutohedron, since we would then immediately know its volume and, in some cases, its Ehrhart polynomial.  However we will show that this is not the case for $\Pi=\{132,312\}$.  To do this, we need a few more tools.

A \emph{fan} in $\RR^n$ consists of a set of polyhedral cones $\FF = \{C_{\alpha}\}$ in $\RR^n$, each containing $0$, such that 
\begin{itemize}
	\item if $C_{\alpha} \in \FF$ and $C_{\beta}$ is a face of $C_{\alpha}$, then $C_{\beta} \in \FF$, and
	\item for any $\alpha$ and $\beta$, $C_{\alpha} \cap C_{\beta}$ is a face of both $C_{\alpha}$ and $C_{\beta}$. 
\end{itemize}
Using the notation 
\[
	|\FF| := \bigcup_{F \in \FF} F,
\]
we say a fan $\FF^{\prime}$ {\em refines} $\FF$ if $|\FF^{\prime}| = |\FF|$ and if each cone in $\FF^{\prime}$ is contained in a cone in $\FF$.
We note that the  literature also uses the notation $\bigcup \FF$ for $|\FF|$.

Let $w \in \RR^n$ and let $P \subseteq \RR^n$ be any polytope.
Define
\[
	\face_w(P) := \{ u \in P\ |\ w^Tu \geq w^Tv \text{ for all } v \in P\}.
\]
In other words, $\face_w(P)$ is the face of $P$ for which the linear form defined by $w$ is maximized.
If $F$ is a face of a polytope $P$, the {\em normal cone} of $F$ at $P$ is
\[
	N_P(F) := \{w \in \RR^n\ |\ \face_w(P) = F\}.
\]
In particular, if $F$ is a facet of $P$, then $N_P(F)$ is a ray. 
The collection of all $N_P(F)$, ranging over all faces of $P$, is the {\em normal fan} of the polytope, and is denoted $N(P)$.

In our case, the inequalities of $\eqref{eq:pnfacets}$ provide the rays of the normal fan for $P_n(132,312)$.
We will compare this normal fan with a certain other fan, defined in the following way.
The {\em braid arrangement} in $\RR^n/(1,\ldots,1)\RR$ is the set of hyperplanes $\{x_i = x_j\}_{1 \leq i < j \leq n}$.
These hyperplanes partition the space into the {\em Weyl chambers}
\[
	C_{\sig} := \{(x_1,\dots,x_n) \in \RR^n \mid x_{\sig(1)} \leq \cdots \leq x_{\sig(n)}\},
\]
where $\sig \in \symm_n$.
The collection of these chambers and their lower-dimensional faces is the {\em braid arrangement fan}.
The following result of Postnikov, Reiner, and Williams, allows us to see that $P_n(132,312)$ does not fall into the class of generalized permutohedra.

\begin{prop}[{\cite[Proposition 3.2]{PostnikovEtAl}}]
	A polytope $P$ in $\RR^n$ is a generalized permutohedron if and only if its normal fan, reduced by $(1,\ldots,1)\RR$, is refined by the braid arrangement fan. \qed
\end{prop}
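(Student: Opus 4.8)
The plan is to take the definition of a generalized permutohedron in the sense of Postnikov~\cite{Postnikov2009}: a polytope of the form
\[
  P \;=\; \Bigl\{\, x \in \RR^n \ \Big|\ \textstyle\sum_{i=1}^n x_i = z_{[n]}, \ \ \sum_{i\in I} x_i \le z_I \text{ for all } \emptyset \ne I \subsetneq [n] \,\Bigr\}
\]
for some reals $z_I$, where $[n]=\{1,\dots,n\}$ (and one may always take the family $\{z_I\}$ submodular by replacing $z_I$ with $\max_{x\in P}\sum_{i\in I}x_i$), and then to prove both implications. Observe first that any such $P$ lies in the hyperplane $\sum x_i = z_{[n]}$, so $(1,\dots,1)$ belongs to the lineality space of $N(P)$ and ``reducing $N(P)$ by $(1,\dots,1)\RR$'' is meaningful; conversely, since the braid arrangement fan has lineality space exactly $(1,\dots,1)\RR$, the hypothesis that it refines the reduced normal fan already forces $(1,\dots,1)$ into the lineality space of $N(P)$, hence forces $P$ into some hyperplane $\sum x_i = c$. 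So throughout we may assume $P$ lies in such a hyperplane.

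For the forward implication I would run the greedy algorithm. Fix $w$ in the relative interior of a Weyl chamber $C_\sigma$, so that $w_{\sigma(1)} < \cdots < w_{\sigma(n)}$. Abel summation of $w^T x$ in the order $\sigma(1),\dots,\sigma(n)$ shows that maximizing $w^Tx$ over $P$ amounts to making each partial sum $x_{\sigma(1)} + \cdots + x_{\sigma(k)}$ as small as the constraints allow, i.e.\ to imposing the equalities $\sum_{i\in I_k} x_i = z_{I_k}$ for the chain $I_k := \{\sigma(k+1),\dots,\sigma(n)\}$, $k=0,\dots,n-1$. These $n$ equalities determine a single point $v(\sigma)$; submodularity of $\{z_I\}$ guarantees $v(\sigma)\in P$, and genericity of $w$ makes $v(\sigma)$ the unique maximizer, hence a vertex. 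Since $v(\sigma)$ depends only on the chamber $C_\sigma$, the map $w\mapsto \face_w(P)$ is constant on the relative interior of each $C_\sigma$, so every normal cone of $P$ is a union of faces of Weyl chambers. Therefore every cone of $N(P)$ is a union of cones of the braid arrangement fan, which is exactly the statement that the braid arrangement fan refines $N(P)$.

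For the reverse implication, suppose the braid arrangement fan refines $N(P)$; as noted, $P$ lies in a hyperplane $\sum x_i = c$. A refinement of fans cannot destroy rays, so every ray of $N(P)$ is a ray of the braid arrangement fan, and modulo $(1,\dots,1)\RR$ those rays are exactly the directions $\sum_{i\in I} e_i$, $\emptyset \ne I \subsetneq [n]$. The normal cone $N_P(F)$ of a facet $F$ is a ray of $N(P)$, hence equals $\RR_{\ge 0}\sum_{i\in I}e_i$ for some such $I$; since $\face_w$ denotes the face on which $w$ is maximized, the corresponding facet-defining inequality of $P$ has the form $\sum_{i\in I}x_i \le z_I$. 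Thus $P$ is cut out by $\sum x_i = c$ together with inequalities $\sum_{i\in I}x_i \le z_I$ (putting $z_I := \max_{x\in P}\sum_{i\in I}x_i$ for any proper nonempty $I$ not arising as a facet), which is precisely the defining form of a generalized permutohedron. An alternative route here runs through edge directions: the walls of $N(P)$ are exactly the cones $N_P(F)$ for edges $F$, and the hypothesis says no such wall meets the relative interior of a Weyl chamber; one checks this forces each $N_P(F)$ to lie in a braid hyperplane $\{x_i=x_j\}$, hence each edge $F$ to be parallel to $e_i-e_j$, and a polytope all of whose edges are parallel to differences of standard basis vectors is a generalized permutohedron.

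The step I expect to be the main obstacle is the feasibility assertion $v(\sigma)\in P$ in the forward implication: this is exactly the polymatroid exchange (greedy-optimality) argument, and it is where submodularity of $\{z_I\}$ is genuinely used. I would isolate it as a lemma and cite~\cite{Postnikov2009}. Everything else reduces to bookkeeping in the quotient $\RR^n/(1,\dots,1)\RR$ and two elementary facts of convex geometry: that a refinement of fans retains all rays of the coarser fan, and that the only hyperplanes through the origin that are unions of faces of the braid arrangement fan are the braid hyperplanes $\{x_i=x_j\}$.
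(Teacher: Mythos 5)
The paper itself offers no proof of this statement: it is quoted with a terminal $\qed$ directly from Postnikov--Reiner--Williams, so there is no argument in the paper to compare yours against. I can only assess the sketch on its own merits.

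Your forward direction (greedy over each Weyl chamber) is the standard argument and is fine, \emph{provided} your working definition of generalized permutohedron actually requires the $\{z_I\}$ to be submodular. But the parenthetical claim that one ``may always take the family $\{z_I\}$ submodular by replacing $z_I$ with $\max_{x\in P}\sum_{i\in I}x_i$'' is false: tightening the inequalities to support-function values does not produce a submodular family for an arbitrary polytope lying in $\sum x_i = c$. Submodularity of $I\mapsto\max_{x\in P}\sum_{i\in I}x_i$ is not a cosmetic normalization; it is exactly equivalent to being a generalized permutohedron, and is the content you are trying to prove. A concrete counterexample: in $\RR^4$ with $\sum x_i=0$, take the rectangular box whose facet normals are $\pm(e_1+e_2)$, $\pm(e_1+e_3)$, $\pm(e_1+e_4)$ --- three mutually orthogonal directions after reduction by $(1,1,1,1)\RR$. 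Every facet-defining inequality has the form $\sum_{i\in I}x_i\le z_I$, yet the points $(-3,-2,1,4)$ and $(-4,-1,2,3)$ lie in the interior of the same Weyl chamber $x_1\le x_2\le x_3\le x_4$ but give opposite signs to $(e_1-e_2-e_3+e_4)^Tx$, so the braid fan does not refine the normal fan of the box. The box is \emph{not} a generalized permutohedron even though it is cut out by inequalities of the ``right'' form.

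The same example exposes the gap in your reverse implication as primarily stated. You correctly deduce that every facet normal of $P$ is, modulo $(1,\dots,1)$, a $\{0,1\}$-vector, so $P$ is cut out by $\sum x_i=c$ together with inequalities $\sum_{i\in I}x_i\le z_I$; but you then assert this is ``precisely the defining form of a generalized permutohedron.'' It is necessary, not sufficient: you still have to prove that the resulting $z_I$'s are submodular (or, equivalently, that $P$ is a deformation of the permutohedron), and that requires using the \emph{full} refinement hypothesis, not just the statement about rays. Your ``alternative route'' --- the refinement hypothesis forces every wall of $N(P)$ into a braid hyperplane $\{x_i=x_j\}$, hence every edge of $P$ is parallel to some $e_i-e_j$, hence $P$ is a generalized permutohedron --- is in fact the sound argument and should be the main one rather than a footnote. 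Note, though, that promoting it replaces one nontrivial equivalence with another: that ``all edges parallel to $e_i-e_j$'' characterizes generalized permutohedra is itself one of Postnikov's equivalences and would need to be proved or cited explicitly.
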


Using the hyperplane description from Proposition~\ref{prop:firstclass}, we can see immediately that the rays of $N(P_n(132,312))$ are not all rays of the braid arrangement fan.
Thus, the braid arrangement fan cannot be a refinement of $N(P_n(132,312))$.
See Figure~\ref{fig:braids} for an example.

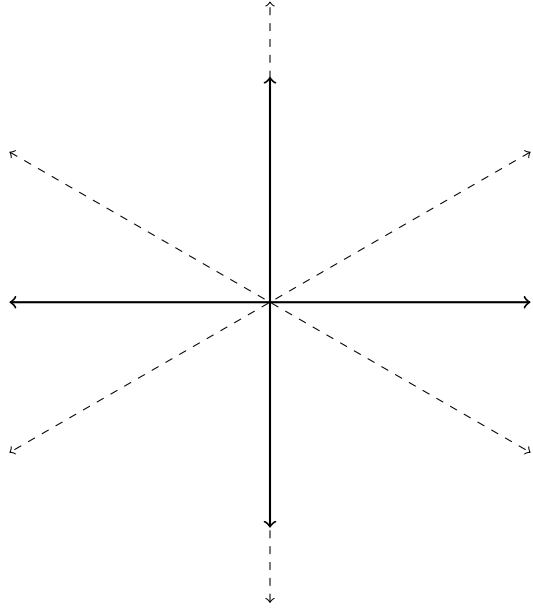
\begin{figure}
	\begin{tikzpicture}
	\draw[dashed, <->] (0,-4) -- (0,4);
	\draw[dashed, <->] (-3.464,2) -- (3.464,-2);
	\draw[dashed, <->] (3.464,2) -- (-3.464,-2);
	
	\draw[thick,<->] (0,-3) -- (0,3);
	\draw[thick,<->] (-3.464,0) -- (3.464,0);
	
	\end{tikzpicture}
	\caption{Viewed from $(1,1,1)$, the rays $N(P_3(132,312))$ are solid, while the rays of the braid arrangement fan are dashed.}\label{fig:braids}
\end{figure}

The Ehrhart polynomial of $P_n$ is known to be $\sum_{i=0}^{n-1} F_i m^i$, where $F_i$ is the number of forests with $i$ edges on vertex set $\{1,2,\dots,n\}$ (see Exercise 4.64(a) in~\cite{StanleyVol1Ed2}).
The technique in this exercise can also be used to find the Ehrhart polynomial of $P_n(\{132,312\})$.
Our first step in this direction will use the following result, due to Stanley.
	
\begin{thm}[{\cite[Theorem 2.2]{StanleyZonotope}}]
\label{zonotope}
	Suppose $P$ is a lattice zonotope, that is, $P$ can be written in the form
	\[
		P = \{a_1v_1 + \ldots + a_kv_k\ |\ 0 \leq a_i \leq 1\},
	\]
	where each $v_i$ belongs to $\ZZ^n$.
	The Ehrhart polynomial of $P$ is
	\begin{equation} \label{g(X)}
		\ehr_P(m) = \sum_X g(X)m^{|X|}
	\end{equation}
	where the sum ranges over all linearly independent subsets $X$ of $\{v_1,\ldots, v_k\}$ 
	and where $g(X)$ is the greatest common divisor of all full minors of the matrix whose columns are the elements of $X$. \qed
\end{thm}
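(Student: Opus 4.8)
The approach I would take is the classical one: realize the zonotope as a \emph{paving} by half-open parallelepipeds, one for each linearly independent subset of the generators, and then count lattice points in each dilated piece separately. Write $P = Z(v_1,\dots,v_k)$ for the zonotope with generators $v_1,\dots,v_k$. Fixing a linear functional $u$ generic for the configuration $\{v_1,\dots,v_k\}$, Shephard's theorem on zonotopal pavings produces a \emph{disjoint} decomposition
\[
  Z(v_1,\dots,v_k) \;=\; \bigsqcup_{X} \bigl(\tau_X + \Pi_X\bigr),
\]
where $X$ ranges over all linearly independent subsets of $\{v_1,\dots,v_k\}$, $\Pi_X$ is a parallelepiped spanned by $\{v_i : i \in X\}$ with a consistent choice of open and closed facets (determined by the sign of $u$ on each $v_i$), and each translation vector $\tau_X$ is a sum of some of the $v_i$ not in $X$; in particular $\tau_X \in \ZZ^n$.

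Granting this decomposition, the theorem falls out of a short computation. Dilation by $m$ respects the paving, so $mP = \bigsqcup_X (m\tau_X + m\Pi_X)$, and since each $\tau_X$ is integral, translation by $m\tau_X$ is a bijection of $\ZZ^n$; hence $\ehr_P(m) = \sum_X |m\Pi_X \cap \ZZ^n|$. Set $r = |X|$ and let $\Lambda_X = \bigl(\mathrm{span}_\RR\{v_i : i\in X\}\bigr) \cap \ZZ^n$, a lattice of rank $r$. Each half-open parallelepiped $\Pi_X$ is a fundamental domain for the sublattice $\sum_{i\in X}\ZZ v_i \subseteq \Lambda_X$, hence contains exactly $[\Lambda_X : \sum_{i\in X}\ZZ v_i]$ points of $\ZZ^n$; and $m\Pi_X$ is tiled by the $m^r$ lattice translates $\sum_{i\in X} b_i v_i + \Pi_X$ with $0 \le b_i \le m-1$, so $|m\Pi_X \cap \ZZ^n| = m^r\,[\Lambda_X : \sum_{i\in X}\ZZ v_i]$. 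Finally, putting the matrix with columns $\{v_i : i\in X\}$ into Smith normal form identifies this index with the product of its elementary divisors, which is exactly the greatest common divisor of its full (maximal) minors, namely $g(X)$. (For $X = \emptyset$ all of these quantities equal $1$, giving the constant term $1$ required by Ehrhart's theorem; and the top-degree terms reproduce the classical formula expressing the volume of a zonotope as a sum over its bases.) Summing over $X$ gives $\ehr_P(m) = \sum_X g(X)\, m^{|X|}$.

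The genuinely delicate step -- and the one where I expect the real work to lie -- is the existence of the half-open paving as an \emph{exact} disjoint decomposition. When $k$ exceeds the rank of the configuration there are linear dependencies among subcollections of the $v_i$, and one must verify that the bookkeeping of open versus closed facets makes the pieces fit together with neither overlaps nor gaps. The standard remedy is a perturbation argument: move $v_1,\dots,v_k$ to a configuration in general position, where the paving is immediate, and then send the perturbation to $0$, noting that the choice of open facets was arranged using the generic functional $u$ precisely so that disjointness survives the limit. An alternative that sidesteps pavings entirely is a deletion--contraction induction on $k$: one checks that both $\ehr_{Z(v_1,\dots,v_k)}$ and $\sum_X g(X)m^{|X|}$ obey the same recursion relating the configuration $v_1,\dots,v_k$ to $v_1,\dots,v_{k-1}$ and to its image in $\RR^n/\RR v_k$, with the empty configuration as the base case -- essentially the Tutte-polynomial proof. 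I would present the paving argument for geometric transparency and keep the inductive version as a cross-check.
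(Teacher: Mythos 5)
The paper offers no proof of this statement: it is quoted verbatim from Stanley~\cite{StanleyZonotope} (Theorem~2.2 there), with the $\qed$ symbol placed immediately after the statement to signal that it is being used as a black box. There is therefore no in-paper argument to compare against. Your proposed proof is correct and is, in substance, Stanley's original argument: decompose the zonotope into half-open parallelepipeds $\tau_X + \Pi_X$ indexed by linearly independent subsets $X$ of the generators (the paving whose existence you attribute to Shephard), count lattice points in the $m$th dilate of each piece as $m^{|X|}$ times the index $[\Lambda_X : \sum_{i\in X}\ZZ v_i]$, and identify that index with the gcd of the maximal minors via Smith normal form. The one step you flag as delicate, the exactness of the half-open paving, is indeed where the care lies, and either of the two remedies you sketch (perturbation to general position, or deletion--contraction induction) is a legitimate way to close it. In short: the proposal is sound and mirrors the standard proof; the paper simply delegates it to the cited reference.
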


To state the next result elegantly we define, for nonnegative integers $n$ and $k$, the {\em falling factorial}
\[
n\da_k = n(n-1)\dots(n-k+1).
\]

\begin{prop}\label{prop:ehrhartpoly}\label{prop: 132 312}
	The polytope $P = P_n(132,312)$ has Ehrhart polynomial
	\[
		\ehr_P(m) = \sum_{k=0}^{n-1} (n-1)\da_k m^k.
	\]
\end{prop}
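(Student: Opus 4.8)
The plan is to realize $P = P_n(132,312)$ as a lattice zonotope and apply Theorem~\ref{zonotope}. From the proof of Proposition~\ref{prop:firstclass} we know that $\sig = 12\cdots n$ is a vertex of $P$, and that the edges of $P$ emanating from it go to the permutations $\sig_j = 2,3,\dots,j,1,j+1,\dots,n$ for $j = 2,\dots,n$. Since $P$ is a parallelotope, it is precisely the translate by $\sig$ of the zonotope generated by the edge vectors $v_j := \sig_j - \sig$. Translation by an integer vector does not change the Ehrhart polynomial, so $\ehr_P(m) = \ehr_Z(m)$ where $Z = \{\sum_{j=2}^n a_j v_j \mid 0 \le a_j \le 1\}$. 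First I would write down the $v_j$ explicitly: $\sig_j - \sig$ has entries $v_j = (1,1,\dots,1,-(j-1),0,\dots,0)$, with the $1$'s in positions $1,\dots,j-1$ and the entry $-(j-1)$ in position $j$. (Equivalently $v_j = \sum_{i=1}^{j-1}(e_i - e_j)$.) Note all $n-1$ of these are linearly independent, consistent with $\dim P = n-1$.

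Next I would apply the formula \eqref{g(X)}: $\ehr_Z(m) = \sum_X g(X) m^{|X|}$, the sum over linearly independent subsets $X \subseteq \{v_2,\dots,v_n\}$. Since the full set is already independent, every subset is independent, so the coefficient of $m^k$ is $\sum_{|X|=k} g(X)$, summing over all $k$-element subsets of $\{v_2,\dots,v_n\}$. The heart of the argument is therefore the computation of $g(X)$ — the gcd of the maximal minors of the matrix with columns the elements of $X$. I expect the key lemma to be: \emph{for any subset $X \subseteq \{v_2,\dots,v_n\}$, $g(X) = 1$.} Granting this, the coefficient of $m^k$ would be the number of $k$-subsets of an $(n-1)$-set, i.e.\ $\binom{n-1}{k}$, giving $\ehr_P(m) = \sum_k \binom{n-1}{k} m^k = (1+m)^{n-1}$. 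But the claimed answer is $\sum_k (n-1)\da_k\, m^k$, which is \emph{not} $(1+m)^{n-1}$ (e.g.\ the $m^{n-1}$ coefficient is $(n-1)!$, matching Corollary~\ref{cor:pnvolume}, whereas $\binom{n-1}{n-1}=1$). So $g(X)$ is emphatically \emph{not} always $1$, and the real content is computing these gcds correctly.

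So instead, the main step I would carry out is a careful determinant computation. Index the generators so that $X = \{v_{j_1},\dots,v_{j_k}\}$ with $2 \le j_1 < \cdots < j_k \le n$. A maximal minor of the $n \times k$ matrix of columns $v_{j_1},\dots,v_{j_k}$ is obtained by selecting $k$ rows; I would show that, up to sign, these minors are products of the form $\prod_\ell (j_{\ell}-1)$ over some sub-collection, and more precisely that $g(X) = \prod_{\ell=1}^{k-1} (j_{\ell} - 1)$ — the product over all but the largest index (the largest index contributes a factor that can be "cancelled" by choosing rows cleverly, because for the top generator $v_{j_k}$ one can use a row $i < j_{k-1}$ where it has entry $1$). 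Then the coefficient of $m^k$ in $\ehr_P(m)$ is
\[
\sum_{2 \le j_1 < \cdots < j_k \le n} \ \prod_{\ell=1}^{k-1}(j_\ell - 1)
\;=\; \sum_{1 \le i_1 < \cdots < i_{k-1} \le n-1}\ i_1 i_2 \cdots i_{k-1} \cdot (\text{number of valid top indices}),
\]
and a short combinatorial identity (essentially $\sum e_{k-1}$ of $\{1,\dots,n-1\}$ weighted appropriately, or an induction on $n$) should collapse this to $(n-1)\da_k$. The main obstacle is pinning down the gcd $g(X)$: one must verify both that $\prod_{\ell<k}(j_\ell-1)$ divides every maximal minor and that this value is actually achieved by some minor, which requires a judicious choice of $k$ rows and a cofactor expansion. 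As a sanity check, for $k = n-1$ this gives $g = \prod_{\ell=1}^{n-2}(j_\ell - 1) = 1\cdot 2 \cdots (n-2) = (n-2)!$ with the unique subset $\{v_2,\dots,v_n\}$; but the normalized volume should be $(n-1)!$, so in fact $g(\{v_2,\dots,v_n\}) = (n-1)!/1$... this discrepancy tells me the correct formula is likely $g(X) = \prod_{\ell=1}^{k}(j_\ell - 1) / (\text{something})$ or that I have the edge vectors off by a reindexing; resolving exactly which product of the $(j_\ell - 1)$ appears is precisely the computation I would need to do with care, cross-checking against $\ehr_P(1) = 2^{n-1}$ and the volume $(n-1)!$.
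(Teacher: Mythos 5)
Your high-level strategy coincides with the paper's: realize $P$ (up to lattice translation) as a zonotope generated by the edge vectors at a chosen vertex and invoke Stanley's formula $\ehr_P(m)=\sum_X g(X)m^{|X|}$ from Theorem~\ref{zonotope}. Where you part ways is that the paper first performs a unimodular change of basis, replacing the generators $v_j=\sum_{i=1}^{j-1}(e_i-e_j)$ by the ``staircase'' vectors $w_j=\sum_{i=1}^{j+1}(i-1)e_i$. That change of basis is not cosmetic: it makes the matrices of the subsets $X$ essentially upper-triangular with a controlled bottom row, which is what lets the paper set up and prove the recurrence $g(n+1,k)=g(n,k)+n\bigl(g(n,k-1)-g(n-1,k-1)\bigr)+g(n-1,k-1)$ for the sums of gcds. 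You try to push the gcd computation through directly in the $v_j$ basis, and that is where the proposal breaks down.

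Concretely, your conjectured formula $g(X)=\prod_{\ell=1}^{k-1}(j_\ell-1)$ is false, and not merely off by reindexing. Take $n=3$ and $X=\{v_2,v_3\}$, so the $3\times 2$ matrix has columns $(1,-1,0)^T$ and $(1,1,-2)^T$; the three $2\times 2$ minors are $2,-2,2$, so $g(X)=2$, whereas your formula gives $j_1-1=1$. (This is also forced by the volume: the coefficient of $m^2$ must be $(n-1)!=2$, and there is only one $2$-subset.) More damning, $g(X)$ is not a function of the multiset of differences $j_\ell-1$ at all --- it depends on the adjacency pattern of the indices. For $n\ge 4$ one computes $g(\{v_2,v_3\})=2$ but $g(\{v_2,v_4\})=1$, even though both are $2$-subsets. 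In fact the correct rule in the $v_j$ basis is $g(X)=\prod_{\ell\,:\,j_{\ell+1}=j_\ell+1} j_\ell$, a product over consecutive pairs of indices, and one would then still have to prove the nontrivial identity $\sum_{2\le j_1<\cdots<j_k\le n}\,\prod_{\ell\,:\,j_{\ell+1}=j_\ell+1} j_\ell = (n-1)\da_k$. You flag the inconsistency yourself in your sanity checks but leave it unresolved, so the key lemma of the argument is both unproved and, as stated, incorrect. Repairing the proof would require either discovering and proving the run-product formula above together with the accompanying sum identity, or --- as the paper does --- first applying the unimodular transformation to the $w_j$ basis so that the gcds become amenable to an inductive recurrence.
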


\begin{proof}
	From the half-space and hyperplane description given in Proposition~\ref{prop:firstclass}, we can see that $P$ is, up to a translation by $(1,2,\ldots,n)$, the zonotope
	\[
		Z = \{a_1v_1 + \ldots + a_{n-1}v_{n-1}\ |\ 0 \leq a_i \leq 1\} \subseteq \RR^n
	\]
	where $v_j = \sum_{i=1}^j (e_i - e_{j+1})$ for $j = 1,\ldots,n-1$.
	By applying the transformation $x \mapsto Ax$, where $A$ is the $n\times n$ upper-triangular matrix with $-1$ in all positions along and above the diagonal,
	we see that $Z$ is unimodularly equivalent to
	\[
		\hat{Z} = \{a_1w_1 + \ldots + a_{n-1}w_{n-1}\ |\ 0 \leq a_i \leq 1\} \subseteq \RR^n
	\]
	where 
	\[
		w_j = \sum_{i=1}^{j+1} (i-1)e_i
	\]
	for each $j = 1, \ldots, n-1$.
	Note that the set of all $w_j$ is linearly independent.
	
We will now complete the proof using equation~(\ref{g(X)}) on the $w_j$ basis. 
First, however, we need to set up some notation.  
For $X$ as in~(\ref{g(X)}) we will use $X$ to stand for both the subset and the matrix whose columns are the elements of $X$. 
For any family $\cF$ of subsets $X$ we  define
\[
	g(\cF)=\sum_{X\in\cF} g(X).
\]
We also let $\cF_{n,k}$ be the family of all $k$-element subsets of $w_1,\dots,w_{n-1}$ and $g(n,k)=g(\cF_{n,k})$. 
So we will be done if we can  prove that $g(n,k)=(n-1)\da_k$.  In fact, we will show that the following recurrence relation holds:
\begin{equation}
\label{grec}
g(n+1,k)=g(n,k)+n(g(n,k-1)-g(n-1,k-1))+g(n-1,k-1).
\end{equation}
It is easy to verify that $(n-1)\da_k$ satisfies the same recursion for $n\ge2$.  So  induction on $n$ completes the proof once we have verified the base case $n=1$.  But $P=P_1(132,312)$ is a single vertex so that $\ehr_P(m)=1$ which agrees with the fact that $0\da_k =\delta_{0,k}$ where the latter is the Kronecker delta.

To prove \eqref{grec}, partition $\cF_{n+1,k}$ into the three subsets
\begin{align*}
	\cF_1&=\{X\in\cF_{n+1,k}\ |\ \text{$X$ does not contain $w_n$}\},\\
	\cF_2&= \{X\in\cF_{n+1,k}\ |\ \text{$X$ contains both $w_{n-1}$ and $w_n$}\},\\
	\cF_3&= \{X\in\cF_{n+1,k}\ |\ \text{$X$ contains $w_n$ but not $w_{n-1}$}\}.
\end{align*}
From the definitions, one has
\[
	g(n+1,k)=g(\cF_1)+g(\cF_2)+g(\cF_3).
\]
We now show that each of these summands equals the corresponding summand in~(\ref{grec}).

The matrices in $\cF_1$ are the same as those for $\cF_{n,k}$ except with a last row of zeros. 
Clearly this row does not contribute any nonzero minors so $g(\cF_1)=g(n,k)$, giving the first summand.

Now consider the minors of a matrix $X\in\cF_2$, letting $M$  be the submatrix of the minor. 
An example follows the proof to elucidate the method.
If $M$ does not contain the last row of $X$, then its last two columns are equal and $\det M = 0$. 
So the only $M$ contributing to $g(\cF_2)$ are those whose last row is the final row of $X$ which is all zero except for a last entry of $n$.  
It follows that $|\det M| = n|\det M'|$ where $M'$ is obtained by removing the last row and column of $M$. 
The possible $M'$ which can appear are exactly those occurring in elements $X'\in\cF_{n,k-1}$ such that $w_{n-1}\in X'$. 
Using the reasoning of the previous paragraph and complementation, we see that such $|\det M'|$ contribute exactly $g(n,k-1)-g(n-1,k-1)$ to the desired sum.  
Thus $g(\cF_2)=n(g(n,k-1)-g(n-1,k-1))$.

Finally take $X\in\cF_3$ so that $X$ ends with a sequence of at least two rows each of which has a sole nonzero entry at the end. 
Keeping the notation and reasoning of the previous paragraph, we see that if $\det M\neq 0$ then $M$ must contain exactly one row from this final sequence. 
Let $m_1,\dots,m_r$ be the minors which can be obtained from all nonzero minors containing the last row of $X$. 
Then for all $i$ we have $m_i=n m_i'$ where $m_1',\dots,m_r'$ are exactly the nonzero minors of $X'\in\cF_{n-1,k-1}$ obtained by removing the last row and column of $X$. 
So 
\[
	\gcd(m_1,\dots,m_r)=n\gcd(m_1',\dots,m_r')=n g(X').
\]  
Now repeat this process, but using the penultimate row of $X$, giving minors $m_{r+1},\dots,m_{2r}$ with greatest common divisor $(n-1)g(X')$. 
But $n$ and $n-1$ are relatively prime, so $\gcd(m_1,\dots,m_{2r})=g(X')$. 
Continuing in this way, we see that $g(X)=g(X')$. 
Summing over all possible $X$ gives $g(\cF_3)=g(n-1,k-1)$ and completes the proof.
\end{proof}

To illustrate this demonstration, take $n=5$ and $k=4$ .  Then a typical element of $\cF_2$ is
$$
X=\left[\begin{array}{cccc}
0&0&0&0\\
1&1&1&1\\
0&2&2&2\\
0&0&3&3\\
0&0&4&4\\
0&0&0&5
\end{array}\right].
$$
Considering the submatrix $M$ obtained by picking rows $2$, $3$, $5$, and $6$ of $X$ and expanding around the last row we get $\det M = 5\det M'$ where
$$
M'=\left[\begin{array}{ccc}
1&1&1\\
0&2&2\\
0&0&4
\end{array}\right].
$$
Note that $M'$ is also a submatrix of the matrix
$$
X'=\left[\begin{array}{ccc}
0&0&0\\
1&1&1\\
0&2&2\\
0&0&3\\
0&0&4\\
\end{array}\right]\in\cF_{5,3}
$$
and $w_4\in X'$.  The reader should now find it easy to construct a similar example for the argument concerning $X\in\cF_3$ if need be.

A standard fact from Ehrhart theory states that the leading coefficient of $\ehr_P(m)$ is the volume of $P$, so Corollary~\ref{cor:pnvolume} is reaffirmed by the previous result.
Moreover, knowing the Ehrhart polynomial allows us to deduce an interesting fact about the interior lattice points of $P_n(132,312)$.

\begin{cor}
	The number of lattice points interior to $P_n(132,312)$ is equal to the number of derangements in $\symm_{n-1}$.
\end{cor}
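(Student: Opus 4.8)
The plan is to invoke Ehrhart--Macdonald reciprocity together with the explicit Ehrhart polynomial of Proposition~\ref{prop: 132 312}, and then to recognize the resulting alternating sum as a derangement number. Recall that $P=P_n(132,312)$ has dimension $n-1$ by Proposition~\ref{prop:firstclass}, so reciprocity tells us that the number of lattice points in the relative interior of $P$ equals $(-1)^{n-1}\ehr_P(-1)$.

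First I would substitute $m=-1$ into the formula $\ehr_P(m)=\sum_{k=0}^{n-1}(n-1)\da_k\, m^k$ from Proposition~\ref{prop: 132 312}, obtaining
\[
\ehr_P(-1)=\sum_{k=0}^{n-1}(-1)^k (n-1)\da_k .
\]
Next I would rewrite $(n-1)\da_k=(n-1)!/(n-1-k)!$ and reindex by $j=n-1-k$, which converts the sum into $(-1)^{n-1}(n-1)!\sum_{j=0}^{n-1}(-1)^j/j!$. Since $(n-1)!\sum_{j=0}^{n-1}(-1)^j/j!$ is the standard closed form for the number $D_{n-1}$ of derangements of $\symm_{n-1}$, we get $\ehr_P(-1)=(-1)^{n-1}D_{n-1}$, and hence $(-1)^{n-1}\ehr_P(-1)=D_{n-1}$, as claimed.

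There is essentially no obstacle here: the argument is a short computation once one has Proposition~\ref{prop: 132 312} and recalls reciprocity and the derangement formula. The only point requiring a word of care is the use of the \emph{relative} interior and the fact that $\dim P = n-1$ rather than $n$ (so that the sign in reciprocity is $(-1)^{n-1}$); this is exactly what Proposition~\ref{prop:firstclass} supplies. One could alternatively avoid reciprocity entirely and derive the interior point count directly from the product-of-edge-lengths structure of the parallelotope, but routing through the Ehrhart polynomial is cleaner.
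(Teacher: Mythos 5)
Your proposal is correct and follows essentially the same route as the paper: invoke Ehrhart--Macdonald reciprocity to express the interior lattice point count as $(-1)^{n-1}\ehr_P(-1)$, substitute into the formula from Proposition~\ref{prop: 132 312}, and recognize the resulting alternating sum as the standard inclusion--exclusion formula for derangements. The only difference is that you spell out the reindexing $j = n-1-k$ explicitly, whereas the paper simply cites the resulting sum as the well-known formula.
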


\begin{proof}
	Let $P = P_n(132,312)$ and $P^{\circ}$ be the interior of $P$. By Proposition~\ref{prop:ehrhartpoly} and Ehrhart-Macdonald reciprocity \cite[Theorem~4.1]{BeckRobinsCCDed2},
	\[
		\ehr_{P^{\circ}}(m) = (-1)^{n-1}\sum_{k=0}^{n-1} (n-1)\da_k (-m)^k.
	\]
	Evaluating at $m=1$, we get
	\[
		\ehr_{P^{\circ}}(1) = (-1)^{n-1}\sum_{k=0}^{n-1} (n-1)\da_k (-1)^k,
	\]
	which is the well-known inclusion-exclusion formula for derangements.
\end{proof}

\begin{question}
	Is there a natural bijection between the interior points of $P_n(132,312)$ and the derangements in $\symm_{n-1}$?
\end{question}

In the case of $P_n(132,312)$, the Ehrhart polynomial was simple enough to compute directly. 
Since the coefficients can be explicitly determined, one may also determine the $h^*$-vector of $P_n(132,312)$ by a change-of-basis, although there does not seem to be a simple formula for its components.

Although finding explicit formulas for $h^*$-vectors is usually challenging in general, there are other methods for determining certain properties it might possess.
A recent result due to Beck, Jochemko, and McCullough~\cite{BJMminkowskisum} states that lattice zonotopes always have a unimodal $h^*$-vector.
Thus the following result follows from Proposition~\ref{prop:firstclass}.

\begin{cor}
	For all $n \geq 1$, $h^*(P_n(132,312))$ is unimodal. \qed
\end{cor}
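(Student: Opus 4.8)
The plan is to deduce this immediately from Proposition~\ref{prop:firstclass} together with the theorem of Beck, Jochemko, and McCullough~\cite{BJMminkowskisum} asserting that the $h^*$-vector of any lattice zonotope is unimodal. The only point that needs verification is that $P_n(132,312)$ really qualifies as a \emph{lattice} zonotope, i.e.\ as a Minkowski sum of segments with integral direction vectors. This is already contained in the proof of Proposition~\ref{prop:ehrhartpoly}: up to the translation by $(1,2,\dots,n)$, the polytope equals $Z=\{a_1v_1+\cdots+a_{n-1}v_{n-1}\mid 0\le a_i\le 1\}$ with $v_j=\sum_{i=1}^{j}(e_i-e_{j+1})\in\ZZ^n$, so it is the Minkowski sum of the lattice segments $[0,v_j]$. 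Since translation by an integer vector changes neither the Ehrhart polynomial nor the $h^*$-vector, we have $h^*(P_n(132,312))=h^*(Z)$, and the cited result applies verbatim.

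Concretely, I would first recall the Beck--Jochemko--McCullough statement, then invoke the rectangular-parallelotope/zonotopal description coming from Propositions~\ref{prop:firstclass} and~\ref{prop:ehrhartpoly}, and conclude in one line. There is essentially no obstacle: all of the substantive work has been done in the preceding propositions and in the literature, so the corollary is a genuine one-line consequence; the only care required is the bookkeeping noted above that the generating vectors $v_j$ are integral.

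If instead one wanted a self-contained argument avoiding the general zonotope theorem, one could try to read off $h^*(P_n(132,312))$ from a change of basis applied to the explicit Ehrhart polynomial $\ehr_P(m)=\sum_{k=0}^{n-1}(n-1)\da_k\, m^k$ of Proposition~\ref{prop: 132 312}; but extracting unimodality directly from those coefficients looks considerably messier than simply citing the general result, so I would not pursue that route.
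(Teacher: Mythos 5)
Your proposal is exactly the paper's argument: it cites the Beck--Jochemko--McCullough theorem that lattice zonotopes have unimodal $h^*$-vectors and applies it to the parallelotope/zonotope description of $P_n(132,312)$ established in Propositions~\ref{prop:firstclass} and~\ref{prop:ehrhartpoly}. The extra bookkeeping you include (that the generators $v_j$ are integral and that a lattice translation preserves the $h^*$-vector) is correct and does no harm, though the paper treats it as implicit.
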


\begin{question}
	For which $\Pi$-avoiding permutohedra $P$ is $h^*(P)$ unimodal?
\end{question}

We will next consider a $\Pi$-avoiding permutohedron whose Ehrhart polynomial is easily computable due to results of Pitman and Stanley~\cite{PitmanStanley}.
Given a sequence of nonnegative real numbers $c = (c_1,\ldots,c_n)$, there is a corresponding {\em Pitman-Stanley polytope} $PS_n(c)$ defined by
\[
	PS_n(c) := \left\{x \in \RR^n\ |\ x_i \geq 0 \text{ and } \sum_{i=1}^jx_i \leq \sum_{i=1}^jc_i \text{ for all } 1 \leq j \leq n\right\}.
\]
Pitman-Stanley polytopes are connected with multiple combinatorial objects.
For example, recall that a {\em polyhedral subdivision} of a polytope $P$ is a collection of subpolytopes $P_1,\ldots,P_k \subseteq P$ whose union is $P$, and $P_i \cap P_j$ is a face of both $P_i$ and $P_j$ for all $i,j$. Pitman and Stanley showed that $PS_n(c)$ has polyhedral subdivisions whose maximal elements of correspond to certain plane trees; $\Vol(PS_n(c))$ can be expressed in terms of parking functions; the number of lattice points of $PS_n(c)$ can be expressed in terms of plane partitions of a particular shape.
The key result for us is the following.

\begin{thm}[Pitman and Stanley, \cite{PitmanStanley}]\label{thm:pitmanstanleyehrhart}
	Let $a,b$ be positive integers, and set $c = (a,b,\ldots,b) \in \ZZ^n$.
	The Ehrhart polynomial of $PS_n(c)$ is
	\[
		\ehr_{PS_n(c)}(m) = \frac{am + 1}{n!}\prod_{j=2}^n \left((a + nb)m + j\right).
	\] \qed
\end{thm}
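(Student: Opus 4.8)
The plan is to treat the statement as a lattice-point identity and prove it directly by induction on $n$, using a one-variable recursion obtained by conditioning on the first coordinate; I would not attack the factored form of the polynomial head-on.

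First I would rewrite the Ehrhart count. After dilating by $m$, the lattice points of $m\cdot PS_n(c)$ with $c=(a,b,\dots,b)$ are the integer vectors $x\in\ZZ_{\ge0}^n$ with $x_1+\dots+x_j\le m(a+(j-1)b)$ for every $j$; passing to the partial sums $y_j=x_1+\dots+x_j$ replaces this bijectively with the set of weakly increasing integer sequences $0\le y_1\le\dots\le y_n$ subject to $y_j\le p+(j-1)q$, where $p=ma$ and $q=mb$. Writing $f_n(p,q)$ for the number of such sequences, one has $\ehr_{PS_n(c)}(m)=f_n(ma,mb)$, so (both sides being polynomials in $m$) it is enough to prove
\[
f_n(p,q)=\frac{p+1}{n!}\prod_{j=2}^n (p+nq+j)
\]
for all nonnegative integers $p,q$ and then substitute $p=ma$, $q=mb$.

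Next I would establish the recursion $f_n(p,q)=\sum_{s=q}^{p+q}f_{n-1}(s,q)$ with base case $f_1(p,q)=p+1$: fixing $y_1=t\in\{0,\dots,p\}$ and setting $z_i=y_{i+1}-t$ turns the remaining constraints into $0\le z_1\le\dots\le z_{n-1}$ with $z_i\le(p-t+q)+(i-1)q$, which is counted by $f_{n-1}(p-t+q,q)$; summing over $t$ and reindexing by $s=p-t+q$ gives the claim. The induction step then amounts to substituting the formula for $f_{n-1}$ into this recursion and evaluating the resulting sum. After the change of variable $s=q+t$ ($t=0,\dots,p$) and the abbreviation $r=nq+t$, the summand is $(q+t+1)\prod_{j=2}^{n-1}(r+j)$; the key manipulation is to write $q+t+1=(r+1)-(n-1)q$, which breaks the sum into $\sum_r (r+1)(r+2)\cdots(r+n-1)$ and $-(n-1)q\sum_r (r+2)\cdots(r+n-1)$. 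Writing $Q(r)=r(r+1)\cdots(r+n-1)$ and $R(r)=(r+1)(r+2)\cdots(r+n-1)$, both sums telescope, since $(r+1)\cdots(r+n-1)=\tfrac1n[Q(r+1)-Q(r)]$ and $(r+2)\cdots(r+n-1)=\tfrac1{n-1}[R(r+1)-R(r)]$; because $Q(r)=r\,R(r)$, the lower boundary terms at $r=nq$ cancel against the $qR(\cdot)$ correction, and everything collapses to $\tfrac{p+1}{n}R(nq+p+1)=\tfrac{p+1}{n}\prod_{j=2}^n(p+nq+j)$. Dividing by the $(n-1)!$ pulled out of $f_{n-1}$ gives precisely $\tfrac{p+1}{n!}\prod_{j=2}^n(p+nq+j)$, and specializing $p=ma$, $q=mb$ completes the proof.

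I expect the only real difficulty to be the bookkeeping in the induction step: recognizing the decomposition $q+t+1=(nq+t+1)-(n-1)q$ that converts the \emph{weighted} sum into two plain telescoping sums of products of consecutive integers, and then checking that the telescoped boundary terms recombine into the single product $\prod_{j=2}^n(p+nq+j)$ (the identity $Q(r)=r\,R(r)$ is exactly what drives the cancellation). A different route would be to use Pitman and Stanley's polyhedral subdivision of $PS_n(c)$ indexed by plane trees and read off the Ehrhart data from it, but the self-contained recursion above seems shorter.
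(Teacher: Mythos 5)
The paper does not prove this statement: it is attributed to Pitman and Stanley and stated as a citation, not an argument, so there is no in-paper proof to compare against. That said, your argument is correct and self-contained. The reduction to counting weakly increasing integer sequences via partial sums $y_j=x_1+\cdots+x_j$ is the right normalization, the one-step recursion $f_n(p,q)=\sum_{s=q}^{p+q}f_{n-1}(s,q)$ follows cleanly from conditioning on $y_1$, and the telescoping via $Q(r)=rR(r)$ with the decomposition $q+t+1=(r+1)-(n-1)q$ closes the induction exactly as you describe; I checked that the boundary terms at $r=nq$ cancel (since $\tfrac{1}{n}Q(nq)=qR(nq)$) and that the surviving factor $\tfrac{nq+p+1}{n}-q=\tfrac{p+1}{n}$, multiplied by $R(nq+p+1)$ and divided by the $(n-1)!$ carried from $f_{n-1}$, produces exactly $\tfrac{p+1}{n!}\prod_{j=2}^{n}(p+nq+j)$. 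Substituting $p=ma$, $q=mb$ then gives the stated Ehrhart polynomial. This is a genuinely different and more elementary route than Pitman and Stanley's own derivation, which proceeds through a polyhedral subdivision of $PS_n(c)$ indexed by plane trees and the attendant generating-function machinery for parking functions and plane partitions; your proof trades that structural insight for a short self-contained sum, which is a fair exchange if the only goal is the Ehrhart polynomial itself.
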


Before continuing, we need a little background. 
The {\em face lattice} of a polytope is the poset of its faces ordered by inclusion.
Two polytopes are {\em combinatorially equivalent} if their face lattices are isomorphic.
As proven in Theorem 19 of~\cite{PitmanStanley}, whenever $c$ has positive entries, $PS_n(c)$ is combinatorially equivalent to an $n$-cube.

\begin{lemma}\label{lem:PS vertices}
	When $c$ has positive entries, the vertices of $PS_n(c)$ are exactly the vectors $v = (v_1,\ldots,v_n)$ constructed, component-wise from left to right, by either setting $v_j = 0$ or setting $v_j = c_j+c_{j-1}+\dots+c_i$,
	where $v_{i-1}$ is the previous nonzero entry of $v$.
\end{lemma}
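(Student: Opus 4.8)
The plan is to induct on $n$, using the combinatorial equivalence between $PS_n(c)$ and the $n$-cube together with the half-space description defining $PS_n(c)$. First I would set up the recursive structure: write the defining constraints as $x_i \ge 0$ for all $i$ and $s_j := x_1 + \dots + x_j \le C_j := c_1 + \dots + c_j$ for all $j$. Since $c$ has positive entries, each $C_j$ is strictly positive and the $C_j$ are strictly increasing. A vertex of $PS_n(c)$, being a $0$-dimensional face, is the unique solution of a choice of $n$ of these $2n$ inequalities made into equalities, and the chosen system must have rank $n$. The key observation is that for each coordinate $j$ we cannot simultaneously impose both $x_j = 0$ \emph{and} $s_{j-1} = C_{j-1}$ \emph{and} $s_j = C_j$, since these are dependent; more carefully, I would argue that in any vertex-defining tight system, for each $j$ exactly one "event" happens at step $j$: either $x_j = 0$, or $s_j = C_j$ (with at most one $s_j = C_j$ constraint active in any maximal run), and this is precisely what the component-wise construction records.

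The cleaner route is probably the inductive one. For the base case $n = 1$, $PS_1(c) = [0, c_1]$ has vertices $0$ and $c_1$, matching the description (either $v_1 = 0$ or $v_1 = c_1$). For the inductive step, I would look at the last coordinate $x_n$. The facet $x_n = 0$ of $PS_n(c)$ is a copy of $PS_{n-1}(c_1, \dots, c_{n-1})$, and its vertices are, by induction, exactly the vectors built by the stated rule on the first $n-1$ coordinates, extended by $v_n = 0$ — which is one of the two options the rule allows at step $n$. For the complementary vertices, those with $v_n \ne 0$: such a vertex must lie on the facet $s_n = C_n$ (this is the only remaining tight constraint involving $x_n$ besides $x_n = 0$), and on that facet $x_n$ is determined by the first $n-1$ coordinates as $x_n = C_n - s_{n-1}$. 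One checks that the vertices of $PS_n(c)$ with $v_n \ne 0$ are in bijection with vertices of the truncation $\{x \in PS_{n-1}(c_1,\dots,c_{n-1}) : s_{n-1} \le C_n\} = PS_{n-1}(c_1, \dots, c_{n-1})$ itself (since $C_n > C_{n-1}$ the extra constraint is vacuous), again built by the stated rule; then $v_n = C_n - s_{n-1}$, and writing $s_{n-1} = v_1 + \dots + v_{n-1}$ where $v_{i-1}$ is the last nonzero entry among $v_1, \dots, v_{n-1}$, one gets $v_n = C_n - C_{i-1} = c_i + c_{i+1} + \dots + c_n$ (telescoping), exactly the claimed formula. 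The fact that these $2^{n-1} + 2^{n-1} = 2^n$ vectors are distinct and are all the vertices follows from $PS_n(c)$ being combinatorially equivalent to the $n$-cube, hence having exactly $2^n$ vertices.

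The main obstacle I anticipate is the telescoping/bookkeeping in the case $v_n \ne 0$: one must verify that when the previous nonzero entry of the inductively-constructed $v$ sits at position $i-1$, the partial sum $s_{n-1}$ equals $C_{i-1}$ and not merely is bounded by it — i.e., that every stretch of zero entries between two nonzero entries forces the running sum to sit exactly on the corresponding constraint $s = C$. This is really the content of why the rule produces vertices rather than just lattice points of the boundary, and it should come out of the inductive hypothesis if it is phrased correctly (namely: the inductively-constructed vector $v$ for $PS_{n-1}$ satisfies $v_1 + \dots + v_k = C_j$ where $j \le k$ is the index of the last nonzero entry among $v_1, \dots, v_k$, for every $k$). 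I would therefore strengthen the inductive hypothesis to include this running-sum identity, which makes both the extension step and the final formula $v_j = c_j + c_{j-1} + \dots + c_i$ immediate.
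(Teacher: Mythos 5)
Your proof is correct, but it takes an inductive route where the paper argues directly. The paper's proof uses the combinatorial-cube structure of $PS_n(c)$ once and globally: the $2n$ facets pair off into $n$ opposite pairs $\{x_j = 0\}$ versus $\{x_1 + \dots + x_j = c_1 + \dots + c_j\}$, and since a vertex of a combinatorial $n$-cube lies on exactly one facet of each opposite pair, each vertex $v$ satisfies, for every $j$, exactly one of $v_j = 0$ or $v_1 + \dots + v_j = c_1 + \dots + c_j$; the stated formula for $v_j$ then falls out by the same telescoping you identify. Your argument instead peels off the last coordinate, splitting vertices according to whether they lie on the facet $x_n = 0$ or the facet $x_1+\dots+x_n = c_1+\dots+c_n$, identifying each facet with a copy of $PS_{n-1}(c_1,\dots,c_{n-1})$, and invoking the $2^n$ vertex count at the end. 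Both arguments lean on Pitman and Stanley's Theorem~19 (that $PS_n(c)$ is a combinatorial $n$-cube), but the paper uses it to read off the tight-facet combinations directly, while you use it only for counting. Your approach is longer but more hands-on in how the formula emerges; the ``running-sum'' identity $v_1 + \dots + v_k = c_1 + \dots + c_k$ whenever $v_k \neq 0$, which you correctly flag as the crux, is a direct consequence of the construction rule (a one-line induction or telescoping) and is implicit in the paper's derivation as well, so your strengthened inductive hypothesis closes the gap cleanly.
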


\begin{proof}
	Since $c$ has positive entries, $PS_n(c)$ is a combinatorial cube, hence the set of facets may be partitioned into $n$ non-intersecting pairs.
	In particular, the pairs correspond to the hyperplanes $x_j = 0$ and $x_1 + \cdots + x_j = c_1 + \cdots + c_j$.
	Again, since $PS_n(c)$ is a combinatorial cube, a vertex $v$ will lie on exactly one of the facets of each pair.
	From these two facts, the conclusion follows.
\end{proof}

\begin{thm}
\label{thm: 123 132}
	The polytope $P=P_n(123,132)$ is a combinatorial cube with Ehrhart polynomial
	\[
		\ehr_{P}(m) =\frac{m + 1}{(n-1)!}\prod_{j=2}^{n-1}{(nm + j)}.
	\]
\end{thm}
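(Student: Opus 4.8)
The plan is to first pin down the set $\Av_n(123,132)$ combinatorially, then recognize $P_n(123,132)$ as a translate of a Pitman--Stanley polytope, and finally invoke Theorem~\ref{thm:pitmanstanleyehrhart}. A permutation $\sig = a_1\cdots a_n$ avoids both $123$ and $132$ exactly when no entry $a_1,\dots,a_{i-1}$ lies below $a_i$ for any later pair — equivalently, $a_1$ must be $n$ or $n-1$ (if $a_1 \le n-2$, then among the later entries either two are increasing above $a_1$, or one is above and one below, forcing a $123$ or $132$), and iterating, each $a_j$ is either the largest or the second-largest of the not-yet-used values. So $\Av_n(123,132)$ is built by, at each step, placing either the current maximum or the current ``max minus one'' of the remaining symbols; this gives $2^{n-1}$ permutations (the last step is forced), matching the vertex count of a combinatorial $(n-1)$-cube.

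Next I would identify the affine hull: every $\sig\in\Av_n(123,132)$ sums to $\binom{n+1}{2}$, so $P$ lies in the hyperplane $\sum x_i = \binom{n+1}{2}$ and has dimension at most $n-1$. The key step is to produce an explicit unimodular map carrying $P_n(123,132)$ onto a Pitman--Stanley polytope $PS_{n-1}(c)$ with $c=(a,b,\dots,b)$ for suitable $a,b$. Reading the Ehrhart polynomial in the statement against Theorem~\ref{thm:pitmanstanleyehrhart}, with $PS_{n-1}(c)$ we want $\frac{am+1}{(n-1)!}\prod_{j=2}^{n-1}((a+(n-1)b)m+j)$ to equal $\frac{m+1}{(n-1)!}\prod_{j=2}^{n-1}(nm+j)$, which forces $a=1$ and $a+(n-1)b=n$, i.e. $b=1$; so the target is $PS_{n-1}(1,1,\dots,1)$. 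The cleanest route is to use Lemma~\ref{lem:PS vertices}: describe the vertices of $PS_{n-1}(1,\dots,1)$ — each is obtained by scanning left to right and setting $v_j=0$ or $v_j = j-i+1$ where $v_{i-1}$ was the previous nonzero coordinate — and exhibit a bijection with $\Av_n(123,132)$ that is the restriction of an affine unimodular transformation of the ambient space. Concretely, I expect the map to record, for each position $j=1,\dots,n-1$, how far $\sig$ ``drops'' at step $j$ (whether $a_{j+1}$ is the new running max or one below it, and the size of the resulting descent block), translating a vertex of $P$ by $-(n,n-1,\dots,1)$ or a similar fixed lattice vector and then applying an integer triangular change of coordinates of determinant $\pm1$ to land on $PS_{n-1}(1,\dots,1)$.

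Having set up the bijection on vertices compatibly with an ambient unimodular affine map, the polytopes $P_n(123,132)$ and $PS_{n-1}(1,\dots,1)$ are unimodularly equivalent, hence have equal Ehrhart polynomials and isomorphic face lattices; since $PS_{n-1}(c)$ with positive $c$ is combinatorially an $(n-1)$-cube (Theorem~19 of~\cite{PitmanStanley}, quoted above), so is $P$, and Theorem~\ref{thm:pitmanstanleyehrhart} with $a=b=1$ gives the stated Ehrhart polynomial. I would also double-check the small cases $n=2,3$ by hand (for $n=2$, $P$ is the segment from $(1,2)$ to $(2,1)$, and the formula gives $m+1$, correct).

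The main obstacle I anticipate is getting the explicit unimodular map exactly right: one must track the ``max or second-max'' choices of the pattern-avoiding permutations and see that the induced data — essentially a lattice point in a box with the Pitman--Stanley partial-sum constraints — transforms by a genuinely unimodular (not merely invertible) affine transformation, and that the inequalities match up rather than just the vertex sets. A slicker alternative, avoiding the explicit map, is to give the half-space description of $P_n(123,132)$ directly (analogous to Proposition~\ref{prop:firstclass} for $\{132,312\}$): show the facet pairs are $\{x_1+\dots+x_j \text{ attains its max}\}$ versus $\{x_1+\dots+x_j \text{ attains its min}\}$ for $j=1,\dots,n-1$, which immediately exhibits $P$ as an affine image of $PS_{n-1}(1,\dots,1)$ under a triangular unimodular map; I would pursue whichever of these two descriptions is least computation-heavy.
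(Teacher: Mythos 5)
Your proposal is correct and follows essentially the same route as the paper: characterize $\Av_n(123,132)$, recognize $P_n(123,132)$ as lattice-equivalent to $PS_{n-1}(1,\dots,1)$ via the vertex description in Lemma~\ref{lem:PS vertices}, and then invoke Theorem~\ref{thm:pitmanstanleyehrhart} with $a=b=1$. The explicit map you were hunting for is just projection onto the first $n-1$ coordinates followed by translation by $-(n-1,n-2,\dots,1)$; combined with the paper's block description of the vertices (a decreasing sequence of blocks of pattern $k(k-1)\cdots1(k+1)$, equivalent to your ``max or second-max'' rule) the vertex correspondence falls out immediately, and since $P$ sits in the hyperplane $\sum x_i=\binom{n+1}{2}$ this projection is a lattice isomorphism onto its image.
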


\begin{proof}
We will show that $P$ is related to $PS_{n-1}(1,\dots,1)$ in such a way that its face lattice and Ehrhart polynomial are preserved.  Then the theorem will follow from the statement just before Lemma~\ref{lem:PS vertices}, and by setting $a=b=1$ in Theorem~\ref{thm:pitmanstanleyehrhart}.
	
We first need a description of the vertices of $P$.
By reversing the permutations in Proposition 4.2 of~\cite{DokosEtAl}, we note that the diagram for a vertex $v = (v_1,\dots,v_n)$ of $P_n(123,132)$ consists of a decreasing sequence of blocks where each block is the pattern $k(k-1)\cdots 1(k+1)$ for some $k$.
	Define a function $f: \RR^n \to \RR^{n-1}$ by
	\[
		f(a_1,\ldots,a_n) = (a_1,\ldots,a_{n-1}) - (n-1,n-2,\ldots,1). 
	\]
We claim that $f$  maps the vertices of $P$ to the vertices of $PS_{n-1}(1,\ldots,1)$. 
Indeed, suppose the first block of a vertex $v$ of $P$ is of the form $(n-1,n-2,\dots,n-k,n)$.  Then under $f$ this maps to the sequence 
$(0,0,\dots,0,k+1)$ with $k$ initial zeros.  But, by Lemma~\ref{lem:PS vertices}, this is the prefix of a vertex of 
$PS_{n-1}(1,\dots,1)$.  Continuing in this way, we see that $f(v)$ will indeed be a vertex of this Pitman-Stanley polytope. 
Reversing the argument shows that $f$ is, in fact, a bijection on the vertex sets.
	
Since $P$ is a subpolytope of the usual permutohedron, the projection to the first $n-1$ coordinates preserves the face lattice and Ehrhart polynomial, as does lattice translation.
This verifies the claim in the first sentence of the proof.
\end{proof}

From the Ehrhart polynomial, we can immediately determine the volume and number of lattice points in the polytope.

\begin{cor}\label{cor: 123 132 corollary}
	The normalized volume of $P_n(123,132)$ is $n^{n-2}$ and the number of lattice points it contains is the Catalan number 
\[
	C_n=\frac{1}{n+1}\binom{2n}{n}.
\]
\end{cor}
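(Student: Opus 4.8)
The plan is to simply specialize the Ehrhart polynomial from Theorem~\ref{thm: 123 132} in two different ways. For the normalized volume, recall that $\Vol(P) = (\dim P)!\,\vol(P)$ and that the volume equals the leading coefficient of $\ehr_P(m)$. Since $P = P_n(123,132)$ is a combinatorial cube living (after the projection $f$ used in the proof of Theorem~\ref{thm: 123 132}) in $\RR^{n-1}$, we have $\dim P = n-1$. The leading term of $\frac{m+1}{(n-1)!}\prod_{j=2}^{n-1}(nm+j)$ comes from multiplying the $m$ from the linear factor with the $nm$ from each of the $n-2$ factors in the product, giving leading coefficient $\frac{1}{(n-1)!}\cdot n^{n-2}$. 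Multiplying by $(n-1)!$ yields $\Vol(P_n(123,132)) = n^{n-2}$, which is (as noted in the introduction) the number of trees on $n$ labelled vertices — consistent with the cube structure via the matrix-tree flavored count, though here it falls out directly.

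For the lattice-point count, I would evaluate $\ehr_P(1)$. Substituting $m=1$ into the formula gives
\[
	\ehr_P(1) = \frac{2}{(n-1)!}\prod_{j=2}^{n-1}(n+j) = \frac{2}{(n-1)!}\cdot\frac{(2n-1)!}{(n+1)!}.
\]
The remaining step is purely a binomial-coefficient manipulation: one checks that $\dfrac{2\,(2n-1)!}{(n-1)!\,(n+1)!}$ equals $\dfrac{1}{n+1}\binom{2n}{n}$. Indeed $\binom{2n}{n} = \dfrac{(2n)!}{n!\,n!} = \dfrac{2n\,(2n-1)!}{n!\,n!}$, so $\dfrac{1}{n+1}\binom{2n}{n} = \dfrac{2n\,(2n-1)!}{(n+1)!\,n!} = \dfrac{2\,(2n-1)!}{(n+1)!\,(n-1)!}$, matching the expression above. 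Hence the number of lattice points in $P_n(123,132)$ is the Catalan number $C_n$.

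I do not anticipate any real obstacle here — both parts are routine consequences of the explicit Ehrhart polynomial already established. The only points requiring a word of care are: first, confirming that $\dim P_n(123,132) = n-1$ so that the normalization factor is $(n-1)!$ rather than something else (this is immediate from the bijection with $PS_{n-1}(1,\dots,1)$, an $(n-1)$-cube); and second, handling the small cases $n=1,2$ where the product $\prod_{j=2}^{n-1}$ is empty, for which one simply verifies the claimed values directly ($P_1$ is a point with $C_1=1$ lattice point and normalized volume $1^{-1}$ interpreted as $1$; $P_2$ is a segment). With those caveats noted, the corollary follows.
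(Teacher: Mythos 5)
Your proposal is correct and follows exactly the paper's approach: extract the leading coefficient of the Ehrhart polynomial from Theorem~\ref{thm: 123 132} and multiply by $(n-1)!$ for the normalized volume, then evaluate $\ehr_P(1)$ for the lattice-point count. You have simply spelled out the arithmetic (the product telescoping to $(2n-1)!/(n+1)!$ and the matching against $C_n$) that the paper leaves to the reader.
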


\begin{proof}
	To calculate the normalized volume, one takes the leading coefficient of the Ehrhart polynomial in Theorem~\ref{thm: 123 132} and multiplies by $(n-1)!$ since $\dim P_n(123,132)=n-1$. 
	To calculate the number of lattice points, one just plugs $m=1$ into this polynomial.
\end{proof}

We end this section with a question and a conjecture.

\begin{question}
	The normalized volume in	Corollary~\ref{cor: 123 132 corollary} is just the number of trees on $n$ vertices and this quantity will also appear as a normalized volume in Proposition~\ref{prop: 123 231 312}.  
	And there are many combinatorial interpretations of the Catalan numbers.  
	This raises the question of whether there is a combinatorial proof of Corollary~\ref{cor: 123 132 corollary} or Proposition~\ref{prop: 123 231 312}.
\end{question}

The conjecture that follows makes a statement similar to that of Proposition~\ref{thm: 123 132}.
However, we have been unable to provide a proof.

\begin{conj}\label{conj: 132 213}
	For all $n$, $P_n(132,213)$ is a combinatorial cube with normalized volume $2^{n-1}n^{n-3}$.
\end{conj}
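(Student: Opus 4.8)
The plan is to mimic the strategy used for $P_n(123,132)$ in Theorem~\ref{thm: 123 132}: first obtain an explicit combinatorial description of the vertices of $P_n(132,213)$ via the structure of $\Av_n(132,213)$, then exhibit an affine (ideally unimodular) map that identifies $P_n(132,213)$, up to a lattice translation and a coordinate projection, with some well-understood polytope whose combinatorial type and normalized volume are known. First I would recall (or reprove from the analysis in~\cite{DokosEtAl} of classes avoiding two patterns in $\symm_3$) the grid-class description of $\Av_n(132,213)$; I expect these permutations to decompose into a sequence of blocks much as in the $\{123,132\}$ case, where each successive block is built from a prescribed small pattern on a contiguous set of values, and each block contributes one binary choice. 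That would give $|\Av_n(132,213)| = 2^{n-1}$ and, crucially, a recipe for the vertices analogous to Lemma~\ref{lem:PS vertices}.

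Next I would try to show $P_n(132,213)$ is a combinatorial cube. One clean route: produce $n-1$ pairs of parallel (or at least non-adjacent) facet hyperplanes, one pair per block-choice, and argue each of the $2^{n-1}$ candidate vertices lies on exactly one facet from each pair — exactly the bookkeeping carried out in the proof of Proposition~\ref{prop:firstclass} and Lemma~\ref{lem:PS vertices}. Alternatively, establish an affine bijection to a Pitman-Stanley-type polytope $PS_{n-1}(c)$ with positive entries (whose combinatorial cube structure is already cited just before Lemma~\ref{lem:PS vertices}); the block structure of the vertices should dictate the correct sequence $c$. Given the conjectured normalized volume $2^{n-1}n^{n-3}$, I would guess the right comparison object is, up to unimodular equivalence, something like a zonotope or a skewed cube rather than a literal $PS_{n-1}(1,\dots,1)$, since the latter has normalized volume $n^{n-2}$, not $2^{n-1}n^{n-3}$.

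For the volume, once a generating set of edge vectors at a base vertex (say the identity or the reverse-identity permutation) is in hand, I would compute the normalized volume as in Corollary~\ref{cor:pnvolume}: take the base vertex, list the $n-1$ emanating edge vectors, and evaluate $\sqrt{\det A^TA}$ for the matrix $A$ whose columns are a $\ZZ$-basis of $(\aff P)\cap\ZZ^n$, dividing the Euclidean parallelepiped volume accordingly. If $P$ is merely a combinatorial — not a metric — cube, the product-of-edge-lengths shortcut fails, so instead I would likely pass through an Ehrhart computation: if the vertex description yields a zonotope presentation $P = \{\sum a_i v_i : 0\le a_i\le1\}$ after translation, apply Theorem~\ref{zonotope} and read off the leading coefficient, then multiply by $(n-1)!$. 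The factor $n^{n-3}$ strongly suggests a Cayley-type count (spanning trees of $K_n$ with a marked or deleted edge, i.e.\ $\frac{1}{n}\cdot n^{n-2} \cdot$ something), which would emerge naturally from the matrix-tree theorem applied to the $g(X)$ sum over linearly independent subsets of the $v_i$.

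The main obstacle I anticipate is pinning down the exact vertex/edge combinatorics of $\Av_n(132,213)$ precisely enough to identify the target polytope: unlike $\{132,312\}$ (a single column grid class) or $\{123,132\}$ (a transparent block decomposition), the class $\Av_n(132,213)$ may have a more intricate block structure whose geometric realization is not obviously a cube, and verifying that every one of the $2^{n-1}$ vertices is genuinely extreme — and that there are no additional vertices coming from the convex hull — is the delicate step, exactly the point at which the authors say they have been unable to complete the argument.
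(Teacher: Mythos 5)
This statement is a \emph{conjecture} in the paper, not a theorem: the authors explicitly say ``we have been unable to provide a proof,'' so there is no proof of record to compare against. Your submission, by your own final paragraph, is a plan of attack rather than a proof --- you lay out the strategy modeled on Theorem~\ref{thm: 123 132} (grid-class vertex description, affine identification with a cube-like polytope, Ehrhart/zonotope or $\sqrt{\det A^TA}$ volume computation) but stop at exactly the step the authors say is open: pinning down the vertex combinatorics of $\Av_n(132,213)$ precisely enough to identify the target polytope and to verify that all $2^{n-1}$ candidate points are extreme. Nothing here is established.

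A few concrete obstacles worth naming. First, the comparison with a Pitman--Stanley polytope $PS_{n-1}(a,b,\dots,b)$ cannot give the conjectured volume for positive integer $a,b$: Theorem~\ref{thm:pitmanstanleyehrhart} yields normalized volume $a\bigl(a+(n-1)b\bigr)^{n-2}$, and no integer choices make this equal to $2^{n-1}n^{n-3}$ for all $n$ (e.g.\ at $n=4$ one would need $a(a+3b)^2=16$, but $a(a+3b)^2 \ge 1\cdot 4^2 = 16$ forces $a=1,b=1$, giving the wrong polytope, and $n=5$ then fails). So the ``Pitman--Stanley route'' that works in Theorem~\ref{thm: 123 132} does not transfer directly, which you partly anticipate. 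Second, the zonotope route of Theorem~\ref{zonotope} requires knowing that $P_n(132,213)$ is a lattice zonotope after translation; a combinatorial cube need not be a zonotope, and you give no argument for that. Third, even granting a cube structure, you have not produced the base vertex and edge vectors needed to run the $g(X)$ sum or the $\sqrt{\det A^TA}$ computation. As stated, this is a reasonable roadmap but leaves the conjecture exactly where the paper leaves it.
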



\subsection{Avoiding Three or Four Patterns from $\symm_3$}

When $\Pi$ contains at least three or four patterns of $\symm_3$, there are relatively few vertices of $P_n(\Pi)$.
Consequently, $P_n(\Pi)$ can be a farily simple object such as a simplex or line segment.

\begin{prop}\label{prop: 123 132 312}
	The Ehrhart polynomial for $P = P_n(123,132,312)$ is $(1+m)^{n-1}$ and so $h^*_P(t)$ is the Eulerian polynomial $A_{n-1}(t)$.
\end{prop}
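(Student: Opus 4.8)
The plan is to pin down the vertices of $P$, reduce $P$ by a lattice transformation to a concrete simplex, and then compute that simplex's Ehrhart data. For the vertices: since $\{132,312\}\subseteq\{123,132,312\}$, every vertex of $P$ avoids $132$ and $312$, so by Lemma~\ref{lem:132,312} each of its entries is either a new left-to-right maximum or a new left-to-right minimum. The left-to-right maxima of any permutation form an increasing subsequence, so avoiding $123$ forces there to be at most two of them; a short case analysis then shows that $\Av_n(123,132,312)$ consists exactly of the $n$ permutations $v_1,\dots,v_n$, where $v_j$ is the word obtained from $(n-1)(n-2)\cdots 1$ by inserting $n$ in position $j$. (This can alternatively be read off from Proposition~4.2 of~\cite{DokosEtAl} after reversing, as in the proof of Theorem~\ref{thm: 123 132}.) In particular $P=\conv\{v_1,\dots,v_n\}$ is an $(n-1)$-dimensional simplex.

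Next I would reduce to a standard simplex. A direct computation yields the clean identity $v_j-v_{j+1}=j(e_j-e_{j+1})$ for $1\le j\le n-1$. Applying the unimodular ``partial sum'' transformation $x\mapsto(x_1,\,x_1+x_2,\,\dots,\,x_1+\cdots+x_n)$, dropping the last coordinate (which is constantly $\binom{n+1}{2}$ on $P$), and translating by a lattice vector, one sees that $P$ is unimodularly equivalent to the simplex $\Delta\subseteq\RR^{n-1}$ with vertices $0$ and $u_j:=\sum_{p=j}^{n-1}p\,e_p$ for $1\le j\le n-1$. Since $[u_1\mid\cdots\mid u_{n-1}]$ is lower triangular with diagonal $1,2,\dots,n-1$, a facet computation gives
\[
	\Delta=\Bigl\{\,x\in\RR^{n-1}\ \Bigm|\ 0\le\frac{x_1}{1}\le\frac{x_2}{2}\le\cdots\le\frac{x_{n-1}}{n-1}\le 1\,\Bigr\},
\]
so $\ehr_P=\ehr_\Delta$, and the lattice points of $m\Delta$ correspond (via $r_p=x_p/p$) bijectively to weakly increasing sequences $0\le r_1\le r_2\le\cdots\le r_{n-1}\le m$ with $r_p\in\frac1p\ZZ$ for all $p$.

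Finally I would compute the Ehrhart data of $\Delta$ via the half-open fundamental parallelepiped of the cone over $\Delta$: writing a point there as $\lambda_0(0,\dots,0,1)+\sum_{j=1}^{n-1}\lambda_j(u_j,1)$ with $\lambda_j\in[0,1)$ and setting $S_p=\lambda_1+\cdots+\lambda_p$, its lattice points are indexed by the sequences $S_0=0\le S_1\le\cdots\le S_{n-1}$ with $S_p\in\frac1p\ZZ$ and $S_p-S_{p-1}<1$, and the ``height'' (last coordinate) of the corresponding lattice point is $\lceil S_{n-1}\rceil$. Since a half-open interval of length $1$ meets $\frac1p\ZZ$ in exactly $p$ points, there are $\prod_{p=2}^{n-1}p=(n-1)!$ such sequences (so $\Vol(\Delta)=(n-1)!$), and what remains is to show that the statistic $\lceil S_{n-1}\rceil$ on them is equidistributed with $\des$ on $\symm_{n-1}$; granting this, $h^*_\Delta(t)=A_{n-1}(t)$, and then $\ehr_\Delta(m)=(m+1)^{n-1}$ follows from the Worpitzky identity $\sum_{m\ge0}(m+1)^d t^m=A_d(t)/(1-t)^{d+1}$ (equivalently, the Eulerian-polynomial statement drops out directly from this computation). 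I expect this equidistribution to be the main obstacle: the cleanest route seems to be to encode the choices $S_2,\dots,S_{n-1}$ as a factorial-base word $(c_2,\dots,c_{n-1})$ with $0\le c_p\le p-1$ and to match this against a standard bijection between $\symm_{n-1}$ and $\prod_{p=2}^{n-1}\{0,\dots,p-1\}$ under which $\lceil S_{n-1}\rceil$ becomes the number of descents — the subtlety being that the lattices $\frac1p\ZZ$ do not nest, so the tempting coordinatewise induction on $n$ does not close up on its own. If that proves awkward, a fallback is to peel off $r_1\in\{0,\dots,m\}$ directly: as $r_1$ is an integer it lies in every $\frac1p\ZZ$, so shifting $r_p\mapsto r_p-r_1$ reduces the count to the analogous sequences $0\le r_2\le\cdots\le r_{n-1}\le M$ (with denominators $2,\dots,n-1$) for $M=m-r_1$, whereupon it suffices to show those number $(M+1)^{n-1}-M^{n-1}$ and sum over $r_1$ telescopes.
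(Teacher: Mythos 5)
Your route genuinely diverges from the paper's, but it has a gap you yourself flag. The paper identifies the vertices of $P$ via the proof of Proposition 16* in Simion--Schmidt (as the $n$ insertions of $n$ into the decreasing word), then shows $P$ is unimodularly equivalent to a specific simplex $P_n'$ whose Ehrhart polynomial $(1+m)^{n-1}$ is \emph{already known}, being quoted from \cite{BraunUnimodality} (which derives it from \cite{CLS2005}). So the heavy lifting --- actually computing the Ehrhart polynomial --- is outsourced to the literature; the proof only needs to set up the right unimodular transformation.

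Your plan, by contrast, tries to compute the Ehrhart polynomial of the reduced simplex $\Delta=\{0\le x_1/1\le x_2/2\le\cdots\le x_{n-1}/(n-1)\le 1\}$ from scratch. The vertex identification and the reduction to $\Delta$ are fine (and your clean identity $v_j-v_{j+1}=j(e_j-e_{j+1})$ is a nice touch). But the core step --- showing either that $\lceil S_{n-1}\rceil$ on the fundamental-parallelepiped points is equidistributed with $\des$ on $\symm_{n-1}$, or, in your fallback, that the sequences $0\le r_2\le\cdots\le r_{n-1}\le M$ with $r_p\in\frac1p\ZZ$ number exactly $(M+1)^{n-1}-M^{n-1}$ --- is asserted, not proved. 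You acknowledge this as ``the main obstacle,'' and the difficulty is real: precisely because $\frac1p\ZZ$ and $\frac1{p+1}\ZZ$ do not nest, the natural inductive peeling does not close up, and neither the factorial-base bijection nor the telescoping count is established. Until one of those counting arguments is actually carried through, the proof is a roadmap rather than a proof. The paper's citation-based shortcut is less illuminating but avoids this wall entirely; if you want a self-contained argument along your lines, the missing lemma is exactly the place to concentrate effort.
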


\begin{proof}
	As noted in~\cite{BraunUnimodality}, it is implied by~\cite{CLS2005} that the simplex $P_n^{\prime}$ whose vertices are the set
	\[
		L_n := \{e_n\} \cup \left\{ \sum_{j=i}^n je_j \mid i = 1, \dots, n-1 \right\}
	\]
	has Ehrhart polynomial $(1+m)^{n-1}$.
	Since the degree of the Ehrhart polynomial is the dimension of the polytope, $P_n^{\prime}$ is an $(n-1)$-dimensional simplex.
	In particular, note that each $(x_1,\dots,x_n) \in L_n$  satisfies the equation $x_n - x_{n-1} = 1$.
	So, projecting $P_n^{\prime}$ to $\RR^{n-1}$ by forgetting the last coordinate one obtains $P_n^{\dprime}$, which has the same Ehrhart polynomial as $P_n^{\prime}$.
	Transforming $P_n^{\dprime}$ by $f: x \mapsto Ax$, where $A$ is the matrix with $j$th column $e_j - e_{j+1}$ for $j = 1,\ldots,n-2$ and last column $e_{n-1}$, results in the simplex whose vertices are $0$ and
	$ie_i + \sum_{j=i+1}^{n-1} e_j$ for $i=1,\ldots, n-1$.
	
	As stated in the proof Proposition 16* from the paper of Simion and Schmidt~\cite{SimionSchmidt}, the $n$ permutations in $\Av_n(123,132,312)$ are those obtained by inserting $n$ in all possible ways (between  elements or at the beginning or end) into the decreasing sequence $n-1, n-2, \dots, 1$.
	So $f(P_n^{\dprime})$ can also be obtained from $P$ by dropping the last coordinate and translating by $-(n-1,n-2,\ldots,1)$.
	Since each of these operations is a unimodular transformation, $P$ has the same Ehrhart polynomial and $h^*$-polynomial as $P_n^{\prime}$, which are $(1+m)^{n-1}$ and $A_{n-1}(t)$, respectively.
\end{proof}

Recall that the $(n-1)$-dimensional {\em standard simplex} is the simplex $\Delta_{n-1} \subseteq \RR^n$ whose vertices are the standard basis vectors of $\RR^n$.

\begin{prop} \label{prop: 123 132 231}
	For all $n$, $P_n(123,132,231)$ is unimodularly equivalent to $\Delta_{n-1}$.
\end{prop}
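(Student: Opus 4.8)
The plan is to identify the vertices of $P_n(123,132,231)$ explicitly and show that, after a lattice translation, the affine-span lattice together with the $n$ vertices forms a unimodular simplex, i.e.\ the $n-1$ edge vectors emanating from one vertex form a $\mathbb{Z}$-basis of $(\aff P_n(\Pi))\cap\ZZ^n$ shifted to the origin. First I would recall (from the Simion--Schmidt classification, as already used in the proof of Proposition~\ref{prop: 123 132 312}) the explicit list of permutations in $\Av_n(123,132,231)$. Avoiding $123$ forces at most one ascent; combined with avoiding $132$ and $231$, one finds that $|\Av_n(123,132,231)|=n$, and the permutations are exactly those of the form $k,k-1,\dots,1,n,n-1,\dots,k+1$ for $k=0,1,\dots,n-1$ (a decreasing run followed by a decreasing run, with the second run sitting entirely above the first). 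I would verify this list avoids all three patterns and has the right cardinality $n$, so that $P=P_n(123,132,231)$ is the convex hull of these $n$ points and hence at most an $(n-1)$-simplex.

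Next I would pick the base vertex $v_0 = (1,2,\dots,n)$ (the case $k=0$, i.e.\ $n,n-1,\dots$? — one must be careful which permutation is which point; I would take $v_0$ to be the identity permutation's point, corresponding to $k=0$ giving the string $n,n-1,\dots,1$ read appropriately, or simply re-index so that $v_0=(1,\dots,n)$ is on the list). Then I would compute the $n-1$ difference vectors $v_k - v_0$. The key computation is that these differences, written as the columns of an $n\times(n-1)$ integer matrix $M$, span the sublattice $(\aff P)\cap\ZZ^n$ and that $M$ can be completed to an $n\times n$ integer matrix of determinant $\pm1$ — equivalently, that the $(n-1)\times(n-1)$ minors of $M$ have gcd $1$, and in fact one such minor equals $\pm1$. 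Because each difference vector will turn out to be (a signed permutation of) something like $e_i - e_j$ or a short $\{0,\pm1\}$ vector with a triangular structure, one should be able to row/column reduce $M$ to see a unit upper- or lower-triangular block, making the $\pm1$ minor transparent. Then the affine map $x\mapsto$ (that $\ZZ$-unimodular change of basis composed with the translation by $-v_0$) carries $P$ onto $\conv\{0,e_1,\dots,e_{n-1}\}$, which is unimodularly equivalent to the standard simplex $\Delta_{n-1}\subseteq\RR^{n-1}$, and one checks $\Delta_{n-1}\subseteq\RR^n$ has the same normalized volume / Ehrhart data up to the standard projection-to-coordinates argument used already in Proposition~\ref{prop: 123 132 312}.

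The main obstacle I anticipate is purely bookkeeping: getting the correspondence between the one-line notation of the permutations in $\Av_n(123,132,231)$ and their coordinate vectors exactly right, and then choosing the base vertex and ordering of the other vertices so that the resulting difference matrix visibly has a unit triangular form. There is no deep difficulty — unimodular equivalence of an $n$-point polytope to a simplex reduces to a determinant/gcd check on a small integer matrix — but the pattern-avoidance side requires one genuine verification step (that the list of $n$ permutations is complete), for which I would lean on the cited classification rather than re-derive it. Once the vertex list is pinned down, I expect the matrix to reduce to something like the lower-triangular matrix with $1$'s on and below the diagonal (whose determinant is $1$), so the equivalence with $\Delta_{n-1}$ falls out immediately, and the proof is short.
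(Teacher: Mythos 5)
Your plan is the right shape (list the $n$ vertices, pick a base vertex, show the edge-difference vectors span the affine lattice unimodularly, then exhibit an explicit affine-unimodular map), and it is essentially the same approach the paper takes. However, there is a genuine error in the one step you chose to lean on as a black box: your vertex list is wrong. The permutations of the form $k,k-1,\dots,1,n,n-1,\dots,k+1$ (two decreasing runs with the second lying entirely above the first) are the elements of $\Av_n(123,231,312)$, not $\Av_n(123,132,231)$. Concretely, for $n=4$ and $k=2$ your list contains $2143$, which has the subsequence $2,4,3$ standardizing to $132$, so $2143\notin\Av_4(123,132,231)$. The correct Simion--Schmidt description for $\Av_n(123,132,231)$ is $\sigma=n,n-1,\dots,k+1,k-1,\dots,1,k$ for $1\le k\le n$ --- i.e., take the decreasing word, delete one letter $k$, and append it at the end. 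Relatedly, the identity permutation $1,2,\dots,n$ contains $123$ for $n\ge 3$, so it is never in $\Av_n(123,132,231)$ and cannot serve as your base vertex $v_0$; the point $(1,2,\dots,n)$ simply is not a vertex of the polytope.

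With the corrected vertex set the rest of your argument does work and matches the paper's proof, which writes down an explicit affine map $f(x)=Ax-v$ with integer $A$ of determinant $\pm1$ sending $P_n(123,132,231)$ onto $\Delta_{n-1}$. One small point of care that you flag only tentatively: the paper takes $\Delta_{n-1}\subseteq\RR^n$ to be $\conv\{e_1,\dots,e_n\}$, so no coordinate projection or drop in ambient dimension is needed --- you can (and should) build a genuine $n\times n$ unimodular matrix rather than appealing to a projection argument as in Proposition~\ref{prop: 123 132 312}. Once you fix the vertices and choose, say, $v_0$ corresponding to $k=n$ (i.e.\ $\sigma=n-1,n-2,\dots,1,n$), the difference matrix does indeed reduce to a unit triangular form, and the equivalence follows.
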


\begin{proof}
Again from the proof of \cite[Proposition 16*]{SimionSchmidt} we see that the elements of $\Av_n(123,132,231)$ are exactly the permutations of the form
\[
	\sig = n, n-1, \dots, k+1, k-1, k-2, \dots, 1, k
\]
for $1\le k\le n$.
Consider the transformation $f: \RR^n \to \RR^n$, defined by $f(x) = Ax - v$, where column $i$ of $A$ is $e_{n-i} - e_{n-i+1} + e_n$ for $1 \leq i < n$, column $n$ of $A$ is $e_n$, and $v = (1,\dots,1,\binom{n}{2})^{T}.$
It is straightforward to check that $\det A = (-1)^{n-1}$, so that $f$ is a unimodular transformation, and that $f(P_n(123,132,231)) = \Delta_{n-1}$.
\end{proof}

\begin{prop}\label{prop: 123 231 312}
	For all $n$, $P_n(123,231,312)$ is a simplex with normalized volume $n^{n-2}$.
\end{prop}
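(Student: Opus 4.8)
The plan is to identify the vertices of $P = P_n(123,231,312)$ explicitly, observe there are exactly $n$ of them, and then evaluate the normalized volume as a single determinant, which will simultaneously establish affine independence.

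First I would pin down the avoidance class, as in the proof of Proposition~16* of~\cite{SimionSchmidt}. (Directly: if $n$ occupies position $j$ in $\sigma\in\Av_n(123,231,312)$, then avoiding $123$ forces the entries before $n$ to be decreasing, avoiding $312$ forces the entries after $n$ to be decreasing, and avoiding $231$ forces every entry before $n$ to be smaller than every entry after $n$; these constraints determine $\sigma$ once $j$ is fixed.) One concludes that $\Av_n(123,231,312)$ consists precisely of the $n$ permutations
\[
\sigma^{(j)} = (j-1,\, j-2,\, \dots,\, 1,\, n,\, n-1,\, \dots,\, j+1,\, j), \qquad j = 1, \dots, n.
\]
Since $P$ is the convex hull of a subset of the vertices of $P_n$, each $\sigma^{(j)}$ is automatically a vertex of $P$; so once the $\sigma^{(j)}$ are shown to be affinely independent, $P$ is an $(n-1)$-simplex with these $n$ vertices.

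The key point is that consecutive vertices differ very simply: writing $\mathbf 1 = (1,\dots,1)$ and comparing coordinates, one checks that
\[
\sigma^{(j)} - \sigma^{(j+1)} = n\,e_j - \mathbf 1 \qquad (1 \le j \le n-1).
\]
Since $P$ lies in the hyperplane $\sum x_i = \binom{n+1}{2}$ and the projection onto the first $n-1$ coordinates is a lattice isomorphism $\{x\in\ZZ^n : \sum x_i = 0\}\xrightarrow{\ \sim\ }\ZZ^{n-1}$, the normalized volume of the simplex equals $|\det|$ of the $(n-1)\times(n-1)$ matrix whose columns are the first $n-1$ coordinates of the edge vectors $\sigma^{(1)}-\sigma^{(n)},\dots,\sigma^{(n-1)}-\sigma^{(n)}$ emanating from $\sigma^{(n)}$. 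Replacing these columns by the consecutive differences $\sigma^{(j)}-\sigma^{(j+1)}$ via elementary column operations leaves the determinant unchanged, so $\Vol(P) = |\det(n I_{n-1} - J_{n-1})|$, where $J_{n-1}$ is the all-ones matrix. Since $J_{n-1}$ has eigenvalue $n-1$ once and $0$ with multiplicity $n-2$, the matrix $nI_{n-1}-J_{n-1}$ has eigenvalue $1$ once and $n$ with multiplicity $n-2$, giving $\det(nI_{n-1}-J_{n-1}) = n^{n-2}$. This is nonzero, which confirms both that the $\sigma^{(j)}$ are affinely independent (hence $P$ is a simplex) and that $\Vol(P)=n^{n-2}$.

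I do not expect a serious obstacle here: once the vertex set is correctly identified, the argument reduces to the one-line identity $\sigma^{(j)}-\sigma^{(j+1)}=ne_j-\mathbf1$ and the evaluation of $\det(nI-J)$. The only place needing a little care is the lattice normalization: because $P$ is not full-dimensional, the volume is taken relative to $\aff(P)\cap\ZZ^n$, which is exactly why one should project to $n-1$ coordinates rather than manipulate an $n\times n$ determinant directly.
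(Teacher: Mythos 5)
Your proof is correct. The vertex description, the avoidance argument, the identity $\sigma^{(j)}-\sigma^{(j+1)}=ne_j-\mathbf 1$, the reduction to the projected determinant $\det(nI_{n-1}-J_{n-1})$, and the eigenvalue evaluation are all right; and the lattice normalization via the projection $\{x\in\ZZ^n:\sum x_i=0\}\to\ZZ^{n-1}$ is exactly the correct way to handle the relative volume.

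The paper's proof reaches the same determinant value by a different route: it applies an explicit unimodular affine map $f(x)=Ax+v$ (with $A$ built column by column) to carry the polytope to $\conv\bigl(\{0\}\cup\{ne_i+(n-i)e_n : 1\le i<n\}\bigr)$, identifies a $\ZZ$-basis of the ambient hyperplane's lattice, and then reads $n^{n-2}$ off a nearly triangular matrix. You instead never leave the original coordinates: the observation that consecutive vertices differ by $ne_j-\mathbf 1$ makes the edge matrix equal to $nI_{n-1}-J_{n-1}$ after the lattice projection, and the eigenvalues of $J$ finish it. Your version has two advantages: it does not require conjuring an auxiliary unimodular transformation (the one place in the paper's proof that a reader has to take on faith until checking), and the appearance of $nI-J$ ``explains'' the answer $n^{n-2}$ structurally rather than as the output of a cofactor expansion. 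The paper's version in turn exhibits a concrete standard form of the simplex, which is occasionally useful if one wants to do more with it, but for this statement your argument is the cleaner one.
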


\begin{proof}
Using the proof of Proposition 16* in~\cite{SimionSchmidt} again, the elements of $\Av_n(123,231,312)$ are
\[
	\sig= k, k-1,\dots, 1, n, n-1, \dots, k+1
\]
for $1\le k\le n$.
Consider the transformation $f: \RR^n \to \RR^n$, defined by $f(x) = Ax + v$, where the first column of $A$ is $-e_1-e_n$, column $i$ of $A$ is $e_{i-1} - e_i$ for $1 < i < n$, column $n$ of $A$ is $e_{n-1}$, and $v = (1,\dots,1,n).$
It is easy to see that $\det A = (-1)^n$, so $f$ is a unimodular transformation.
Moreover, 
\[
	P' = f(P_n(123,231,312)) = \conv(\{0\}\cup\{ne_i + (n-i)e_n \mid 1 \leq i < n\}).
\]
So, $P_n(123,231,312)$ is a simplex lying in the hyperplane $H$ determined by the equation
\[
	(n-1)x_1 + (n-2)x_2 + \dots + x_{n-1} - nx_n = 0.
\]
The lattice $H \cap \ZZ^n$ has a $\ZZ$-basis 
\[
	\{e_i - (n-i)e_{n-1} \mid 1 \leq i \le n-2\} \cup \{ne_{n-1} + e_n\}.
\]
Therefore, the normalized volume of $P'$ is the same as the normalized volume of the polytope whose vertices are 
the coordinate vectors of the vertices of $P'$ expressed in this basis.
These vertices are $0$, $e_{n-1}$, and $ne_i + (n-i)e_{n-1}$ for $1 \leq i \leq n-2$.
Thus, the normalized volume is
\[
	\det [ne_1 + (n-1)e_{n-1},\  ne_2 + (n-2)e_{n-1},\ \dots,\  ne_{n-2} + 2e_{n-1},\  e_{n-1}] = n^{n-2},
\]
as desired.
\end{proof}

\begin{prop}\label{prop: 132 213 231}
	For all $n$, $P_n(132, 213, 231)$ is a simplex with normalized volume $(n-1)!$.
\end{prop}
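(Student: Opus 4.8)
The plan is to follow the same template used in Propositions~\ref{prop: 123 132 231} and~\ref{prop: 123 231 312}: first extract an explicit description of the permutations in $\Av_n(132,213,231)$ from the proof of Proposition~16* in~\cite{SimionSchmidt}, then exhibit a concrete affine transformation $f:\RR^n\to\RR^n$ with integer matrix of determinant $\pm1$ that carries $P_n(132,213,231)$ onto a simplex whose normalized volume is visibly $(n-1)!$. The first step should be essentially immediate: the avoidance class $\Av_n(132,213,231)$ has exactly $n$ elements (as do all the triple-pattern classes treated here), and the cited source lists them. I expect the elements to be, up to the symmetries of Proposition~\ref{prop: Pn equivalence}, the permutations of the form $\sig = k, k+1,\dots,n,k-1,k-2,\dots,1$ or a similar ``one increasing run followed by one decreasing run'' family indexed by $1\le k\le n$; the exact form is whatever Simion--Schmidt's argument produces.

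Given those $n$ vertices $v_1,\dots,v_n\in\ZZ^n$, the second step is to recognize that $P=P_n(132,213,231)$ is an $(n-1)$-dimensional lattice simplex lying in the hyperplane $\sum x_i=\binom{n+1}{2}$. Because it is a simplex, its normalized volume is computed by a single determinant: pick a base vertex, say $v_n$, form the emanating edge vectors $v_i-v_n$ for $i=1,\dots,n-1$, express them in a $\ZZ$-basis of the lattice $(\aff P)\cap\ZZ^n$, and take the absolute value of the determinant of the resulting $(n-1)\times(n-1)$ integer matrix. Equivalently, following the bookkeeping in the proof of Proposition~\ref{prop: 123 231 312}, one applies a unimodular map $f(x)=Ax+v$ so that $f(P)$ becomes a simplex with one vertex at the origin and the others at easily-read lattice points; then $\Vol f(P)$ is the determinant of the matrix whose columns are the nonzero vertices, and this should telescope to $(n-1)!$.

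The only genuine content is choosing the transformation $A$ correctly and checking $\det A=\pm1$ together with the identification of $f(P)$; this is the same style of routine-but-fiddly linear algebra as in the two preceding proofs, and I anticipate the final determinant evaluating to $(n-1)!$ either by a direct row/column reduction (the matrix should be triangular or nearly so after the right change of basis) or by recognizing it as $\det$ of a matrix with a single $n$-th power absent — in contrast to Proposition~\ref{prop: 123 231 312} where all $n$ coordinates of the ``long run'' contribute a factor $n$, here the run structure forces the product $1\cdot 2\cdots(n-1)$ instead. The main obstacle is purely clerical: getting the columns of $A$ and the translation vector $v$ consistent with whichever explicit list of avoiders one imports, so that $f(P)$ has the clean vertex set claimed. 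There is no conceptual difficulty, and no step should require more than the verification that a small explicit integer matrix has determinant $\pm1$ and sends the listed vertices where claimed.
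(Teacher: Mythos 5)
Your plan matches the paper's proof almost exactly: cite Simion--Schmidt's Proposition~16* for the explicit $n$-element list, apply a unimodular map to move the simplex to a convenient position, and evaluate a single determinant to read off $(n-1)!$. One small correction to your guess at the vertex set: the avoiders of $\{132,213,231\}$ are $\sig = n,\,n-1,\dots,k+1,\,1,2,\dots,k-1,\,k$ (a decreasing run followed by an increasing run), not increasing-then-decreasing as you hypothesized; with that form, the paper's transformation $f(x)=Ax$ (row 1 of $A$ all ones, row 2 equal to $e_n$, row $i$ equal to $e_{i-1}-e_i$ for $2<i\le n$) followed by projection onto the last $n-1$ coordinates and a lattice translation produces a simplex whose nonzero vertex matrix is diagonal with determinant $\pm(n-1)!$, exactly the telescoping you anticipated.
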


\begin{proof}
Again,  the proof of \cite[Proposition 16*]{SimionSchmidt} shows that the elements of $\Av_n(132,213,231)$ are exactly the permutations of the form
\[
	\sig = n, n-1, \dots, k+1, 1, 2, \dots, k-1, k
\]
for $1\le k\le n$.
Consider the transformation $f: \RR^n \to \RR^n$, defined by $f(x) = Ax$, where row $1$ of $A$ is the all-ones vector, row $2$ of $A$ is $e_n$, and row $i$ for $2 < i \leq n$ is $e_{i-1} - e_i$.
It is routine to verify (say, by cofactor expansion along row $2$) that $\det A = (-1)^n$, hence $f$ is a unimodular transformation.
Also, it is straightforward to check that $f(P_n(132,213,231))$ lies on the hyperplane $x_1 = \binom{n+1}{2}$, so that projecting $f(P_n(132,213,231))$ onto its last $n-1$ coordinates results in a polytope $P'_n$ with the same normalized volume.
Furthermore, the translation $P'_n + v$, where $v = (-n,1,\dots,1)^T \in \RR^{n-1}$, gives a polytope whose vertices are $v_0,\dots,v_{n-1}$, where $v_0$ is the origin, $v_1 = -e_1$, and
\[
	v_i = -ie_1 + (n-i+1)e_i + 2\sum_{j = 2}^{i-1} e_j
\]
for $i = 2,\dots,n-1$.
The matrix $M$ with  columns  $v_1,\dots,v_{n-1}$  is diagonal with $|\det M| = (n-1)!$ which simultaneously proves that $P_n(132,213,231)$ is a simplex and has the correct normalized volume.
\end{proof}

Although we have proven that $P_n(132,213,231)$ has the same combinatorial structure and volume as $P_n(123,132,312)$, these two polytope are not unimodularly equivalent.
This can be seen from comparing their Ehrhart polynomials, which are distinct; some of the Ehrhart polynomials of $P_n(132,213,231)$ for small $n$ are given in Table~\ref{tab: 132 213 231}.
There does not appear to by any obvious formula for their coefficients.

\begin{table}
\begin{tabu}{|c|c|} \hline
$n$ & Ehrhart polynomial of $P_n(132,213,231)$ \\ \hline
$3$ & $1 + 2m + m^2$ \\
$4$ & $1 + 2m + 2m^2 + m^3$ \\
$5$ & $1 + \frac{7}{3}m + 3m^2 + \frac{8}{3}m^3 + m^4$ \\
$6$ & $1+3m + \frac{9}{2}m^2 + \frac{9}{2}m^3 + 3m^4 + m^5$ \\
$7$ & $1 + \frac{46}{15}m + \frac{31}{6}m^2 + \frac{41}{6}m^3 + \frac{19}{3}m^4 + \frac{18}{5}m^5 + m^6$ \\
$8$ & $1+\frac{91}{30}m + \frac{19}{3}m^2 + \frac{125}{12}m^3 + \frac{35}{3}m^4 + \frac{171}{20}m^5 + 4m^6 + m^7$ \\ \hline
\end{tabu}
\caption{The Ehrhart polynomial of $P_n(132,213,231)$ for $3 \leq n \leq 8$.}\label{tab: 132 213 231}
\end{table}

The last result of this section is included for completeness.

\begin{prop}\label{prop: line segments}
	For all $n$, the polytopes $P_n(123, 132,213,231)$, $P_n(123, 132, 231, 312)$, and $P_n(132, 213, 231, 312)$ are line segments.
\end{prop}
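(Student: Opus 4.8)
The plan is to reduce the statement to a counting fact. A nonempty polytope is a line segment exactly when it has precisely two vertices, and since distinct permutations of $\symm_n$ correspond to distinct points of $\RR^n$, the vertices of $P_n(\Pi)$ are in bijection with $\Av_n(\Pi)$ (recall that every element of $\Av_n(\Pi)$ is automatically a vertex, being already a vertex of $P_n$). So it suffices to prove that $|\Av_n(\Pi)| = 2$ for each of the three sets $\Pi$ in the statement and every $n \ge 2$; for $n = 1$ the polytope degenerates to a point.

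To carry out the count I would not start from scratch but intersect with avoidance classes already described in this section. Each four-element set here contains a three-element set whose avoidance class was given explicitly above: $\{123,132,213,231\}$ contains $\{123,132,231\}$ (Proposition~\ref{prop: 123 132 231}), $\{123,132,231,312\}$ contains $\{123,231,312\}$ (Proposition~\ref{prop: 123 231 312}), and $\{132,213,231,312\}$ contains $\{132,213,231\}$ (Proposition~\ref{prop: 132 213 231}). In each of those propositions the avoidance class of the three-pattern set is a one-parameter family of $n$ permutations indexed by $k \in \{1,\dots,n\}$, each built from two monotone runs. Hence $\Av_n$ of the four-pattern set is obtained by keeping only those members of the family that also avoid the remaining pattern (namely $213$, $132$, and $312$, respectively), and the final step is just to see which $k$ survive. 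A short look at the two runs shows that in the first case exactly $k = 1,2$ work (the decreasing permutation and the one obtained from it by transposing the last two entries), in the second case exactly $k = n-1,n$, and in the third case exactly $k = 1,n$ (the decreasing and the increasing permutations); an equally short alternative is to quote the enumeration of $\Av_n(\Pi)$ for all $\Pi \subseteq \symm_3$ from the proof of Proposition~16* of Simion and Schmidt~\cite{SimionSchmidt}.

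There is no genuine obstacle in this argument; the only points that need a little care are the endpoint values of $k$, where one of the two runs is empty, and the small cases $n = 2, 3$, which are immediate by hand.
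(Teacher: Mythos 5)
Your reduction to the cardinality statement $|\Av_n(\Pi)|=2$ is exactly what the paper does, and your argument is correct; the difference lies in how the count is obtained. The paper simply cites Proposition~17 of Simion and Schmidt, which treats four-element subsets of $\symm_3$ directly, whereas you derive the count from the three-pattern avoidance classes already described explicitly in Propositions~\ref{prop: 123 132 231}, \ref{prop: 123 231 312}, and \ref{prop: 132 213 231} and then intersect with the remaining pattern. Your approach is more self-contained (it reuses material that the paper has already worked out) and gives the reader a concrete picture of which two permutations survive in each case; the paper's is terser. One small slip: the ``direct citation'' alternative you offer points to Proposition~16* of Simion--Schmidt, but that proposition handles three-element subsets; the four-element count the paper actually uses is their Proposition~17. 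Your intersection computations themselves check out: for $\{123,132,213,231\}$ only $k=1,2$ survive the added pattern $213$ (since for $k\geq 3$ the subsequence $k-1,\,k-2,\,k$ is a $213$, and for $k=n$ the subsequence $n-1,\,n-2,\,n$ is as well); for $\{123,132,231,312\}$ only $k=n-1,n$ survive $132$ (since for $k\leq n-2$ the subsequence $1,\,n,\,k+1$ is a $132$); and for $\{132,213,231,312\}$ only $k=1,n$ survive $312$ (since for $2\leq k\leq n-1$ the subsequence $k+1,\,1,\,2$ is a $312$).
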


\begin{proof}
	By~\cite[Proposition 17]{SimionSchmidt}, $|\Av_n(\Pi)|=2$ for each of these $\Pi$.
	The claim follows immediately.
\end{proof}


\section{The Birkhoff Polytope}\label{sec:birkhoff}

We come now to our second mixing of polytopes and avoidance classes of permutations by generalizing the Birkhoff polytope $B_n$ in the following way.

\begin{defn}
	Let $\Pi$ be any set of permutations.
	The {\em $\Pi$-avoiding Birkhoff polytope} is
	\[
		B_n(\Pi) := \conv\{M \in \RR^{n\times n}\ |\ M \text{ is the permutation matrix for some } \sig \in \Av_n(\Pi)\}.
	\]
\end{defn}

Despite its simple description, the Birkhoff polytope has shown a  reluctance to provide researchers with information about certain elements of its structure.
For example, although its $h^*$-vector is known to be symmetric and unimodal~\cite{AthanasiadisBirkhoff}, its volume is only known for $n \leq 10$~\cite{beckpixton}.

Studying variations of the Birkhoff polytope is not uncommon.
For example, {\em permutation polytopes}, subpolytopes of $B_n$ whose vertices form a subgroup of $\symm_n$,
have been studied by, for example, Burggraf, De Loera, and Omar~\cite{DeLoeraOmarBurggraf}, who studied their volumes, and Onn~\cite{Onn}, who studied their low-dimensional skeletons and combinatorial types.
Another important variation is the class of \emph{transportation polytopes}, in which row and column sums may be numbers other than $1$, and two rows or columns do not necessarily need to sum to the same value.
See \cite{DeLoeraKim} for a nice survey of these polytopes.

For compatibility with diagrams of permutations, we will henceforth use the nonstandard convention of indexing our matrices using Cartesian coordinates, using the convention for permutation diagrams.  So if $M=(m_{x,y})$ is a matrix then $m_{x,y}$ refers to the entry which is in the $x$th column from the left and $y$th row from the bottom.   By way of illustration, in a $3\times3$ matrix we would have
\[
	M=\begin{bmatrix}
		m_{1,3} & m_{2,3} & m_{3,3}\\
		m_{1,2} & m_{2,2} & m_{3,2}\\
		m_{1,1} & m_{2,1} & m_{3,1}
	\end{bmatrix}
\]

If $\sigma \in \symm_n$ is a permutation and we refer to its matrix, we mean the permutation matrix $(m_{x,y}) \in \RR^{n \times n}$ such that $m_{x,y} = 1$ if and only if $(x,y)$ is in the diagram of $\sigma$.
For example, if $\sigma=132$ then the corresponding matrix is
\[
	\begin{bmatrix}
		0 & 1 & 0\\
		0 & 0 & 1\\
		1 & 0 & 0
	\end{bmatrix}
\]
Note that we will use the term ``main diagonal" to refer to the longest diagonal going from northwest to southeast in a square matrix, the same as when normal matrix coordinates are used.

Similarly to $P_n(\Pi)$, $B_n(\Pi)$ has the permutation matrices in $\Av_n(\Pi)$ as its vertices.
Aside from this, though, there is very little in common between $P_n(\Pi)$ and $B_n(\Pi)$.
One additional similarity is that,  just as in case of $P_n$ and $B_n$ themselves, $P_n(\Pi)$ is the image of $B_n(\Pi)$ via the projection 
\[
	(m_{x,y})_{1 \leq x,y \leq n} \mapsto \sum_{1 \leq x,y \leq n} m_{x,y}e_x =  (y_1,y_2,\dots, y_n)
\]
where $e_i$ denotes the $i^{\rm th}$ standard basis vector in $\RR^n$ and $y_i$ is the unique index such that $m_{x_i,y_i}=1$ for $1\le i\le n$.
However, unlike the $\Pi$-avoiding permutohedron, we will see that all trivial Wilf equivalences of permutations yield unimodular equivalences of the corresponding polytopes.

\begin{prop}\label{prop:dihedral}
	If $\Pi \subseteq \symm$, then $B_n(f(\Pi))$ is unimodularly equivalent to $B_n(\Pi)$ for any $f$ in the dihedral group of the square.
\end{prop}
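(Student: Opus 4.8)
The plan is to realize each of the eight elements of $D_4$ as an explicit affine transformation of the ambient space $\RR^{n \times n}$ of $n \times n$ matrices, check that each such transformation carries permutation matrices to permutation matrices in the prescribed way, and verify that its linear part is a permutation of coordinates (hence integral with determinant $\pm 1$). Since $D_4$ is generated by two elements — say the reflection $r_\infty$ (reversal) and the reflection $r_1$ (transpose, corresponding to $\sigma \mapsto \sigma^{-1}$) — it suffices to treat these two generators, as a composition of unimodular equivalences is again a unimodular equivalence, and $B_n(g(f(\Pi)))$ being unimodularly equivalent to $B_n(f(\Pi))$ which is unimodularly equivalent to $B_n(\Pi)$ gives the general case by induction on word length in the generators.

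First I would handle the transpose $r_1$: if $M = (m_{x,y})$ is the matrix of $\sigma$, then the transpose matrix $M^T = (m_{y,x})$ is the matrix of $\sigma^{-1}$, and since $\sigma \in \Av_n(\Pi)$ if and only if $\sigma^{-1} \in \Av_n(r_1(\Pi))$ (as noted in the preliminaries, $\sig \in \Av_n(\pi) \iff f(\sig) \in \Av_n(f(\pi))$ for any $f \in D_4$), the linear map $M \mapsto M^T$ carries the vertex set of $B_n(\Pi)$ bijectively onto the vertex set of $B_n(r_1(\Pi))$. Transposition is an involution permuting the $n^2$ coordinates $m_{x,y}$, so its matrix is a permutation matrix and the map is unimodular; taking convex hulls gives the unimodular equivalence. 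Next I would handle the reversal $r_\infty$: in our Cartesian indexing convention, reversing $\sigma$ sends the point $(x, a_x)$ of the diagram to $(n+1-x, a_x)$, so on matrices it is the coordinate permutation $m_{x,y} \mapsto m_{n+1-x,y}$ (reflection of columns), again an involution permuting the $n^2$ coordinates, hence unimodular, and again carrying vertices of $B_n(\Pi)$ to vertices of $B_n(r_\infty(\Pi))$. Taking convex hulls finishes this generator.

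With $r_1$ and $r_\infty$ in hand, every $f \in D_4$ is a word in these two reflections (indeed $D_4 = \langle r_1, r_\infty \rangle$ since the product of two reflections through lines meeting at $45^\circ$ is a rotation by $90^\circ$, which together with a reflection generates all of $D_4$), so composing the corresponding coordinate-permutation maps exhibits $B_n(f(\Pi))$ as a unimodular image of $B_n(\Pi)$. I expect the only point requiring genuine care — rather than routine verification — is bookkeeping the effect of each rigid motion on the \emph{Cartesian}-indexed matrix entries, since the paper has deliberately adopted a nonstandard indexing convention; in particular one must confirm that $r_1$ really is the transpose and $r_\infty$ really is the column-reversal in this convention, and that the complement $r_0$ (which equals the composition $R_{180} \circ r_\infty$, or directly $m_{x,y} \mapsto m_{x,n+1-y}$, the row-reversal) is covered. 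None of these involve a computation longer than identifying which coordinate permutation a given reflection induces, so there is no substantive obstacle; the argument is essentially the observation that $D_4$ acts on $\RR^{n\times n}$ by signed coordinate permutations (in fact unsigned ones) preserving the class of permutation matrices.
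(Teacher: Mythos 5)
Your proposal is correct and follows essentially the same approach as the paper: both identify the action of $D_4$ on diagrams with a permutation of the $n^2$ matrix coordinates, which is automatically a unimodular transformation carrying the vertex set of $B_n(\Pi)$ to that of $B_n(f(\Pi))$. The paper states this in one compressed paragraph; you simply unpack it by reducing to the generators $r_1$ (transpose) and $r_\infty$ (column reversal), which is a reasonable way to make the bookkeeping explicit but is not a different argument.
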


\begin{proof}
	Because $f$ is a dihedral action on the square, there is an obvious corresponding action on the vertices of $B_n(\Pi)$ to obtain the vertices of $B_n(f(\Pi))$.
	This action is a particular permutation of the elements of each matrix, which is itself a unimodular transformation.
	Applying the action to the full polytope $B_n(\Pi)$ results in a unimodular transformation whose image is $B_n(f(\Pi))$.
\end{proof}

This characterization of unimodularly equivalent polytopes allows us to more efficiently study $B_n(\Pi)$, as did Proposition~\ref{prop: Pn equivalence}. However, as the reader will see in the following sections, the analysis of $B_n(\Pi)$ appears to be much more difficult than it was for $P_n(\Pi)$.  So we will content ourselves with describing a few special cases.

We begin again with the most natural starting point: choosing $\Pi$ to be a single element of $\symm_3$.
By Proposition~\ref{prop:dihedral}, there are only two such classes to consider, which are that of $B_n(123)$ and $B_n(132)$.
Although $123$ and $132$ are Wilf equivalent, they are not trivially Wilf equivalent and their corresponding Birkhoff polytopes are not unimodularly equivalent.
Table~\ref{tab:single element data} provides experimental data for these two polytopes when $n$ is small.

\begin{table}
\begin{tabular}{|c|c|c|c|c|} \hline
$P$ & $\dim P$ & $f$-vector of $P$ & $h^*(P)$ & $\Vol P$ \\ \hline
$B_3(123)$ & $4$ & $(1,5,10,10,5,1)$ & $(1)$ & $1$ \\
$B_4(123)$ & $9$ & $(1,14, 83, 275, 565, 752, 654, 363, 120, 20, 1)$ & $(1,4,6,4,1)$ & $16$ \\
$B_5(123)$ & $16$ & ? & ? & $13890$ \\
$B_3(132)$ & $4$ & $(1,5,10,10,5,1)$ & $(1)$ & $1$ \\
$B_4(132)$ & $9$ & $(1, 14, 85, 290, 610, 822, 714, 390, 125, 20, 1)$ & $(1,4,7,5,1)$ & $17$ \\
$B_5(132)$ & $16$ & ? & ? & $21043$ \\ \hline
\end{tabular}
\caption{Data for $B_n(123)$ and $B_n(132)$ for $n = 3, 4, 5$.}\label{tab:single element data}
\end{table}

We can say more about $B_n(\Pi)$ for certain two-element subsets $\Pi \subseteq \symm_3$.
For the first such, we recall a well-known polytope, introduced in \cite{ChanRobbinsYuen}.

\begin{defn}
	The \emph{Chan-Robbins-Yuen polytope} is the polytope in $\RR^{n\times n}$ defined as
\[
		\CRY_n :=  \conv\{ (m_{x,y}) \mid (m_{x,y}) \text{ a permutation matrix and } m_{x,y} = 0 \text{ for all } x \geq n+3-y\}.
	\]
\end{defn}

One of the most fascinating aspects of $\CRY_n$ is that its volume is known to be a product of consecutive Catalan numbers, but this fact has only been established via analytic techniques \cite{ZeilbergerProof}.
It remains an open problem to find a combinatorial proof.  In what follows, we will say that a permutation matrix $M_\sig$ {\em contains} a pattern $\pi$ if $\sig$ contains $\pi$ and similarly for other definitions from pattern theory.

\begin{prop}
	For all $n$ we have $B_n(123,213) = \CRY_n$. 
\end{prop}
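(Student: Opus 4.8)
The plan is to show that the two polytopes have the same vertex set, since both are defined as convex hulls of permutation matrices. So it suffices to prove that a permutation matrix $M_\sigma$ satisfies the support condition $m_{x,y}=0$ for all $x\geq n+3-y$ defining $\CRY_n$ if and only if $\sigma \in \Av_n(123,213)$. Unwinding the support condition in Cartesian coordinates: the forbidden cells are those with $x+y\geq n+3$, i.e.\ the positions strictly above the second superdiagonal. Equivalently, $M_\sigma$ is a vertex of $\CRY_n$ precisely when for every $i$, the value $\sigma(i)$ lies in column $i$ with $i + \sigma(i) \leq n+2$; writing this out, $\sigma(i) \leq n+2-i$ for all $i$, and we must translate this numerical constraint into the pattern-avoidance statement.

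First I would identify $\Av_n(123,213)$ combinatorially. A permutation avoids both $123$ and $213$ exactly when it has no two entries to the left of some larger entry — that is, every entry has at most one entry before it that is smaller than some later entry; more precisely, avoiding $\{123,213\}$ means there is no index $k$ with two earlier positions $i<j<k$ such that $\sigma(i),\sigma(j) < \sigma(k)$ (the patterns $123$ and $213$ together forbid exactly the configuration ``two entries, in either relative order, both below a later entry''). This forces a rigid structure: reading left to right, once we have placed entries, the next entry $\sigma(k)$ can be larger than at most one previously placed entry. I expect this to be equivalent to saying $\sigma(k) \leq (\text{the second-smallest value not yet used}) $ at each step, or cleanly: $\sigma$ avoids $\{123,213\}$ iff $\sigma(i) \leq n+2-i$ for all $i$. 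I would prove this equivalence by a direct induction on position, or by counting: it is classical that $|\Av_n(123,213)| = 2^{n-1}$, and one checks the set of permutations with $\sigma(i)\leq n+2-i$ for all $i$ also has size $2^{n-1}$ (each $\sigma(i)$ after fixing earlier choices has exactly two legal values once the structure is pinned down), then shows one inclusion directly.

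The main steps in order: (1) translate the $\CRY_n$ support condition into the inequality $\sigma(i) + i \leq n+2$ for all $i$ using the Cartesian indexing convention; (2) show every $\sigma \in \Av_n(123,213)$ satisfies this inequality — if some $\sigma(i) \geq n+3-i$, then among the $i-1$ positions before $i$ and the values less than $\sigma(i)$, a pigeonhole/counting argument produces two entries before position $i$ that are both smaller than $\sigma(i)$, giving a $123$ or $213$ pattern; (3) conversely, show every permutation matrix supported in the $\CRY_n$ region avoids $\{123,213\}$ — if $\sigma$ contained such a pattern with top entry at position $k$ and value $v=\sigma(k)$, count that there are at least two values below $v$ occurring before position $k$, hence at most $v-2$ values below $v$ occur at or after position $k$, which combined with the support constraint on those later positions forces $k + v \geq n+3$, a contradiction; (4) conclude the vertex sets coincide, hence the polytopes are equal.

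The main obstacle will be step (2)/(3): making the counting argument that links the local pattern condition to the global diagonal support condition fully rigorous. The cleanest route is probably to prove $\Av_n(123,213) = \{\sigma : \sigma(i) \leq n+2-i \text{ for all } i\}$ as a lemma first, perhaps by establishing that both sides are exactly the permutations whose diagram, read left to right, inserts each new entry either as a new maximum or immediately below the current maximum among unused values — a structural description analogous to Lemma~\ref{lem:132,312} — and this description is transparently equivalent to both the avoidance condition and the inequality. Once that lemma is in hand, the equality of polytopes is immediate.
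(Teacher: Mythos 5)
Your overall strategy matches the paper's exactly: show that the permutation matrices $M_\sigma$ indexing the vertices of $\CRY_n$ (those with support confined to $x+y\leq n+2$) and those indexing the vertices of $B_n(123,213)$ (permutations avoiding $\{123,213\}$) coincide, by proving the two defining conditions equivalent. Your step~(2) is the same pigeonhole count the paper uses: if some $1$ sits at $(x,y)$ with $x+y\geq n+3$, then of the $y-1$ ones in lower rows at most $n-x\leq y-3$ can be in columns to the right, so at least two must be to the lower-left, yielding a $123$ or $213$ pattern.

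The converse (your step~(3)) is where you diverge from the paper, and your sketch has a gap. The paper takes $m_{x,y}=1$ to be the \emph{rightmost} $1$ participating in any $123$ or $213$ pattern; this single choice forces \emph{every} $1$ in a column to the right of $x$ to lie in a row below $y$, which makes the count $n-x\leq(y-1)-2$ immediate without invoking the support condition at any other position. You instead try to deduce $k+v\geq n+3$ by combining the bound on values below $v$ after position $k$ with the support constraint at later positions. As written this does not quite work: first, ``at most $v-2$'' should be ``at most $v-3$'' (there are $v-1$ values below $v$, at least two before position $k$, and $\sigma(k)=v$ itself is not below $v$); second, the claimed deduction requires a case split you have not spelled out (roughly, positions $p>k$ with $\sigma(p)>v$ are forced into $\{k+1,\dots,n+1-v\}$, which has $\leq\max(0,n+1-v-k)$ elements, and one must separately compare this bound with the lower bound of $n-k-v+3$ such positions in the cases $k<n+1-v$ and $k\geq n+1-v$). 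This can be completed, but it is substantially more delicate than what is sketched; the missing clean idea is the paper's rightmost-occurrence trick, which should be your first move.
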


\begin{proof}
	To establish the equality, we will show that the polytopes have the same vertex sets.
In fact, we prove the contrapositive: $M_\sig$ contains a $123$ or $213$ pattern if and only if $m_{x,y}=1$ for some $x\ge n+3-y$.  Assume first that we have $m_{x,y}=1$ where $x\ge n+3-y$.  The number of ones in a row below row $y$ is $y-1$.  And the number of ones in a column to the right of column $x$ is $n-x\le y-3$.  It follows that there must be at least two ones below and to the left of $m_{x,y}$.  Thus these three ones form a copy of $123$ or $213$.

For the converse, let $m_{x,y}=1$ be the one which is furthest to the right in any copy of $123$ or $213$.  It follows that all ones to the right of $m_{x,y}$ must be in lower rows, else $m_{x,y}$ is not rightmost.  Since we know there are at least two elements to the left of $m_{x,y}$ which are smaller, the number of columns to the right of column $x$ is bounded by the number of rows below $y$ minus $2$.  Equivalently $n-x\le y-3$ as we wished to prove.
\end{proof}

We next consider $\Pi = \{123,312\}$. First, we will need a lemma which will be helpful for a number of our results.

\begin{lemma}
\label{lem:unimod}
Suppose $n\ge d+1$ and let $P$ be a polytope in $\RR^n$ with vertices $v_1,\dots,v_{d+1}$ of the form
\begin{equation}
\label{eqn:unimod}
v_j=(\overbrace{0,\dots,0}^{j-1},1,*,\dots,*)^T
\end{equation}
for $1\le j\le d+1$ where the stars represent arbitrary integers.  Then $P$ is unimodularly equivalent to $\Delta_d$.

\end{lemma}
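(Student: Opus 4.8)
The plan is to exhibit an explicit affine unimodular map carrying the vertices $v_1,\dots,v_{d+1}$ to a standard lattice presentation of $\Delta_d$. The shape \eqref{eqn:unimod} is a column-echelon form, so the natural first move is to use the ``staircase'' of leading ones to triangularize things. Concretely, I would first note that the polytope $P$ lives in the affine subspace spanned by the $v_j$, which has dimension $d$, and that after dropping all but the first $d+1$ coordinates (the entries in rows $d+2,\dots,n$ are determined by affine dependence, or can simply be discarded since we only care about unimodular equivalence of the lattice polytope, using the projection argument already used repeatedly in Section~\ref{sec:permutohedra}) we may assume $n=d+1$. Thus $v_1,\dots,v_{d+1}$ become the columns of a $(d+1)\times(d+1)$ matrix $V$ that is upper-triangular with ones on the diagonal, hence $\det V=1$.

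The second step: take the emanating vectors $v_2-v_1,\dots,v_{d+1}-v_1$ and show they form a $\ZZ$-basis of $(\aff\{v_1,\dots,v_{d+1}\})\cap\ZZ^{d+1} - v_1$, which by the characterization of unimodular simplices recalled in Section~\ref{sec:pre} is exactly the statement that $P$ is a unimodular simplex, hence unimodularly equivalent to $\Delta_d$. To see the $\ZZ$-basis claim, observe that the matrix $V$ with columns $v_1,\dots,v_{d+1}$ is unimodular, so $V$ itself maps $\ZZ^{d+1}$ onto $\ZZ^{d+1}$ and carries the standard simplex $\conv\{e_1,\dots,e_{d+1}\}$ onto $P$. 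Since $\conv\{e_1,\dots,e_{d+1}\}=\Delta_d$ and $V$ is a linear (in particular affine) map with integer matrix of determinant $\pm1$, this is precisely a unimodular equivalence $\Delta_d\cong P$.

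So the whole argument really reduces to two observations: (i) after the harmless coordinate projection we land in $\RR^{d+1}$ with the $v_j$ as columns of an integer matrix that is \emph{unipotent upper-triangular} because of the echelon pattern in \eqref{eqn:unimod}, hence has determinant $1$; and (ii) an integer matrix of determinant $\pm1$, viewed as a linear map, sends the standard simplex to a unimodularly equivalent copy. I expect essentially no obstacle here; the only point requiring a sentence of care is justifying that the projection $\RR^n\to\RR^{d+1}$ onto the first $d+1$ coordinates restricts to a bijection $P\cap\ZZ^n \to (\text{image})\cap\ZZ^{d+1}$ compatible with dilation — that is, that it is a unimodular equivalence onto its image — which follows because the images $\bar v_1,\dots,\bar v_{d+1}$ already span a $d$-dimensional affine subspace (their differences are linearly independent, being columns of the invertible matrix minus one column), so no dimension is lost and the map on the affine hulls is a lattice isomorphism. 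With that in hand the conclusion is immediate.
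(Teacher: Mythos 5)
Your proof is correct and uses the same central observation as the paper's: the echelon shape in~\eqref{eqn:unimod} makes the $v_j$ the columns of a unimodular triangular matrix, which therefore carries the standard simplex onto $P$. The difference is procedural. You first project to the leading $d+1$ coordinates and then apply the $(d+1)\times(d+1)$ matrix $V$; the paper instead stays in $\RR^n$ and pads the $n\times(d+1)$ matrix of $v_j$'s with the columns $e_{d+2},\dots,e_n$ to produce an $n\times n$ matrix $A$ with $\det A = 1$ and $Ae_j = v_j$ for $1\le j\le d+1$, so nothing further needs to be checked. Your route is fine but adds the very point you flagged as delicate: you need the coordinate projection to restrict to a lattice isomorphism on $\aff(P)\cap\ZZ^n$, and ``no dimension is lost'' does not by itself give surjectivity onto $\aff(\bar P)\cap\ZZ^{d+1}$. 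The claim does hold here --- since $V$ is unimodular, $\{\bar v_1,\dots,\bar v_{d+1}\}$ is a $\ZZ$-basis of $\ZZ^{d+1}$, so any real combination of the $\bar v_j - \bar v_1$ landing in $\ZZ^{d+1}$ has integer coefficients, forcing the corresponding point of $\RR^n$ into the $\ZZ$-span of the $v_j - v_1$ --- but it deserved that explicit sentence rather than being waved through. One small slip: with the leading $1$ in position $j$ of column $j$ and zeros above it, $V$ is \emph{lower}-triangular, not upper-triangular (the determinant is of course still $1$).
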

\begin{proof}
Let $A$ be the square matrix whose $j$th column is $v_j$ for $1\le j\le d+1$, and is $e_j$ for $j>d+1$.  By definition of the $v_j$, we have that $A$ has a main diagonal of ones with zeros above it.  So $\det A=1$.  Also, by construction, we have $A e_j= v_j$ for $1\le j\le d+1$ and the lemma follows.
\end{proof}

\begin{prop}\label{prop:firstdimension}
	The polytope $B_n(123,312)$ is unimodularly equivalent to $\Delta_{\binom{n}{2}}$.
	Thus, for any $\Pi \subseteq \symm$ containing $123$ and $312$ we have $h^*(B_n(\Pi)) = 1$. 
\end{prop}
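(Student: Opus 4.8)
The plan is to apply Lemma~\ref{lem:unimod}: I will exhibit an explicit ordering of the $n^2$ matrix coordinates and of the vertices of $B_n(123,312)$ under which the list of vertex vectors has the staircase shape $\bigl(\overbrace{0,\dots,0}^{j-1},1,*,\dots,*\bigr)^T$ required by that lemma, with $d=\binom{n}{2}$. The first task is to describe $\Av_n(123,312)$ explicitly. If $\sigma\in\Av_n(123,312)$ and $n$ occupies position $k$, then $123$-avoidance forces $\sigma_1>\dots>\sigma_{k-1}$ and $312$-avoidance forces $\sigma_{k+1}>\dots>\sigma_n$; a short check of the patterns that straddle the letter $n$ then shows that the set $L$ of values before $n$ must be a (possibly empty) interval of integers $[a,b]\subseteq\{1,\dots,n-1\}$, the remaining values appearing after $n$ in decreasing order. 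Hence the elements of $\Av_n(123,312)$ are exactly $\sigma^{(\emptyset)}=n,n-1,\dots,1$ together with, for each $1\le a\le b\le n-1$,
\[
\sigma^{[a,b]}=b,b-1,\dots,a,\;n,\;n-1,\dots,b+1,\;a-1,\dots,1,
\]
which gives $1+\sum_{b=1}^{n-1}b=\binom{n}{2}+1$ permutations, matching $\dim\Delta_{\binom{n}{2}}+1$. (If preferred, this characterization can instead be quoted from \cite{SimionSchmidt}, in the manner used for the three-pattern classes.)

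Next I would attach to each vertex a distinguished cell of its permutation matrix: to $\sigma^{[a,b]}$ the cell $(b-a+1,a)$, which is the last cell of its decreasing ``left run'' and lies on the antidiagonal $x+y=b+1$; and to $\sigma^{(\emptyset)}$ the cell $(1,n)$. Computing the value of $\sigma^{[a',b']}$ in position $b-a+1$ directly shows that the cell $(b-a+1,a)$ carries a $1$ in the matrix of $\sigma^{[a',b']}$ precisely when $b'=b$ and $a'\le a$, and in no other vertex's matrix (in particular not in that of $\sigma^{(\emptyset)}$); similarly $(1,n)$ carries a $1$ only in the matrix of $\sigma^{(\emptyset)}$. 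Since distinct values of $b$ give distinct antidiagonals, these $\binom{n}{2}+1$ ``leading cells'' are pairwise distinct.

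Now order the vertices $v_1,\dots,v_m$ ($m=\binom{n}{2}+1$) by placing $\sigma^{(\emptyset)}$ first and then the $\sigma^{[a,b]}$ with larger $b$ first and, within a fixed $b$, smaller $a$ first; order the $n^2$ coordinates so that coordinate $j$ is the leading cell $c_j$ of $v_j$ for $j\le m$, the remaining $n^2-m$ cells listed arbitrarily afterward. By the occupancy computation, $v_j$ has a $1$ in coordinate $j$, and for $k<j$ it has a $0$ in coordinate $c_k$: either $v_k$ lies in a different $b$-block, or $v_k=\sigma^{(\emptyset)}$ (the case $k=1$), or $v_k$ is in the same block with strictly smaller $a$ (so that $a_j>a_k$, which is exactly the condition making that entry $0$). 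Thus the $v_j$ have the required staircase form, and since $n^2\ge\binom{n}{2}+1$, Lemma~\ref{lem:unimod} gives $B_n(123,312)\cong\Delta_{\binom{n}{2}}$ unimodularly. For the final sentence: if $\Pi\supseteq\{123,312\}$ then $\Av_n(\Pi)\subseteq\Av_n(123,312)$, so the vertices of $B_n(\Pi)$ form a subset of the (affinely independent) vertex set of the simplex $B_n(123,312)$; hence $B_n(\Pi)$ is a face of that simplex, i.e.\ itself a unimodular simplex (when nonempty), whose Ehrhart series is $1/(1-t)^{\dim+1}$ and so whose $h^*$-polynomial is $1$.

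I expect the main obstacle to be the explicit determination of $\Av_n(123,312)$ and then, more delicately, getting the vertex ordering in the last step to point the right way: the incidence between vertices and leading cells must be triangular in the correct direction, and the subtle point is precisely that within a block of fixed $b$ the parameter $a$ must be taken in increasing order. Once the avoidance class is in hand, everything else is a bookkeeping verification and the passage to $h^*(B_n(\Pi))=1$ is immediate from the face-of-a-simplex observation.
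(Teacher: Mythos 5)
Your proposal is correct and takes essentially the same approach as the paper: the paper also parametrizes the non-antidiagonal vertices of $B_n(123,312)$ by the "southeast-most" subdiagonal entry $(x,y)$ with $x+y\le n$ (which coincides with your "leading cell" $(b-a+1,a)$), reorders the matrix coordinates so that these leading cells come first, and invokes Lemma~\ref{lem:unimod} exactly as you do, with the same face-of-a-unimodular-simplex argument for the $h^*$ consequence. The only cosmetic differences are that the paper quotes the grid-class description from Simion--Schmidt rather than re-deriving the interval structure of the avoidance class, and that the paper places $\sigma^{(\emptyset)}=n,n{-}1,\dots,1$ last in the vertex ordering rather than first — either choice works, since the only ordering that actually matters for the triangularity is increasing $a$ within each fixed antidiagonal $b$, which you correctly identify as the delicate point.
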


\begin{proof}
	Using the proof of \cite[Proposition 13]{SimionSchmidt} and complementation, we see that the permutations $\sigma\in\Av_n(123,312)$ are exactly the elements of $\symm_n$ in the grid class of the matrix
\[
	A=\begin{bmatrix}
		0 & -1 & 0 \\
		-1 & 0 & 0\\
		0 & 0 & -1
	\end{bmatrix}.
\]
So the elements below the main diagonal of $M_{\sig}$ are precisely those corresponding to the $-1$ in the first column of $A$, and once those elements are determined the rest of $M_\sig$ is fixed.  Furthermore, if we know the coordinates of   the southeast-most $m_{x,y}=1$  in the sequence corresponding to that $-1$ in $A$, then the whole sequence is determined because it must be
\begin{equation}
\label{dec}
m_{x,y}, m_{x-1,y+1},\dots,m_{1,x+y-1}.
\end{equation}
To summarize, there is a unique vertex of $B_n(123,312)$ associated with each coordinate pair $(x,y)$ with 
$x+y\le n$, together with a last vertex corresponding to $\sig=n,n-1,\dots,1$ which has no entry below the main diagonal.

To verify the first statement of the proposition, we will use Lemma~\ref{lem:unimod}. To  bring our matrices to the form in equation~(\ref{eqn:unimod}) we reorganize the coordinates according to the map  $\RR^{n\times n}\rightarrow \RR^{n^2}$ given by
	\[
		\begin{bmatrix}
			z_{\binom{n}{2}+1} & * & * & \dots & * & * & * & * \\
			z_{n-1} & * & * & \dots & * & *  & *& *\\
			z_{2n-3} & z_{n-2} & * & \dots & * & *  & * & * \\
			z_{3n-6} & z_{2n-4} & z_{n-3} & \dots & * & *  & * &  *\\
			\vdots & \vdots & \vdots & \ddots & \vdots & \vdots &\vdots & \vdots\\
			z_{\binom{n}{2}-3} & z_{\binom{n}{2} - 7} & z_{\binom{n}{2} - 12} & \dots &z_3 & *& * & *\\
			z_{\binom{n}{2}-1} & z_{\binom{n}{2} - 4} & z_{\binom{n}{2} - 8} & \dots &z_{n+1} & z_2 & * & *\\
			z_{\binom{n}{2}} & z_{\binom{n}{2} - 2} & z_{\binom{n}{2} - 5} & \dots &z_{2n-2} & z_n & z_1 & *
		\end{bmatrix}
		\mapsto (z_1,z_2,\ldots,z_{\binom{n}{2}+1},*,\dots,*)^T,
	\]
where the coordinates with stars are rearranged in a fixed but arbitrary manner.  It is now an easy matter to verify that
the hypothesis of the lemma is satisfied  if $v_j$ is the image of the $M_\sig$ with sequence~(\ref{dec}) ending at position $z_j$ for $1\le j\le \binom{n}{2}$, and for $j=\binom{n}{2}+1$ we take $M_\sig$ to be the matrix with ones on the main diagonal.

	For the second claim, since $B_n(123,312)$ is a unimodular simplex and $B_n(\Pi^{\prime})$ is a subpolytope if $\{123,312\} \subseteq \Pi^{\prime}$, $B_n(\Pi^{\prime})$ is a face of $B_n(123,312)$.
	Thus $B_n(\Pi^{\prime})$ is a lattice simplex of some dimension $k \leq n$, and is unimodular (with respect to its affine span).
	So, using the equivalence we just established, if $\Pi^{\prime}$ contains $123$ and $312$ then $h^*(B_n(\Pi^{\prime})) = 1$.
\end{proof}

\section{The polytopes $B_n(132,312)$ and $\AltB_n(123)$}
\label{sec:132,312 and 123}

The remainder of this paper will be devoted to studying $B_n(132,312)$ and one other class of polytopes.
For this final class we will require some more definitions and notation.
We say a permutation $\sig = a_1\cdots a_n$ is {\em alternating}, or {\em up-down}, if $a_1 < a_2 > a_3 < \cdots$.
In the literature, ``alternating'' sometimes includes {\em down-up} permutations, where the previous inequalities are all reversed.
It is worth noting that alternating permutations may be expressed in terms of {\em vincular patterns}, which are patterns requiring certain elements to occur consecutively.
To indicate this, the portion of the pattern which must be consecutive is underlined. 
For example, $4261573$ contains five instances of the vincular pattern $2\underline{31}$, namely $261$, $461$, $473$, $573$, and $673$;
the subsequence $453$ is not an instance of the vincular pattern $2\underline{31}$ since $5$ and $3$ do not occur consecutively.
The study of vincular patterns was introduced in~\cite{BabsonSteingrimsson} and has since been extended to bivincular patterns, mesh patterns, and other generalizations.
We refer to~\cite{SteingrimssonSurvey} for more information about each of these avoidance classes, including assorted open problems.

Alternating permutations in $\symm_n$ are exactly the elements $\sig = a_1 \cdots a_n \in \Av_n(\underline{\varepsilon 21},\underline{123},\underline{321})$.
The ``$\varepsilon$'' at beginning of the vincular pattern denotes the ``empty permutation'' which has length $0$ and is to be treated as preceding $a_1$.
So $\sig$ containing the pattern $\underline{\varepsilon 21}$ is equivalent to $a_1>a_2$, and avoiding it forces $a_1<a_2$.
In the interest of compact notation, we will write $\AltAv_n(\Pi)$ for 
$\Av_n(\{\underline{\varepsilon 21}$,$\underline{123}$,$\underline{321}\} \cup \Pi)$ and $\AltB_n(\Pi)$ for the analogous variation of $B_n(\Pi)$.

We now introduce the final class of polytopes that we will study, $\AltB_n(123)$. 
We claim that if $n$ is even, then the number of $123$-avoiding alternating permutations is the same in $\symm_n$ and $\symm_{n-1}$.
To see this, note that in any permutation avoiding $123$ the $1$ can not be followed by two elements forming an increasing subsequence.
So if $n$ is even and $\sig = a_1a_2\cdots a_n$ is alternating and $123$-avoiding, then $a_{n-1}=1$.  Furthermore, since $\sig$ avoids $123$ and $a_{n-3}<a_{n-2}$ we must have $a_{n-2}>a_n$.
It follows that standardizing $\sig'=a_1 a_2\dots a_{n-2} a_n$ gives a bijection between the two sets of permutations in question.
Thus, the projection of $\AltB_n(123)$ to $\AltB_{n-1}(123)$, defined by dropping row $n$ and column $n-1$ of the matrices, preserves the Ehrhart polynomial.

To study the Ehrhart theory of $B_n(132,312)$ and $\AltB_n(123)$, we use the following outline:
\begin{enumerate}
	\item Let $P$ be either $B_n(132,312)$ or $\AltB_n(123)$.
	\item In Proposition~\ref{prop:unimodularity}, we will construct a set of simplices, each contained in $P$, such that each simplex $S$ is unimodular with respect to the lattice $\aff(S) \cap \ZZ^{n \times n}$.
	\item Using toric algebra, we will separately construct a triangulation of $P$ in Theorem~\ref{thm:regularity}.
	\item Finally, we will observe that the simplices from Theorem~\ref{thm:regularity} are exactly those formed in Proposition~\ref{prop:unimodularity}.
		Therefore, the triangulations obtained in step $3$ are unimodular with respect to the lattice $\aff(P) \cap \ZZ^{n \times }$. 
\end{enumerate}


\subsection{Sublattices of the Weak Order}\label{subsec:weak order}

In order to prove interesting results about the Ehrhart theory of $B_n(132,312)$ and $\AltB_n(123)$, we will first show how the polytopes may be decomposed by putting a partial order on their vertex sets.
These posets (partially ordered sets) are themselves highly structured and interact in a natural way with the geometry of the polytopes. 
We refer the reader to~\cite[Chapter~3]{StanleyVol1Ed2} for the necessary background regarding posets.

Our posets will be constructed using weak Bruhat order.  We will compose permutations from right to left.
A permutation $\sig=a_1 \dots a_n\in\symm_n$ has {\em inversion set}
\[
	\Inv(\sig) = \{ (i,j) \mid \text{$i<j$ and $a_i > a_j$}\},
\]
and {\em inversion value set}
\[
	\Invv(\sig) = \{(a_j,a_i)  \mid \text{$i<j$ and $a_i > a_j$}\}.
\]
The {\em number of inversions} of $\sig$ is $\inv(\sig) =|\Inv(\sig)|=|\Invv(\sig)|$.

The {\em right} (respectively, {\em left}) {\em weak (Bruhat) order} on $\symm_n$ is defined by the cover relations $\sig_1 \lessdot \sig_2$ if there is a simple transposition $s_i$ such that $\sig_1s_i = \sig_2$ (respectively, $s_i\sig_1 = \sig_2$) and $\inv(\sig_2) = \inv(\sig_1) + 1$. 
For example, if $\sig = 2613754$, then $\sig s_3 = 2631754$, and $s_3\sig = 2614753$ and in both cases the number of inversions increases.
The left and right weak orders are isomorphic by the order-preserving map $\sig \mapsto \sig^{-1}$, but it will be important for the reader to keep in mind the distinction between left and right in what follows.

Let $Q_n(\Pi)$ denote the poset obtained by restricting the right weak order to $\Av_n(\Pi)$. 
Similarly define $\AltQ_n(\Pi)$ for the left weak order on $\AltAv_n(\Pi)$.

If $\Pi$ is chosen arbitrarily, then there is no reason to expect these posets to have especially pleasant structure.
We will see, though, that specific choices of $\Pi$ may result in interesting classes of posets.
Figure~\ref{fig:posets} shows the posets $Q_5(132,312)$ and $\AltQ_{8}(123)$.

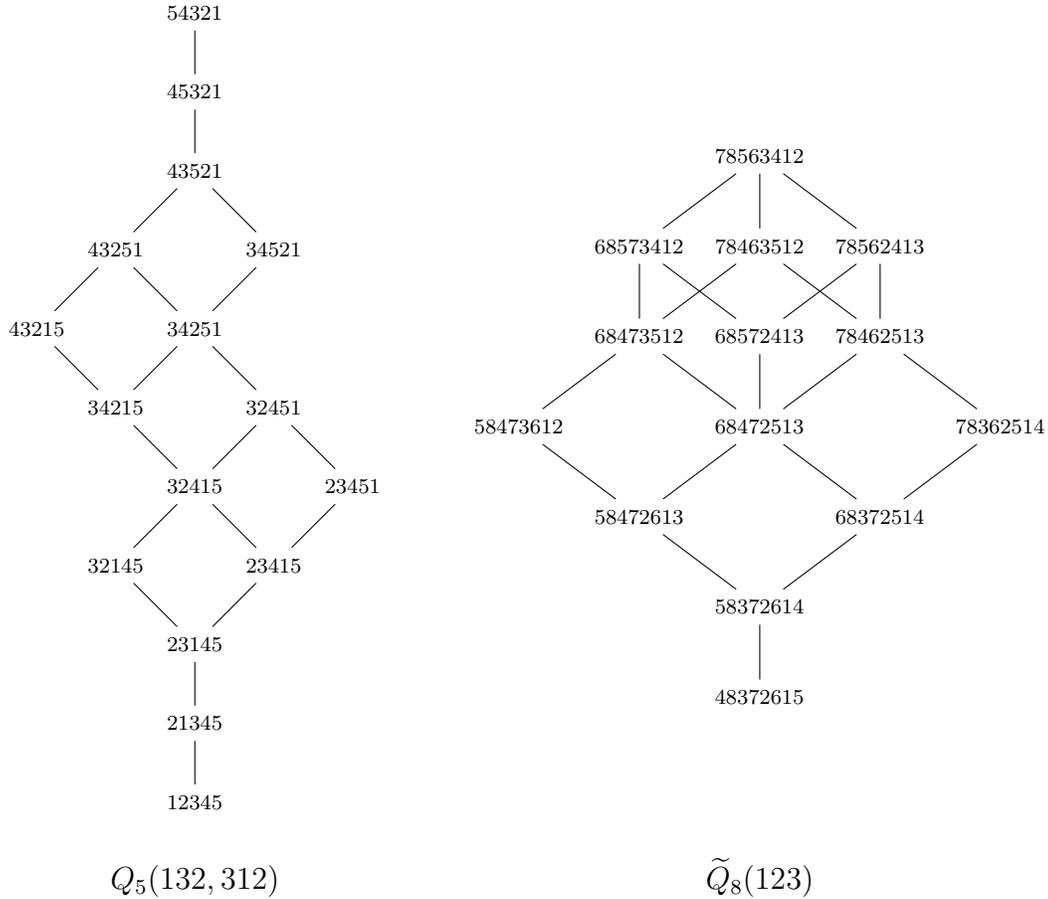
\begin{figure}
	\begin{tikzpicture}[scale=0.7]
	
		\node (A) at (0,0) {\tiny{$12345$}};
		\node (B) at (0,1.5) {\tiny{$21345$}};
		\node (C) at (0,3) {\tiny{$23145$}};
		\node (D) at (-1.5,4.5) {\tiny{$32145$}};
		\node (E) at (1.5,4.5) {\tiny{$23415$}};
		\node (F) at (0,6) {\tiny{$32415$}};
		\node (G) at (3,6) {\tiny{$23451$}};
		\node (H) at (-1.5,7.5) {\tiny{$34215$}};
		\node (I) at (1.5,7.5) {\tiny{$32451$}};
		\node (J) at (-3,9) {\tiny{$43215$}};
		\node (K) at (0,9) {\tiny{$34251$}};
		\node (L) at (-1.5,10.5) {\tiny{$43251$}};
		\node (M) at (1.5,10.5) {\tiny{$34521$}};
		\node (N) at (0,12) {\tiny{$43521$}};
		\node (O) at (0,13.5) {\tiny{$45321$}};
		\node (P) at (0,15) {\tiny{$54321$}};
		
		\draw (A) -- (B) -- (C) -- (D) -- (F) -- (H) -- (J) -- (L) -- (N) -- (O) -- (P);
		\draw (C) -- (E) -- (G) -- (I) -- (K) -- (M) -- (N);
		\draw (E) -- (F) -- (I);
		\draw (H) -- (K) -- (L);
		
		\node at (0,-1.5) {$Q_5(132,312)$};
	\end{tikzpicture} \quad\quad
	\begin{tikzpicture}[scale=0.8]
		\node (A) at (0,-1) {\tiny{$48372615$}};
		\node (B) at (0,0.5) {\tiny{$58372614$}};
		\node (C) at (2,2) {\tiny{$68372514$}};
		\node (D) at (-2,2) {\tiny{$58472613$}};
		\node (E) at (4,3.5) {\tiny{$78362514$}};
		\node (F) at (0,3.5) {\tiny{$68472513$}};
		\node (G) at (-4,3.5) {\tiny{$58473612$}};
		\node (H) at (2,5) {\tiny{$78462513$}};
		\node (I) at (0,5) {\tiny{$68572413$}};
		\node (J) at (-2,5) {\tiny{$68473512$}};
		\node (K) at (2,6.5) {\tiny{$78562413$}};
		\node (L) at (0,6.5) {\tiny{$78463512$}};
		\node (M) at (-2,6.5) {\tiny{$68573412$}};
		\node (N) at (0,8) {\tiny{$78563412$}};
		\node (O) at (0,-4) {$\AltQ_{8}(123)$};
				
		\draw (A) -- (B) -- (C) -- (E) -- (H) -- (K) -- (N) -- (M) -- (J) -- (G) -- (D) -- (B);
		\draw (C) -- (F) -- (J) -- (L) -- (N);
		\draw (D) -- (F) -- (H) -- (L);
		\draw (F) -- (I) -- (K);
		\draw (I) -- (M);
	\end{tikzpicture}
	\caption{Hasse diagrams of posets $Q_5(132,312)$ and  $\AltQ_{8}(123)$.}\label{fig:posets} 
\end{figure}

We will define two well-known posets and prove that these are isomorphic to the posets just defined.
To do so, we first need to introduce certain kinds of Young diagrams.
Given a strictly decreasing partition $\la=(\la_1,\dots,\la_l)$, its 
\emph{shifted Young diagram} is an array of boxes such that row $i$ contains $\la_i$ boxes and begins in column $i$.
Let $M(n)$ denote the poset of shifted Young diagrams with largest part at most $n$, ordered by inclusion that is, $(\la_1,\dots,\la_l) < (\la'_1,\dots,\la'_k)$ if and only if $l \leq k$ and $\la_i \leq \la'_i$ for each $i = 1,\dots,l$.
These are the posets described in Exercise~3.187(a) in~\cite{StanleyVol1Ed2} and studied using linear algebra in~\cite{Proctor}.
In particular, the previously cited exercise establishes that $M(n)$ is a distributive lattice.

For the other class of useful posets, recall that a {\em Dyck path}, $p$, of length $2k$ is a lattice path from $(0,0)$ to $(k,k)$ using steps $(1,0)$ and $(0,1)$, which never goes below the line $y = x$.
We say the steps $(1,0)$ and $(0,1)$ are  {\em east} steps and {\em north} steps, respectively.
Let $D_k$ denote the poset of Dyck paths of length $2k$, where if $d_1,d_2 \in D_k$, then 
$d_1 \le d_2$ if $d_1$ lies weakly to the right of $d_2$.
The posets $D_k$ were shown to be distributive lattices in~\cite{FerrariPinzani}.

For an arbitrary poset $P$, we denote the dual poset by $P^*$.
We may equivalently describe $D_k^*$ as the poset of (left-justified) Young diagrams fitting inside the shape $(k-1,k-2,\dots,1)$, ordering by inclusion.
This equivalence is easily seen by identifying a Dyck path with the region bounded between it, the $y$-axis, and the line $y=k$.

Before proving our isomorphisms, we should make some comments about order polytopes.
Let $Q = \{q_1,\dots,q_s\}$ be a poset, and let $\mathcal{I}_Q$ be the distributive lattice of order ideals of $Q$.
If $I \in \mathcal{I}_Q$, let $\chi_I = (\chi_I(q_1),\dots,\chi_I(q_{s}))$ where
\[
	\chi_I(q_i) = \begin{cases}
				0 & \text{ if } q_i \notin I \\
				1 & \text{ if } q_i \in I
			\end{cases}.
\]
The \emph{order polytope} of $Q$ is
\[
	\OO(Q) = \conv\{ \chi_I \in \RR^{|Q|} \mid I \in \mathcal{I}_Q\}.
\]
Using results from \cite{StanleyTwoPosetPolytopes}, if we could  show that $B_n(132,312)$ or $\AltB_n(123)$ are order polytopes, then certain triangulations, volumes, and other properties of the polytopes would follow immediately.
For example, one might try to show that $B_n(132,312)$  is unimodularly equivalent to $\OO(\Irr(M(n-1)))$, where $\Irr(M(n-1))$ is the poset of irreducibles of $M(n-1)$.
Indeed, this appears to be the case for $n \leq 5$ for $B_n(132,312)$ and $n \leq 8$ for $\AltB_n(123)$ when comparing face vectors.
However, since
$B_n(132,312) \subseteq \RR^{n\times n}$ and $\OO(\Irr(M(n-1))) \subseteq \RR^{2^{n-1}}$, for example, it is not obvious how to find  a specific unimodular equivalence.
One possible approach would be to take some subset $S \subseteq [n] \times [n]$ of size $2^{n-1}$, project $B_n(132,312)$ to $\RR^{2^{n-1}}$ onto coordinates according to the indices in $S$,
and find the reduced form of the matrix $X_S := [v_1 \, v_2 \,  \dots \, v_{2^{n-1}}]$, where $v_1,\dots,v_{2^{n-1}}$ are the projections of the vertices of $B_n(132,312)$. 
One can then check for unimodular equivalence by computing row-reduced echelon forms.
However, an exhaustive search of all possible  $S$ for small $n$ reveals  no choice that works.  It is for this reason that we have resorted to other means.

\begin{question}
Are $B_n(132,312)$ or $\AltB_n(123)$ unimodularly equivalent to order polytopes?
\end{question}

Our next result will provide isomorphisms of both $Q_n(132,312)$ and $\AltQ_n(123)$ with the lattices of certain Young diagrams.

\begin{prop}\label{prop:distributive}
	For all $n$, $Q_n(132,312) \iso M(n-1)$ and $\AltQ_n(123) \iso D_{\lceil n/2 \rceil}^*$.
	Thus, $Q_n(132,312)$ and $\AltQ_n(123)$ are distributive lattices.  Also, the covers in both posets are also covers in weak Bruhat order.
\end{prop}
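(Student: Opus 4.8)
The plan is to describe explicit bijections on vertex sets and check that they respect the relevant orders. Recall that Lemma~\ref{lem:132,312} characterizes $\Av_n(132,312)$ as the grid class of $\begin{bmatrix} 1 \\ -1 \end{bmatrix}$, i.e. a permutation $\sig=a_1\cdots a_n$ is built left to right by appending, at each step, either one more than the current maximum or one less than the current minimum. Thus such a $\sig$ is determined by the set $S\subseteq\{2,\dots,n\}$ of positions where we append a new \emph{minimum} (the remaining positions append a new maximum). The inversions of $\sig$ are exactly the pairs forcing a later-appearing element to be small: appending a new minimum at position $j$ creates exactly $j-1$ new inversions, and the right weak order covers correspond precisely to toggling one such choice while increasing $\inv$ by one. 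I would match $\sig$ with the shifted Young diagram whose rows record how far each ``new minimum'' step reaches down, showing that the componentwise order on these diagrams (membership in $M(n-1)$, which has $2^{n-1}$ elements, matching $|\Av_n(132,312)|$) agrees with right weak order; a cover in $Q_n(132,312)$ adds one box, hence is also a cover in $M(n-1)$, and conversely. Since $M(n-1)$ is a distributive lattice (Exercise~3.187(a) in~\cite{StanleyVol1Ed2}), so is $Q_n(132,312)$.

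For $\AltQ_n(123)$, I would use a parallel analysis: a $123$-avoiding alternating permutation $\sig=a_1\cdots a_n$ decomposes, via the standard decomposition of $123$-avoiders into two decreasing subsequences, into data that can be read off as a Dyck path of semilength $\lceil n/2\rceil$. Concretely, since $\sig$ avoids $123$ its values split into the left-to-right maxima and the rest, each decreasing; the alternating condition $a_1<a_2>a_3<\cdots$ pins down which positions carry maxima, so the only freedom is the interleaving of the two decreasing runs, which is exactly the combinatorial content of a Dyck path (equivalently, a Young diagram inside the staircase $(\lceil n/2\rceil-1,\dots,1)$, matching $D_{\lceil n/2\rceil}^*$). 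I would then check that applying a simple transposition $s_i$ on the \emph{left} to such a $\sig$ — which swaps the positions of the values $i$ and $i+1$ — stays inside $\AltAv_n(123)$ exactly when it corresponds to moving one box of the associated Young diagram, increasing $\inv$ by one; this gives that $\AltQ_n(123)\iso D_{\lceil n/2\rceil}^*$ as posets and that covers correspond. Since $D_k$ is a distributive lattice by~\cite{FerrariPinzani}, so is its dual, hence so is $\AltQ_n(123)$.

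The ``covers are covers in weak Bruhat order'' clause is then automatic from the construction in both cases: the cover relations of $Q_n(\Pi)$ and $\AltQ_n(\Pi)$ are by definition inherited from weak Bruhat order on $\symm_n$ (they are the restrictions of that order), so any cover in the restricted poset is an edge of the weak order; the substantive content is that no cover of the sublattice is ``long'' in weak order, which follows because under the bijections a cover corresponds to adding a single box, which in turn corresponds to a single left or right multiplication by a simple transposition raising $\inv$ by exactly one.

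The main obstacle I anticipate is the bookkeeping for $\AltQ_n(123)$: one must verify carefully that multiplication on the left by $s_i$ preserves both the $123$-avoidance and the alternating property precisely in the cases predicted by the box-moves of the staircase Young diagram, and handle the parity cases $n$ even versus odd uniformly (the reduction $\AltB_n(123)\to\AltB_{n-1}(123)$ for even $n$ discussed earlier suggests the odd case is the essential one, with $\lceil n/2\rceil$ absorbing the distinction). The $Q_n(132,312)$ half, by contrast, should be routine given the clean grid-class description from Lemma~\ref{lem:132,312}.
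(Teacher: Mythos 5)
Your high-level plan matches the paper's in spirit — encode each $\{132,312\}$-avoider by its ``new-min positions'' and read off a shifted Young diagram, and encode each alternating $123$-avoider as a shuffle of two decreasing runs to get a Dyck path — but there is a genuine gap in the way you propose to verify that the bijections are order-preserving.

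The specific false step is the assertion that ``the right weak order covers correspond precisely to toggling one such choice while increasing $\inv$ by one.'' Take $n=5$ and the cover $21345 \lessdot 23145$: the set $S$ of new-min positions changes from $\{2\}$ to $\{3\}$, which is two toggles, not one; conversely, toggling $3$ into $S(21345)=\{2\}$ produces $32145$, which lies two steps above $21345$, not one. So the map ``toggle a bit of $S$'' has nothing to do with the covers of the weak order, and the claim you lean on when saying ``a cover in $Q_n(132,312)$ adds one box, hence is also a cover in $M(n-1)$, and conversely'' is exactly what needs to be proved. The paper's argument supplies the missing mechanism: it invokes the characterization $\sigma\le\tau$ in right weak order iff $T_L(\sigma)\subseteq T_L(\tau)$ (B\"orner--Brenti), builds an explicit bijection between $T_L(\sigma)$ and the cells of the shifted Young diagram $\Des(\sigma)$, and deduces both order-preservation and the cover statement from ``$T_L(\tau)$ adds one transposition iff the diagram adds one cell.'' Something of this nature (or an equally careful direct argument that $\sigma s_i$ stays in the class and moves $\Des(\sigma)$ by one cell in the shifted-diagram order) is indispensable; without it the proof doesn't close. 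The same gap recurs in your $\AltQ_n(123)$ sketch — you correctly flag the bookkeeping as the obstacle, and the paper fills it with $T_R(\sigma)$ and the reduced inversion set $\Inv'(\sigma)$.

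Two smaller points: for a $123$-avoider the left-to-right \emph{maxima} form an increasing, not decreasing, subsequence (and a $123$-avoider has at most two of them), so you want left-to-right \emph{minima} versus the rest; for alternating $123$-avoiders these coincide with the odd/even-position split the paper actually uses. Also, the paper reduces $\AltQ_{2k}\iso\AltQ_{2k-1}$ and then works with $n$ even; your remark that ``the odd case is the essential one'' has the direction reversed, though either convention would work once the isomorphism is in hand.
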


\begin{proof}  
	First note that the statement about distributive lattices will follow immediately once we have proved the isomorphisms.

	We begin by proving that $Q_n(132,312)\iso M(n-1)$.
	Let $\Des(\sig)$ denote the descent set of $\sig = a_1\dots a_n$, namely,
	\[
		\Des(\sig) = \{i \in [n-1] \mid a_i > a_{i+1}\}. 
	\]
	Note that if $\sig, \tau \in \Av_n(132,312)$ are distinct permutations, then it follows from Lemma~\ref{lem:132,312} that $\Des(\sig) \neq \Des(\tau)$.
	Combined with the fact that $|Q_n(132,312)|=2^{n-1}=|M(n-1)|$, we have that $\Des:Q_n(132,312)\rightarrow M(n-1)$ is a bijection, where we write the descent set in decreasing order and consider it the shape of a shifted Young diagram.
	
To show that $\Des$ and its inverse are order preserving, let $t_{i,j}$ denote the transposition in $\symm_n$ which interchanges $i$ and $j$ where $1\le i < j\le n$.  Given $\sig\in\symm_n$, we consider the set 
\[
	T_L(\sig) =\{ t_{i,j} \mid \inv(t_{i,j}\sig)<\inv(\sig)\}.
\]
We are interested in $T_L(\sig)$ because of the fact~\cite[Proposition 3.1.3]{bjornerbrenti} that $\sig\le\tau$ in right weak order if and only if $T_L(\sig)\subseteq T_L(\tau)$.  It is easy to see that  
\[
	T_L(\sig)=\{ t_{i,j} \mid (i,j)\in\Invv(\sig)\}
\]
and this is the description of $T_L(\sig)$ which we will use.

Now suppose $\Des(\sig)=\lambda$  where $\sig=a_1\dots a_n$ and $\la=(\la_1,\dots,\la_l)$ is the shape of a shifted Young diagram.  We will show that there is a bijection between the $t_{i,j}\in T_L(\sig)$ and the squares of $\la$ where we index those squares using matrix coordinates and also use $\la$ to stand for the set of squares.  An example follows the proof.  Using the description of $\sig$ in Lemma~\ref{lem:132,312} we see that $k\in\Des(\sig)=\la$ if and only if $a_{k+1}$ is on the $-1$ side of the grid (where, by convention, $a_1$ is on the $+1$ side).  And in this case $a_m>a_{k+1}$ for every $m\le k$, whereas elements on the $+1$ side of the grid are the second coordinate in no inversion value pairs.  Also the elements before $a_{k+1}$ form the interval $[a_{k+1}+1,a_{k+1}+k]$.  In addition, the elements on the $-1$ side of the grid are exactly the $l$ smallest elements of $\sig$, where $l$ is the number of parts of $\la$.  So, letting $i=a_{k+1}$,
\[
	\Invv(\sig)=\{(i,j) \mid \text{$1\le i\le l$ and $i+1\le j\le i+k$}\}.
\]
Comparing this to the set of squares of $\la$ which is 
\[
	\{(i,j) \mid \text{$1\le i\le l$ and $i\le j\le i+k-1$}\}
\]
we have the obvious bijection between $T_L(\sig)\leftrightarrow\la$ given by $t_{i,j}\leftrightarrow (i,j-1)$.

We can now show that $\Des$ and $\Des^{-1}$ are order preserving. Suppose $\Des(\sig)=\la$ and $\Des(\tau)=\mu$.  Then $\sig\le\tau$ if and only if $T_L(\sig)\subseteq T_L(\tau)$.  But from the previous paragraph, this is equivalent to $\la\subseteq\tau$ as Young diagrams and that is the partial order on $M(n-1)$.  We also obtain the statement in the theorem about covers.  For, using the previous notation, we have a cover in right weak order if and only if $T_L(\tau)$ is obtained from $T_L(\sigma)$ by adding a single transposition.   But the covers in $M(n-1)$ occur precisely when $\mu$ is obtained from $\la$ by adding a single square.  By the bijection $T_L(\sig)\leftrightarrow\la$, the covers in $M(n-1)$ become covers in $Q(132,312)$.

	Showing $\AltQ_n(123) \iso D_{\lceil n/2 \rceil}^*$ requires a bit more care.
	We will first show that $\AltQ_{2k}(123) \iso \AltQ_{2k-1}(123)$ under the map
	\[
		\varphi(a_1, a_2, \dots, a_{2k}) = a_1-1, \dots, a_{2k-2}-1, a_{2k}-1.
	\]
	That this map is a bijection follows from the discussion when we defined $\AltB_n(123)$.
	Moreover, any $\sig=a_1\dots a_{2k}$ always has $a_{2k-1}=1$. 
	So one will never apply $s_1$ to $\sig$. 
	And applying $s_i$, $i\ge2$, corresponds to acting on $\varphi(\sig)$ with $s_{i-1}$. 
	From this the isomorphism follows.	
	Therefore we may henceforth assume that $n=2k$ for some integer $k$.

Define a function $f: \AltQ_n(123) \to D_k^*$ where the path $f(a_1\dots a_{2k})=p$ is constructed by putting north steps in positions $a_1,a_3,\dots,a_{2k-1}$ and east steps in positions $a_2,a_4,\dots,a_{2k}$.  We must check that $f$ is well defined in that it stays weakly above the line $y=x$.  Note that since the sequences used to define the $N$ and $E$  steps are decreasing, the $i$th step east is in position $a_{2k-2i+2}$.  Since $\pi=a_1\dots a_{2k}$ is also alternating, $a_{2k-2i+2}$ is larger than both $a_{2k-2i+1}$ and all the elements in odd positions to its right.  But these are the positions of the first $i$ north steps, and thus the given $E$ step has sufficiently many $N$ steps preceding it to make the path Dyck.  Directly from its definition, we see that $f$ is injective.  So in must be a bijection since both the domain and range have $C_k$ elements.

We now show that  $f$ is order preserving.  First of all, instead of $T_L(\sig)$ one must use
\[
	T_R(\sig) = \{ t_{i,j} \mid \inv(\sig t_{i,j})<\inv(\sig)\}=\{ t_{i,j} \mid (i,j)\in\Inv(\sig)\}.
\]
One must also be aware that since $\sig$ is alternating with $a_1,a_3,\dots,a_{2k-1}$ and $a_2,a_4,\dots,a_{2k}$ decreasing, then the set of pairs
\[
	\Inv'=\{(2i-1,2j-1) \mid 1\le i< j\le k\} \cup \{(2i,j) \mid 2\le 2i < j \le 2k\}
\]
is contained  in $\Inv(\sig)$ for every $\sig\in\AltQ(123)$.  So we need only consider $\Inv'(\sig):=\Inv(\sig)-\Inv'$. It follows that every pair in $\Inv'(\sig)$ is of the form $(2i-1,2j)$ for some $i< j$.  (We can not have $i=j$ since $\sig$ is alternating.)

Now let $f(\sig)=p$ for a Dyck path $p$ and let $\la$ be the left-justified Young diagram associated with $p$ as described in the paragraph before the proof of this Proposition.  Again, an example follows.  There is a canonical bijection between the squares of $\la$ and pairs consisting of an $N$ step of $p$ together with an earlier $E$ step, where the square $(i,j)$ is the one in the same row as the $N$ step (which must be the $i$th north step reading right-to-left) and the same column of the $E$ step (which must be the $j$th east step reading left-to-right). By the way that the steps of $p$ are labeled, it must be that the $N$ step corresponds to some $a_{2i-1}$ and the $E$ step to some $a_{2(k-j+1)}$.  Furthermore, because $p$ is Dyck, it must be that 
$(2i-1,2(k-j+1))\in\Inv'(\sig)$.  And every element of $\Inv'(\sig)$ is realized this way.  Thus we have a bijection between the squares of 
$\la$ and the elements of $T_R(\sig)$ indexed by elements of $\Inv'(\sig)$where $(i,j)\leftrightarrow t_{2i-1,2(k-j+1)}$.
The rest of the proof is as in the $Q_n(132,312)$ case.
\end{proof}

\begin{figure}

\begin{center}
\raisebox{65pt}{
$
\begin{ytableau}
t_{1,2} &  t_{1,3} & t_{1,4}&  t_{1,5}\\
\none 	  &  t_{2,3} & t_{2,4}\\
\none   & \none    & t_{3,4}
\end{ytableau}
$
}
\qquad
$
\begin{tikzpicture}[scale=.7]
\draw (0,0)--(0,4) (1,2)--(1,4) (2,2)--(2,4) (3,3)--(3,4) (0,2)--(2,2) (0,3)--(3,3) (0,4)--(4,4);
\filldraw (0,1) circle(.05);
\draw(.2,.4) node{$1$};
\draw(.2,1.4) node{$2$};
\draw(.7,1.7) node{$3$};
\draw(1.7,1.7) node{$4$};
\draw(2.2,2.4) node{$5$};
\draw(2.7,2.7) node{$6$};
\draw(3.2,3.4) node{$7$};
\draw(3.7,3.7) node{$8$};
\draw(.5,3.5) node{$t_{1,8}$};
\draw(1.5,3.5) node{$t_{1,6}$};
\draw(2.5,3.5) node{$t_{1,4}$};
\draw(.5,2.5) node{$t_{3,8}$};
\draw(1.5,2.5) node{$t_{3,6}$};
\end{tikzpicture}
$
\end{center}

	\caption{The bijections in Proposition~\ref{prop:distributive}.}\label{fig:diagrams}
\end{figure}
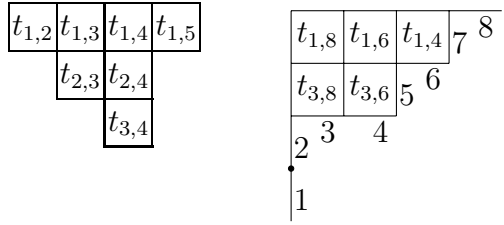

To illustrate the bijection for $Q_n(132,312)$, consider the permutation $\sig=4325167$.  So $\Des(\sig)=(4,2,1)=\la$ whose diagram is displayed on the left  in Figure~\ref{fig:diagrams}.  We also have 
\[
	D_L(\sig)=\{t_{1,2},  t_{1,3}, t_{1,4},  t_{1,5}, t_{2,3}, t_{2,4},  t_{3,4}\}
\]
and each square of $\la$ is labeled with its corresponding transposition.  As for $\AltQ_n(123)$, consider $\sig=78562413$.  So
\[
	\Inv'(\sig)=\{(1,4),\ (1,6),\ (1,8),\ (3,6),\ (3,8)\}.
\]
The path $p$ will have its $N$ steps labeled by $1,2,5,7$ and its $E$ steps labeled by $3,4,6,8$ as on the right in Figure~\ref{fig:diagrams}.  As before, each square of the Young diagram of $p$ is labeled with the corresponding transposition indexed by $\Inv'(\sig)$.

We now return to the general development.
For a general finite distributive lattice $L$ of rank $n$, it is well-known that there exists an $n$-element poset $P$ for which $L \cong J(P)$, where $J(P)$ denotes the lattice of order ideals of $P$.
The poset $P$ can be taken to be the join-irreducible elements of $L$ with order relations inherited from $L$. 
Note that $x\in L$ is join-irreducible if and only if $x$ covers exactly one element.
We denote the poset of join-irreducibles of $L$ by $\Irr(L)$.
To simplify matters, we will identify the join-irreducibles of $Q_n(132,312)$ with the join-irreducibles of $M(n-1)$, and likewise identify the join-irreducibles of $\AltQ_n(123)$ and $D_{\lceil n/2 \rceil}^*$.

Let us now determine the join-irreducibles of our two lattices.
Let $(b,c)$ be the box in row $b$ and column $c$ of a Young diagram $\lambda$.  
(Note that we are taking the diagrams to be in English notation with the largest row on top.)
Call $(b,c)$ an  \emph{inner corner} of  the diagram if  neither $(b+1,c)$ nor $(b,c+1)$ is in $\lambda$.
Using the Young diagram interpretation of our two lattices, an element is join-irreducible precisely when the shape has exactly one inner corner.
Identifying these diagrams with the coordinates of their unique inner corners, the induced partial order on both posets of join-irreducibles is component-wise.
For the remainder of this paper, the join-irreducibles of $Q_n(132,312)$ and $\AltQ_n(123)$ will be identified with the elements of these posets.
See Figure~\ref{fig:extensions} for an example, where for now the label coming after each coordinate pair can be ignored.


\subsection{Triangulations, Shellabililty, and EL-labelings}\label{subsec:EL and shellable}

In this section we will use the posets $Q_n(132,312)$ and $\AltQ_n(123)$ to carefully decompose $B_n(132,312)$ and $\AltB_n(123)$.
First, we recall some definitions and concepts in geometry and poset topology.

A {\em polytopal complex} $\FF$ is a finite nonempty collection of polytopes such that 
\begin{enumerate}
	\item if $P \in \FF$, then every face of $P$ is in $\FF$, and
	\item if $P, Q \in \FF$, then $P \cap Q$ is a face of both $P$ and $Q$.
\end{enumerate}
An important polytopal complex is the {\em face complex} $\FF(P)$ of a polytope $P$, whose faces are the faces of $P$.
A polytopal complex $\FF$ is a {\em geometic simplicial complex} if every polytope $P\in\FF$ is a simplex.

A {\em triangulation} of a polytopal complex $\FF$ is a geometric simplicial complex $\Delta$ whose vertices are the vertices of $\FF$ and underlying space equal to the union of the faces of $\FF$,
such that every face of $\Delta$ is contained in a face of $\FF$.
A triangulation of the face complex $\FF(P)$ of a polytope $P$ is simply called a {\em triangulation} of $P$.
Therefore, if $P$ has a unimodular triangulation $\mathcal{T}$, then its normalized volume is equal to the number of maximal simplices in $\mathcal{T}$.

The {\em order complex} $\Delta(Q)$ of a poset $Q$ is the simplicial complex of chains in $Q$.
A simplicial complex is {\em shellable} if its maximal faces are of the same dimension and can be ordered as $F_1,\ldots,F_k$ such that for each $i=1,\ldots, k-1$, 
\[
	F_{i+1} \bigcap \left(\bigcup_{j=1}^i F_j\right)
\]
is a nonempty union of facets of $F_{i+1}$.
A poset is called {\em shellable} if its order complex is shellable.

We will show that $Q_n(132,312)$ and $\AltQ_n(123)$ are shellable by using a particular labeling of the edges in their Hasse diagrams.

If $Q$ is a poset, let $E(Q)$ denote the set
\[
	E(Q) := \{(q_1,q_2) \in Q\times Q \ |\ q_1 \lessdot q_2\},
\] 
thought of as the edges of the Hasse diagram of $Q$.
An {\em edge labeling} of $Q$ by $\ZZ$ is a function $\lambda: E(Q) \to \ZZ$.
A saturated chain $q_0 \lessdot q_1 \lessdot \cdots \lessdot q_k$ in $Q$ is called {\em increasing} if $\lambda(q_0,q_1) < \lambda(q_1,q_2) < \cdots < \lambda(q_{k-1},q_k)$.
An {\em EL-labeling} of a poset $Q$, first introduced in \cite{BjornerShellable}, is an edge labeling such that every interval $[x,y]$ in $Q$ has a unique increasing maximal chain, 
and that chain lexicographically precedes all other maximal chains of $[x,y]$.
Posets admitting an EL-labeling are shellable and are usually referred to as {\em EL-shellable}.

\begin{figure}
	\begin{tikzpicture}

		\node (A) at (0,0) {\tiny{$(1,1), 1$}};
		\node (B) at (0,2) {\tiny{$(2,2), 5$}};
		\node (C) at (0,4) {\tiny{$(3,3), 8$}};
		\node (D) at (0,6) {\tiny{$(4,4), 10$}};
		\node (E) at (1,1) {\tiny{$(1,2), 2$}};
		\node (F) at (1,3) {\tiny{$(2,3), 6$}};
		\node (G) at (1,5) {\tiny{$(3,4), 9$}};
		\node (H) at (2,2) {\tiny{$(1,3), 3$}};
		\node (I) at (2,4) {\tiny{$(2,4), 7$}};
		\node (J) at (3,3) {\tiny{$(1,4), 4$}};

		\draw (A) -- (E) -- (H) -- (J) -- (I) -- (G) -- (D);
		\draw (E) -- (B) -- (F) -- (I);
		\draw (G) -- (C) -- (F) -- (H);
	\end{tikzpicture} \qquad
	\begin{tikzpicture}
		
		\node (E) at (-2,2) {\tiny{$(3,1), 6$}};
		\node (F) at (0,2) {\tiny{$(2,2), 5$}};
		\node (G) at (2,2) {\tiny{$(1,3), 3$}};
		\node (H) at (-1,1) {\tiny{$(2,1), 4$}};
		\node (I) at (1,1) {\tiny{$(1,2), 2$}};
		\node (J) at (0,0) {\tiny{$(1,1), 1$}};
		\node (dummy) at (0,-2) {};

		\draw (E) -- (H) -- (J) -- (I) -- (G);
		\draw (F) -- (I);
		\draw (F) -- (H);
	\end{tikzpicture}
	\caption{The elements of $\Irr(Q_5(132,312))$ and $\Irr(\AltQ_8(123))$ along with their images under  natural labelings.}\label{fig:extensions}
\end{figure}
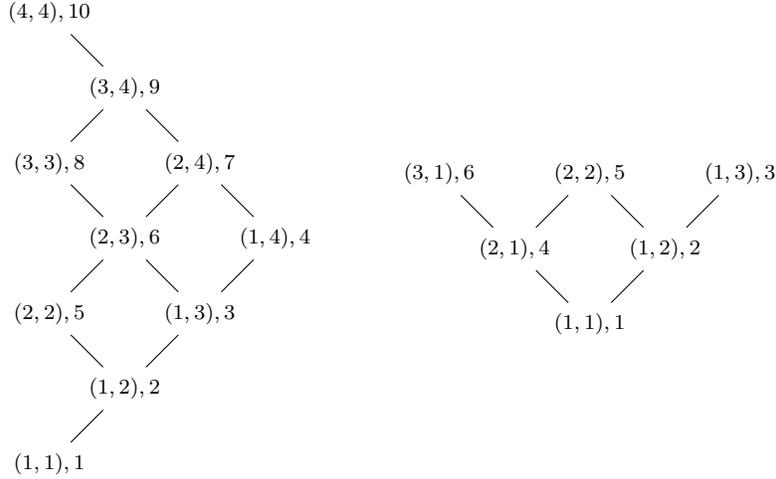

We will use EL-shellable posets to decompose $B_n(132,312)$ and $\AltB_n(123)$ in specific ways in Section~\ref{sec:gorensteinsection}.
Fortunately, specific EL-shellings of $Q_n(132,312)$ and $\AltQ_n(123)$ are available and follow naturally from~\cite{StanleySupersolvable}.
A {\em natural labeling} of a poset $P$ with $|P|=n$ is an order-preserving bijection  $\om:P\rightarrow[n]$.
Let $L$ be a finite distributive lattice so that $L \iso J(P)$ where $P$ is the poset of join-irreducibles, and let $\om$ be a natural labeling of $P$.
Then we have a cover of order ideals $I\lessdot J$ in $L$ if and only if $J  -  I =\{x\}$ for some $x\in P$.  Give the cover the label $\lambda(I,J)=\om(x)$.

\begin{thm}[Stanley, see \cite{StanleySupersolvable}]
	The edge labeling of a finite distributive lattice $L$ constructed above is an EL-labeling for $L$. \qed
\end{thm}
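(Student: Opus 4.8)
The plan is to reduce the statement to one elementary fact about a single interval and then observe that every interval of $L$ is again of the same type. Write $L = J(P)$ with $P = \Irr(L)$ and fix the natural labeling $\om\colon P \to [n]$. First I would check that for $I \le K$ in $L$, the interval $[I,K]$ is isomorphic to $J(Q)$, where $Q := K \setminus I$ carries the order induced from $P$: the map $J \mapsto J \setminus I$ is the isomorphism, and both directions reduce to the observation that, because $I$ and $K$ are order ideals of $P$, a set $J$ with $I \subseteq J \subseteq K$ is an order ideal of $P$ if and only if $J \setminus I$ is an order ideal of $Q$. Under this isomorphism the restriction of the labeling $\lambda$ to $[I,K]$ is exactly the edge labeling of $J(Q)$ coming from the labeling $\om|_Q$ of $Q$, which is again order-preserving and injective (and $Q \iso \Irr([I,K])$). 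So it is enough to prove: for any finite poset $Q$ with an order-preserving injection $\om\colon Q \to \ZZ$, the lattice $J(Q)$ has a unique increasing maximal chain from $\hat 0$ to $\hat 1$, and that chain lexicographically precedes every maximal chain of $J(Q)$.

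Next I would set up the standard bijection between maximal chains of $J(Q)$ and linear extensions of $Q$: a maximal chain $\hat 0 = I_0 \lessdot I_1 \lessdot \dots \lessdot I_m = Q$ corresponds to the linear extension $x_1, \dots, x_m$ determined by $I_j = \{x_1,\dots,x_j\}$, using that a subset of $Q$ is an order ideal exactly when it is an initial segment of some linear extension. By construction the label sequence of this chain is $\om(x_1), \dots, \om(x_m)$, so an increasing maximal chain is exactly a linear extension of $Q$ whose $\om$-values increase along it.

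Now list the elements of $Q$ as $x_1,\dots,x_m$ with $\om(x_1) < \dots < \om(x_m)$; this is possible and unique because $\om$ is injective, and it is a linear extension because $x_i <_Q x_j$ forces $\om(x_i) < \om(x_j)$ and hence $i<j$. Its label sequence is strictly increasing, and it is the only such chain, since an increasing label sequence forces the underlying elements to appear in $\om$-sorted order (the labels determine the elements, $\om$ being injective). For lexicographic minimality, take any linear extension $y_1,\dots,y_m$ of $Q$ and let $k$ be the least index with $y_k \ne x_k$ (if any). Then the first $k-1$ labels agree, and since $x_k$ is the element of $Q \setminus \{x_1,\dots,x_{k-1}\}$ of smallest $\om$-value while $y_k$ is a different element of that same set, $\om(y_k) > \om(x_k)$; thus the sorted chain is lexicographically smaller. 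Applying this to every interval $[I,K]$ via the reduction above completes the proof.

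The only step requiring genuine care is that reduction: verifying both that an arbitrary interval of $L$ is again the ideal lattice of a sub-poset and that $\lambda$ restricts to the analogous edge labeling there, which is what lets the one-interval argument be applied uniformly. Everything after that is the greedy observation that sorting by a natural labeling yields the unique increasing linear extension and that this linear extension is produced by always choosing the available element of smallest label, which is precisely what lexicographic leastness demands.
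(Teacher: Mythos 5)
Your proof is correct, and since the paper simply cites Stanley's result and omits the proof, there is nothing in-text to compare against; your argument is the standard one. You reduce to a single interval by observing that $[I,K]\iso J(K\setminus I)$ with the labeling inherited verbatim, translate maximal chains of $J(Q)$ to linear extensions of $Q$, and then note that sorting $Q$ by the injective order-preserving labeling $\om$ gives the unique increasing linear extension and that the greedy ``pick the smallest available label'' property of this sorted sequence is precisely lexicographic minimality. All the steps check out, including the two directions of the claim that $J$ (with $I\subseteq J\subseteq K$) is an ideal of $P$ iff $J\setminus I$ is an ideal of $K\setminus I$, and the observation that $x_k$ genuinely has the smallest $\om$-value among the not-yet-used elements, not merely among the currently available minimal ones.
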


To apply this process we will use the natural labeling of the irreducibles in both of our posets which is obtained by reading the cells $(b,c)$ in each row of the corresponding triangular diagram left to right, starting with the first row and moving down.  Thus in
$\Irr(Q_n(132,321))$ this extension is given by
\[ \om(b,c) = (b-1)n + c + 1 -\binom{b+1}{2} \]
and in $\Irr(\AltQ_n(123))$ for $n$ even by
\[ \om(b,c) = \frac{(b-1)(n-b)}{2}+ c.\]
Alternatively, one can think of both natural labelings as ordering the elements of the poset lexicographically.
Examples of these elements and their associated labels are given in Figure~\ref{fig:extensions}, where the label is displayed beside each element. 
An application of the EL-labeling process appears for $\AltQ_{8}(123)$ in Figure~\ref{fig:altlabeling}.
To simplify notation, we will often identify maximal chains $c: q_0 \lessdot q_1 \lessdot \dots \lessdot q_k$ in $Q_n(132,312)$ and $\AltQ_n(123)$ 
with their sequences of edge labels $\lambda(c)=(\lambda(q_0,q_1), \lambda(q_1,q_2), \dots, \lambda(q_{k-1},q_k))$.

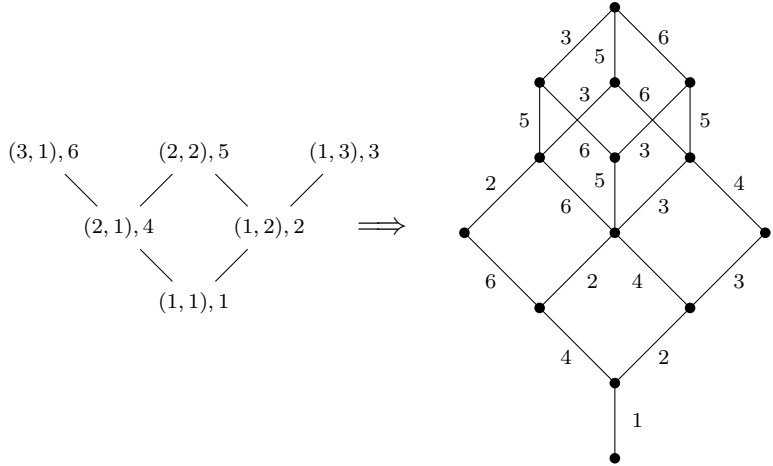
\begin{figure}
	\begin{tikzpicture}

		\node (E) at (-2,2) {\tiny{$(3,1), 6$}};
		\node (F) at (0,2) {\tiny{$(2,2), 5$}};
		\node (G) at (2,2) {\tiny{$(1,3), 3$}};
		\node (H) at (-1,1) {\tiny{$(2,1), 4$}};
		\node (I) at (1,1) {\tiny{$(1,2), 2$}};
		\node (J) at (0,0) {\tiny{$(1,1), 1$}};
		\node (dummy) at (0,-2) {};
		\node (arrow) at (2.5,1) {$\quad\Longrightarrow\quad$};
		   
		\draw (E) -- (H) -- (J) -- (I) -- (G);
		\draw (F) -- (I);
		\draw (F) -- (H);
	\end{tikzpicture}
	\begin{tikzpicture}
	
	\foreach \i in {(0,-1),(0,0),(-1,1),(1,1),(-2,2),(0,2),(2,2),(-1,3),(0,3),(1,3),(-1,4),(0,4),(1,4),(0,5)}
		    {
		    	\fill \i circle [radius=2pt];
		    }
		\draw (0,-1) -- (0,0) -- (-2,2) -- (-1,3) -- (-1, 4) -- (0,5) -- (1,4) -- (1,3) -- (2,2) -- (0,0);
		\draw (-1,1) -- (0,2) -- (1,1);
		\draw (0,2) -- (-1,3) -- (0,4) -- (1,3) -- (0,2);
		\draw (-1,4) -- (0,3) -- (1,4);
		\draw (0,2) -- (0,3);
		\draw (0,4) -- (0,5);
		    
		\node (U) at (0.3,-0.5) {{\tiny $1$}};
		\node (UL) at (-0.65,0.35) {{\tiny $4$}};
		\node (UR) at (0.65,0.35) {{\tiny $2$}};
		\node (ULL) at (-1.65,1.35) {{\tiny $6$}};
		\node (URR) at (1.65,1.35) {{\tiny $3$}};
		\node (ULR) at (-0.3,1.35) {{\tiny $2$}};
		\node (URL) at (0.3,1.35) {{\tiny $4$}};
		\node (ULLR) at (-1.65,2.66) {{\tiny $2$}};
		\node (URRL) at (1.65,2.66) {{\tiny $4$}};
		\node (ULRL) at (-0.65,2.35) {{\tiny $6$}};
		\node (URLR) at (0.65,2.35) {{\tiny $3$}};
		\node (ULRU) at (-0.2,2.7) {{\tiny $5$}};
  		\node (ULRLR) at (-0.4,3.85) {{\tiny $3$}};
		\node (URLRL) at (0.4,3.85) {{\tiny $6$}};
		\node (ULRUL) at (-0.4,3.1) {{\tiny $6$}};
		\node (URLUR) at (0.4,3.1) {{\tiny $3$}};
		\node (URRLU) at (1.2,3.5) {{\tiny $5$}};
		\node (ULLRU) at (-1.2,3.5) {{\tiny $5$}};
		\node (URRLUL) at (0.65,4.6) {{\tiny $6$}};
		\node (ULLRUR) at (-0.65,4.6) {{\tiny $3$}};
		\node (ULRLR) at (-0.2,4.35) {{\tiny $5$}};

	\end{tikzpicture}
	\caption{Producing an edge labeling on $\AltQ_{8}(123)$.}\label{fig:altlabeling}
\end{figure}

We now take a first step in constructing a bridge from purely combinatorial information of these abstract simplicial complexes to geometric information about $B_n(132,312)$ and $\AltB_n(123)$.
One of our main goals is to construct triangulations of $B_n(132,312)$ and $\AltB_n(123)$.
The following proposition only identifies unimodular simplices which potentially form the simplices of triangulations of these polytopes.
The fact that these do form a triangulation will require toric algebra and so will come in Section~\ref{sec:toric algebra}.

\begin{prop}\label{prop:unimodularity}
	Let $f: \Delta(Q_n(132,312)) \to \RR^{n \times n}$ be the function 
	\[
		f(\{\sig_1,\dots,\sig_u\}) = \conv\{M_{\sig_1}, \dots , M_{\sig_u}\},
	\]
	where $M_{\sig_i}$ is the matrix for $\sig_i$.
	The collection
	\[
		\TT_n(132,312) := \{ f(\Gamma)\ |\ \Gamma \in \Delta(Q_n(132,312))\}
	\]
	is a set of simplices contained in $B_n(132,312)$, each $f(\Gamma)$ is unimodular with respect to the affine lattice 
	$\aff(f(\Gamma)) \cap \ZZ^{n\times n}$ and is of dimension $\binom{n}{2}$.
	The collection $\AltT_n(123)$, defined similarly, is a collection of unimodular simplices in $\AltB_n(123)$
of dimension $\binom{n}{\lfloor n/2\rfloor}$.
\end{prop}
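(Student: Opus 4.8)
The plan is to reduce the statement to maximal chains and then apply Lemma~\ref{lem:unimod}, following the template of the proof of Proposition~\ref{prop:firstdimension}. Two easy points come first. Every $f(\Gamma)$ is the convex hull of a set of vertices of $B_n(132,312)$ (resp.\ $\AltB_n(123)$), so it lies inside that polytope; and if $\Gamma$ lies on a maximal chain $C$, then $f(\Gamma)$ is a face of the simplex $f(C)$. A face of a unimodular lattice simplex, taken with the lattice induced on its affine span, is again unimodular: the edge vectors of the face extend by unimodular column operations to a lattice basis of $\aff(f(C))\cap\ZZ^{n\times n}$, and any lattice vector lying in $\aff(f(\Gamma))$ has vanishing coordinates in the complementary directions, hence already lies in the span of the edge vectors of $f(\Gamma)$. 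By Proposition~\ref{prop:distributive}, $Q_n(132,312)\iso M(n-1)$ and $\AltQ_n(123)\iso D_{\lceil n/2\rceil}^*$ are distributive lattices, hence graded, so a maximal chain has one more element than the rank of the lattice, which is the number of cells in its largest diagram; this is the dimension asserted. Thus it suffices to show that $f(C)$ is a unimodular simplex of that dimension for every maximal chain $C$.

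So fix a maximal chain $C\colon \sigma_0\lessdot\sigma_1\lessdot\dots\lessdot\sigma_N$ of $Q_n(132,312)$, with $N=\binom n2$. By Proposition~\ref{prop:distributive} each cover here is also a cover in right weak order, so $\sigma_k=\sigma_{k-1}s_{j_k}$ for a simple transposition $s_{j_k}$; hence $M_{\sigma_k}-M_{\sigma_{k-1}}$ is supported on a single $2\times2$ submatrix, the two transposed entries of $M_{\sigma_{k-1}}$ becoming $0$ and two new entries becoming $1$. I would then order the vertices of $f(C)$ along the reversed chain, $v_j:=M_{\sigma_{N+1-j}}$ for $1\le j\le N+1$, and choose distinguished matrix positions $z_1,\dots,z_{N+1}$ as follows: let $z_{N+1}$ be any $1$-entry of $M_{\sigma_0}$, and for $1\le m\le N$ let $z_{N+1-m}$ be a position at which $M_{\sigma_m}$ has a $1$ while $M_{\sigma_0},\dots,M_{\sigma_{m-1}}$ all have a $0$. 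Granting these positions exist, they are automatically pairwise distinct (the one chosen for step $m$ is $0$ at every earlier stage and $1$ at stage $m$, and every $z_j$ with $j\le N$ is a $0$-entry of $M_{\sigma_0}$), so the coordinate reordering of $\RR^{n\times n}$ that lists $z_1,\dots,z_{N+1}$ first puts $v_1,\dots,v_{N+1}$ exactly into the triangular shape of equation~(\ref{eqn:unimod}) with $d=N$. Lemma~\ref{lem:unimod} then yields $f(C)\iso\Delta_N$, so $f(C)$ is a unimodular $N$-simplex; in particular its vertices are affinely independent and $\dim f(C)=N$, as needed.

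The case of $\AltB_n(123)$ runs in exact parallel. First reduce to $n$ even, using the isomorphism $\AltQ_{2k}(123)\iso\AltQ_{2k-1}(123)$ together with the fact, noted where $\AltB_n(123)$ was defined, that dropping row $n$ and column $n-1$ projects $\AltB_n(123)$ onto $\AltB_{n-1}(123)$ compatibly with the vertex bijection. For $n=2k$ the covers of $\AltQ_n(123)$ are covers in \emph{left} weak order (Proposition~\ref{prop:distributive}), so each step now transposes two values, i.e.\ swaps two rows of the permutation matrix, and again $M_{\sigma_k}-M_{\sigma_{k-1}}$ is supported on a $2\times2$ submatrix. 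One then repeats the reversed-chain ordering and the choice of distinguished coordinates, now reading them off from the Dyck-path description of $123$-avoiding alternating permutations used in the proof of Proposition~\ref{prop:distributive}, and applies Lemma~\ref{lem:unimod} once more.

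The crux, and the step I expect to be the main obstacle, is proving that the distinguished positions above really do exist: at each cover $\sigma_{m-1}\lessdot\sigma_m$ of a maximal chain, at least one of the two matrix entries that switch from $0$ to $1$ must be \emph{brand new}, meaning equal to $0$ in every $M_{\sigma_i}$ with $i<m$. This cannot follow from properties of weak Bruhat order alone, since along a chain a fixed matrix entry can toggle $1\to 0\to 1$; it has to be extracted from the rigid structure of the avoidance classes — in $\Av_n(132,312)$ every permutation is built by repeatedly extending an interval of values upward or downward (Lemma~\ref{lem:132,312} and the proof of Proposition~\ref{prop:firstclass}), and $123$-avoiding alternating permutations have the analogous north–east lattice-path rigidity. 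The delicate feature is that the column $j_k$ of the transposition $s_{j_k}$ depends on the whole chain up to step $k$, not just on the cell added at step $k$ to the associated (shifted) Young diagram; the real work is in controlling this, or equivalently in pinning the brand-new entry directly to that cell via the bijection between cells and inversion-value pairs established in the proof of Proposition~\ref{prop:distributive}. In this sense Proposition~\ref{prop:unimodularity} is only a soft precursor to Theorem~\ref{thm:regularity}, where the toric-algebra argument will independently confirm that the $f(\Gamma)$ fit together into an honest triangulation.
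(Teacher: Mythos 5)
Your reduction to maximal chains is sound, the reasoning that a face of a unimodular simplex is unimodular is fine, and the dimension claim follows from the rank of the distributive lattice. But the argument has a genuine gap exactly where you flag it, and you do not close it: to apply Lemma~\ref{lem:unimod} directly to \emph{every} maximal chain, you need that along each such chain, every cover introduces a matrix entry that has never been a $1$ before. You state this as ``the step I expect to be the main obstacle'' and then offer only a heuristic sketch of why it should follow from the grid-class rigidity of $\Av_n(132,312)$, without actually producing the argument. As you yourself observe, the required property depends on the \emph{entire history} of the chain, not just the current cover: the position at which the new inversion is realized is a function of the whole prefix, and a matrix entry can a priori toggle $1\to 0\to 1$ along a saturated chain in weak order. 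Deferring to Theorem~\ref{thm:regularity} does not help, since that theorem uses Proposition~\ref{prop:unimodularity} precisely to strengthen unimodularity from ``with respect to the sublattice generated by the vertices'' to ``with respect to $\aff(f(\Gamma))\cap\ZZ^{n\times n}$''; you cannot appeal to it here without circularity.

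The paper sidesteps this difficulty entirely. It applies Lemma~\ref{lem:unimod} only to the lexicographically first maximal chain $c_1$, for which an explicit coordinate reordering makes the triangular form of~\eqref{eqn:unimod} manifest (along $c_1$ the values $1,2,\dots$ are successively pushed to the end, so the brand-new entries are easy to name). For every later chain $c_q$, the EL-shellability of $Q_n(132,312)$ (resp.\ $\AltQ_n(123)$) guarantees an earlier chain $c_p$ differing from $c_q$ in a single element $\sigma\leftrightarrow\sigma'$, which together with $\sigma\wedge\sigma'$ and $\sigma\vee\sigma'$ form a diamond. The identity $M_{\sigma}+M_{\sigma'}=M_{\sigma\wedge\sigma'}+M_{\sigma\vee\sigma'}$ then produces an explicit linear map $\varphi$ carrying $f(\Delta^p)-f(\iota)$ to $f(\Delta^q)-f(\iota)$; its matrix is the identity except in one column, where it has entries $1,-1,1$ in three consecutive rows, hence determinant $-1$. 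Induction along the shelling order transports unimodularity from $\Delta^1$ to all $\Delta^q$. This avoids ever having to analyze brand-new entries on an arbitrary chain, using instead a purely lattice-theoretic diamond-flip argument. I would recommend either proving your brand-new-entry claim rigorously (pinning the new $1$ to the cell added to the (shifted) Young diagram via the inversion-value bijection, with a careful induction on the chain) or adopting the shelling-and-diamond-flip route. Finally, a small point: the dimension you derive from the rank of $\AltQ_n(123)$ is $\binom{\lceil n/2\rceil}{2}$, which does not literally match the $\binom{n}{\lfloor n/2\rfloor}$ printed in the statement; you should not silently identify the two.
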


\begin{proof}
	First we will focus on $\TT_n(132,312)$.
	Note that it is enough to prove the claim for the simplices in $\TT_n(132,312)$ of maximal dimension, since $\Gamma_1 \subseteq \Gamma_2$ in 
	$\Delta(Q_n(132,312))$ corresponds to an inclusion of faces $f(\Gamma_1) \subseteq f(\Gamma_2)$ in $\TT_n(132,312)$, and faces of unimodular simplices are again unimodular.
	
	Arrange the maximal chains $c_1,\ldots,c_s$ in $Q_n(132,312)$ lexicographically, and let $\Delta^q = \Delta(c_q)$ be the corresponding maximal simplex in $\Delta(Q_n(132,312))$.
	We will prove our claim by induction on $q$.
	
	First consider the $f(\Delta^1)$.
We will use Lemma~\ref{lem:unimod} to show that this is a unimodular simplex.  Note that we have chosen the labeling of the irreducibles  so that $c_1$ starts with the identity permutation $\iota$ and then one proceeds  up the chain by having the element $1$ move from the first position to the last, followed by the element $2$ moving to be the penultimate element, and so forth until one reaches the decreasing permutation.  Thus if one rewrites the coordinates of the $M_\sig$ for $\sig\in c_1$ using the map
\[
		\begin{bmatrix}
			z_{\binom{n}{2}+1} & *  & \dots &  * & * & * \\
			z_{\binom{n}{2}} & *  & \dots & * & *  & *\\
			z_{\binom{n}{2}-2} & z_{\binom{n}{2}-1} & \dots & *  & * & * \\
			\vdots & \vdots  & \ddots & \vdots & \vdots  & \vdots\\
			z_{n} & z_{n+1}  & \dots & z_{2n-3} & * & *\\
			z_1 & z_2& \dots  & z_{n-2} & z_{n-1} & *
		\end{bmatrix}
		\mapsto (z_1,z_2,\ldots,z_{\binom{n}{2}+1},*,\dots,*)^T,
	\]
and let $v_j$ be the image of the element of rank $j-1$ in the chain, then it is easy to check that equation~(\ref{eqn:unimod}) holds.  So we have shown that $f(\Delta^1)$ is unimodular of the correct dimension.
  In particular, if we let $L_1$ be the affine span of $f(\Delta^1)$ then the vectors $f(\beta_{1,r})-f(\iota)$ for $\beta_{1,r} \neq \iota$ in $c_1$ form a basis for the lattice $(L_1-f(\iota)) \cap \ZZ^{\binom{n}{2}}$.
	
	We will perform the induction step by showing that the remaining maximal simplices in $\TT_n(132,312)$ are unimodular transformations of $f(\Delta^1)$. 
	Recall that $Q_n(132,312)$ has an EL-labeling.  So, for $q > 1$, each maximal chain $c_q$ intersects some earlier maximal chain $c_p$ such that they differ by a single element.
	So suppose $c_q$ intersects with $c_p$ such that $\sig \in c_p -  c_q$ and $\sigp \in c_q -  c_p$.
	Then $\sig$ and $\sigp$ are incomparable, and
	\[
		\sig \land \sigp \lessdot \sig,\sigp \lessdot \sig \lor \sigp.
	\]
	So $\sig,\sigp$ can each be obtained from simple transpositions applied to their meet.  And since the interval  
$[\sig \land \sigp, \sig \lor \sigp]$ consists of $4$ elements, these transpositions commute.  If follows that the relationship displayed above is captured by $f$ via
	\begin{equation}
	\label{f(sigma)}
		f(\sig \land \sigp) + f(\sig \lor \sigp) = f(\sig) + f(\sigp).
	\end{equation}
	
	We will use this relationship create a transformation $\varphi: L_p - f(\iota) \to L_q - f(\iota)$ by defining its images on the basis vectors $\beta_{p,r} - f(\iota)$ obtained from the inductive assumption. 
	(The map $\varphi$ implicitly depends on $p$ and $q$ even though that is not reflected in our notation.)
	This function will map $f(\Delta^p) - f(\iota)$ to $f(\Delta^q) - f(\iota)$, and we will show that it is a unimodular transformation.
	It follows that $\Delta^q$ is also unimodular with respect to the affine lattice $L_q \cap \ZZ^{n \times n}$.
	
	For each $r$, set
	\[
		\varphi(f(\beta_{p,r}) - f(\iota)) = f(\beta_{q,r}) - f(\iota).
	\]
	If $\beta_{p,r} \in c_p \cap c_q$, then $\varphi$ acts as the identity on $\beta_{p,r} - f(\iota)$.
	Otherwise, consider the index $t$ such that $\beta_{p,t} = \sig$ and use equation~(\ref{f(sigma)}) to write
	\begin{align*}
		\varphi(f(\beta_{p,t}) - f(\iota)) &= f(\sigp) - f(\iota) \\
			&= [f(\sig \land \sigp) - f(\iota)] + [f(\sig \lor \sigp) -  f(\iota)]- [f(\sig) - f(\iota)] \\
		           &=[f(\beta_{q,t-1}) - f(\iota)] + [f(\beta_{q,t+1}) -  f(\iota)] - [f(\beta_{q,t}) - f(\iota)].
	\end{align*}

	The matrix for $\varphi$  is identical to the identity matrix except in the column corresponding to $\sig$.  
	And in that column, because of the previously displayed equation, the only nonzero entries are a $-1$ on the main diagonal with a $1$ just above it and another $1$  just below.
	So, this matrix is unimodularly equivalent to the identity matrix since it has determinant $-1$, and $\Delta^q$ is a unimodular simplex with respect to $L_q \cap \ZZ^{n \times n}$.
	
	We then apply induction, using the $\varphi$ constructed above.
	Since $\Delta^1$ is unimodular with respect to $L$, so are all of the images of the $\varphi$, and therefore so are all of the $f(\Delta^q)$.
	Thus, $\TT_n(132,312)$ is a collection of unimodular simplices.
		
	The case of $\AltB_n(123)$ is similar.
	First consider $n = 2k$  and $\Delta^1=f(c_1)$.  In order to apply Lemma~\ref{lem:unimod}, read the permutations
$\sig=a_1 a_2\dots a_{2k}\in c_1$ from the bottom of $c_1$ to the top, concentrating only on the subsequence $a_1 a_3\dots a_{2k-1}$.  Recall that we have chosen the labeling $\omega$ to add boxes to the Young diagram row by row, and that $a_{2i+1}$ is the label of the north step at the end of row $i$ of  the Dyck path boundary of the Young diagram.  It follows that $a_1$ will first increase from $k$ to $2k-1$, then $a_3$ will increase from $k-1$ to $2k-3$, and so forth.  This suggests that we use the following map to rewrite the coordinates, where we will just write out the case $n=8$ since the generalization to all even $n$ should then be clear:
\[
		\begin{bmatrix}
			*  &  *  &  *  &  *  &  *  &  *  &  *  &  * \\
			*  &  *  &  *  &  *  &  *  &  *  &  *  &  * \\
			z_3& *  &  *  &  *  &  *  &  *  &  *  &  * \\
			z_2&*  &  *  &  *  &  *  &  *  &  *  &  * \\
			z_1&*  &z_5 &  *  &  *  &  *  &  *  &  * \\
			*  &  *  &z_4  &  *  &  *  &  *  &  *  &  * \\
			*  &  *  &  *  &  *  &z_6  &   * &  *  &  * \\
			*  &  *  &  *  &  *  &  *  &  *  & z_7  & * \\
		\end{bmatrix}
		\mapsto (z_1,z_2,\ldots,z_7,*,\dots,*)^T.
	\]
Now letting $v_i$ be the image of the element at rank $i-1$ in $c_1$ as before completes the proof of unimodularity and the corresponding dimension.

	By creating $\widetilde{\varphi}$ as in the previous case, it follows by induction that $\AltT_{2k}(123)$ is a collection of unimodular simplices in $\AltB_n(123)$.
	As usual, if $n$ is odd, then we use the isomorphism $\AltQ_n(123) \iso \AltQ_{n+1}(123)$ and proceed as in the case of even $n$.
\end{proof}

\subsection{Toric Algebra}\label{sec:toric algebra}

The methods we will use to show $\TT_n(132,312)$ and $\AltT_n(123)$ are unimodular triangulations of their respective polytopes require a bit of algebra background.
Part of the importance of identifying unimodular triangulations is to show when two constructions based on polytopes agree, and we will encounter such a situation in this section; the details of this connection are delayed until the end of the end of the section.
The crucial property of a polytope necessary for the constructions to agree is the following.

\begin{defn}
	A lattice polytope $P \subseteq \RR^n$ is said to have the {\em integer decomposition property} (or to be {\em IDP}) if, for all positive integers $m$ and any $x \in mP \cap \ZZ^n$, there exist $m$ points
	$x_1,\ldots, x_m \in P \cap \ZZ^n$ such that $x = \sum x_i$.
\end{defn}

Much of the exposition that follows is described in \cite[Chapters 4 and 8]{sturmfels}; we reproduce the relevant background below in the interest of self-containment.

First, let $\A = \{l_1,\dots,l_s\} \subseteq \ZZ^n$.
For a field $k$, we may define a subring $k[\A]$ of the ring of Laurent polynomials $k[x_1^{\pm1},\ldots,x_n^{\pm1}]$
by  $k[\A] := k[x^{l_1},\ldots,x^{l_s}]$ where $x^{(v_1,\ldots,v_n)} = \prod x_i^{v_i}$.
Defining $T_{\A} = k[t_1,\dots,t_s]$ and the map $\phi: T_{\A} \to k[\A]$ by $\phi(t_i) = x^{l_i}$, it follows that
\[
	T_{\A} / \ker \phi \iso k[\A].
\]
The ideal $I_{\A} := \ker \phi$ is the {\em toric ideal} of $\A$, and has been studied extensively in part due to its uses in algebraic statistics, algebraic geometry, and convex polytopes.

If $P$ is an integral polytope then we set $\A_P = (P,1) \cap \ZZ^{n+1}$, and 
\[
	k[\cn(P)] := k[x^az^m\ |\ a \in mP \cap \ZZ^n] \subseteq k[x_1^{\pm1},\ldots,x_n^{\pm1},z],
\]
an algebra graded by the exponent of the new variable $z$. So when $P$ is IDP we have $k[\cn(P)] = k[\A_P]$.
However, this equality does not hold if $P$ is not IDP, since then the monoid generated by $\A_P$ does not generate all elements of $\cn(P)\cap \ZZ^{n+1}$.
To remedy this we have to introduce the {\em Hilbert basis} of $\cn(P)$, which is the unique minimal-cardinality set $\HH \subseteq \cn(P) \cap \ZZ^{n+1}$ such that every lattice point of $\cn(P)$ is a $\ZZ_{\geq 0}$-linear combination of elements of $\HH$.
The existence and uniqueness of the Hilbert basis can be proved using the Hilbert Basis Theorem.

This allows us to define the {\em toric ideal} $I_P$ of a polytope $P$:
Suppose the Hilbert basis of $\cn(P)$ is $\HH=\{(v_1,w_1),\dots,(v_r,w_r)\} \subseteq \ZZ^n \times \ZZ$. 
We have
\[
	T_{\HH} / I_P \iso k[\cn(P)],
\]
where $I_P = \ker \phi$ is the toric ideal of $P$.
So, if $P$ is IDP, then $I_P = I_{\A_P}$, but in general we only have $I_P\supseteq I_{\A_P}$.

If there is some $\nu = (\nu_1,\dots,\nu_n) \in \RR^n$ 
such that $\nu^Tl_i = 1$ for each $l_i \in \A$, we call $\A$ a {\em point configuration}, or simply a {\em configuration} if there is no risk of confusion.
When $\A$ is a configuration,  then the {\em positive span}
\[
	\pos(\A) := \left\{ \sum_{i=1}^s \lambda_il_i\ |\ \lambda_i \geq 0 \text{ for all } i \right\} \subseteq \RR^n
\]
is a polyhedral cone (differing from $\cn(\A)\subseteq \RR^{n+1}$) containing no positive-dimensional subspace, so a Hilbert basis exists.
If $\A$ is not a configuration, then no such $\nu$ exists.
In this case, $\pos(\A)$ is still a cone but now contains a nontrivial subspace, so a Hilbert basis does not exist since a minimal generating set of $\pos(\A) \cap \ZZ^n$ is no longer unique.
Note that for any polytope $P$ in $\RR^n$, the set $\A_P$ is a configuration since it satisfies $e_{n+1}^Tv = 1$ for each $v \in \A_P$.

Techniques from toric algebra will provide the tools for a critical step in proving that $B_n(132,312)$ and $\AltB_n(123)$ are IDP by showing that the collections of simplices introduced in the previous section actually form unimodular triangulations of their respective polytopes.  In particular,
when $P$ is one of these polytopes, we will use $I_{\A_P}$ to identify a triangulation of $\conv \A_P$ in which the vertices of the triangulation use only the elements of $\A_P$.
In this case, since $P$ is a subpolytope of $[0,1]^{n \times n}$, it contains no lattice points other than its vertices.
So, $\A_P$ consists exactly of the vertices of $(P,1)$, and a triangulation of $\conv \A_P$ is automatically a triangulation of $(P,1)$, which in turn induces a triangulation of $P$ by projecting each simplex back into $\RR^{n \times n}$.
The triangulation of $P$ will be unimodular with respect to the lattice generated by $\ZZ$-linear combinations of the elements of $P$.
Observing that this triangulation consists exactly of the simplices in $\TT_n(132,312)$ (respectively, $\AltT_n(123)$), Proposition~\ref{prop:unimodularity} will show that the triangulations are unimodular with respect to the affine lattice $B_n(132,312) \cap \ZZ^{n\times n}$ (respectively, $\AltB_n(123) \cap \ZZ^{n \times n}$).

Returning to the general development, 
when $S \subseteq \RR^n$ is a unimodular simplex, it is not difficult to show that $\A_S$ is the Hilbert basis of $\cn(S)$.
When $P$ is a general lattice polytope, we only know a priori that $\A_P$ must be contained in the Hilbert basis of $\cn(P)$.
When a triangulation $\TT$ of $P$ is known, each lattice point $x \in \cn(P)$ lies in $\cn(S)$ for some $S \in \TT$.
If $S$ is unimodular, then $x$ may be written as a sum of just the elements in $(S,1) \cap \ZZ^{n+1} \subseteq \A_P$.
Thus, if $\TT$ is a unimodular triangulation, $x$ can always be expressed as a sum of elements in $\A_P$, so $\A_P$ is exactly the Hilbert basis of $\cn(P)$.
Therefore, in this case, any properties of $(\TT,1)$ as a unimodular triangulation with respect to $\aff \A_P \cap \ZZ^{n+1}$ carry over to $\TT$ as a unimodular triangulation of $P$.

Before continuing with toric ideals, let us first recall some additional definitions.
Let $\Delta$ be an abstract simplicial complex on vertex set $\{v_1,\ldots,v_s\}$ and let $T = k[t_1,\ldots,t_s]$.
The {\em Stanley-Reisner} ideal of $\Delta$ is
\[ I_{\Delta} := ( t_{i_1}\cdots t_{i_j}\ |\ \{i_1,\ldots,i_j\} \notin \Delta),\]
where the parentheses represent the ideal of $T$ generated by these monomials.
We use this ideal to define the {\em Stanley-Reisner ring}, $T/I_{\Delta}$, whose monomials are those with support corresponding to faces of $\Delta$.
The numerator of its Hilbert series is called the $h$-polynomial of $\Delta$.
If $P$ is a polytope and $\Delta$ is a unimodular triangulation of $P$, then the $h$-polynomial of $\Delta$ and the $h^*$-polynomial of $P$ coincide.

Note that the Stanley-Reisner ideal of a simplicial complex accounts for the combinatorial structure of the complex and does not inherently reflect any geometric properties.
To overcome this limitation, we will express the Stanley-Reisner ideal as the result of operations on a different ideal, designed with geometric properties in mind.


Suppose $\prec$ is a monomial order on $T$,
that is, a total well-ordering of the monomials of $T$ which respects multiplication.  Consider any ideal $I$  of $T$.
Each $f \in I$ then has an {\em initial} or {\em leading term} with respect to $\prec$, denoted $\init(f)$, which is the term of $f$ that is greatest with respect to $\prec$.
The {\em initial ideal} of $I$ with respect to $\prec$ is the ideal generated by the initial terms of polynomials in $I$, that is,
\[
	\init(I) := ( \init(f)\ |\ f \in I).
\]
A {\em Gr\"obner basis} of $I$ is a finite generating set $\grob$ for $I$ such that $\init(I) = (\init(g)\ |\ g \in \grob)$.
Since $I$ is assumed to be an ideal of a noetherian ring, a Gr\"obner basis always exists and may be computed from a given finite set of generators for $I$ using the well-known Buchberger algorithm.
Say $\grob$ is {\em reduced} if each element has a leading coefficient of $1$ and for any $g_1, g_2 \in \grob$, $\init(g_1)$ does not divide any term of $g_2$.
Given an ideal $I \subseteq T$ and a fixed monomial ordering on $T$, there are many Gr\"obner bases of $I$ but there is exactly one reduced Gr\"obner basis of $I$.

\begin{thm}[{\cite[Corollary 4.4 and Lemma 4.14]{sturmfels}}]
\label{thm:grob}
	Let $\A = \{l_1,\dots,l_s\} \subseteq \ZZ^n$.
	The reduced Gr\"obner basis $\grob$ of the toric ideal $I_{\A}$ consists of binomials of the form $t^u - t^v$ for $u,v \in \ZZ_{\geq 0}^s$, where the monomials $t^u$ and $t^v$ have no variable in common.
	Moreover, the binomials of $\grob$ are homogeneous if and only $\A$ is a configuration.
\end{thm}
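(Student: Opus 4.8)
The plan is to reconstruct the standard argument, essentially that of \cite[Chapter~4]{sturmfels}. Write $A$ for the $n\times s$ integer matrix with columns $l_1,\dots,l_s$, so that $\phi(t^u)=x^{Au}$ for $u\in\ZZ_{\geq0}^s$. The first step is to show that $I_\A$ is spanned, as a $k$-vector space, by the binomials $t^u-t^v$ with $Au=Av$: every such binomial clearly lies in $I_\A$, and conversely, given $f=\sum_i c_i t^{a_i}\in I_\A$ with distinct exponents, one groups the monomials according to the value $x^{Aa_i}$ and uses the linear independence of distinct Laurent monomials to conclude that within each group the coefficients sum to zero; hence $f$ is a $k$-linear combination of binomials $t^{a_i}-t^{a_j}$ with $Aa_i=Aa_j$.

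The second step is to pass to a Gr\"obner basis. Running Buchberger's algorithm from the binomial generating set just produced, one checks that the $S$-polynomial of two binomials is again a binomial and that one reduction step of a binomial modulo a binomial again produces a binomial; hence the algorithm terminates with a Gr\"obner basis consisting of binomials, and normalizing leading coefficients and inter-reducing to obtain the reduced basis $\grob$ preserves this property. The disjoint-support claim is then the delicate point and the step I expect to be the main obstacle: if $t^u-t^v\in\grob$ with $\init(t^u-t^v)=t^u$ shared a variable $t_k$, one writes $t^u=t_k t^{u'}$, $t^v=t_k t^{v'}$; since the ambient Laurent polynomial ring is an integral domain, $t^{u'}-t^{v'}$ is a nonzero element of $I_\A$ whose leading term divides $t^u$ or $t^v$ without equalling it, and one then extracts a contradiction with the definition of a reduced Gr\"obner basis (no leading term of a basis element divides a term of another, and a leading term never divides a lower term of its own polynomial). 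This bookkeeping is short but requires care about which element of $\grob$ is invoked in each case.

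Finally, for the homogeneity statement, use the standard grading $\deg t_i=1$, so that $\deg t^u=\mathbf 1^Tu$. If $\A$ is a configuration, with $\nu\in\RR^n$ satisfying $\nu^Tl_i=1$ for all $i$, then any binomial $t^u-t^v\in I_\A$ satisfies $\mathbf 1^Tu=\nu^TAu=\nu^TAv=\mathbf 1^Tv$, so $I_\A$ is homogeneous and hence so is $\grob$. Conversely, if $\grob$ is homogeneous then $I_\A$ is homogeneous, and since a toric ideal contains no monomials, every binomial in $I_\A$ is homogeneous; thus the functional $u\mapsto\mathbf 1^Tu$ vanishes on $\ker A\cap\ZZ^s$, hence on all of $\ker A$ (which this sublattice spans over $\QQ$), so $\mathbf 1^T$ lies in the row space of $A$. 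This gives $\nu$ with $\nu^Tl_i=1$ for every $i$, showing that $\A$ is a configuration.
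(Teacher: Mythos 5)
The paper cites this theorem from Sturmfels without reproducing the proof, so there is no in-paper argument to compare against; your reconstruction is the standard one from Sturmfels' Chapter~4, and it is correct. The two minor places worth tightening are: (i) in the disjoint-support step, after observing $t^{u'}-t^{v'}\in I_{\A}$ is nonzero with leading term $t^{u'}$, you should explicitly invoke the Gr\"obner property to produce some $g\in\grob$ with $\init(g)\mid t^{u'}$ before running the case split on whether $g$ equals $t^u-t^v$ (if not, reducedness is violated since $\init(g)$ properly divides $t^u$; if so, $t^u\mid t^{u'}$ contradicts that $t^{u'}$ properly divides $t^u$); and (ii) in the converse for homogeneity, the fact that no binomial $t^u-t^v\in I_\A$ can be inhomogeneous follows because its homogeneous components would be the monomials $t^u$ and $-t^v$, which would then lie in the (prime, monomial-free) toric ideal, and it is worth noting that the differences $u-v$ with $u,v\in\ZZ_{\ge0}^s$, $Au=Av$ span all of $\ker A$, so that $\mathbf 1^T$ genuinely vanishes on $\ker A$ (the degenerate case $I_\A=0$ forces $\ker A=0$, so the conclusion still holds). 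With those clarifications your argument is complete.
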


There are many additional nice results connecting Gr\"obner bases with combinatorics, one of which involves types of triangulations that we define now.
Suppose $P \subseteq \RR^n$ is an $n$-dimensional lattice polytope and $P \cap \ZZ^n = \{l_1,\ldots,l_s\}$.
Choose a vector $w = (w_1,\ldots,w_s) \in \RR^s$ such that the polytope
\[ P_w := \conv\{(l_1,w_1),\ldots,(l_s,w_s)\} \subseteq \RR^{n+1}\]
is $(n+1)$-dimensional, i.e., $P_w$ does not lie in an affine hyperplane of $\RR^{n+1}$.
Certain facets of $P_w$ have outward-pointing normal vectors with a negative last coordinate; projecting these facets back to $\RR^n$ provides the facets of a polytopal decomposition of $P$.
If the facets are themselves simplices, then the decomposition is a triangulation, and will be denoted $\Upsilon_w(P)$.
Any triangulation that can be obtained in this way for an appropriate choice of $w$ is called {\em regular}.

For a configuration $\A \subseteq \ZZ^n$ of size $s$, there is a close connection between regular triangulations of $\conv(\A)$ and initial ideals of $I_{\A}$.
First, we note that each monomial ordering $\prec$ on $T_{\A} = k[t_1,\dots,t_{s}]$ can be represented by a sufficiently generic {\em weight vector} $w \in \RR^{s}$ such that, for all $u, v \in \ZZ_{\geq 0}^{s}$, $t^u \prec t^v$ if and only if $w^Tu < w^Tv$.
Next, we define the {\em initial complex} $\Delta_{\prec}(I)$ of an ideal $I \subseteq T_{\A}$ with respect to $\prec$ to be the simplicial complex on $[s]$ such that $F$ is a face of $\Delta_{\prec}(I)$ if and only if there is no monomial in $\init(I)$ whose support is $F$.
Using linear programming, one may show the following.

\begin{thm}[{\cite[Theorem 8.3]{sturmfels}}]\label{thm:sturmfels1}
	Let $\A \subseteq \ZZ^n$ be a configuration.
	If $w$ is the weight vector for a monomial order $\prec$ on $T_{\A}$ then the abstract simplicial complex $\Delta_{\prec}(I_{\A})$  is, in fact,  a geometric simplicial complex which is the regular triangulation $\Upsilon_w(\conv(\A))$.
	That is, the set
	\[
		\Upsilon_w(\conv(\A)) = \{\conv(F) \mid F \in \Delta_{\prec}(I_{\A})\}
	\]
	is a regular triangulation of $\conv(\A)$. \qed
\end{thm}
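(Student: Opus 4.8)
The plan is to follow Sturmfels and route the whole argument through linear programming duality, since $\A$ being a configuration is exactly the hypothesis that makes the relevant polyhedra behave. First I would recall the structure of the toric ideal from the discussion around Theorem~\ref{thm:grob}: writing $\A=\{l_1,\dots,l_s\}$ and also using $\A$ for the matrix with columns $l_i$, the ideal $I_{\A}$ is generated by the homogeneous binomials $t^{u}-t^{v}$ with $\A u=\A v$, and two monomials are congruent modulo $I_{\A}$ precisely when they lie in the same fiber $\{x\in\ZZ_{\geq 0}^{s}\mid \A x=b\}$. Hence $t^{u}\in\init(I_{\A})$ if and only if the fiber of $b=\A u$ contains some $v$ with $w^{T}v<w^{T}u$; equivalently, $t^{u}$ is standard exactly when $u$ is the optimal solution of $\min\{w^{T}x\mid \A x=\A u,\ x\geq 0\}$, and genericity of $w$ makes this optimum unique. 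Since $\init(I_{\A})$ is a monomial ideal it is closed under multiplication by the $t_i$, so the non-faces of $\Delta_{\prec}(I_{\A})$ form an up-set and $\Delta_{\prec}(I_{\A})$ is indeed an abstract simplicial complex; unwinding its definition, $F\subseteq[s]$ is a non-face iff $F=\operatorname{supp}(u)$ for some non-optimal $u$, i.e.\ iff the linear program $\min\{w^{T}x\mid \A x=b,\ x\geq 0\}$ fails to have a unique optimum supported in $F$ for $b$ in the relative interior of $\pos\{l_i\mid i\in F\}$.

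Next I would pin down the triangulation side. Set $P_w=\conv\{(l_i,w_i)\mid i\in[s]\}$; by definition $\Upsilon_w(\conv(\A))$ is the projection of the lower faces of $P_w$, and using that $\A$ is a configuration one sees that $\conv\{l_i\mid i\in F\}$ is a cell of $\Upsilon_w(\conv(\A))$ exactly when there is a vector $c\in\RR^{n}$ with $c^{T}l_i=w_i$ for $i\in F$ and $c^{T}l_j<w_j$ for $j\notin F$, i.e.\ an affine functional supporting the lower hull of $P_w$ along precisely the lifted points indexed by $F$. Genericity of $w$ forces every lower facet to carry no more than an affine-basis worth of lifted points, so each maximal cell is a simplex and $\Upsilon_w(\conv(\A))$ is a genuine (regular) triangulation rather than merely a subdivision.

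Now I would match the two descriptions. For fixed $b$, the dual of $\min\{w^{T}x\mid \A x=b,\ x\geq 0\}$ is $\max\{b^{T}c\mid c^{T}l_i\leq w_i\ \text{for all }i\}$, and complementary slackness gives, for a primal optimum $u$ and dual optimum $c$, that $c^{T}l_i=w_i$ for every $i\in\operatorname{supp}(u)$. Choosing $b$ in the relative interior of $\pos\{l_i\mid i\in F\}$ one checks that the primal LP has a unique optimum with support inside $F$ if and only if there is a dual optimum whose tight constraints are exactly those indexed by $F$; by the previous paragraph this is precisely the condition that $\conv\{l_i\mid i\in F\}$ is a cell of $\Upsilon_w(\conv(\A))$. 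Therefore $F$ is a face of the abstract complex $\Delta_{\prec}(I_{\A})$ if and only if $\conv\{l_i\mid i\in F\}$ is a cell of $\Upsilon_w(\conv(\A))$, so the realization $F\mapsto\conv\{l_i\mid i\in F\}$ exhibits $\Delta_{\prec}(I_{\A})$ as this regular triangulation, and in particular it is a geometric simplicial complex.

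I expect the main obstacle to be the passage between the algebraic statement ``$t^{u}\in\init(I_{\A})$'' and the parametric linear program: one has to show that whether a support set $F$ is a non-face depends only on $F$ and not on the particular $b\in\pos\{l_i\mid i\in F\}$ used to probe it (so that one may move freely between ``some non-optimal $u$ with support $F$'' and ``every $b$ in the cone''), and one has to see that genericity of $w$ does double duty, simultaneously guaranteeing uniqueness of fiber minima on the algebraic side and simpliciality of the lower hull on the geometric side. Once those are in place, everything else is standard polyhedral bookkeeping together with the toric-ideal structure already recalled in Theorem~\ref{thm:grob}.
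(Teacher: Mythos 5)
First, note that the paper supplies no proof of this theorem: it is cited verbatim from Sturmfels's book (hence the terminal $\qed$), so there is no argument in the paper itself to compare against. Your outline is nonetheless correct and matches Sturmfels's parametric linear-programming approach.

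There is, however, a genuine gap, and although your closing paragraph gestures at it you misidentify its nature. The characterization ``$t^u$ is standard iff $u$ is $w$-optimal in its fiber'' is an \emph{integer} program: the relevant fiber is $\{x \in \ZZ_{\geq 0}^{s} \mid \A x = \A u\}$. By contrast, the regular triangulation $\Upsilon_w$, the LP dual $\max\{b^{T}c \mid \A^{T}c \leq w\}$, and the complementary-slackness step are all statements about the \emph{real} relaxation $\min\{w^{T}x \mid \A x = b,\ x \in \RR_{\geq 0}^{s}\}$. On a fixed fiber these two programs can disagree: a lattice point that is $w$-optimal among integer solutions need not be $w$-optimal among real solutions, since the real minimizer may be fractional with strictly smaller objective value. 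Consequently the conclusion ``therefore $F$ is a face of $\Delta_{\prec}(I_{\A})$ if and only if $\conv\{l_i \mid i \in F\}$ is a cell of $\Upsilon_w(\conv(\A))$'' does not follow from the LP-duality argument as you have written it. The missing bridge is exactly where the radical earns its keep: $F$ is a non-face of $\Delta_{\prec}(I_{\A})$ iff $(t^F)^k \in \init(I_{\A})$ for some $k \geq 1$, i.e.\ iff the dilate $k\mathbf{1}_F$ fails to be $w$-optimal in its \emph{integer} fiber for some $k$; and because the real LP over rational data has a rational optimum, which becomes integral after clearing denominators, this is in turn equivalent to $\mathbf{1}_F$ failing to be optimal in the \emph{real} relaxation --- which is what LP duality actually detects. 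That scaling step is absent from your proposal, and framing the difficulty as ``whether the answer depends on which $b$ one probes with'' understates it: the harder point is that on a single $b$ the integer and real optima can already diverge, and it is only after taking powers (equivalently, taking the radical) that the two programs are reconciled.
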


To state the next result we will need, recall that an ideal $I \subset T_{\A}$ having a minimal generating set of monomials is  \emph{squarefree} if no square divides any of these generating monomials.

\begin{thm}[{\cite[Corollaries 8.4 and 8.9]{sturmfels}}]\label{thm:sturmfels2}
	For any monomial order $\prec$ and corresponding weight vector $w$, the radical $\rad(\init(I_{\A}))$ is the Stanley-Reisner ideal of $\Upsilon_w(\conv(\A))$.
	Moreover, $\init(I_{\A})$ is squarefree if and only if $\Upsilon_w(\conv(\A))$ is unimodular with respect to the affine lattice generated by $\ZZ$-linear combinations of lattice points in $\A$. \qed
\end{thm}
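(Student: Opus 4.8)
The plan is to derive the statement from Theorem~\ref{thm:sturmfels1} together with two standard facts: that passing to an initial ideal preserves Hilbert functions, and that the degree of the semigroup ring $k[\A]$ of a configuration is the normalized volume of $\conv(\A)$ with respect to the relevant lattice.

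For the first sentence, recall that for a monomial ideal $M \subseteq T_{\A}$ the radical $\rad(M)$ is generated by the squarefree monomials $t^{\mathrm{supp}(m)}$ as $m$ ranges over the minimal monomial generators of $M$; equivalently, $\rad(M)$ is the Stanley--Reisner ideal of the simplicial complex whose non-faces are precisely the supports of monomials of $M$. Applying this with $M = \init(I_{\A})$ and comparing with the definition of the initial complex $\Delta_{\prec}(I_{\A})$ yields $\rad(\init(I_{\A})) = I_{\Delta_{\prec}(I_{\A})}$, and Theorem~\ref{thm:sturmfels1} identifies $\Delta_{\prec}(I_{\A})$ with the regular triangulation $\Upsilon_w(\conv(\A))$. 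This proves the first assertion; it also shows $\init(I_{\A}) \subseteq I_{\Upsilon_w(\conv(\A))}$, with equality exactly when $\init(I_{\A})$ is squarefree.

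For the second assertion I would compare degrees. Because $\A$ is a configuration, the grading $\deg(t_i) = 1$ makes $I_{\A}$ and $\init(I_{\A})$ homogeneous (Theorem~\ref{thm:grob}), so $T_{\A}/I_{\A} \iso k[\A]$ and $T_{\A}/\init(I_{\A})$ share a Hilbert function, hence a Krull dimension $d := \dim\conv(\A) + 1$ and a degree. On one side, $\deg(k[\A])$ is the normalized volume of $\conv(\A)$ with respect to the affine lattice generated by $\ZZ$-linear combinations of $\A$, by the usual identification of the degree of a projective toric variety with a normalized volume. On the other side, $T_{\A}/I_{\Upsilon_w(\conv(\A))}$ is the Stanley--Reisner ring of a pure $(d-1)$-dimensional complex, so its degree is the number of maximal simplices of $\Upsilon_w(\conv(\A))$. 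Since the normalized volume of $\conv(\A)$ equals the sum of the normalized volumes of the maximal simplices of any triangulation, and each such volume is a positive integer that equals $1$ iff the simplex is unimodular, $\deg(k[\A])$ equals the number of maximal simplices iff $\Upsilon_w(\conv(\A))$ is unimodular. Chaining the equivalences: $\init(I_{\A})$ is squarefree $\iff \init(I_{\A}) = I_{\Upsilon_w(\conv(\A))} \iff$ the two quotient rings have equal degree $\iff \Upsilon_w(\conv(\A))$ is unimodular.

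The delicate link — and the step I expect to be the main obstacle — is the equivalence ``$\init(I_{\A}) = I_{\Upsilon_w(\conv(\A))}$ $\iff$ equal degree.'' In general a monomial ideal can agree with its radical in top dimension while still carrying lower-dimensional or embedded primary components, so ``squarefree'' is strictly stronger than ``equal degree''; excluding this for initial ideals of toric ideals of configurations requires their finer structure. The cleanest route is to refine the global degree count to a local one: show that the associated primes of $\init(I_{\A})$ are exactly the monomial primes $P_{\sigma} = (t_i : i \notin \sigma)$ for $\sigma$ a maximal face of $\Upsilon_w(\conv(\A))$, and that the multiplicity of $\init(I_{\A})$ at $P_{\sigma}$ is the normalized volume of the simplex $\sigma$. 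Granting this, ``$\init(I_{\A})$ squarefree'' becomes ``every such multiplicity is $1$,'' i.e.\ ``every maximal simplex is unimodular.'' Establishing that local statement — tracking how $\init$ behaves under restricting $\A$ to the vertex set of a simplex of $\Upsilon_w(\conv(\A))$ — is where the genuine work lies; the volume bookkeeping above is then routine.
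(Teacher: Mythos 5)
This statement is a cited result from Sturmfels' \emph{Gr\"obner Bases and Convex Polytopes} (Corollaries 8.4 and 8.9); the paper does not supply a proof of its own, so there is no paper argument to compare against, and your attempt must stand on its own.

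Your handling of the first sentence is correct and complete: for any monomial ideal $M$, $\rad(M)$ is generated by the squarefree monomials $t^{\mathrm{supp}(m)}$ for $m$ ranging over the generators, and this is exactly the Stanley--Reisner ideal of the complex whose non-faces are the supports of monomials in $M$. Combined with Theorem~\ref{thm:sturmfels1}, this gives $\rad(\init(I_{\A})) = I_{\Upsilon_w(\conv(\A))}$.

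For the second sentence, however, you have a genuine gap, and to your credit you identify it precisely. The chain you write down is ``squarefree $\iff$ $\init(I_\A) = I_{\Upsilon_w}$ $\iff$ equal degree $\iff$ unimodular.'' The outer biconditionals are fine, and the forward direction of the middle one is trivial. The missing implication is ``equal degree $\Rightarrow$ $\init(I_{\A}) = I_{\Upsilon_w}$,'' which you need for ``unimodular $\Rightarrow$ squarefree.'' As you observe, equality of degree for an inclusion $\init(I_\A) \subseteq \rad(\init(I_\A))$ does not force equality, because the smaller ideal could carry embedded primes or lower-dimensional components without affecting the top-dimensional multiplicity count; and $T_{\A}/\init(I_{\A})$ is not in general Cohen--Macaulay, so you cannot dispose of this cheaply. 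The content you would need is exactly what you gesture at in the last paragraph: a primary decomposition $\init(I_{\A}) = \bigcap_{\sigma} Q_{\sigma}$ indexed by the maximal simplices $\sigma$ of the regular triangulation, with $\rad(Q_\sigma)$ the face prime $P_\sigma$ and the multiplicity of $Q_\sigma$ equal to the normalized volume of $\sigma$, and in particular no embedded primes. That statement (which is essentially Sturmfels, Theorem 8.8) makes the conclusion immediate: squarefree $\iff$ each $Q_\sigma = P_\sigma$ $\iff$ each multiplicity is $1$ $\iff$ every maximal simplex is unimodular. Until that local decomposition is proved, your argument establishes only one direction (``squarefree $\Rightarrow$ unimodular'') via the global degree count; the other direction is open.
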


The triangulations $\TT_n(132,312)$ and $\AltT_n(123)$ will turn out to have even more properties than those already discussed.
A triangulation is called {\em flag} if all its minimal nonfaces have two elements.
This may be detected algebraically by proving the existence of an initial ideal generated by squarefree quadratic monomials.
We will demonstrate the flag property by taking the vertices of $P = B_n(132,312)$ (respectively, $P = \AltB_n(123)$) and imposing the {\em graded reverse lexicographic (grevlex)} monomial ordering on $T_{\A_P} / I_{\A_P}$ induced from $Q_n(132,312)$ (respectively, $Q = \AltQ_n(123)$) as follows.
Let $T = k[t_1,\dots,t_s]$ and give the variables the total order $t_1 \succ t_2 \succ \dots \succ t_s$.  Given a monomial $t^a$ we let $|a|$ denote the sum of the exponents.
Grevlex extends the order on the variables to all monomials of $k[t_1,\dots,t_s]$ by insisting that $t^a \gglex t^b$ if $|a| > |b|$ or if both $|a| = |b|$ and the rightmost nonzero entry of $a - b$ is negative.
To apply this to $T_{\A_P}$, we must first place an order on the vertices of $P$; for notational convenience, 
since our variables correspond to permutation matrices, we will frequently use the notation $t_{\sig}$ to denote the variable corresponding the matrix for the permutation $\sig$.
To define grevlex order on monomials in these variables, we must first specify the ordering of the variables themselves.
Write $\sigp \glex \sig$ if  $\sig'$ is lexicographically greater than $\sig$ as words.
In this case we  define $t_{\sigp} \gglex t_{\sig}$.

This allows us to define a {\em reverse lexicographic}, or {\em pulling}, triangulation of a lattice polytope $P$, which is any triangulation whose Stanley-Reisner ideal is $\rad\left({\text{in}_{\lglex}(I_P)}\right)$.
Thus, a triangulation of $P$ is reverse lexicographic if its maximal simplices are the projections of the appropriate facets of $P_w$ where $w$ is a weight vector for $\lglex$.
See \cite{LeeTriangulations}, for example, for a recursive geometric description of how to create reverse lexicographic triangulations.

Before we prove the main theorem of this section, we will need two more lemmas.
Recall that a poset is {\em graded} if all of its maximal chains have the same length.

\begin{lemma}
	Let $M_{\sig}$ denote the matrix corresponding to a permutation $\sig$. 
	For any $\sig, \sigp$ that are both in $Q_n(132,312)$ or in $\AltQ_n(123)$, we have
	\begin{equation}\label{eq:comparability}
		M_{\sig} + M_{\sigp} = M_{\sig \land \sigp} +M_{\sig \lor \sigp}.
	\end{equation}
\end{lemma}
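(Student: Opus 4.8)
The plan is to reduce \eqref{eq:comparability} to the special case in which $\sig$ and $\sigp$ are incomparable and the interval $[\sig\wedge\sigp,\sig\vee\sigp]$ is a diamond (a copy of $B_2$), since that case has essentially already been treated: inside the proof of Proposition~\ref{prop:unimodularity}, equation~\eqref{f(sigma)} is exactly \eqref{eq:comparability} for such a pair. To carry out the reduction I would first isolate a general fact: if $L$ is a finite distributive lattice and $g\colon L\to V$ is a map into an abelian group $V$ satisfying $g(x)+g(y)=g(x\wedge y)+g(x\vee y)$ whenever $x\wedge y\lessdot x$ and $x\wedge y\lessdot y$, then the same identity holds for all $x,y\in L$. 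By Proposition~\ref{prop:distributive} both $Q_n(132,312)\iso M(n-1)$ and $\AltQ_n(123)\iso D_{\lceil n/2\rceil}^*$ are distributive lattices, so applying the fact to $g(\sig)=M_\sig$ (with $V=\ZZ^{n\times n}$) completes the proof once the diamond case is known.

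The general fact is proved by induction on $N(x,y):=(\ell(x)-\ell(x\wedge y))+(\ell(y)-\ell(x\wedge y))$, where $\ell$ is the rank function of $L$. If $x\le y$ or $y\le x$ the identity is immediate, and if $N(x,y)=2$ with $x,y$ incomparable then $[x\wedge y,\,x\vee y]$ is a diamond and the hypothesis applies. Otherwise, after possibly swapping $x$ and $y$ we may assume $\ell(x)-\ell(x\wedge y)\ge 2$; pick $z$ with $x\wedge y\lessdot z\le x$ and set $y':=z\vee y$. Distributivity and a rank count give $z\wedge y=x\wedge y$, $\;x\wedge y'=z$, $\;x\vee y'=x\vee y$, and $y\lessdot y'$, from which $N(x,y')=N(x,y)-1$ and $N(y,z)=(\ell(y)-\ell(x\wedge y))+1<N(x,y)$. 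Applying the inductive hypothesis to $(x,y')$ and to $(y,z)$ and adding the two resulting equalities, the terms $g(y')$ and $g(z)$ cancel and we obtain $g(x)+g(y)=g(x\wedge y)+g(x\vee y)$.

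For the diamond case itself, write $\sig_0:=\sig\wedge\sigp$, so $\sig_0\lessdot\sig,\sigp\lessdot\sig\vee\sigp$. By Proposition~\ref{prop:distributive} each of these is a cover in weak Bruhat order, so for $Q_n(132,312)$ we have $\sig=\sig_0 s_i$ and $\sigp=\sig_0 s_j$ for simple transpositions $s_i\ne s_j$, and $\sig\vee\sigp=\sig_0 s_i s_k=\sig_0 s_j s_m$ for suitable simple transpositions. A short reduced-word argument then forces $|i-j|\ge 2$: a length-two element of $\symm_n$ is either a product of two commuting transpositions or a $3$-cycle, and a $3$-cycle has a unique reduced word (and generates an interval that is a chain, not a diamond), so the latter is impossible. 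Hence $s_i,s_j$ commute and $\sig\vee\sigp=\sig_0 s_is_j$. Since right multiplication by $s_i$ acts on permutation matrices by interchanging columns $i$ and $i+1$, and $\{i,i+1\}$ and $\{j,j+1\}$ are disjoint, comparing $M_{\sig_0}$, $M_{\sig_0 s_i}$, $M_{\sig_0 s_j}$, $M_{\sig_0 s_is_j}$ one column pair at a time yields \eqref{eq:comparability}: in the columns $\{i,i+1\}$ the matrices $M_{\sig_0}$ and $M_{\sig_0 s_j}$ agree while $M_{\sig_0 s_i}$ and $M_{\sig_0 s_is_j}$ agree, so both sides of \eqref{eq:comparability} contribute the same entries there; the columns $\{j,j+1\}$ are symmetric; and all other columns coincide across the four matrices. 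For $\AltQ_n(123)$ the argument is identical, using left multiplication and ``row'' in place of ``column''.

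I expect the main obstacle to be organizational rather than conceptual: stating the valuation-from-diamonds lemma cleanly and verifying the distributivity and rank identities it uses ($z\wedge y=x\wedge y$, $x\wedge y'=z$, $y\lessdot y'$, and the strict decrease of $N$), and then presenting the matrix bookkeeping for the diamond case uniformly across the right weak order on $Q_n(132,312)$ and the left weak order on $\AltQ_n(123)$.
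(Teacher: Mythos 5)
Your proof is correct and takes essentially the same approach as the paper's. Both reduce \eqref{eq:comparability} to the diamond case by an induction on rank-distances using distributivity and semimodularity---the paper picks $\tau\lessdot\sigma$ and inducts lexicographically on the pair of distances, while you pick $z$ covering $\sigma\wedge\sigma'$ and induct on their sum---and both settle the diamond case via the commuting-simple-transpositions argument that the paper makes when deriving equation~\eqref{f(sigma)} in the proof of Proposition~\ref{prop:unimodularity}.
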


\begin{proof}
Our lattices are distributive and thus graded.  Let $r$ and $r'$ be the lengths of  maximal chains in the intervals $[\sigma\land\sigma',\sigma]$ and $[\sigma\land\sigma',\sigma']$, respectively.  Without loss of generality, we can assume $r\ge r'$.  We induct  on the pairs $(r,r')$ in lexicographic order. The case when $r'=0$ in trivial, and the case $(r,r')=(1,1)$ is covered by equation~(\ref{f(sigma)}).  
So take a permutation $\tau$  in the interval $[\sigma\land\sigma',\sigma]$ which is covered by $\sigma$.  Assume $r\ge2$.
First compare $\tau$ and $\sigma'$.
 By choice of $\tau$, we have $\tau\land\sigma'=\sigma\land\sigma'$.  
And since the lattice is semimodular~\cite[Proposition~3.3.2]{StanleyVol1Ed2}, the length of a maximal chain in $[\tau,\tau\lor\sigma']$ is $r'$.
Comparing $\sigma$ and $\tau\lor\sigma'$ we see that, since we are in a distributive lattice, 
\[
	\sigma\land(\tau\lor\sigma')=(\sigma\land\tau)\lor(\sigma\land\sigma')=\tau.
\]
Also clearly $\sigma\lor(\tau\lor\sigma')=\sigma\lor\sigma'$.
Because of the way we have chosen $r$ and $r'$,  we can apply induction to the pair $\tau,\sigma'$ and to the pair $\sigma,\tau\lor\sigma'$, giving
\[
	M_\tau + M_{\sigma'}=M_{\sigma\land\sigma' }+ M_{\tau\lor\sigma'}  \text{ and } 
	M_\sigma +  M_{\tau\lor\sigma'} = M_\tau + M_{\sigma\lor\sigma'}.
\]
Adding these two equations and canceling finishes the proof.
\end{proof}

\begin{lemma}\label{lem:minima}
	For each permutation $\sig = a_1\dots a_n$ define
	\[
		\mu_i(\sig) = \min \{a_1,\dots, a_i\}.
	\]
	Suppose $\sig\le \tau$  in either left or right (weak) Bruhat order.   It follows that $\mu_i(\sig) \leq \mu_i(\tau)$ for all $i$. 
\end{lemma}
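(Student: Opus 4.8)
The plan is to reduce everything to a single cover relation and then simply watch how the prefix multiset $\{a_1,\dots,a_i\}$ changes. Both weak orders are graded by $\inv$, so if $\sig\le\tau$ there is a saturated chain $\sig=\rho_0\lessdot\rho_1\lessdot\cdots\lessdot\rho_m=\tau$; once we show $\mu_i(\rho_{j-1})\le\mu_i(\rho_j)$ for each cover and each $i$, transitivity of $\le$ on $\RR$ finishes the job. So it suffices to treat covers. The only arithmetic fact used is the elementary observation that for any finite set $S$ of integers and any integer $k$ one has $\min(S\cup\{k\})\le\min(S\cup\{k+1\})$, together with the trivial remark that whenever $i$ is large enough that the prefix multiset $\{a_1,\dots,a_i\}$ is unaffected by the move, $\mu_i$ is unchanged.

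For the right weak order, a cover $\sig\lessdot\tau$ means $\tau=\sig s_k$ with $\inv(\tau)=\inv(\sig)+1$, i.e.\ $\tau$ is obtained from $\sig=a_1\cdots a_n$ by interchanging the entries in positions $k$ and $k+1$, and the inversion count forces $a_k<a_{k+1}$. For $i<k$ the prefix $a_1\cdots a_i$ is literally unchanged, and for $i\ge k+1$ the set $\{a_1,\dots,a_i\}$ is the same (merely reordered), so $\mu_i(\sig)=\mu_i(\tau)$ in both ranges. For $i=k$ we have $\mu_k(\sig)=\min(\mu_{k-1}(\sig),a_k)$ and $\mu_k(\tau)=\min(\mu_{k-1}(\sig),a_{k+1})$, so $a_k<a_{k+1}$ gives $\mu_k(\sig)\le\mu_k(\tau)$.

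For the left weak order, a cover means $\tau=s_k\sig$ with $\inv(\tau)=\inv(\sig)+1$, i.e.\ $\tau$ is obtained from $\sig$ by swapping the values $k$ and $k+1$ wherever they occur; writing $a_p=k$ and $a_q=k+1$, the inversion count forces $p<q$. If $i<p$, neither $k$ nor $k+1$ occurs among $a_1,\dots,a_i$, so the prefix is unchanged; if $i\ge q$, both $k$ and $k+1$ occur among $a_1,\dots,a_i$ for both permutations, so the prefix multiset is unchanged; in either range $\mu_i(\sig)=\mu_i(\tau)$. If $p\le i<q$, the prefix multiset of $\sig$ is $S\cup\{k\}$ and that of $\tau$ is $S\cup\{k+1\}$ for a common set $S$, so the elementary inequality above yields $\mu_i(\sig)\le\mu_i(\tau)$. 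Combining the two cases with the saturated-chain reduction proves the lemma. The only point that needs genuine care — and the one spot where a slip would be fatal — is keeping straight which entries move under a cover in each order (positions for the right order, values for the left order) and in which direction the hypothesis $\inv(\tau)=\inv(\sig)+1$ pins them down; after that, everything collapses to the one-line fact about minima of prefix multisets.
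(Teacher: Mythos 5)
Your proof is correct and follows essentially the same approach as the paper's: reduce to a single cover relation and then track how the prefix set changes, observing that the minimum can only weakly increase. The one cosmetic difference is that the paper handles both weak orders uniformly by noting that any cover swaps entries at positions $r<s$ with $a_r<a_s$, whereas you treat the right and left orders as separate cases before arriving at the same prefix-set calculation.
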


\begin{proof}	
	The proof follows quickly by induction if we can prove it for $\sigma \lessdot \tau$.   In this case, $\tau$ was obtained from $\sigma$ by interchanging two elements $a_r$ and $a_s$ where $r<s$ and $a_r<a_s$.
Consider the sets  $A=\{a_1,\dots,a_i\}$ and $B=\{b_1,\dots,b_i\}$.  If $i<r<s$ or $r<s\le i$ then $A=B$ and so the lemma is trivial.  The only remaining possibility  is $r\le i< s$.  But in that case $B$ is obtained from $A$ by replacing $a_r$ with a larger element $a_s$.  So the minimum can only weakly increase in passing from $A$ to $B$ and the proof is complete.
\end{proof}

We are now ready to prove the main result of this section.

\begin{thm}\label{thm:regularity}
	The sets $\TT_n(132,312)$ and $\AltT_n(123)$ are regular, flag, unimodular reverse lexicographic
triangulations of $B_n(132,312)$ and $\AltB_n(123)$, respectively.
\end{thm}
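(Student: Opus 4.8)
The plan is to reduce everything to a single computation in toric algebra: the determination of the initial ideal of $I_{\A_P}$ with respect to the grevlex order $\gglex$, where $P=B_n(132,312)$ or $P=\AltB_n(123)$ and $\A_P$ is, as above, the set of homogenized vertices of $P$. Let $L$ denote the corresponding lattice, $Q_n(132,312)$ or $\AltQ_n(123)$. The claim is that
\[
	\mathrm{in}_{\gglex}(I_{\A_P}) \ =\ \mathcal{J} \ :=\ \bigl(\, t_\sig t_{\sigp}\ :\ \sig\not\sim\sigp\ \text{in}\ L \,\bigr),
\]
a squarefree quadratic monomial ideal; granting this, all of the asserted properties drop out of Theorems~\ref{thm:sturmfels1} and~\ref{thm:sturmfels2} together with Proposition~\ref{prop:unimodularity}.

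First I would write down the candidate Gr\"obner basis. For each incomparable pair $\sig\not\sim\sigp$ in $L$, equation~\eqref{eq:comparability} shows that
\[
	q_{\sig,\sigp}\ :=\ t_\sig t_{\sigp} - t_{\sig\land\sigp}\,t_{\sig\lor\sigp}
\]
lies in $I_{\A_P}$. A cover in weak Bruhat order makes the one-line word strictly larger in lexicographic order (the relevant adjacent swap turns an ascent into a descent), so weak order implies lexicographic order; hence $\sig\land\sigp$ is lex-smaller than both $\sig$ and $\sigp$, which in turn are both lex-smaller than $\sig\lor\sigp$, so among these four distinct permutations the variable $t_{\sig\land\sigp}$ is the grevlex-least. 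Since the quadratic monomial $t_\sig t_{\sigp}$ does not involve $t_{\sig\land\sigp}$ while $t_{\sig\land\sigp}t_{\sig\lor\sigp}$ does, this gives $t_\sig t_{\sigp}\gglex t_{\sig\land\sigp}t_{\sig\lor\sigp}$, so $\mathrm{in}_{\gglex}(q_{\sig,\sigp})=t_\sig t_{\sigp}$. The family $\{q_{\sig,\sigp}\}$ is then visibly a reduced set of binomials (leading coefficients are $1$, and $t_\sig t_{\sigp}$ cannot divide a term of another $q_{\gamma,\delta}$ since $\gamma\land\delta<\gamma\lor\delta$ are comparable), and in particular $\mathcal{J}\subseteq\mathrm{in}_{\gglex}(I_{\A_P})$.

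The reverse inclusion --- equivalently, the statement that $\{q_{\sig,\sigp}\}$ is a Gr\"obner basis --- is the heart of the matter, and I would prove it by identifying the standard monomials of $I_{\A_P}$. Call a monomial $t_{\sig_1}^{a_1}\cdots t_{\sig_u}^{a_u}$ with $\sig_1<\cdots<\sig_u$ in $L$ a \emph{chain monomial}; these are precisely the standard monomials of $\mathcal{J}$. Every fiber of the monoid homomorphism $t^a\mapsto\sum_i a_iM_{\sig_i}$ contains at least one chain monomial: whenever the support of a monomial contains an incomparable pair $\sig,\sigp$, replacing the factor $t_\sig t_{\sigp}$ by $t_{\sig\land\sigp}t_{\sig\lor\sigp}$ preserves the image by~\eqref{eq:comparability} and strictly lowers the grevlex order by the previous paragraph, so the process terminates at a chain monomial. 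Every fiber also contains at most one chain monomial: distinct chain monomials have distinct images. This last injectivity statement is the main obstacle. Given $X=\sum_i a_iM_{\sig_i}$ with $\sig_1<\cdots<\sig_u$ a chain, one recovers the $\sig_i$ and the $a_i$ from $X$ by a greedy peeling argument that processes the columns of $X$ from left to right: Lemma~\ref{lem:minima} guarantees that the prefix minima satisfy $\mu_j(\sig_1)\le\cdots\le\mu_j(\sig_u)$ for every $j$, and feeding this monotonicity into the explicit descriptions of the vertices --- Lemma~\ref{lem:132,312} for $B_n(132,312)$, and the Dyck-path description from the proof of Proposition~\ref{prop:distributive} for $\AltB_n(123)$ --- pins down, one column at a time, which permutations may contribute to $X$ and with which multiplicity. (One could instead verify Buchberger's $S$-pair criterion directly on the $q_{\sig,\sigp}$, using distributivity of $L$ to treat the overlap cases; this is more mechanical but has more cases.) Granting injectivity, the chain monomials form a $k$-basis of $k[\A_P]\cong T_{\A_P}/I_{\A_P}$, hence are exactly its standard monomials; since they coincide with the standard monomials of $\mathcal{J}$ and $\mathcal{J}\subseteq\mathrm{in}_{\gglex}(I_{\A_P})$, we conclude $\mathrm{in}_{\gglex}(I_{\A_P})=\mathcal{J}$.

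With the initial ideal in hand, the rest is assembly. Since $\mathcal{J}$ is squarefree, Theorem~\ref{thm:sturmfels2} shows that the regular triangulation $\Upsilon_w(\conv(\A_P))$ --- which by Theorem~\ref{thm:sturmfels1} is the geometric realization of the initial complex $\Delta_{\gglex}(I_{\A_P})$ --- is unimodular with respect to the affine lattice generated by $\ZZ$-linear combinations of $\A_P$, and has Stanley--Reisner ideal $\mathrm{rad}(\mathcal{J})=\mathcal{J}$. A set of variables is a face of $\Delta_{\gglex}(I_{\A_P})$ exactly when it supports no monomial of $\mathcal{J}$, i.e.\ exactly when it is a chain in $L$; hence $\Delta_{\gglex}(I_{\A_P})$ is the order complex $\Delta(Q_n(132,312))$ (respectively $\Delta(\AltQ_n(123))$), and projecting each simplex $\conv\{(M_\sig,1):\sig\in c\}$ back to $\RR^{n\times n}$ recovers exactly the collection $\TT_n(132,312)$ (respectively $\AltT_n(123)$) of Proposition~\ref{prop:unimodularity}. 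Thus that collection is a triangulation of $P$; it is regular and reverse lexicographic because $w$ represents the grevlex order; it is flag because $\mathcal{J}$ is generated in degree $2$, so all its minimal nonfaces have two elements; and it is unimodular with respect to the saturated lattice $\aff(P)\cap\ZZ^{n\times n}$ by Proposition~\ref{prop:unimodularity}. For odd $n$ the alternating case follows from the even case via the isomorphism $\AltQ_n(123)\iso\AltQ_{n+1}(123)$ and the corresponding lattice-preserving projection $\AltB_{n+1}(123)\to\AltB_n(123)$.
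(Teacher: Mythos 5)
Your overall strategy matches the paper's: set up the candidate Gr\"obner basis from incomparable pairs via equation~\eqref{eq:comparability}, show the grevlex initial ideal is exactly the Stanley--Reisner ideal $\mathcal{J}$ of the order complex, and then invoke Theorems~\ref{thm:sturmfels1} and~\ref{thm:sturmfels2} together with Proposition~\ref{prop:unimodularity}. The forward inclusion $\mathcal{J}\subseteq\mathrm{in}_{\gglex}(I_{\A_P})$ is argued the same way in both, including the observation that a weak-order cover (left or right) is a lexicographic increase of the one-line word, which makes $t_\sig t_{\sigp}$ the initial term of $q_{\sig,\sigp}$.

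Where you diverge is the reverse inclusion, and this is where your proposal is incomplete. You reframe it as two claims: (1) each fiber of the monoid map contains at least one chain monomial (easy, by repeated meet/join rewriting), and (2) each fiber contains at most one chain monomial --- injectivity --- which you explicitly call ``the main obstacle'' and then only sketch as a ``greedy peeling argument that processes the columns of $X$ from left to right.'' That sketch does not obviously close: in column $j$ several $\sigma_i$ in a chain can share the same value $\sigma_i(j)$, so you cannot in general peel off individual multiplicities $a_i$ column-by-column. The paper instead makes the argument go through a single well-chosen comparison: assume a chain monomial $t^a$ (supported on $\sigma_1<\dots<\sigma_r$) and a $\gglex$-smaller monomial $t^b$ with no common variables and the same image; take the lex-smallest permutation $\sigma'_1=c_1\dots c_n$ occurring in $t^b$ (it is lex-smaller than every $\sigma_i$ because it is the grevlex-least variable present), let $j$ be the first index with $c_j<a_j$ where $\sigma_1=a_1\dots a_n$, and read off a forced positive entry of $X$ at position $(j,c_j)$, hence some $\sigma_p>\sigma_1$ with $\sigma_p(j)=c_j$; then Lemma~\ref{lem:minima} gives $\mu_j(\sigma_1)\le c_j<a_j$, and the structural fact that every prefix of a permutation in $\Av_n(132,312)$ is an interval (from Lemma~\ref{lem:132,312}), respectively the alternating/parity analysis for $\AltAv_n(123)$, forces $c_j$ to repeat an earlier entry of $\sigma'_1$, a contradiction. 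This simultaneously proves your injectivity statement (take $t^b$ to also be a chain monomial) and the stronger fact that chain monomials are the grevlex-least elements of their fibers, which is what directly identifies them as the standard monomials.

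So: your logical skeleton is sound (linear independence of chain monomials modulo $I_{\A_P}$, combined with $\mathcal{J}\subseteq\mathrm{in}_{\gglex}(I_{\A_P})$, does force equality), you have correctly located the two needed structural inputs (Lemma~\ref{lem:minima} and the vertex descriptions), and your assembly at the end --- squarefree quadratic initial ideal $\Rightarrow$ regular, flag, unimodular, reverse lexicographic triangulation realized as $\TT_n(132,312)$ resp.\ $\AltT_n(123)$ --- matches the paper. But the crucial injectivity/standard-monomial step is left as a hand-wave, and the particular hand-wave you chose (column-by-column peeling) is not the right shape; you should instead locate the first column where the lex-smallest permutation in one term disagrees with the lex-smallest in the other and derive the repeated-entry contradiction as above, handling the $132,312$ case via the interval property and the $\AltB_n(123)$ case via the even/odd-position monotonicity of alternating $123$-avoiders. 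The Buchberger's-criterion alternative you mention would also work but is, as you say, more mechanical, and you have not carried it out either.
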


\begin{proof}
	
	First consider $P = B_n(132,312)$, and let $\A = P \cap \ZZ^{n \times n}$, so that $\A_P = \{ (l,1)\ |\ l \in \A\}$.
	Our strategy will be to construct the reduced Gr\"obner basis $\grob$ of $I_{\A_P}$ with respect to $\prec$ which we are taking the grevlex order.
	By Theorem~\ref{thm:sturmfels1}, the initial complex $\Delta_{\prec}(I_{\A_P})$ is a regular triangulation $\Upsilon_w(\A_P)$ of $\conv(\A_P) = (P,1)$, which induces a regular triangulation $\Upsilon_w(P)$ of $P$.
	We will see that $\grob$ consists of binomials whose initial terms are products of distinct pairs of variables corresponding to incomparable elements of $Q_n(132,312)$.
	Thus, by Theorem~\ref{thm:sturmfels2} and the comment directly afterwards, since $\init(I_{\A_P})$ is the Stanley-Reisner ideal for $\Upsilon_w(P)$, the triangulation is flag and unimodular with respect to the affine lattice
 $\aff(P) \cap \ZZ^{n \times n}$.
	By our description of the minimal non-faces of this triangulation, we will know that the simplices in $\Upsilon_w(P)$ are exactly the elements of $\TT_n(132,312)$.
	Since we saw in Proposition~\ref{prop:unimodularity} that each $\Gamma \in \TT_n(132,312)$ is unimodular with respect to $(\aff \Gamma) \cap \ZZ^{n\times n}$, we have that $\TT_n(132,312)$ is actually a triangulation of $P$ with respect to the lattice $(\aff P) \cap \ZZ^{n \times n}$.
	Because of how we defined $\prec$, the triangulation $\TT_n(132,312)$ is reverse lexicographic as well.
	
	We know by Theorem~\ref{thm:grob} that $\grob$ consists of binomials whose structure we will now examine.
	Consider the set of monomials $t_{\sig}t_{\sigp}$ in $T_{\A_P}$ such that $\sig$ and $\sigp$ are incomparable in $Q_n(132,312)$. 
	Because of equation \eqref{eq:comparability}, we know that $t_{\sig}t_{\sigp} - t_{\sig \land \sigp}t_{\sig \lor \sigp} \in I_{\A_P}$.
	By the way we defined $\prec$, on monomials it is a linear extension of the partial order in  $Q_n(132,312)$.  So the smaller of the two terms is the one containing $t_{\sig\land \sigp}$.
	Thus $t_{\sig}t_{\sigp}$ is the initial term of the binomial.
	Since this monomial is quadratic, it must be the initial term of some binomial in $\grob$.
	It quickly follows from the definition of a reduced Gr\"obner basis that there can be no binomial in $\grob$ of degree $3$ or greater whose initial term contains a pair of variables $t_{\sig}, t_{\sig^{\prime}}$ corresponding to incomparable elements $\sig, \sig^{\prime}$ in $Q_n(132,312)$.
	Otherwise, this initial term would be divisible by $t_{\sig}t_{\sig^{\prime}}$, which is itself an initial term of a binomial in $\grob$.

	Now we will show that there are no binomials 
of degree $2$ or greater in $\grob$ with initial term $t_{\sig_1}^{u_1}\dots t_{\sig_r}^{u_r}$ such that $\sig_1 < \dots < \sig_r$ in $Q_n(132,312)$.
	If we assume there is such a binomial, let $t_{\sigp_1}^{v_1}\dots t_{\sigp_{s}}^{v_{s}}$ be the other term in the binomial.
	By Theorem~\ref{thm:grob}  again, we know the binomial is homogeneous and that the two monomials have no common factors. So in the noninitial term
	 there is some variable, which we may take to be $t_{\sigp_1}$, such that that $ t_{\sigp_1} \lglex t_{\sig_i}$ for all $i$.  
    So, by definition of this monomial order, $\sigp_1 \llex \sig_i  $ for all $i$.
	Letting $\sig_1 = a_1\dots a_n$ and $\sigp_1 = c_1\dots c_n$, denote by $j$ the smallest index for which $c_j < a_j$.
	Now
	\[
		\sum_{i=1}^r u_iM_{\sig_i} = \sum_{i=1}^{s} v_iM_{\sigp_i},
	\]
	and we know from Theorem~\ref{thm:grob} yet again that all the $u_i,v_i$ can be taken to be positive.  Thus the entry  with coordinates $(j,c_j)$ in the matrix for the right-hand sum must be positive. Comparison with the left-hand side shows that
	there is some other $\sig_p = b_1\dots b_n$ for which $b_j = c_j$ and $\sig_1 < \sig_p$ in $Q_n(132,312)$.
	
	We will show $c_j$ is equal to some element in $c_1\dots c_{j-1}=a_1\dots a_{j-1}$ and so $\sigp_1$ is not a permutation, the desired contradiction.
	Using Lemma~\ref{lem:minima} and the definition of $j$ we have
	\[
		\min\{a_1, \dots, a_j\} = \mu_j(\sig_1)\le\mu_j(\sig_p)\le b_j = c_j < a_j.
	\]
	But from Lemma~\ref{lem:132,312},  it is clear that any prefix of $\sig_1$ forms an interval.  So the above inequalities show that $c_j\in \{a_1,\dots,a_{j-1}\}=\{c_1,\dots,c_{j-1}\}$.  So $c_j$ is repeated in $\sigp_1$ forcing it not to be a permutation,  the desired contradiction.
	
	We have shown that the binomials in $\grob$ have initial terms that are products of variables that correspond to pairwise incomparable elements in $Q_n(132,312)$.
	So, the initial ideal of $I_{\A_P}$ is radical and therefore, by Theorem~\ref{thm:sturmfels2}, is the Stanley-Reisner ideal of a regular triangulation of $\conv(\A_P)$ which induces a triangulation of $\conv(\A) = P$.
		
	Since the minimal non-edges of the triangulation are pairs of incomparable elements, any chain $\sig_1 < \dots < \sig_r$ in $Q_n(132,312)$ induces a face $\{M_{\sig_1},\dots,M_{\sig_r}\}$ of the triangulation.
	The set of all such faces is exactly $\TT_n(132,312)$, so $\TT_n(132,312)$ is actually a regular triangulation of $B_n(132,312)$. 
	By Proposition~\ref{prop:unimodularity}, this triangulation is unimodular with respect to $(\aff P) \cap \ZZ^{n \times n}$, and since the minimal non-faces are edges, this triangulation is flag. 
	Because this triangulation was the result of taking an initial ideal with respect to a grevlex order, the triangulation is reverse lexicographic.
	
The same proof will work in the case of $\AltB_n(123)$ except during the demonstration that $\sigp_1$ is not a permutation where we used the grid class structure of $\Av_n(132,312)$.  Instead, we show that there is no such $\sigp_1$ in $\AltQ_n(123)$ as follows.  If $c_j$ occurs among $a_1,\dots,a_{j-1}$ then we are done as before.  Otherwise, $a_j$ must occur to the right of $c_j$ in $\sigp_1$.
Recall that applying a simple transposition $s_i$ to an element of $\AltQ_n(123)$ interchanges $i$ which is in odd position with $i+1$ which is in an even position.  It follows that elements in odd positions increase with the partial order while those in even positions decrease.  Since $a_j>b_j$, we must have $j$  even.  If $a_j$ occurs in an even position to the right of $c_j$ in $\sigp_1$, then we have a contradiction since $c_j<a_j$ are the elements in even positions form a decreasing sequence.  If $a_j$ is in an odd position, then $c_{j-1}>a_j$ since the elements in odd positions are also decreasing.  But then $c_{j-1}>a_j>c_j$ which contradicts the fact that $\sigp_1$ is alternating.  This final contradiction finishes the proof.
\end{proof}

As a first application of this theorem, we will compute the dimensions of our polytopes.  Indeed, since the simplices defined in Proposition~\ref{prop:unimodularity} are those of a regular triangulation, their dimensions must be that of the corresponding polytopes.  So we have shown the following.
\begin{cor}
We have $\dim B_n(132,312)=\binom{n}{2}$ and $\dim\AltB_n(123)=\binom{n}{\lfloor n/2\rfloor}$.\hfill\qed
\end{cor}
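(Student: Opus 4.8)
The plan is to read off both dimensions directly from the triangulations constructed in Theorem~\ref{thm:regularity}, using the standard fact that a triangulation of a polytope $P$ has underlying space $P$ and is therefore a (geometric) simplicial complex of dimension exactly $\dim P$; equivalently, its inclusion-maximal simplices are $(\dim P)$-simplices. So it suffices to exhibit one maximal cell in each triangulation and compute its dimension.

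For $B_n(132,312)$ I would argue as follows. By Proposition~\ref{prop:unimodularity}, the cell of $\TT_n(132,312)$ attached to a chain $\sig_0 \lessdot \cdots \lessdot \sig_k$ of $Q_n(132,312)$ is the honest (affinely independent, indeed unimodular) simplex $\conv\{M_{\sig_0},\dots,M_{\sig_k}\}$, which has dimension $k$; the inclusion-maximal cells are exactly those coming from maximal chains. By Proposition~\ref{prop:distributive}, $Q_n(132,312)\iso M(n-1)$ is a distributive lattice, hence graded, so all its maximal chains have the same length, namely the rank of $M(n-1)$. That rank is the number of boxes of the top element of $M(n-1)$, the shifted staircase $(n-1,n-2,\dots,1)$, which is $1+2+\cdots+(n-1)=\binom n2$. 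Hence every maximal cell of $\TT_n(132,312)$ is a $\binom n2$-simplex, and since $\TT_n(132,312)$ is a triangulation of $B_n(132,312)$, we get $\dim B_n(132,312)=\binom n2$. The case of $\AltB_n(123)$ is identical, using $\AltT_n(123)$, the isomorphism $\AltQ_n(123)\iso D_{\lceil n/2\rceil}^*$, and the dimension $\binom n{\lfloor n/2\rfloor}$ already recorded for those simplices in Proposition~\ref{prop:unimodularity}.

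There is essentially no remaining obstacle: all of the hard content — that the $f(\Gamma)$ are genuine nondegenerate simplices, that they fit together into a triangulation, and that this triangulation is unimodular with respect to the ambient integer lattice — was established in Proposition~\ref{prop:unimodularity} and Theorem~\ref{thm:regularity}. The only point requiring care is the purely combinatorial bookkeeping: that the relevant poset is graded (immediate, being distributive) and that its rank is the one asserted, which is exactly what counting the boxes of the maximal staircase shape accomplishes. I would therefore simply invoke Theorem~\ref{thm:regularity} together with Proposition~\ref{prop:unimodularity} and carry out this box count.
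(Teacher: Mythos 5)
Your approach is exactly the paper's: a triangulation of a polytope has the same dimension as the polytope, so it suffices to know the dimension of a maximal cell of $\TT_n(132,312)$ (resp. $\AltT_n(123)$), which Proposition~\ref{prop:unimodularity} and Theorem~\ref{thm:regularity} already supply. The paper's ``proof'' is a one-sentence remark preceding the corollary; you unpack the same remark more explicitly.

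There is, however, a real inconsistency in your treatment of $\AltB_n(123)$ that you should not let slide. You assert that ``the case of $\AltB_n(123)$ is identical'' and that the box count is the only thing left to do, but then you do not actually perform it; instead you simply cite the value $\binom{n}{\lfloor n/2\rfloor}$ from Proposition~\ref{prop:unimodularity}. If you carry out the box count you claim is identical, you get something different: $\AltQ_n(123)\iso D_{\lceil n/2\rceil}^*$ has as its top element the staircase $(\lceil n/2\rceil-1,\ldots,2,1)$, so the rank of the lattice is $\binom{\lceil n/2\rceil}{2}$, and that is the dimension of every maximal simplex. The expression $\binom{n}{\lfloor n/2\rfloor}$ cannot be right as written --- it grows like $2^n/\sqrt{n}$ and would exceed $n^2 = \dim\RR^{n\times n}$ already for modest $n$ --- and it is also inconsistent with the volume formula two corollaries later, whose leading factor is $\binom{k}{2}!$ with $k=\lceil n/2\rceil$. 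So the formula in Proposition~\ref{prop:unimodularity} is a typo that the corollary (and your write-up, by deferring to the proposition rather than doing the computation) inherits. Your method is sound; had you followed through on the box count for the alternating case as you did for $B_n(132,312)$, you would have caught this and arrived at the correct value $\binom{\lceil n/2\rceil}{2}$.
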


We can also compute the normalized volumes of $B_n(132,312)$ and $\AltB_n(123)$.

\begin{cor}
	The normalized volume of $B_n(132,312)$ is
	\[
		\Vol B_n(132,312) = \binom{n}{2}!\frac{\prod_{i=1}^{n-1} (i-1)!}{\prod_{i=1}^{n-1} (2i-1)!}
	\]
	The normalized volume of $\AltB_n(123)$ is 
	\[
		\Vol \AltB_n(123) = \binom{k}{2}!\frac{1}{\prod_{i=1}^{k-1}(2i -1)^{k-i}},
	\]
	where $k = \lceil n/2 \rceil$.
\end{cor}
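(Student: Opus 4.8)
The plan is to read off both normalized volumes from the triangulations constructed in Theorem~\ref{thm:regularity}. Recall from Section~\ref{subsec:EL and shellable} that if a lattice polytope $P$ carries a triangulation $\TT$ that is unimodular with respect to $(\aff P)\cap\ZZ^{n\times n}$, then $\Vol P$ equals the number of maximal simplices of $\TT$. By Theorem~\ref{thm:regularity} the sets $\TT_n(132,312)$ and $\AltT_n(123)$ are such triangulations, and by their construction in Proposition~\ref{prop:unimodularity} their maximal simplices are precisely the simplices $f(\Gamma)$ as $\Gamma$ ranges over the maximal faces of $\Delta(Q_n(132,312))$, respectively $\Delta(\AltQ_n(123))$, that is, over the maximal chains of $Q_n(132,312)$, respectively $\AltQ_n(123)$. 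Since these posets are distributive lattices (Proposition~\ref{prop:distributive}) they are graded, so all of their maximal chains have the same length; hence $\Vol B_n(132,312)$ is the number of maximal chains of $Q_n(132,312)$ and $\Vol\AltB_n(123)$ is the number of maximal chains of $\AltQ_n(123)$.

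The next step is to turn these chain counts into tableau counts. By Proposition~\ref{prop:distributive}, $Q_n(132,312)\iso M(n-1)$, the lattice of shifted Young diagrams contained in the shifted staircase $\delta=(n-1,n-2,\dots,1)$, a shape with $\binom{n}{2}$ boxes; recording, along a maximal chain, the step at which each box is added gives a bijection between maximal chains of $M(n-1)$ and standard shifted Young tableaux of $\delta$. Likewise $\AltQ_n(123)\iso D_k^*$ with $k=\lceil n/2\rceil$, and $D_k^*$ is the lattice of ordinary Young diagrams contained in the staircase $\delta'=(k-1,k-2,\dots,1)$, a shape with $\binom{k}{2}$ boxes, so maximal chains of $D_k^*$ biject with standard Young tableaux of $\delta'$. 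Thus $\Vol B_n(132,312)$ is the number of standard shifted Young tableaux of $\delta$ and $\Vol\AltB_n(123)$ is the number of standard Young tableaux of $\delta'$.

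It remains to evaluate these two counts with the hook length formula. For the ordinary staircase $\delta'=(k-1,\dots,1)$ the hook length of the box $(i,j)$ is $2(k-i-j)+1$; grouping the boxes by the value $s=i+j$ (there are $s-1$ of them for each $s=2,\dots,k$, all of hook length $2(k-s)+1$) gives $\prod_{(i,j)}h(i,j)=\prod_{i=1}^{k-1}(2i-1)^{k-i}$, so the hook length formula yields
\[
	\Vol\AltB_n(123)=\binom{k}{2}!\Big/\prod_{i=1}^{k-1}(2i-1)^{k-i},
\]
as claimed. For the shifted staircase $\delta$ I would invoke the shifted hook length formula, equivalently the product form $g^{\mu}=\frac{N!}{\mu_1!\cdots\mu_\ell!}\prod_{i<j}\frac{\mu_i-\mu_j}{\mu_i+\mu_j}$ for a strict partition $\mu=(\mu_1>\dots>\mu_\ell)$ with $N=|\mu|$; taking $\mu=\delta$ and $N=\binom{n}{2}$, the numerator product telescopes via $\prod_{1\le i<j\le n-1}(j-i)=\prod_{m=1}^{n-2}m!$, and collecting the factors of $\prod_{1\le i<j\le n-1}(2n-i-j)$ according to their common value rewrites the expression in terms of factorials of odd numbers, giving
\[
	\Vol B_n(132,312)=\binom{n}{2}!\,\frac{\prod_{i=1}^{n-1}(i-1)!}{\prod_{i=1}^{n-1}(2i-1)!}.
\]

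The only genuine obstacle is computational: pushing the product simplification for the shifted staircase through so that the answer lands in exactly the stated form. An equivalent, and perhaps cleaner, route is to compute the shifted hook lengths of $\delta$ directly — they are not all odd; for $n=4$ they are $1,2,3,3,4,5$ — and to check that their product equals $\prod_{i=1}^{n-1}(2i-1)!\big/\prod_{i=1}^{n-1}(i-1)!$. Either way one is reduced to a short elementary identity among factorials, which can be sanity-checked against the small cases $\Vol B_n(132,312)=1,1,2$ for $n=2,3,4$.
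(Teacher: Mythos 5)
Your proposal is correct and follows essentially the same route as the paper: the normalized volume equals the number of maximal simplices in the unimodular triangulations from Theorem~\ref{thm:regularity}, these correspond to maximal chains in $Q_n(132,312)\iso M(n-1)$ and $\AltQ_n(123)\iso D_{\lceil n/2\rceil}^*$, hence to shifted and ordinary standard Young tableaux of staircase shape, which are then counted by the hook length formulas of Thrall and Frame--Robinson--Thrall. Your write-up merely carries out the hook-length product computation explicitly, which the paper leaves to the reader.
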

\begin{proof}
	Since $\TT_n(132,312)$ and $\AltT_n(123)$ are unimodular triangulations of $B_n(132,312)$ and $\AltB_n(123)$, the normalized volumes of the polytopes are the total number of maximal simplices in the respective triangulations.
	These are enumerated by counting the maximal chains in $Q_n(132,312)$ and $\AltQ_n(123)$, which are in bijection with shifted SYT of shape $(n-1,\dots,1)$ and left-justified SYT of shape $(k-1,\dots,1)$.
	Such tableaux are counted by the well-known hook formulas, established in \cite{Thrall} and \cite{FrameHookLength}.
\end{proof}

Because the triangulations in Theorem~\ref{thm:regularity} were obtained using the grevlex order, Corollary~2.5 of \cite{StanleyDecompositions} gives us
\[
	h^*(B_n(132,312)) = h(\TT_n(132,312)) = h(\Delta(Q_n(132,312))),
\]
and likewise for $\AltB_n(123)$.
This fact will come into play in the final section when making statements about the components of $h^*$-vectors for our polytopes.

To close this section, we return to the connection between unimodular triangulations and the integer decomposition property.
In particular, we note that not every choice of $\Pi$ produces a polytope $B_n(\Pi)$ with a unimodular triangulation.
If a lattice polytope does have a unimodular triangulation, then it follows quickly that it is also IDP.
To outline why the implication holds, suppose $v_0,\ldots,v_n$ are the vertices of a unimodular simplex $S \subseteq \RR^n$.
Then $x \in mS \cap \ZZ^n$ if and only if $(x,m) \in \cn(S) \cap \ZZ^{n+1}$, where $\cn(S)$ denotes the cone in $\RR^{n+1}$ whose ray generators are $(v_0,1),\dots,(v_n,1)$.
Since $S$ is a simplex, each lattice point in the cone is contained in a single translate of the monoid generated by $\{(v_0,1),\ldots,(v_n,1)\}$, 
where the translates are uniquely determined by the lattice points in the half-open {\em fundamental parallelepiped}
\[
	\Phi_{S} := \{x \in \RR^{n+1}\ |\ x = \sum_{i=0}^n \lambda_i(v_i,1) \text{ where } 0 \leq \lambda_i < 1\}.
\]

For example, given the $1$-dimensional simplex $[-1,1]$, we see that $\Phi_{[-1,1]}$ contains two lattice points, which are $(0,0)$ and $(0,1)$.
So, every lattice point of $\cn([-1,1])$ is contained in exactly one of the translates $\ZZ_{\geq 0}\{(-1,1),(1,1)\}$ or $(0,1) + \ZZ_{\geq 0}\{(-1,1),(1,1)\}$.

The simplex $S$ is unimodular if and only if $\Phi_{S}$ contains exactly one lattice point, which is necessarily $0$.
Thus the lattice points of $\cn(S)$ are exactly the elements of the single monoid $\ZZ_{\geq 0}\{(v_0,1),\dots,(v_n,1)\}$, which forces $S$ to be IDP.
It follows that a polytope with a unimodular triangulation must also be IDP.

Directly proving that a lattice polytope has the integer decomposition property is usually very difficult.
It is more usually established as a byproduct of proving that the polytope has a unimodular triangulation, or simply a unimodular cover.

\begin{conj}
If $\Pi \subseteq \symm_3$, and $B_n(\Pi)$ is nonempty, then $B_n(\Pi)$ is IDP.
\end{conj}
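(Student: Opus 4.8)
The plan is to reduce to a finite check via Proposition~\ref{prop:dihedral} and then, for each orbit representative, to exhibit a unimodular triangulation of $B_n(\Pi)$; since any lattice polytope admitting a unimodular triangulation is IDP, this suffices.

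By Proposition~\ref{prop:dihedral} it is enough to prove the statement for one representative $\Pi$ from each orbit of subsets of $\symm_3$ under the dihedral group of the square. Several orbits need no new argument. For $\Pi=\emptyset$ the polytope $B_n(\emptyset)=B_n$ is the Birkhoff polytope, which is IDP (it is cut out by a totally unimodular system, so it is even compressed). The orbit of $\{231,321\}$ gives $B_n(231,321)=\CRY_n$, and $\CRY_n$ is a face of $B_n$, namely the intersection of $B_n$ with the supporting hyperplanes $\{m_{x,y}=0\}$ for $x\ge n+3-y$; faces of IDP polytopes are IDP (if $F=P\cap\{a^Tx=b\}$ with $a^Tx\ge b$ on $P$, and $y\in mF\cap\ZZ^n$, then writing $y=\sum x_i$ with $x_i\in P\cap\ZZ^n$ forces $a^Tx_i=b$, so $x_i\in F$), so $\CRY_n$ is IDP. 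Any $\Pi$ in the orbit of a set containing $\{123,312\}$ is covered by Proposition~\ref{prop:firstdimension}, which shows $B_n(\Pi)$ is a unimodular simplex. The orbit of $\{132,312\}$ is handled by Theorem~\ref{thm:regularity}, which produces the unimodular triangulation $\TT_n(132,312)$. Finally, every $\Pi$ in the orbit of a superset of $\{123,321\}$ is excluded by the nonemptiness hypothesis, via the Erd\H{o}s--Szekeres theorem.

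For the remaining orbits --- principally the single patterns $\{123\}$ and $\{132\}$, together with the two- and three-pattern classes not covered above --- the proposed strategy is to imitate Section~\ref{sec:toric algebra}. One would equip $\Av_n(\Pi)$ with an appropriate restriction of left or right weak Bruhat order, show the resulting poset is graded, and verify that the additivity relation~\eqref{eq:comparability}, $M_\sig+M_{\sigp}=M_{\sig\land\sigp}+M_{\sig\lor\sigp}$, still holds; the proof of that lemma in the excerpt uses only that the poset is a distributive lattice and that its length-two intervals are $2\times2$ squares. Granting this, the Gr\"obner-basis computation in the proof of Theorem~\ref{thm:regularity} shows that the grevlex initial ideal of $I_{\A_P}$ is squarefree, so the order complex of the poset gives a regular unimodular triangulation of $B_n(\Pi)$, and IDP follows. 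When $\Pi$ has three or more patterns and $\Av_n(\Pi)$ is small --- so $B_n(\Pi)$ is a simplex --- one instead checks directly, via a version of Lemma~\ref{lem:unimod}, whether the associated permutation matrices span a unimodular simplex, in which case IDP is immediate.

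The main obstacle will be the single-pattern polytopes $B_n(123)$ and $B_n(132)$. Here the restricted weak order need not be a distributive lattice (indeed need not be a lattice), there is no grid-class description of $\Av_n(123)$ or $\Av_n(132)$ analogous to Lemma~\ref{lem:132,312}, and the $h^*$-vectors in Table~\ref{tab:single element data}, such as $(1,4,6,4,1)$ and $(1,4,7,5,1)$, suggest no obvious combinatorial model for the facets of a unimodular triangulation. So the crux is either to locate the right combinatorial gadget indexing a regular unimodular triangulation, or to prove IDP directly: given $m\ge1$ and a lattice point $X\in mB_n(\Pi)\cap\ZZ^{n\times n}$, decompose $X$ as a sum of $m$ permutation matrices each of which avoids the pattern in $\Pi$ --- a pattern-preserving refinement of Birkhoff--von Neumann. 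The difficulty is that the usual greedy extraction of a permutation matrix from an integral doubly stochastic matrix does not respect pattern avoidance, so one would need an extraction rule adapted to the forbidden pattern together with a proof that the residual matrix still lies in the dilated avoidance polytope; making such a rule work uniformly in $n$ is where I expect the real content of the conjecture to lie.
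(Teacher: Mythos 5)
The statement you were asked to prove is labeled a \emph{conjecture} in the paper, not a theorem: the authors offer no proof, only the remark that computer experiments confirm it for all $\Pi\subseteq\symm_3$ and all $n\le 5$. So there is no paper proof to compare against, and your submission should be judged as a partial reduction plus a candid assessment of what remains open.

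Within that frame, what you wrote is sound. Your reduction via Proposition~\ref{prop:dihedral} to one representative per dihedral orbit is correct. Your dispatch of the easy orbits is also correct: $B_n(\emptyset)=B_n$ is a transportation polytope and is compressed, hence IDP; the $\{231,321\}$ orbit (which also contains the paper's $\{123,213\}$) gives $\CRY_n$, and your face-of-an-IDP-polytope argument is a legitimate short proof that $\CRY_n$ is IDP; any $\Pi$ whose orbit meets a superset of $\{123,312\}$ is a unimodular simplex by Proposition~\ref{prop:firstdimension}; the $\{132,312\}$ orbit is covered by Theorem~\ref{thm:regularity}; and supersets of $\{123,321\}$ are excluded by the nonemptiness hypothesis. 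You also correctly identify the crux: the singleton orbits $\{123\}$ and $\{132\}$, and the remaining two- and three-pattern orbits for which no unimodular triangulation or grid-class structure is supplied by the paper.

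Your proposed strategy for the remaining orbits — restrict weak order, hope for a distributive lattice, verify the additivity relation \eqref{eq:comparability}, and rerun the Gr\"obner-basis argument — is the natural first thing to try, and you are right to flag that it is not at all automatic: for $\{123\}$ and $\{132\}$ the restricted weak order need not even be a lattice, and there is no analogue of Lemma~\ref{lem:132,312}. This is precisely why the authors left the statement as a conjecture. So your proposal does not close the conjecture, but it does not misrepresent itself: you locate the genuine open cases accurately, and the cases you do settle are settled correctly. One small clarification worth adding if you develop this further: for the larger $\Pi$ where $B_n(\Pi)$ is low-dimensional (for instance a simplex), unimodularity still has to be checked against the lattice $\aff B_n(\Pi)\cap\ZZ^{n\times n}$ for all $n$, not just observed for small $n$; Lemma~\ref{lem:unimod} is the right tool, but one must verify its hypothesis for each such family.
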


Computer experiments support this conjecture for all choices of $\Pi$ satisfying the given conditions and all $n \leq 5$.
There do exist choices of $\Pi \subseteq \symm$ for which $B_n(\Pi)$ is not IDP, though.
For example, one can verify that
\[
	\begin{bmatrix}
		0 & 1 & 1 & 2 & 0 \\
		1 & 0 & 1 & 0 & 2 \\
		1 & 1 & 0 & 1 & 1 \\
		2 & 0 & 1 & 0 & 1 \\
		0 & 2 & 1 & 1 & 0 \\
	\end{bmatrix}
\]
is a lattice point of $4B_5(2413,3124)$ but cannot be written as a sum of four lattice points from $B_5(2413,3124)$.
This raises the following very broad question.

\begin{question}
	For which choices of $\Pi$ is $B_n(\Pi)$ IDP?
\end{question}

\section{The Ehrhart Theory of $B_n(132,312)$ and $\AltB_n(123)$}\label{sec:gorensteinsection}

The previous section identified shellable, regular, unimodular triangulations of $B_n(132,312)$ and $\AltB_n(123)$ which arose from order complexes of certain distributive lattices;
in this section, we use the EL-labelings of the lattices to study the $h^*$-vectors of the polytopes.
To do so, we require some more definitions and background.

Suppose $P \subseteq \RR^n$ is a lattice polytope containing the origin in its interior.
We say that $P$ is {\em reflexive} if its polar dual
\[
	P^{\vee} := \{x \in \RR^n\ |\ x^Ty \leq 1 \text{ for all } y \in P\}
\]
is also a lattice polytope. 
Any lattice translate of a reflexive polytope is also called reflexive.
A lattice polytope $P$ is said to be {\em Gorenstein} if $kP$ is reflexive for some $k$, called the {\em index}. 
A theorem, due to Stanley, describes exactly the behavior of $h^*$-vectors for Gorenstein polytopes.

\begin{thm}[{\cite[Theorem 4.4]{StanleyHilbert}}]
\label{GorenPalin}
	A lattice polytope is Gorenstein if and only if its $h^*$-vector is palindromic.
\end{thm}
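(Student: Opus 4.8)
The plan is to translate the statement into commutative algebra via the Ehrhart semigroup ring of $P$ and then apply the theory of canonical modules of Cohen--Macaulay graded algebras. After a unimodular embedding we may assume $P\subseteq\RR^d$ is $d$-dimensional. Form the cone $C=\cn(P)=\{(\lambda x,\lambda)\mid x\in P,\ \lambda\ge 0\}\subseteq\RR^{d+1}$ and let $R=k[\,C\cap\ZZ^{d+1}\,]$, graded so that the monomial $x^{(a,m)}$ has degree $m$. Then $R$ is an affine $k$-algebra with $R_0=k$, it is a domain (it lies inside a Laurent polynomial ring), it has Krull dimension $d+1$, and its Hilbert series is exactly the Ehrhart series, $\mathrm{Hilb}(R,t)=E_P(t)=h^*_P(t)/(1-t)^{d+1}$. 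Since $C$ is a pointed rational cone, $C\cap\ZZ^{d+1}$ is a normal affine semigroup, so by Hochster's theorem $R$ is Cohen--Macaulay; this is Stanley's theorem that the Ehrhart ring is Cohen--Macaulay.

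The key input is the Danilov--Stanley description of the canonical module: $\omega_R$ is the ideal of $R$ spanned by those monomials $x^{(a,m)}$ with $(a,m)$ in the relative interior of $C$. Hence, writing $P^{\circ}$ for the relative interior of $P$, one has $\mathrm{Hilb}(\omega_R,t)=\sum_{m\ge 1}|\,mP^{\circ}\cap\ZZ^d\,|\,t^m$, and by Ehrhart--Macdonald reciprocity this rational function equals $(-1)^{d+1}E_P(1/t)$. Writing $h^*_P(t)=\sum_{j=0}^{s}h^*_jt^j$ with $h^*_s\ne 0$ and $s\le d$, a short computation with $(1-1/t)^{d+1}$ yields
\[
	\mathrm{Hilb}(\omega_R,t)=(-1)^{d+1}E_P(1/t)=\frac{t^{\,d+1-s}\,\overline{h}^*_P(t)}{(1-t)^{d+1}},\qquad \overline{h}^*_P(t):=\sum_{j=0}^{s}h^*_jt^{\,s-j}.
\]
Now invoke the Gorenstein criterion for a Cohen--Macaulay graded domain $R$ with $R_0=k$: $R$ is Gorenstein if and only if $\omega_R\iso R(-a)$ as graded modules for some $a\in\ZZ$, and, since $\omega_R$ is a torsion-free rank-one module over the domain $R$, this is equivalent to $\mathrm{Hilb}(\omega_R,t)=t^a\,\mathrm{Hilb}(R,t)$ for some $a$. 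Comparing this with the displayed formula and with $\mathrm{Hilb}(R,t)=h^*_P(t)/(1-t)^{d+1}$, the condition becomes $t^{\,d+1-s}\overline{h}^*_P(t)=t^a h^*_P(t)$; comparing the lowest-degree terms forces $a=d+1-s$, and then $\overline{h}^*_P=h^*_P$, i.e.\ the $h^*$-vector is palindromic. Conversely a palindromic $h^*$-vector gives exactly this identity with $a=d+1-s$, so $R$ is Gorenstein. Finally one unwinds the semigroup-ring/polytope dictionary: $R$ Gorenstein means $\omega_R=x^{(c,\ell)}R$ for a single interior lattice point $(c,\ell)$ of $C$, which is precisely the statement that the dilate $\ell P$, translated so that $c$ becomes the origin, is reflexive --- that is, $P$ is Gorenstein, with index $\ell=d+1-s$.

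I expect the main obstacle to be the \emph{if} direction of the Gorenstein criterion, namely that an equality of Hilbert series $\mathrm{Hilb}(\omega_R,t)=t^a\mathrm{Hilb}(R,t)$ genuinely forces an isomorphism $\omega_R\iso R(-a)$ rather than merely equality of Hilbert functions. This is exactly where the domain hypothesis enters: $\omega_R$ is then torsion-free of rank one, hence isomorphic up to a degree shift to an ideal $I\subseteq R$, and the Hilbert-series condition forces the Hilbert function of $R/I$ to vanish, so $I=R$ up to shift. Making that precise, together with setting up the Danilov--Stanley formula for $\omega_R$ and carrying out the elementary but fiddly semigroup-to-polytope translation in the last step, is the technical heart; the reciprocity computation and the reduction to the full-dimensional case are routine bookkeeping.
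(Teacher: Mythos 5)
The paper does not prove this statement; it is quoted as Theorem~4.4 of Stanley's \emph{Hilbert functions of graded algebras}, so there is no internal proof to compare against. Your argument is the standard one and it is essentially correct: pass to the graded semigroup ring $R=k[\cn(P)\cap\ZZ^{d+1}]$, invoke Hochster for Cohen--Macaulayness, use the Danilov--Stanley description of $\omega_R$ as the ideal spanned by the interior lattice points of the cone, apply Ehrhart--Macdonald reciprocity to get $\mathrm{Hilb}(\omega_R,t)=t^{d+1-s}\overline{h}^*_P(t)/(1-t)^{d+1}$, and conclude via the Gorenstein criterion for Cohen--Macaulay graded \emph{domains}. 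You also correctly translate $\omega_R$ principal into reflexivity of the $\ell$-th dilate. This is exactly Stanley's route, specialized to Ehrhart rings.

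The one step you flag as delicate can be closed more directly than by a general ``torsion-free rank one'' argument, and it is worth doing so cleanly. By Danilov--Stanley, $\omega_R$ is literally an ideal of $R$, not merely abstractly isomorphic to one. If $h^*_P$ is palindromic, the computed Hilbert series shows $\omega_R$ first becomes nonzero in degree $\ell=d+1-s$ and $\dim_k(\omega_R)_\ell=1$; let $x^{(c,\ell)}$ be the unique monomial there. Since $R$ is a domain, $x^{(c,\ell)}R\cong R(-\ell)$, and $x^{(c,\ell)}R\subseteq\omega_R$ is an inclusion of graded $R$-modules with identical Hilbert functions, hence an equality. Thus $\omega_R\cong R(-\ell)$, and $R$ is Gorenstein. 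This avoids the vague appeal to ``$I=R$ up to shift'' (which, as you noticed, does not literally follow from equality of Hilbert series for an arbitrary embedded ideal) and makes the converse direction airtight. With that repair your proof is complete.
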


We can use this result together with the following facts about $h^*$-vectors to determine necessary conditions for $P$ to be Gorenstein.
Let $h^*(P) = (h^*_0,\dots,h^*_d)$ where $P$ is any lattice polytope.
We always have $h^*_0 = 1$.
Additionally, as a consequence of Ehrhart-Macdonald reciprocity, the first scaling of $P$ containing an interior lattice point is $(\dim P - d + 1)P$, and the number of interior lattice points in this scaling is $h^*_d$.
Since a Gorenstein polytope has a palindromic $h^*$-vector, then in order to be Gorenstein, the first scaling of $P$ with an interior lattice point must have exactly one such point.

Note that not every set of permutations $\Pi$ will produce a Gorenstein $B_n(\Pi)$. 
Take, for example, $\Pi = \{123,132\}$ and $n = 5$. 
One may verify that the first nonnegative integer scaling $mB_n(123,132)$ containing an interior lattice point occurs when $m=8$, but this scaling contains four interior lattice points rather than the one needed to be Gorenstein.

The main goal of this section will be to prove the following theorem.

\begin{thm}\label{thm:gorenstein}
	For all $n$, $B_n(132,312)$ and $\AltB_n(123)$ are Gorenstein.
\end{thm}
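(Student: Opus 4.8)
The plan is to reduce the statement, via Theorem~\ref{GorenPalin}, to palindromy of $h^*(B_n(132,312))$ and $h^*(\AltB_n(123))$, and to extract that palindromy from the structure of the distributive lattices $Q_n(132,312)\iso M(n-1)$ and $\AltQ_n(123)\iso D_{\lceil n/2\rceil}^*$. We already recorded, just after Theorem~\ref{thm:regularity}, that $h^*(B_n(132,312))=h(\Delta(Q_n(132,312)))$ and $h^*(\AltB_n(123))=h(\Delta(\AltQ_n(123)))$. Since the EL-labelings of Section~\ref{subsec:EL and shellable} give lexicographic shellings of these order complexes, the $h$-polynomial of each equals $\sum_c t^{\des(\lambda(c))}$, summed over maximal chains $c$, where $\des(\lambda(c))$ is the number of descents of the edge-label word. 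Under the isomorphism $L\iso J(\Irr(L))$, maximal chains of a distributive lattice $L$ correspond to linear extensions of $\Irr(L)$, and for the natural labeling $\omega$ the label word of a chain is precisely the corresponding linear-extension word; hence $h(\Delta(L))$ is the $W$-polynomial $W_{\Irr(L),\omega}(t)=\sum_{\sigma}t^{\des(\sigma)}$ of Stanley's theory of $(P,\omega)$-partitions, the sum being over linear extensions $\sigma$ of $\Irr(L)$.

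Next I would pin down the two posets of join-irreducibles concretely. From Section~\ref{subsec:weak order}, $\Irr(Q_n(132,312))$ is the poset on $\{(b,c)\ :\ 1\le b\le c\le n-1\}$, and (for even $n=2k$) $\Irr(\AltQ_n(123))$ is the poset on $\{(b,c)\ :\ b,c\ge 1,\ b+c\le k\}$, both ordered componentwise; for odd $n$ one passes to $\AltQ_n(123)\iso\AltQ_{n+1}(123)$ and works with $k=\lceil n/2\rceil$. I would then verify that each of these is a \emph{graded} (pure) poset: in both, any cover relation raises exactly one coordinate by $1$, since a relation raising both coordinates always has an intermediate element of the poset, so $\rho(b,c)=b+c$ is a rank function; moreover each poset has a unique minimum $(1,1)$ and, respectively, unique maximum $(n-1,n-1)$ (in the first case) or a set of maxima all of rank $k-2$ (in the second), so every maximal chain has the same length. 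This purity check is the concrete combinatorial heart of the argument.

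Finally I would invoke the fact that for a graded poset $P$ with a natural labeling $\omega$, the polynomial $W_{P,\omega}(t)$ is palindromic. This can be phrased through order polytopes: $\Delta(J(P))$ is (combinatorially) the canonical unimodular triangulation of $\OO(P)$, so $W_{P,\omega}(t)=h^*(\OO(P))$, and one argues $\OO(P)$ is Gorenstein. Concretely, if $P$ is graded of rank $r$ with $|P|=N$, the unique strictly order-preserving map $x\mapsto\rho(x)+1$ into $[r+1]$ shows $(r+2)\OO(P)$ contains exactly one interior lattice point; translating that point to the origin and reading the facet inequalities $x_p\ge 0$, $x_p\le 1$ ($p$ minimal, maximal) and $x_q\ge x_p$ ($p\lessdot q$) of $\OO(P)$ turns every facet into a hyperplane at lattice distance $1$ from the origin, so the translate of $(r+2)\OO(P)$ is reflexive and $\OO(P)$ is Gorenstein of index $r+2$; Theorem~\ref{GorenPalin} then gives that $h^*(\OO(P))=W_{P,\omega}(t)$ is palindromic. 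Applying this to $P=\Irr(Q_n(132,312))$ and $P=\Irr(\AltQ_n(123))$, which are graded by the previous step, we conclude $h^*(B_n(132,312))$ and $h^*(\AltB_n(123))$ are palindromic, so by Theorem~\ref{GorenPalin} both polytopes are Gorenstein.

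I expect the main obstacle to be careful bookkeeping rather than any real difficulty: correctly normalizing the identity $h(\Delta(L))=W_{\Irr(L),\omega}(t)$, and matching index conventions in the implication ``graded $\Rightarrow$ $W$ palindromic'' (that $W$ has degree $N-r-1$ while the Gorenstein index is $r+2$, consistent via Ehrhart--Macdonald reciprocity). Once the componentwise descriptions of the irreducible posets from Section~\ref{subsec:weak order} are in hand, their purity is immediate, and everything else is an application of machinery already available in the paper.
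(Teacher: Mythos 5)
Your proposal is correct and follows the same overall reduction as the paper---use Theorem~\ref{GorenPalin} to reduce to palindromy of $h^*$, identify $h^*$ (via the EL-shelling of $\Delta(Q_n(132,312))$ and $\Delta(\AltQ_n(123))$) with the descent/Eulerian polynomial of linear extensions of the join-irreducible poset, and then show that polynomial is palindromic because the join-irreducible poset is graded. Where you diverge is in the proof of the key lemma ``graded poset $\Rightarrow$ palindromic $W$-polynomial.'' The paper's Theorem~\ref{Qpalin} proves this combinatorially, by manipulating the generating function for the order polynomial using Stanley's reciprocity theorem and the gradedness characterization (Theorem~\ref{Om}). You instead re-interpret $W_{\Irr(L),\omega}(t)$ as $h^*(\OO(\Irr L))$ and prove that the order polytope of a graded poset is Gorenstein by exhibiting the unique interior lattice point of the $(r+2)$-dilate and checking that after translating it to the origin every facet hyperplane has the form $a^Tx\le 1$ with $a$ integral. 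Both routes are sound; in fact they are translations of each other through the dictionary $\Omega_P(m)\leftrightarrow\ehr_{\OO(P)}(m)$---the paper's Theorem~\ref{Om}(A) is precisely Ehrhart--Macdonald reciprocity for $\OO(P)$, and Theorem~\ref{Om}(C) encodes the palindromy shift. Your geometric version is perhaps more transparent if the reader already trusts Stanley's ``Two Poset Polytopes'' results; the paper's version stays entirely within the language of $(P,\omega)$-partitions, which the authors explicitly chose because they could not exhibit $B_n(132,312)$ or $\AltB_n(123)$ as order polytopes directly. Note that your argument does not need that stronger fact---you only use that the $h^*$-vector \emph{agrees} with that of a suitable order polytope, which is weaker and follows from the shared combinatorics of the shelling, so there is no conflict with the paper's remarks. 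One small bookkeeping item worth being explicit about: you should verify that the natural labeling $\omega$ on $\Irr(L)$ is the same one that induces the EL-labeling, so that the two descent polynomials literally coincide; the paper does this in the last sentence of its proof of Theorem~\ref{thm:hstarPalin}, and you should, too.
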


If the hyperplane description of a lattice polytope is known, then proving whether it is Gorenstein is often a straightforward task.
Such a description of $B_n(132,312)$ and $\AltB_n(123)$ has been elusive, though, so we must approach the proof of Theorem~\ref{thm:gorenstein} by showing that their $h^*$ vectors are palindromic and then appealing to Theorem~\ref{GorenPalin}.

One benefit of going through the work of the previous section is that once a Gorenstein polytope is known to have a regular, unimodular triangulation, 
it follows that the $h^*$-vector of the polytope is unimodal in addition to being palindromic~\cite{BrunsRomer}.
Thus, using Theorem~\ref{thm:gorenstein}, the regular unimodular triangulations $\TT_n(132,312)$ and $\AltT_n(123)$, as well as the EL-labelings of $Q_n(132,312)$ and $\AltQ_n(123)$, we will be able to establish that the $h^*$-vectors of these two polytopes are palindromic and unimodal.

In a shellable triangulation (which may be either abstract or geometric) with shelling order $F_1,\dots,F_s$, the {\em restriction} of face $F_j$ is the set $\cR(F_j)$ of vertices $v\in F_j$ such that the facet $F_j-v$ is contained in $F_1\cup\dots\cup F_{j-1}$.  The {\em shelling number} of $F_j$ is $r(F_j)=|\cR(F_j)|$.  The
following result of Stanley shows that the entries of the $h^*$-vector of the polytope being shelled can be computed using shelling numbers.

\begin{prop}[{\cite[Corollary 2.6]{StanleyDecompositions}}]
\label{StanleyDecomp}
	Suppose that $T_1,\ldots, T_k$ is a shelling order of a unimodular triangulation of a lattice polytope $P$. 
	Then the component $h^*_i$ of $h^*(P)$ is equal to the number of simplices $T_j$ such that $r(T_j) = i$. \qed
\end{prop}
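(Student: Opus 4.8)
The plan is to compute the Ehrhart series of $P$ directly and additively along the given shelling, using the standard decomposition of $P$ into half-open simplices; the stated count of the $h^*_i$ then drops out by comparing coefficients. Write $d=\dim P$, set $T_1^\circ:=T_1$, and for $j\ge 2$ put $T_j^\circ:=T_j\setminus\bigcup_{i<j}T_i$. Since $\TT$ covers $P$, the sets $T_1^\circ,\dots,T_k^\circ$ partition $P$, and the same partition holds after dilating by any $m\ge 0$ (scaling is a bijection), so
\[
	E_P(t)=\sum_{j=1}^{k}\ \sum_{m\ge 0}\bigl|\,mT_j^\circ\cap\ZZ^n\,\bigr|\,t^m .
\]
Thus it suffices to prove that the inner sum equals $t^{r(T_j)}/(1-t)^{d+1}$. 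Given that, and since the triangulation is pure of dimension $d$ so that every $T_j$ is a $d$-simplex, comparison with the defining form $E_P(t)=h^*_P(t)/(1-t)^{\dim P+1}$ forces $h^*_P(t)=\sum_{j=1}^k t^{r(T_j)}$, which is exactly the assertion $h^*_i=\#\{\,j:r(T_j)=i\,\}$.

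First I would identify each $T_j^\circ$ geometrically. Because $\TT$ is a geometric simplicial complex, $T_j\cap T_i$ is a common face of $T_j$ and $T_i$, so $T_j\cap\bigcup_{i<j}T_i$ is a union of proper faces of the simplex $T_j$; by the shelling hypothesis its maximal members are facets, and the facet of $T_j$ opposite a vertex $v$ occurs among them exactly when that facet lies in some earlier $T_i$, i.e. exactly when $v\in\cR(T_j)$. Hence $T_j^\circ$ is the half-open simplex obtained from $T_j$ by deleting precisely the $r(T_j)$ facets opposite the vertices in $\cR(T_j)$. Now work throughout in the affine lattice $\Lambda:=\aff(P)\cap\ZZ^n$, and let $v_0,\dots,v_d$ be the vertices of $T_j$. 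Since $T_j$ is unimodular with respect to $\Lambda$, the vectors $(v_0,1),\dots,(v_d,1)$ extend to a $\ZZ$-basis of the ambient lattice, so $\cn\{(v_0,1),\dots,(v_d,1)\}$ meets the lattice in exactly the free monoid $\sum_i\ZZ_{\ge 0}(v_i,1)$; grading by the last coordinate, the closed cone has lattice-point generating function $(1-t)^{-(d+1)}$. Deleting the facet of $T_j$ opposite $v_i$ corresponds to requiring the coefficient of $(v_i,1)$ to be at least $1$, so deleting the $r(T_j)$ facets indexed by $\cR(T_j)$ shifts $r(T_j)$ of the free coordinates and multiplies the generating function by $t^{r(T_j)}$. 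Slicing the (half-open) cone by ``last coordinate $=m$'' recovers $mT_j^\circ$, which yields $\sum_{m\ge 0}|mT_j^\circ\cap\ZZ^n|\,t^m=t^{r(T_j)}/(1-t)^{d+1}$ as required.

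Summing these identities over $j$ gives $E_P(t)=\bigl(\sum_{j=1}^k t^{r(T_j)}\bigr)/(1-t)^{d+1}$, and the conclusion follows. The step needing the most care is the geometric identification of $T_j^\circ$ with the half-open simplex whose deleted facets are exactly those indexed by $\cR(T_j)$: it relies on the fact that in a geometric simplicial complex the intersection of two members is the realization of their common face, together with the defining property of a shelling, and one must be careful not to conflate ``$F\subseteq T_1\cup\cdots\cup T_{j-1}$'' as a statement about subsets of $\RR^n$ with ``$F$ is a face of the subcomplex generated by $T_1,\dots,T_{j-1}$'' (these agree here only because $F$ is itself a face of $\TT$). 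I would also remark that a unimodular triangulation automatically uses every lattice point of $P$ as a vertex — a unimodular simplex contains no lattice points besides its vertices — so no lattice point of any $mP$ is missed by the decomposition, and when $P$ fails to be full-dimensional there is nothing further to check beyond systematically replacing $\ZZ^n$ by $\Lambda$.
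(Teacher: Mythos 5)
The paper does not prove this statement---it is quoted as Corollary 2.6 of Stanley's paper and marked \qed---so there is no in-text proof to compare against. Your argument is correct and is precisely the standard one from the cited source: partition $P$ into the half-open simplices $T_j^\circ$ along the shelling, identify the deleted facets of $T_j$ with the vertices of $\cR(T_j)$, and use unimodularity to see that the half-open cone over $T_j^\circ$ has lattice-point generating function $t^{r(T_j)}/(1-t)^{d+1}$; summing gives $h^*_P(t)=\sum_j t^{r(T_j)}$.

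One sentence deserves a small repair. You write that the partition of $P$ by the $T_j^\circ$ ``holds after dilating by any $m\ge 0$ (scaling is a bijection).'' Scaling by $m=0$ is not a bijection: $0\cdot T_j^\circ=\{0\}$ for every nonempty $T_j^\circ$, so the literal $m=0$ dilates do not partition $0P=\{0\}$, and the double sum as written would give $k$ rather than $1$ at $m=0$. The clean fix---which your cone computation already carries out implicitly, since requiring $a_i\ge 1$ for $i\in\cR(T_j)$ kills the apex for every $j>1$---is to decompose the \emph{closed cone} $\cn(P)$ as the disjoint union of the half-open cones over the $T_j^\circ$ (the apex lies only in the cone over $T_1^\circ$), and then grade by the last coordinate, rather than partitioning each dilate separately. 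With that phrasing the argument is airtight.
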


When using EL-shellings, there is an easy way to determine the shelling number of a facet, that is, of a maximal chain $c$, from its labeling.  In particular, if 
\[
	\lambda(c)=(\lambda(q_0,q_1),\lambda(q_1,q_2),\dots, \lambda(q_{k-1},q_k))
\]
then $q_m\in \cR(c)$ if and only if we have a descent $\lambda(q_{m-1},q_m)>\lambda(q_m,q_{m+1})$ in $\lambda(c)$.  This is the content of the following lemma of Bj\"orner.
\begin{lemma}[{\cite[Lemma 2.6]{BjornerShellable}}]\label{lem:shelling numbers}
	Let $c$ be a maximal chain of a poset admitting an EL-labeling $\lambda$.  Then
	\[
		r(c)=\des \lambda(c)
	\] 
where $\des$ is the number of descents.\qed
\end{lemma}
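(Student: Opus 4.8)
The final statement in the excerpt is Lemma 5.12 (Björner's lemma), asserting that for a maximal chain $c$ of a poset with an EL-labeling $\lambda$, the shelling number satisfies $r(c) = \des\lambda(c)$. I will sketch a self-contained argument.

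The plan is to work directly from the definitions of shelling number and EL-labeling, using the fact that maximal chains of an EL-shellable poset are shelled in the lexicographic order of their label sequences. Let $c: q_0 \lessdot q_1 \lessdot \dots \lessdot q_k$ be a maximal chain with label word $\lambda(c) = (\ell_1,\dots,\ell_k)$ where $\ell_m = \lambda(q_{m-1},q_m)$. By definition, $q_m \in \cR(c)$ precisely when the facet $c - q_m$ of the simplex $\Delta(c)$ is already covered by earlier simplices in the shelling, i.e.\ when there is another maximal chain $c'$ that agrees with $c$ except at the vertex $q_m$ and satisfies $\lambda(c') <_{\rm lex} \lambda(c)$. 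The chain $c'$ must pass through $q_{m-1} \lessdot q_m' \lessdot q_{m+1}$ for some $q_m' \ne q_m$ in the rank-two interval $[q_{m-1}, q_{m+1}]$, and $\lambda(c') = (\ell_1,\dots,\ell_{m-1}, \lambda(q_{m-1},q_m'), \lambda(q_m',q_{m+1}), \ell_{m+2},\dots,\ell_k)$. So $c' <_{\rm lex} c$ holds if and only if $(\lambda(q_{m-1},q_m'), \lambda(q_m',q_{m+1}))$ lexicographically precedes $(\ell_m, \ell_{m+1})$.

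The heart of the matter is therefore the following claim, which is purely about the rank-two interval $[q_{m-1}, q_{m+1}]$: \emph{such a preceding $c'$ exists if and only if $\ell_m > \ell_{m+1}$, i.e.\ position $m$ is a descent of $\lambda(c)$.} For one direction, suppose $\ell_m \le \ell_{m+1}$; since in an EL-labeling the unique increasing maximal chain of $[q_{m-1},q_{m+1}]$ is also lexicographically smallest, and $(\ell_m,\ell_{m+1})$ is already weakly increasing, one checks that $(\ell_m,\ell_{m+1})$ is in fact the lexicographically smallest label word on that interval — if $\ell_m < \ell_{m+1}$ this is the unique increasing chain and hence lex-minimal; if $\ell_m = \ell_{m+1}$ one argues that any strictly-smaller word would begin with a label $< \ell_m$, contradicting that the increasing chain (which starts with a label $\ge$ the minimum available first label) is lex-minimal and uses label $\ell_m$ first in that context — so no earlier $c'$ exists and $q_m \notin \cR(c)$. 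Conversely, if $\ell_m > \ell_{m+1}$, then $(\ell_m,\ell_{m+1})$ is not weakly increasing, so it is not the increasing chain of $[q_{m-1},q_{m+1}]$; the increasing chain has some label word that is lexicographically $<$ our word (being the lex-minimum of the interval), giving a chain $c'$ agreeing with $c$ off $q_m$ with $\lambda(c') <_{\rm lex} \lambda(c)$, hence $q_m \in \cR(c)$. Summing over $m$, $r(c) = |\cR(c)| = |\{m : \ell_m > \ell_{m+1}\}| = \des\lambda(c)$.

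The step I expect to be the main obstacle is the $\ell_m = \ell_{m+1}$ sub-case in the claim: one must be careful that the EL-property (uniqueness and lex-minimality of the increasing chain) genuinely forbids any other label word on the rank-two interval from starting with a label $\le \ell_m$ that would make it lex-smaller, and that a constant word $(\ell_m,\ell_m)$ can indeed occur as the increasing chain only in the degenerate sense; in practice for the distributive lattices $Q_n(132,312)$ and $\AltQ_n(123)$ with the natural labelings of Figure~\ref{fig:extensions}, the two labels on any rank-two interval are always distinct, so this subtlety does not actually arise in our applications. Since this lemma is quoted verbatim from~\cite{BjornerShellable}, a complete proof is not strictly necessary, and one may alternatively simply cite it; but the sketch above shows how it follows from the EL-shellability framework already set up, together with Proposition~\ref{StanleyDecomp}.
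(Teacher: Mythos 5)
The paper does not prove this lemma; it cites Bj\"orner and marks it with a \textsf{qed}, so there is no proof in the paper to compare against. Your remark that citing suffices is correct, and your overall reduction is the standard one and is sound: the shelling is by lexicographic order of label words; $q_m \in \cR(c)$ if and only if some maximal chain $c'$ differing from $c$ only at the $m$th internal vertex satisfies $\lambda(c') <_{\rm lex} \lambda(c)$; and because $c$ and $c'$ agree off $[q_{m-1},q_{m+1}]$, this reduces to comparing the two-letter label words on that rank-two interval. The cases $\ell_m < \ell_{m+1}$ and $\ell_m > \ell_{m+1}$ are handled correctly.

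The sub-case $\ell_m = \ell_{m+1}$, however, contains a genuine error, not merely a subtlety. You correctly observe that any strictly lex-smaller word on $[q_{m-1},q_{m+1}]$ must begin with a label $< \ell_m$, but you then claim this contradicts lex-minimality of the increasing chain. It does not: the word $(\ell_m,\ell_m)$ is not strictly increasing, so it is \emph{not} the unique increasing maximal chain of the interval; that chain has a strictly smaller label word, necessarily starting with a label $< \ell_m$, and so a lex-smaller $c'$ \emph{does} exist. Hence in this sub-case $q_m \in \cR(c)$, and with the paper's strict-descent convention one would get $r(c) > \des\lambda(c)$. The resolution is that Bj\"orner's lemma uses the descent set $\{m : \lambda(q_{m-1},q_m) \geq \lambda(q_m,q_{m+1})\}$; with that convention your argument goes through cleanly, and, as you correctly note, for the natural labelings of irreducibles used in this paper the two labels on any rank-two interval of $Q_n(132,312)$ or $\AltQ_n(123)$ are always distinct (a cover in a distributive lattice $J(P)$ adds a unique element of $P$), so the strict and weak conventions coincide. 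But as written, the stated ``contradiction'' in the $\ell_m=\ell_{m+1}$ sub-case is an error in the wrong direction, and that portion of your sketch is not a valid argument.
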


The last link in our chain will come from a result in the theory of $(Q,\om)$-partitions as developed by Stanley. A fuller exposition can be found in Chapter 3 of his book~\cite{StanleyVol1Ed2}.
Let $Q$ be a poset with $|Q| = n$, and let $\omega: Q \to [n]$ be a bijection, called a {\em labeling} of $Q$.
We say $f: Q \to \ZZ_{\geq 1}$ is a {\em (dual) $(Q,\omega)$-partition} if 
\begin{enumerate}
	\item[(i)] $f$ is order preserving, and 
	\item[(ii)] if $s < t$ and $\omega(s) > \omega(t)$, then $f(s) < f(t)$.
\end{enumerate}
In a sense one may think of $\omega$ as indicating where strict inequalities of $f$ occur, rather than weak inequalities.
If $\omega$ itself is order-preserving then, as we have already seen,  it is called a {\em natural} labeling of $Q$.
We call $\omega$ {\em dual natural} if its {\em dual labeling} $\overline{\omega}$, defined by the complementation 
$\overline{\omega}(q) = n + 1 - \omega(q)$, is natural.

We will be concerned with the {\em order polynomial} $\Om_{Q,\om}(m)$ of $(Q,\om)$, which is the number of maps $f:Q\rightarrow[m]$ which satisfy conditions (i) and (ii) above.  It can be shown that $\Om_{Q,\om}(m)$ is a polynomial in $m$ of degree $n=|Q|$.  Equivalently, the generating function for the order polynomial must be in the form
\[
	\sum_{m\ge0}  \Om_{Q,\om}(m) t^m  =\frac{A_{Q,\om}(t)}{(1-t)^{n+1}}
\]
where $A_{Q,\om}(t)$ is a polynomial of degree at most $n$ called the {\em Eulerian polynomial} of $(Q,\om)$.  In fact, one can give an explicit description of $A_{Q,\om}(t)$ as follows.  Define the {\em Jordan-H\"older set}  $\LL(Q,\om$  of 
$(Q,\om)$ to be the set of all permutations of the form $w=\om(q_1)\om(q_2)\dots\om(q_n)$ as $q_1,q_2,\dots,q_n$ runs over all linear extensions of $Q$, that is, total orders on $Q$ such that if $q_i<q_j$ in $Q$ then $i<j$.
\begin{thm}[{\cite[Theorem 3.15.8]{StanleyVol1Ed2}}]
\label{JH}
We have
\[
	\sum_{m\ge0}  \Om_{Q,\om}(m) t^m  = \frac{\sum_{w\in\LL(Q,\om)} t^{1+\des w}}{(1-t)^{n+1}}
\]
where $n=|Q|$.\hfill\qed
\end{thm}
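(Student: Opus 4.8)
The plan is to prove the identity by the standard decomposition of the set of $(Q,\om)$-partitions into ``cells'' indexed by the linear extensions of $Q$ --- the fundamental lemma of $(P,\om)$-partition theory; a full account is in \cite[Section~3.15]{StanleyVol1Ed2}, and I would reproduce the part needed here. Write $n=|Q|$. First I would set up the decomposition: for each linear extension $q_1<q_2<\dots<q_n$ of $Q$, let $w=\om(q_1)\om(q_2)\cdots\om(q_n)$ be the corresponding element of $\LL(Q,\om)$, and let $C_w$ be the set of maps $f\colon Q\to\ZZ_{\ge 1}$ with $f(q_1)\le f(q_2)\le\dots\le f(q_n)$ and $f(q_i)<f(q_{i+1})$ for every $i\in\Des(w)$. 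The key claim is that the maps satisfying conditions (i) and (ii) are exactly $\bigsqcup_{w\in\LL(Q,\om)}C_w$, a \emph{disjoint} union. Containment of each $C_w$ in the set of $(Q,\om)$-partitions is checked directly: (i) is clear, and for (ii) note that if $s<t$ in $Q$ with $\om(s)>\om(t)$, then writing $s=q_i$, $t=q_j$ ($i<j$), the subword $\om(q_i),\dots,\om(q_j)$ begins above where it ends, so it has some internal descent $k\in\Des(w)$ with $i\le k<j$, whence $f(s)\le f(q_k)<f(q_{k+1})\le f(t)$. Conversely, given any $f$ satisfying (i) and (ii), I would order $Q$ by putting $s$ before $t$ when $f(s)<f(t)$, and when $f(s)=f(t)$ by increasing value of $\om$; conditions (i) and (ii) plus injectivity of $\om$ force this total order to be a linear extension of $Q$, and $f$ then lies in the corresponding $C_w$ and in no other, giving both ``$\subseteq$'' and disjointness.

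\smallskip

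With the decomposition in hand, the rest is counting. Restricting to maps into $[m]$, an element of $C_w\cap[m]^Q$ is the same as an integer sequence $1\le a_1\le a_2\le\dots\le a_n\le m$ that is strictly increasing at the positions in $\Des(w)$; subtracting from each $a_i$ the number of descents of $w$ lying below $i$ puts these in bijection with weakly increasing sequences with values in $\{1,\dots,m-\des w\}$, which number $\binom{m-\des w+n-1}{n}$. Hence
\[
	\Om_{Q,\om}(m)=\sum_{w\in\LL(Q,\om)}\binom{m-\des w+n-1}{n}.
\]
Finally, from $(1-t)^{-(n+1)}=\sum_{k\ge0}\binom{k+n}{n}t^k$ and the observation that $\binom{m-d+n-1}{n}=0$ whenever $0\le m\le d\le n-1$, one obtains $\sum_{m\ge0}\binom{m-d+n-1}{n}t^m=t^{d+1}/(1-t)^{n+1}$ for $0\le d\le n-1$. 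Applying this with $d=\des w$ to each term of the sum above and recombining gives
\[
	\sum_{m\ge0}\Om_{Q,\om}(m)\,t^m=\sum_{w\in\LL(Q,\om)}\frac{t^{1+\des w}}{(1-t)^{n+1}}=\frac{\sum_{w\in\LL(Q,\om)}t^{1+\des w}}{(1-t)^{n+1}},
\]
which is the assertion of the theorem (and along the way reconfirms that $\Om_{Q,\om}$ is a polynomial of degree $n$).

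\smallskip

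The main obstacle is the decomposition in the first paragraph: one must verify carefully that the ``sort by $f$, break ties by $\om$'' rule really produces a linear extension of $Q$ --- this is precisely where both defining conditions of a $(Q,\om)$-partition get used, via the elementary fact that a word beginning higher than it ends must contain a descent --- and that each $(Q,\om)$-partition is captured by one and only one cell $C_w$, so that the union is disjoint. Once that is established, the two counting steps are routine manipulations of binomial coefficients and the negative binomial series.
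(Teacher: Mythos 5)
Your proof is correct; it is the standard ``cell decomposition'' argument (the fundamental lemma of $(Q,\om)$-partitions) for this result, and all three steps --- the disjoint decomposition $\bigsqcup_w C_w$ indexed by $\LL(Q,\om)$, the count $\binom{m-\des w+n-1}{n}$ via the shift trick, and the negative binomial manipulation --- are carried out correctly. Note that the paper does not prove this theorem at all but simply cites it from \cite[Theorem~3.15.8]{StanleyVol1Ed2}, so your argument is essentially a reproduction of the proof in the cited reference rather than of anything in the paper itself.
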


Our next goal is to show that under certain conditions $A_{Q,\om}(t)$ is palindromic.  To do this, we will need a trio of results.  Since 
$\Om_{Q,\om}(m)$ is a polynomial it makes sense to talk about its value at a negative argument.  Also, there are many properties of the order polynomial which are true for all natural labelings $\om$.  In this case, we shorten $\Om_{Q,\om}$ to $\Om_Q$ and similarly for other notation.

\begin{thm}[{\cite[Corollaries 3.15.12 and 3.15.18]{StanleyVol1Ed2}}]
\label{Om}
Let $Q$ be a poset with $|Q|=n$ and longest chain of length $l$.
\begin{enumerate}
	\item[(A)] (Reciprocity theorem for order polynomials)  For all $m\in\ZZ$
	\[
		\Om_{Q,\overline{\om}}(m)=(-1)^n \Om_{Q,\om}(-m).
	\]
	\item[(B)]  If $\om$ is natural then
	\[
		\Om_Q(0)=\Om_Q(-1)=\dots=\Om_Q(-l)=0.
	\]
	\item[(C)]  Suppose $\om$ is natural.  The poset $Q$ is graded if and only if 
	\[
		\Om_Q(m)=(-1)^n \Om_Q(-m-l)
	\]
	for all $m\in\ZZ$.\hfill\qed
\end{enumerate}
\end{thm}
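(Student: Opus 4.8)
The plan is to derive all three parts from Theorem~\ref{JH} and the elementary reciprocity for rational generating functions, in the order (A), then (B) from (A), then (C) from (A) and (B). For (A), set $A(t)=\sum_{w\in\LL(Q,\om)}t^{1+\des w}$, a polynomial of degree at most $n$, so that Theorem~\ref{JH} reads $\sum_{m\ge0}\Om_{Q,\om}(m)t^m=A(t)/(1-t)^{n+1}$. A linear extension $q_1<\dots<q_n$ of $Q$ produces the word $\om(q_1)\cdots\om(q_n)$ for $\om$ and its complement $\ol\om(q_1)\cdots\ol\om(q_n)$ for $\ol\om$, and complementing a permutation word exchanges ascents and descents, so the number of descents of the complement of a word $w$ equals $n-1-\des w$. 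Hence $\sum_{m\ge0}\Om_{Q,\ol\om}(m)t^m=\bigl(\sum_w t^{n-\des w}\bigr)/(1-t)^{n+1}=t^{n+1}A(1/t)/(1-t)^{n+1}$. On the other hand, any polynomial $P$ of degree $n$ with $\sum_{m\ge0}P(m)t^m=A(t)/(1-t)^{n+1}$ satisfies the standard identity $\sum_{m\ge1}P(-m)t^m=(-1)^n t^{n+1}A(1/t)/(1-t)^{n+1}$. Taking $P=\Om_{Q,\om}$ and comparing the two series gives $\Om_{Q,\ol\om}(m)=(-1)^n\Om_{Q,\om}(-m)$ for all $m\ge1$; the constant terms match because $\Om_{Q,\ol\om}(0)=0$ for nonempty $Q$ (the empty poset being trivial), and since both sides are polynomials the identity extends to all $m\in\ZZ$, which is (A).

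For (B), let $\om$ be natural, so $\ol\om$ is dual natural and $s<t$ always forces $\ol\om(s)>\ol\om(t)$; then condition~(ii) in the definition of a $(Q,\ol\om)$-partition becomes ``$f(s)<f(t)$ whenever $s<t$'', so $\Om_{Q,\ol\om}(m)$ counts strictly order-preserving maps $Q\to[m]$. A chain of length $l$ requires $l+1$ distinct values, so no such map exists when $0\le m\le l$, whence $\Om_{Q,\ol\om}(m)=0$ there; by (A), $\Om_Q(-m)=(-1)^n\Om_{Q,\ol\om}(m)=0$ for $0\le m\le l$, which is (B).

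For (C), use (A) to see that $\Om_Q(m)=(-1)^n\Om_Q(-m-l)$ is equivalent to $\Om_Q(m)=\Om_{Q,\ol\om}(m+l)$, i.e.\ to
\[
\#\{f\colon Q\to[m]\ \text{order preserving}\}=\#\{g\colon Q\to[m+l]\ \text{strictly order preserving}\}.
\]
There is always an injection of the left-hand set into the right one, $f\mapsto g$ with $g(x)=f(x)+\rho(x)$, where $\rho(x)$ is the length of a longest chain of $Q$ from a minimal element up to $x$: the map $g$ is strict because $\rho(y)\ge\rho(x)+1$ on covers, and $g(x)\le m+l$ because a chain of length $\rho(x)$ lies below $x$. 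I would then prove this injection is onto exactly when $Q$ is graded. If $Q$ is graded, $\rho$ is a genuine rank function with $\rho(x)+\ol\rho(x)=l$ for every $x$, where $\ol\rho(x)$ is the length of a longest chain from $x$ up to a maximal element; one recovers $f=g-\rho$ and checks $f(x)\ge1$ from the chain below $x$ and $f(x)\le m$ from the chain above $x$. If $Q$ is not graded, then either some cover $x\lessdot y$ has $\rho(y)\ge\rho(x)+2$ or some maximal element $z$ has $\rho(z)<l$, and in either case one exhibits, for all large $m$, an explicit strictly order-preserving $g\colon Q\to[m+l]$ not of the form $f+\rho$ (for instance a linear-extension embedding placing $x,y$ consecutively, or the map $x\mapsto\rho(x)+1$ with $g(z)$ raised to $m+l$); then the two counts disagree for large $m$ and so cannot coincide as polynomials, and (C) follows. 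I expect this final step, the non-graded direction of (C), to be the main obstacle: one must isolate the combinatorial defect of a non-graded poset and convert it into a strictly order-preserving map outside the image of the injection while keeping track of the $\pm1$ discrepancies coming from $\rho$ and $\ol\rho$. Parts (A) and (B) are routine bookkeeping once Theorem~\ref{JH} and the generating-function reciprocity are available.
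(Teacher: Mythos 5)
The paper does not prove Theorem~\ref{Om}; it is stated as a direct citation of \cite[Corollaries 3.15.12 and 3.15.18]{StanleyVol1Ed2} (note the closing \verb|\qed| with no argument), so there is no in-paper proof to compare against. Your reconstruction is correct and follows the standard textbook line of reasoning. Parts (A) and (B) are exactly the usual derivations: complementation turns $\des w$ into $n-1-\des w$, giving $\sum_{m\ge0}\Om_{Q,\ol\om}(m)t^m = t^{n+1}A(1/t)/(1-t)^{n+1}$, and the rational generating-function reciprocity $\sum_{m\ge1}P(-m)t^m = (-1)^{n}t^{n+1}A(1/t)/(1-t)^{n+1}$ for a degree-$n$ $P$ does the rest; your remark about $m=0$ is redundant once agreement for $m\ge1$ extends the polynomial identity, but it is not wrong. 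Part (B) correctly interprets $\Om_{Q,\ol\om}$ as the strict order polynomial when $\om$ is natural. For Part (C), the shift map $f\mapsto f+\rho$, where $\rho(x)$ is the longest-chain-from-a-minimal-element rank, is the right injection, and your claim that gradedness makes $\rho$ a genuine rank function with $\rho(x)+\ol\rho(x)=l$ is a true and standard (though not entirely trivial) fact worth a sentence of its own: one checks that a longest chain up to $x$ has a minimal bottom and covers at each step, a longest chain above $x$ has a maximal top, and concatenating gives a maximal chain, forcing $\rho(x)+\ol\rho(x)=l$ and $\rho(y)=\rho(x)+1$ on covers. Your dichotomy for the non-graded case (either some cover jumps $\rho$ by $\ge 2$, or some maximal element has $\rho(z)<l$) is exhaustive, and the two witness constructions (a linear-extension embedding placing $x,y$ adjacent; the map $\rho+1$ with $g(z)$ pushed to $m+l$) correctly produce, for all large $m$, a strict order-preserving map outside the image, so the two polynomials disagree. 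In short, the proposal is a correct proof, essentially reproducing the argument in Stanley's text rather than offering a new route.
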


\begin{thm}
\label{Qpalin}
	Let $Q$ be a poset and let $\om$ be a natural labeling of $Q$.  Then the Eulerian polynomial $A_Q(t)$ is palindromic if and only if $Q$ is graded.
\end{thm}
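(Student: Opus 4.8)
The plan is to convert the palindromicity of $A_Q(t)$ into a functional equation and match it against the reciprocity characterization of gradedness in Theorem~\ref{Om}(C). Write $n=|Q|$ and let $l$ be the length of a longest chain of $Q$. By Theorem~\ref{JH}, $A_Q(t)=\sum_{w\in\LL(Q,\om)}t^{1+\des w}$, so $A_Q$ has zero constant term while its coefficient of $t^1$ is positive (the linear extension $q_1<q_2<\dots$ with $\om(q_i)=i$ is order-preserving and descent-free). Hence $A_Q$, being supported in degrees $1,\dots,\deg A_Q$, is palindromic exactly when $A_Q(t)=t^{\deg A_Q+1}A_Q(1/t)$.

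First I would record the standard generating-function reciprocity: if $f$ is a polynomial of degree $d$ with $\sum_{m\ge0}f(m)t^m=h(t)/(1-t)^{d+1}$, then $\sum_{m\ge0}f(-m-1)t^m=(-1)^d\,t^dh(1/t)/(1-t)^{d+1}$ (substitute $t\mapsto1/t$ in the rational function and expand at the other end). Applying this with $f=\Om_Q$, $d=n$, $h=A_Q$ gives
\[
	\sum_{m\ge0}\Om_Q(-m-1)\,t^m=(-1)^n\,\frac{t^nA_Q(1/t)}{(1-t)^{n+1}}.
\]
If $Q$ is graded, then $l$ is the common length of all maximal chains and Theorem~\ref{Om}(C) yields $\Om_Q(-m-1)=(-1)^n\Om_Q(m+1-l)$ for all $m$. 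Substituting this into the left-hand side, reindexing, and discarding the terms that vanish by Theorem~\ref{Om}(B), I obtain $\sum_{m\ge0}\Om_Q(-m-1)t^m=(-1)^nt^{\,l-1}A_Q(t)/(1-t)^{n+1}$; comparing with the displayed identity gives $t^{\,l-1}A_Q(t)=t^nA_Q(1/t)$, i.e. $A_Q(t)=t^{\,n-l+1}A_Q(1/t)$. Together with the support of $A_Q$ lying in $\{1,\dots,n-l\}$, this says precisely that $A_Q$ is palindromic.

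For the converse, suppose $A_Q$ is palindromic; I must first determine $\deg A_Q$. Using the standard fact that (in the above notation) $\deg h=d+1-\min\{m\ge1:f(-m)\neq0\}$, together with Theorem~\ref{Om}(B) (so $\Om_Q(-m)=0$ for $1\le m\le l$) and Theorem~\ref{Om}(A) (which identifies $(-1)^n\Om_Q(-(l+1))$ with the number of strictly order-preserving maps $Q\to[l+1]$, positive since one may send $q$ to one more than the length of a longest chain with top $q$), I conclude $\deg A_Q=n-l$. Thus palindromicity reads $A_Q(t)=t^{\,n-l+1}A_Q(1/t)$; running the previous paragraph's computation in reverse turns this into $\Om_Q(-m-1)=(-1)^n\Om_Q(m+1-l)$ for all $m\ge0$, and a short check using Theorem~\ref{Om}(B) for the remaining residues extends it to $\Om_Q(m)=(-1)^n\Om_Q(-m-l)$ for all $m\in\ZZ$. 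By Theorem~\ref{Om}(C), $Q$ is graded.

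The routine parts are the two reindexing computations and the bookkeeping that promotes the reciprocity identity from $m\ge0$ to all integers. The one essential ingredient beyond the cited results is the degree formula $\deg A_Q=n-l$, and I expect pinning this down — in particular the nonvanishing $\Om_Q(-(l+1))\neq0$ — to be where the most care is needed, since without it palindromicity would only give a reciprocity with the wrong shift, which could not be matched to Theorem~\ref{Om}(C).
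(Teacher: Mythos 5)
Your proof is correct. For the direction the paper actually writes out (graded implies palindromic) your argument is essentially the paper's, rephrased: the paper evaluates $(-1)^n\sum_{m\ge0}\Om_Q(-m)t^m$ two ways, once via Theorem~\ref{Om}(A), the definition of $\ol\om$, and Theorem~\ref{JH} to obtain $\sum_w t^{\,n-\des w}/(1-t)^{n+1}$, and once via parts~(C) and~(B) and Theorem~\ref{JH} again to obtain $\sum_w t^{\,l+1+\des w}/(1-t)^{n+1}$; your formal generating-function reciprocity is equivalent to the first of these computations. What you genuinely add is the converse, which the paper dispatches with ``going forwards is similar.'' As you correctly point out, that direction is not purely formal: palindromicity of $A_Q$ about degree $\deg A_Q$ initially gives only $\Om_Q(m)=(-1)^n\Om_Q(-m-(n-\deg A_Q))$, and to invoke Theorem~\ref{Om}(C) one must know that the shift $n-\deg A_Q$ equals $l$, i.e.\ that $\deg A_Q=n-l$. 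Your derivation of this degree fact --- part~(B) gives $\Om_Q(-m)=0$ for $0\le m\le l$, part~(A) identifies $(-1)^n\Om_Q(-(l+1))$ with the number of strict order-preserving maps $Q\to[l+1]$, and sending each $q$ to one more than the length of the longest chain with top $q$ shows that count is positive --- is exactly the detail the paper leaves implicit, and you are right that the proof of the converse would fall through without it.
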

\begin{proof}
We will prove the backwards direction as going forwards is similar.  We will  use $\ol{Q}$ as an abbreviation for $(Q,\ol{\om})$.  We also conserve the notation of the previous result.  Using Theorem~\ref{Om} (A),  Theorem~\ref{JH}, and the definition of $\ol{\om}$ in turn we get
\[
	(-1)^n\sum_{m\ge0} \Om_Q(-m) t^m = \sum_{m\ge0} \Om_{\ol{Q}}(m) t^m
	=\frac{\sum_{w\in \LL(\ol{Q})} t^{1+\des w}}{(1-t)^{n+1}}=\frac{\sum_{w\in \LL(Q)} t^{n-\des w}}{(1-t)^{n+1}}.
\]
Also using, in turn,  parts (C) and (B) of the previous result followed by Theorem~\ref{JH}  gives
\[
	(-1)^n\sum_{m\ge0} \Om_Q(-m) t^m =\sum_{m\ge0}\Om_Q(m-l) t^m = t^l \sum_{m\ge0}\Om_Q(m) t^m
	=\frac{\sum_{w\in \LL(Q)} t^{l+1+\des w}}{(1-t)^{n+1}}.
\]
Comparison of the final numerators in the last two series of displayed equalities implies that $A_Q(t)$ is a palindrome, as desired.
\end{proof}

We now have all our tools in place.
The following result, together with Theorem~\ref{GorenPalin}, proves Theorem~\ref{thm:gorenstein}.
\begin{thm}\label{thm:hstarPalin}
	The vectors $h^*(B_n(132,312))$ and $h^*(\AltB_n(123))$ are palindromic for all $n$.
\end{thm}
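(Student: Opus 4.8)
The plan is to apply the machinery of $(Q,\omega)$-partitions assembled above to the two distributive lattices $Q_n(132,312)$ and $\AltQ_n(123)$. Write $Q$ for either of these lattices and recall from Proposition~\ref{prop:distributive} that $Q$ is a graded distributive lattice; write $n'=|Q|$ for its cardinality. The first step is to connect $h^*$ of the polytope to the combinatorics of $Q$. Because $\TT_n(132,312)$ (respectively $\AltT_n(123)$) is a regular unimodular triangulation of the polytope obtained from a grevlex order, Corollary~2.5 of \cite{StanleyDecompositions} gives $h^*(B_n(132,312)) = h(\Delta(Q_n(132,312)))$, and likewise in the alternating case; this identity was already noted just after the corollaries to Theorem~\ref{thm:regularity}. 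So it suffices to show that the $h$-polynomial of the order complex $\Delta(Q)$ is palindromic.

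The second step identifies this $h$-polynomial with an Eulerian polynomial of a labeled poset. Take the natural labeling $\omega$ of $Q$ coming from the chosen natural labeling of $\Irr(Q)$ (the one used in Section~\ref{subsec:EL and shellable}, adding boxes of the triangular diagram row by row), and the induced EL-labeling $\lambda$ of $Q$ from Stanley's theorem (see \cite{StanleySupersolvable}). By Lemma~\ref{lem:shelling numbers}, the shelling number $r(c)$ of a maximal chain $c$ equals $\des\lambda(c)$, and Proposition~\ref{StanleyDecomp} then says $h_i = \#\{c : \des\lambda(c)=i\}$. Under Stanley's EL-labeling of a distributive lattice $L=J(P)$, the sequence of labels along a maximal chain records the order in which the join-irreducibles are adjoined, i.e.\ a maximal chain corresponds exactly to a linear extension of $P=\Irr(Q)$ read off as a permutation of its $\omega$-labels; thus the multiset of label-sequences $\{\lambda(c)\}$ is precisely the Jordan–Hölder set $\LL(\Irr(Q),\omega)$. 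Therefore $\sum_i h_i t^i = \sum_{w\in\LL(\Irr(Q),\omega)} t^{\des w}$, which up to the shift $t\mapsto$ (multiply by $t$) is the Eulerian polynomial $A_{\Irr(Q),\omega}(t)$ of Theorem~\ref{JH}. Equivalently, $h_{\Delta(Q)}(t) = t^{-1}A_{\Irr(Q),\omega}(t)$ (the degree-$0$ term of $A$ is $0$ since the unique increasing chain has no descents), so palindromicity of $h_{\Delta(Q)}(t)$ is equivalent to palindromicity of $A_{\Irr(Q),\omega}(t)$.

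The third step is to invoke Theorem~\ref{Qpalin}: since $\omega$ is a natural labeling of $\Irr(Q)$, $A_{\Irr(Q),\omega}(t)$ is palindromic if and only if $\Irr(Q)$ is graded. So the remaining task is to verify that the poset of join-irreducibles of each lattice is graded. For $Q_n(132,312)\cong M(n-1)$, the join-irreducibles are the staircase shapes with a single inner corner, identified with lattice points $(b,c)$ with $1\le b\le c\le n-1$ and ordered componentwise; the rank of $(b,c)$ is $b+c-1$ (the number of boxes of the corresponding shifted diagram), so every maximal chain from the minimum $(1,1)$ to a fixed element has the same length and $\Irr(M(n-1))$ is graded — indeed it is the componentwise order on a triangular array, which is visibly ranked. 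For $\AltQ_n(123)\cong D_{\lceil n/2\rceil}^*$, the join-irreducibles are again the single-inner-corner Young diagrams inside the staircase $(k-1,\dots,1)$ with $k=\lceil n/2\rceil$, identified with $(b,c)$, $1\le b$, $c\ge 1$, $b+c\le k$, ordered componentwise; the same argument shows this poset is graded by $b+c$. Hence $\Irr(Q)$ is graded in both cases, so by Theorem~\ref{Qpalin} the Eulerian polynomial is palindromic, so $h_{\Delta(Q)}(t)$ and therefore $h^*(B_n(132,312))$ and $h^*(\AltB_n(123))$ are palindromic.

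The main obstacle, and the only genuinely delicate point, is the second step: making precise that the EL-labeling of $L=J(P)$ turns the set of maximal-chain label-sequences into exactly the Jordan–Hölder set $\LL(P,\omega)$, with descents matching shelling numbers. This requires carefully tracking how Stanley's labeling $\lambda(I,J)=\omega(x)$ (where $J\setminus I=\{x\}$) interacts with $\omega$: a maximal chain in $L$ is a total order $x_{i_1},x_{i_2},\dots$ in which the irreducibles are added, this sequence is by definition a linear extension of $P$, and the word of labels is $\omega(x_{i_1})\omega(x_{i_2})\cdots$, which is exactly an element of $\LL(P,\omega)$; conversely every linear extension arises this way. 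Once this dictionary is spelled out, everything else is a bookkeeping application of Lemma~\ref{lem:shelling numbers}, Proposition~\ref{StanleyDecomp}, Theorem~\ref{JH}, and Theorem~\ref{Qpalin}, together with the elementary verification that the two triangular componentwise posets of join-irreducibles are graded.
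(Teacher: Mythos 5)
Your proof is correct and follows essentially the same route as the paper's: reduce palindromicity of $h^*$ to palindromicity of the Eulerian polynomial $A_{\Irr(Q),\omega}(t)$ via the EL-shelling and the identification of maximal-chain label sequences with the Jordan--H\"older set, then invoke Theorem~\ref{Qpalin} using the gradedness of the poset of join-irreducibles. You go slightly further than the paper by explicitly checking that $\Irr(Q)$ is graded (the paper merely asserts it); note the small off-by-one in your stated rank --- in the componentwise order the rank of $(b,c)$ above the minimum $(1,1)$ is $b+c-2$, not $b+c-1$, and it is not in general the number of boxes of the corresponding shifted diagram --- but this is immaterial, since the gradedness conclusion still stands.
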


\begin{proof}
We will only deal with the case of $P=B_n(132,312)$ as $\AltB_n(123)$ is similar.  Let $Q=\Irr(Q_n(132,312))$.  Let $\om$ be the natural labeling of $Q$ used in the EL-labeling $\lambda$ of $Q_n(132,312)$. 
Since $Q$ is graded, we know from Theorem~\ref{Qpalin} that $A_Q(t)$ is palindromic.  So it suffices to show that the coefficient sequence of $A_Q(t)$ equals $h^*(P)$ (where we ignore the constant term of zero in the former).  
Consider the unimodular triangulation of $P$ given in Theorem~\ref{thm:regularity}.
This permits us to apply
Proposition~\ref{StanleyDecomp} and  Lemma~\ref{lem:shelling numbers} to conclude that $h_i^*(P)$ is the number of maximal chains $c$ of $Q_n(132,312)$ with $\des \lambda(c) = i$.  Comparing this with the expression for  $A_Q(t)$ in Theorem~\ref{JH},
we see that it suffices to prove 
\[
	\LL(Q)=\{\lambda(c)\ |\ \text{$c$ a maximal chain in $Q_n(132,312)$}\}.
\]
But this follows since $Q_n(132,312)=J(Q)$ so that linear extensions $q_0,q_1,q_2,\dots$ of $Q$ are in bijective correspondence with maximal chains $q_0 \lessdot q_0\vee q_1 \lessdot q_0\vee q_1\vee q_2 \lessdot \dots$ of $Q_n(132,312)$, and we are using the same function $\om$ to label both the elements of $Q$ and the covers in the chain.
\end{proof}

\begin{cor}
	The vectors $h^*(B_n(132,312))$ and $h^*(\AltB_n(123))$ are unimodal.
\end{cor}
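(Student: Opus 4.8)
The plan is to combine, in essentially one step, the two properties of $B_n(132,312)$ and $\AltB_n(123)$ that the work of Sections~\ref{sec:132,312 and 123} and~\ref{sec:gorensteinsection} has already supplied. By Theorem~\ref{thm:regularity}, the sets $\TT_n(132,312)$ and $\AltT_n(123)$ are regular unimodular triangulations of $B_n(132,312)$ and $\AltB_n(123)$, respectively, where ``unimodular'' is with respect to the ambient lattices $\aff(B_n(132,312))\cap\ZZ^{n\times n}$ and $\aff(\AltB_n(123))\cap\ZZ^{n\times n}$ (this is exactly what Proposition~\ref{prop:unimodularity} feeds into Theorem~\ref{thm:regularity}). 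By Theorem~\ref{thm:gorenstein}, both polytopes are Gorenstein. A theorem of Bruns and R\"omer~\cite{BrunsRomer} asserts that a Gorenstein lattice polytope admitting a regular unimodular triangulation has a unimodal $h^*$-vector. Applying this to each of our two polytopes yields the corollary, so the proof is two sentences long once the inputs are cited.

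The only point I would take care to verify is that the notion of unimodular triangulation used in~\cite{BrunsRomer} agrees with ours, namely that each maximal simplex has normalized volume $1$ relative to the lattice generated by the lattice points of the polytope; this is precisely what Theorem~\ref{thm:regularity} establishes for $\TT_n(132,312)$ and $\AltT_n(123)$, since the vertices of these polytopes are their only lattice points. There is no genuine obstacle here: the corollary is purely a matter of assembling results already proved, and indeed the need for a regular unimodular triangulation of a Gorenstein polytope was anticipated in the discussion opening Section~\ref{sec:gorensteinsection}.

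An alternative, more self-contained route would avoid~\cite{BrunsRomer} and argue combinatorially: since $h^*(B_n(132,312))=h(\Delta(Q_n(132,312)))$ and $Q_n(132,312)=J(Q)$ for the graded, naturally labeled poset $Q=\Irr(Q_n(132,312))$, the $h^*$-polynomial equals the $(Q,\om)$-Eulerian polynomial $A_Q(t)$ of Theorem~\ref{JH}, and likewise for $\AltB_n(123)$. One would then need a unimodality statement for $A_Q(t)$ when $Q$ is graded. This is considerably more delicate—real-rootedness fails in general for $(P,\om)$-Eulerian polynomials—so I would not pursue it; the Bruns--R\"omer argument is both shorter and the one the exposition has been building toward.
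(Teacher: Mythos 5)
Your proof is correct and follows exactly the same route as the paper: cite Theorem~\ref{thm:regularity} for the regular unimodular triangulation, Theorem~\ref{thm:gorenstein} for the Gorenstein property, and conclude via the Bruns--R\"omer theorem. The caveat you raise about the lattice with respect to which unimodularity holds is a reasonable sanity check but does not present an obstacle here, as you note.
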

\begin{proof}
	For each $n$, $B_n(132,312)$ and $\AltB_n(123)$ have regular, unimodular triangulations by Theorem~\ref{thm:regularity} and are Gorenstein by Theorem~\ref{thm:gorenstein}.
	By the main result of~\cite{BrunsRomer}, the $h^*$-vectors for each polytope are $h$-vectors for boundaries of simplicial polytopes, that is, they are unimodal.
\end{proof}

\medskip

{\em Acknowledgement.} We thank Richard Stanley for pointing out that the theory of $(P,\omega)$-partitions could be used to obtain Theorem~\ref{Qpalin}. Helpful comments were also given by several anonymous referees.

\bibliographystyle{plain}
\bibliography{references}

\end{document}